\numberwithin{equation}{section}
\newcommand\numberthis{\addtocounter{equation}{1}\tag{\theequation}}
\author{Andreas Petrak}
\DeclareMathOperator{\im}{im}
\DeclareMathOperator{\Ind}{Ind}
\DeclareMathOperator{\Hess}{Hess}
\DeclareMathOperator{\codim}{codim}
\DeclareMathOperator{\Cord}{Cord}
\DeclareMathOperator{\grad}{grad}
\DeclareMathOperator{\sign}{sign}
\DeclareMathOperator{\spann}{span}
\DeclareMathOperator{\supp}{supp}
\DeclareMathOperator{\lk}{lk}
\DeclareMathOperator{\B}{B}
\newcommand{\ntransv}{\mathrel{\text{\npitchfork}}}
\newcommand{\npitchfork}{%
\vbox{
\baselineskip\z@skip
\lineskip-1.5ex
\lineskiplimit\maxdimen
\m@th
\ialign{##\crcr\hidewidth\smash{$/$}\hidewidth\crcr$\pitchfork$\crcr}
}%
}
\newtheorem{lem}{Lemma}[section]
\newtheorem{thm}[lem]{Theorem}
\newtheorem{cor}[lem]{Corollary}
\theoremstyle{definition}
\newtheorem{defi}[lem]{Definition}
\newtheorem{ex}[lem]{Example}
\newtheorem{rem}[lem]{Remark}
\begin{document}
\begin{center}
\vspace*{1cm}
{\huge Definition of the cord algebra of knots\\ using Morse Theory}\\
\vspace*{1cm}
Andreas Petrak\\
\end{center}
\section{Introduction}
The cord algebra is a knot invariant first introduced in 2005 by Lenhard Ng in \cite{Ng2}. The topological definition of the cord algebra given there is inspired by the Legendre contact homology (see \cite{Ng1}). Lenhard Ng extends the definition 2008 in \cite{Ng3} and 2014 in \cite{Ng4}. In 2017 Kai Cieliebak, Tobias Ekholm, Janko Latschev and Lenhard Ng develop in \cite{Cie3} a noncommutative refinement of the original definition, including a base point of the knot and a framing. This results in four additional generators in the cord algebra. In \cite{Cie3} it is also shown that the cord algebra is isomorphic to the Legendre contact homology. This is done with the help of string homology by first showing the isomorphism of string homology and the cord algebra and then the isomorphism of Legendre contact homology and string homology. To define the string homology (in degree zero), a chain complex is defined which contains broken strings or 1-parameter families of broken strings as generators in degree 0 or 1, respectively. Broken strings are curves in space that intersect the knot at several points. In the proof of the isomorphism of string homology and cord algebra, a retraction of the broken strings on words in (linear) cords is used, where (linear) cords are linear curves in space with start and end points on the knot. Therefore, it makes sense to describe the cord algebra with the help of a suitable complex of (linear) cords. We will do this in this paper. For this purpose we will define a suitable chain complex which contains the critical points of index 0 and 1 of a Morse function in degree 0 and 1, respectively, and the four additional generators mentioned above in degree 0. Further degrees of the chain complex are not needed. These critical points correspond to binormal (linear) cords on the knot. In order to construct a suitable differential, we let the binormal (linear) cords of index 1 flow along their unstable manifolds until they, taking into account four relations, reach (linear) cords of index 0. These four relations are used in \cite{Cie3} to define the cord algebra, where two of them are already defined by Lenhard Ng in \cite{Ng2} and a third is introduced in \cite{Ng3}. We will adapt them here only slightly to the changed concept.\\[.5em]
We will need several results from differential topology and Morse theory. Therefore, in Appendix~\ref{Backgroundmaterial} some statements of the mentioned fields are presented without proofs. The proofs can be found in the literature referred to.\\
Important statements from differential topology which we will use are the transversality theorem and the jet transversality theorem as well as the relative versions of these two theorems.\\
To define the cord algebra, we will consider a gradient vector field. This vector field has to satisfy the Smale condition. For this it may be necessary to change the gradient to a pseudo gradient. From Morse theory we also need the statement that in a generic 1-parameter family of vector fields without nonconstant periodic orbits only birth-death type degeneracies occur.\\[.5em]
At the beginning of Section \ref{Corddef} we will first explain the noncommutative definition of the cord algebra from \cite{Cie3}. A cord is a continuous path in space that has a start and end point on the knot $K$ and does not intersect the knot in any other point. We also need a base point on $K$ and a framing which is a slightly shifted copy of $K$. Four relations are defined which specify relations between different cords, partly taking into account the base point and the framing. These relations generate an ideal~$\mathcal{I}$ in a noncommutative unital ring $\mathcal{A}$, where $\mathcal{A}$ is generated by homotopy classes of cords and four other generators $\lambda^{\pm1},\mu^{\pm1}$ modulo the relations $\lambda\cdot\lambda^{-1}=\lambda^{-1}\cdot\lambda=\mu\cdot\mu^{-1}=\mu^{-1}\cdot\mu=1$ and $\lambda\cdot\mu=\mu\cdot\lambda$. The cord algebra of $K$, $\Cord(K)$, is then defined as the quotient ring~$\mathcal{A}/\mathcal{I}$.\\[.5em]
In section \ref{enfct} we will redefine the terms cord and framing to be able to define the cord algebra using Morse theory. A (linear) cord of $K$ is now a straight line in space with start and end points on the knot. A framing is now a smooth map that assigns a unit normal vector to each point of the knot. The four relations from the original definition of the cord algebra are adapted accordingly. We'll look at the energy function $E:K\times K\to\mathbb{R},(x,y)\mapsto\frac{1}{2}\vert x-y\vert^{2}$ and change it to a Morse function. The critical points of Morse index~1 of this Morse function are binormal (linear) cords whose unstable manifolds are one-dimensional. These (linear) cords are moved along their unstable manifolds and, taking into account the relations, generate an ideal $I$ in an $R$-algebra $C_{0}$ which is generated by critical points of index 0. Here $R=\mathbb{Z}[\lambda^{\pm1},\mu^{\pm1}]$ is the commutative ring over $\mathbb{Z}$ which is generated by $\lambda,\lambda^{-1},\mu$ and $\mu^{-1}$ modulo the relations $\lambda\cdot\lambda^{-1}=\mu\cdot\mu^{-1}=1$. We then define in Section \ref{Cord} the cord algebra as $\Cord(K):=C_{0}/I$.\\[.5em]
Before that, however, we will look at the following three subsets of $K\times K$: 
\begin{itemize}
\item[B]is the set of (linear) cords that have the base point $\ast$ as start or end point. If we choose a parametrization $\gamma$ of $K$ with $\gamma(0)=\ast$, then $B=(K\times\lbrace0\rbrace)\cup(\lbrace0\rbrace\times K)$.
\item[S]is the set of (linear) cords that intersect the knot $K$ in their interior. Generically, $S\subset K\times K$ is an immersed curve with boundary. This and other properties of $S$ are shown in \cite{Cie3}.
\item[F]is the set of (linear) cords that intersect the framing. Since $F$ is symmetric, $F$ can be split into two subsets $F^{s}$ and $F^{e}$ of (linear) cords that intersect the framing at their start and end points, respectively. These subsets are one-dimensional submanifolds with boundary of $K\times K$. In addition, we have $\partial F=\partial S$. We will formulate this lemma in Section \ref{enfct}. The proof of this lemma is given in Appendix \ref{framinglemmaproof}.
\end{itemize}
In Section \ref{enfct} we will also show some generic properties of the function $E_{g}=E+g$, where $g$ is a small perturbation of the function $E$.\\[.5em]
In the definition of the cord algebra we use the Seifert framing as a canonical framing. To determine the cord algebra of a knot, however, it is easier to use the blackboard framing. In order to obtain the cord algebra with respect to the Seifert framing, certain transformations must be applied. We will discuss these transformations in Section \ref{Framing}.\\[.5em]
In Section \ref{Examples} we will determine the cord algebras for the unknot and the right-handed trefoil knot.\\[.5em]
The proof that the cord algebra according to our definition is a knot invariant will be given in Section \ref{Knotinv}. We consider two knots $K_{0}$ and $K_{1}$ which are connected by a smooth isotopy of knots. So it must be shown that the cord algebras of the two knots are the same. First of all, it is necessary to note that in the course of the isotopy there are only a finite number of knots that are not generic. Therefore, we can assume that only one non-generic knot occurs during the isotopy and show that the cord algebra does not change in the course of the isotopy. Since there are several cases of degeneracies, we have to go through all these cases. Furthermore, we will also show that the cord algebra does not change in the course of the isotopy if no non-generic knot occurs.
\subsection*{Acknowledgements} A thousand thanks to Kai Cieliebak for many inspiring conversations.
\section{Definition of the Cord Algebra}\label{Corddef}
The cord algebra is a knot invariant developed only a few years ago \cite{Ng2,Ng3,Ng4}. In \cite{Cie3} a non-commutative refinement of the original definition is presented. This refined version is briefly explained in the following. First, we will define the terms framing and cord. However, in Section~\ref{enfct} we will modify the definitions of these terms to define the cord algebra using Morse theory.
\begin{defi}
Let $K\subset\mathbb{R}^{3}$ be a knot of length $L$ and $\gamma:[0,L]\to\mathbb{R}^{3}$ be an arclength parametrization of $K$. Let $\nu:[0,L]\to S^{2}$ be a smooth map, where $\nu(t)$ is a unit normal vector to $K$ at the point $\gamma(t)$ for all $t\in[0,L]$. Let $\varepsilon>0$ be small enough such that the strip $\lbrace\gamma(t)+\alpha\nu(t):t\in[0,L],\alpha\in[0,\varepsilon]\rbrace$ has no self-intersections.\\
A \textit{framing} of $K$ is the set $K^{\prime}:=\lbrace\gamma(t)+\varepsilon\nu(t):t\in[0,L]\rbrace$.
\end{defi}
Let $K\subset\mathbb{R}^{3}$ be an oriented knot equipped with a framing $K^{\prime}$. Choose a base point $\ast$ on $K$ and a corresponding base point $\ast$ on $K^{\prime}$ (in fact only the base point on $K^{\prime}$ will be needed).
\begin{defi}
A \textit{cord} of $K$ is a continuous map $\alpha:[0,1]\to\mathbb{R}^{3}$ such that $\alpha([0,1])\cap K=\emptyset$ and $\alpha(0),\alpha(1)\in K^{\prime}\setminus\lbrace\ast\rbrace$. Two cords are \textit{homotopic} if they are homotopic through cords.
\end{defi}
We now construct a noncommutative unital ring $\mathcal{A}$ as follows: as a ring, $\mathcal{A}$ is freely generated by homotopy classes of cords and four extra generators $\lambda^{\pm1},\mu^{\pm1}$, modulo the relations
\[\lambda\cdot\lambda^{-1}=\lambda^{-1}\cdot\lambda=\mu\cdot\mu^{-1}=\mu^{-1}\cdot\mu=1,\phantom{blabla}\lambda\cdot\mu=\mu\cdot\lambda.\]
Thus, $\mathcal{A}$ is generated as a $\mathbb{Z}$-module by (noncommutative) words in homotopy classes of cords and powers of $\lambda$ and $\mu$ (and the powers of $\lambda$ and $\mu$ commute with each other, but not with any cords).
\begin{defi}\label{ngcordalgebradef}
The \textit{cord algebra} of $K$ is the quotient ring
\[\Cord(K)=\mathcal{A}/\mathcal{I},\]
where $\mathcal{I}$ is the two-sided ideal of $\mathcal{A}$ generated by the relations shown in Figure \ref{ngrelations}.
\begin{figure}[ht]
\hspace*{3.5cm}\subfigure{\includegraphics[scale=0.35]{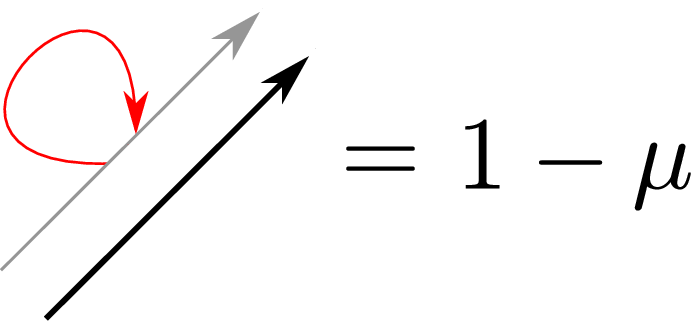}}\\
\hspace*{3.5cm}\subfigure{\includegraphics[scale=0.35]{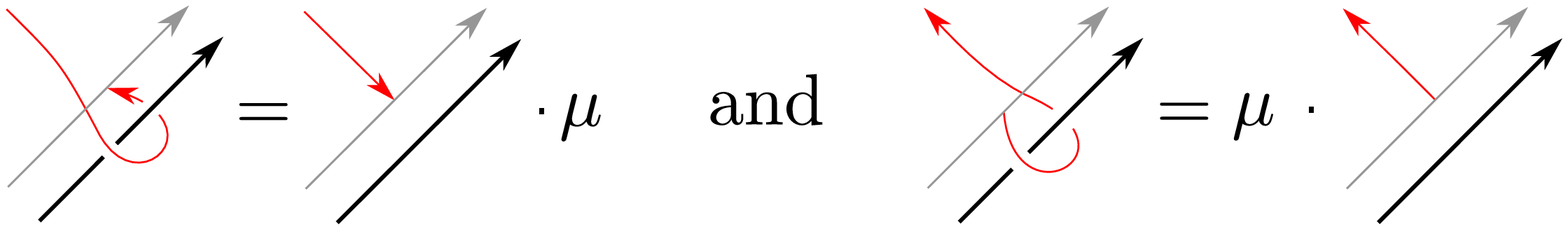}}\\
\hspace*{3.5cm}\subfigure{\includegraphics[scale=0.35]{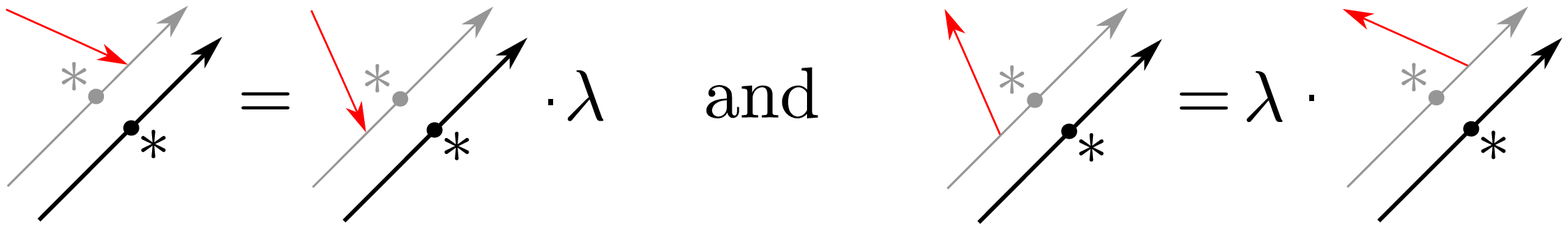}}\\
\hspace*{3.5cm}\subfigure{\includegraphics[scale=0.35]{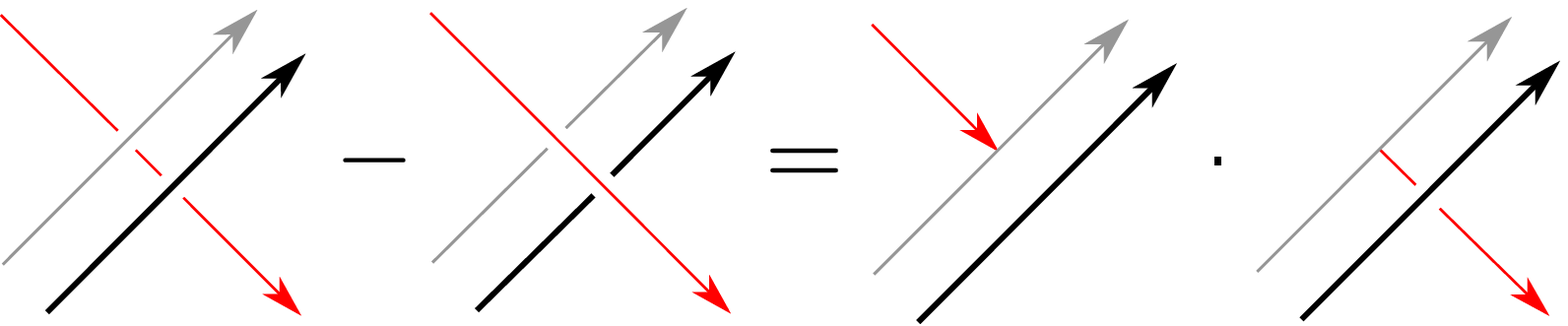}}
\caption{Relations for cords}\label{ngrelations}
\end{figure}
\end{defi}
Here $K$ is depicted in black and $K^{\prime}$ in gray, and cords are drawn in red.
\begin{rem}[\cite{Cie3}]\label{remng}
The relations in Definition \ref{ngcordalgebradef} depict cords in space that agree
outside of the drawn region (except in (iv), where either of the two cords on the left hand side of the equation splits into the two on the right). Thus, (ii) states that appending a meridian to the beginning or end of a cord multiplies that cord by $\mu$ on the left or right. $\mu$ is also called \textit{meridian}. In the third relation it is shown that crossing the base point multiplies the cord by $\lambda$ from the left or right. $\lambda$ is also called \textit{longitude}. Relation (iv) is equivalent to the relation shown in Figure \ref{ngrelationivequiv}.
\begin{figure}[ht]\centering
\includegraphics[scale=0.35]{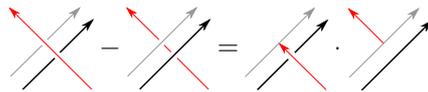}
\caption{Equivalent representation of the fourth relation}\label{ngrelationivequiv}
\end{figure}%

Applying the second and fourth relation in a suitable way to a contractible cord $c$, we get the equation
\[(c-(1-\mu))c=0.\]
The first relation means that the expression in the brackets already vanishes.
\end{rem}
\subsection{The energy function for generic knots}\label{enfct}
In this paper we want to define the cord algebra using Morse theory. For this we need a framing and cords as above. However, as already mentioned, we will redefine these two terms.
\begin{defi}\label{framingdef}
Let $K\subset\mathbb{R}^{3}$ be a knot of length $L$ and $\gamma:[0,L]\to\mathbb{R}^{3}$ be an arclength parametrization of $K$.
\begin{itemize}\itemsep0pt
\item[(i)]A \textit{framing} of $K$ is a smooth map $\nu:[0,L]\to S^{2}$, where $\nu(t)$ is a unit normal vector to $K$ at the point $\gamma(t)$ for all $t\in[0,L]$.
\item[(ii)]Given a Seifert surface for a knot $K$, a \textit{Seifert framing} is a framing such that the normal vector $\nu(t)$ is tangent to the Seifert surface and pointing inwards for all $t\in[0,L]$.
\end{itemize}
\end{defi}
\begin{rem}
Assume that $\ddot{\gamma}$ vanishes nowhere. A framing can also be understood as a map $\nu:S^{1}\to S^{1}$ by representing $\nu(t)$ for each $t\in S^{1}$ in local coordinates of the normal plane $N(t)=\spann(\ddot{\gamma}(t),\dot{\gamma}(t)\times\ddot{\gamma}(t))$ at the point $\gamma(t)$.\\
Given a knot $K$, we take a look at the set of framings of $K$ modulo homotopy:
\begin{align*}
\lbrace\text{framings of }K\rbrace/\sim&=C(S^{1},S^{1})/\sim\\
&=\pi_{1}(S^{1})\\
&\cong\mathbb{Z}.
\end{align*}
As a consequence, the Seifert framing is unique up to homotopy, since the linking number of $K$ and a copy of $K$ which is shifted slightly in the direction of the Seifert framing is always zero.
\end{rem}
Let $K\subset\mathbb{R}^{3}$ be a generic oriented knot of length $L$ and $\gamma:[0,L]\to K$ be an arclength parametrization of $K$. Also, let $K$ be equipped with a framing. For this we use the Seifert framing (obtained by the Seifert algorithm) as a canonical framing. To facilitate the determination of the cord algebra of a knot, however, we will use a different framing. This and the necessary transformations to obtain the cord algebra with respect to the Seifert framing will be discussed in Section \ref{Framing}. Furthermore, we choose a base point $\ast$ on $K$.
\begin{defi}
Let $K\subset\mathbb{R}^{3}$ be a knot (or a link). A \textit{cord} of $K$ is a curve $\alpha\in C^{2}([0,1],\mathbb{R}^{3})$ such that $\alpha(0),\alpha(1)\in K$ and $\ddot{\alpha}\equiv0$, i.e. $\alpha$ is a straight line in $\mathbb{R}^{3}$ starting and ending on the knot.
\end{defi}
The space of these cords can be canonically identified with $K\times K$ by associating to each cord its endpoints on $K$: For $s,t\in[0,L]$,
\[c=(\gamma(s),\gamma(t))\in K\times K\]
is a cord with startpoint $\gamma(s)$ and endpoint $\gamma(t)$. The space $K\times K$ can be canonically identified with the torus $T^{2}$ by using the identification $S^{1}\cong\mathbb{R}/L\mathbb{Z}$. The startpoint resp.~endpoint of a cord $c=(\gamma(s),\gamma(t))$ can be identified with $s\in S^{1}$ resp.~$t\in S^{1}$, and thus we can simply write 
\[c=(s,t).\]
We also assign an orientation to each cord: A cord $c=(\gamma(s),\gamma(t))$ is oriented from its startpoint $\gamma(s)$ to its endpoint $\gamma(t)$. 
\begin{defi}\label{framingintersectiondef}
Let $K\subset\mathbb{R}^{3}$ be an oriented knot of length $L$ equipped with a framing $\nu$. Let $\gamma:~[0,L]\to\mathbb{R}^{3}$ be an arclength parametrization of $K$. Let $N(t)\subset\mathbb{R}^{3}$ be the normal plane to $K$ at the point $\gamma(t)$ and $\pi_{t}:\mathbb{R}^{3}\to N(t)$ be the orthogonal projection onto $N(t)$. Let $c=(s,t)\in K\times K$ be a cord of $K$.\\
We say \textit{c intersects the framing} if one of the following conditions is satisfied (see Figure \ref{framingintersection}):
\begin{itemize}\itemsep-4pt
\item $\pi_{s}(\gamma(t)-\gamma(s))=\alpha\nu(s)$ for an $\alpha>0$ (\textit{$c$ intersects the framing at its startpoint}), or
\item $\pi_{t}(\gamma(s)-\gamma(t))=\alpha\nu(t)$ for an $\alpha>0$ (\textit{$c$ intersects the framing at its endpoint}).
\end{itemize}
\begin{figure}[!ht]\centering
\begin{minipage}{0.42\textwidth}
\includegraphics[width=1.0\textwidth]{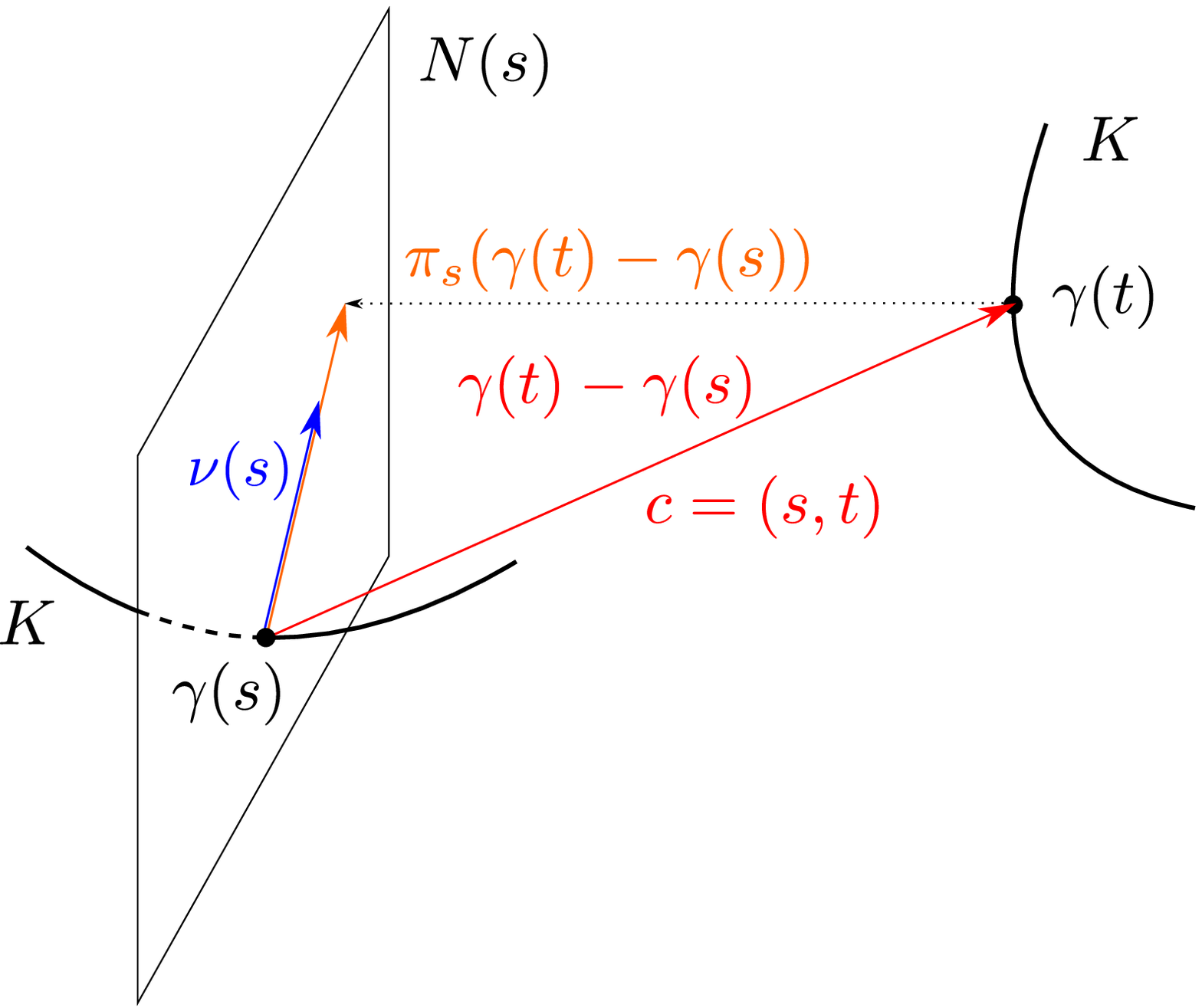}
\caption*{\hspace*{-.2cm}(a) $c$ intersects the framing at its startpoint}
\end{minipage}\hspace*{1cm}
\begin{minipage}{0.42\textwidth}
\vspace*{.7cm}
\includegraphics[width=1.0\textwidth]{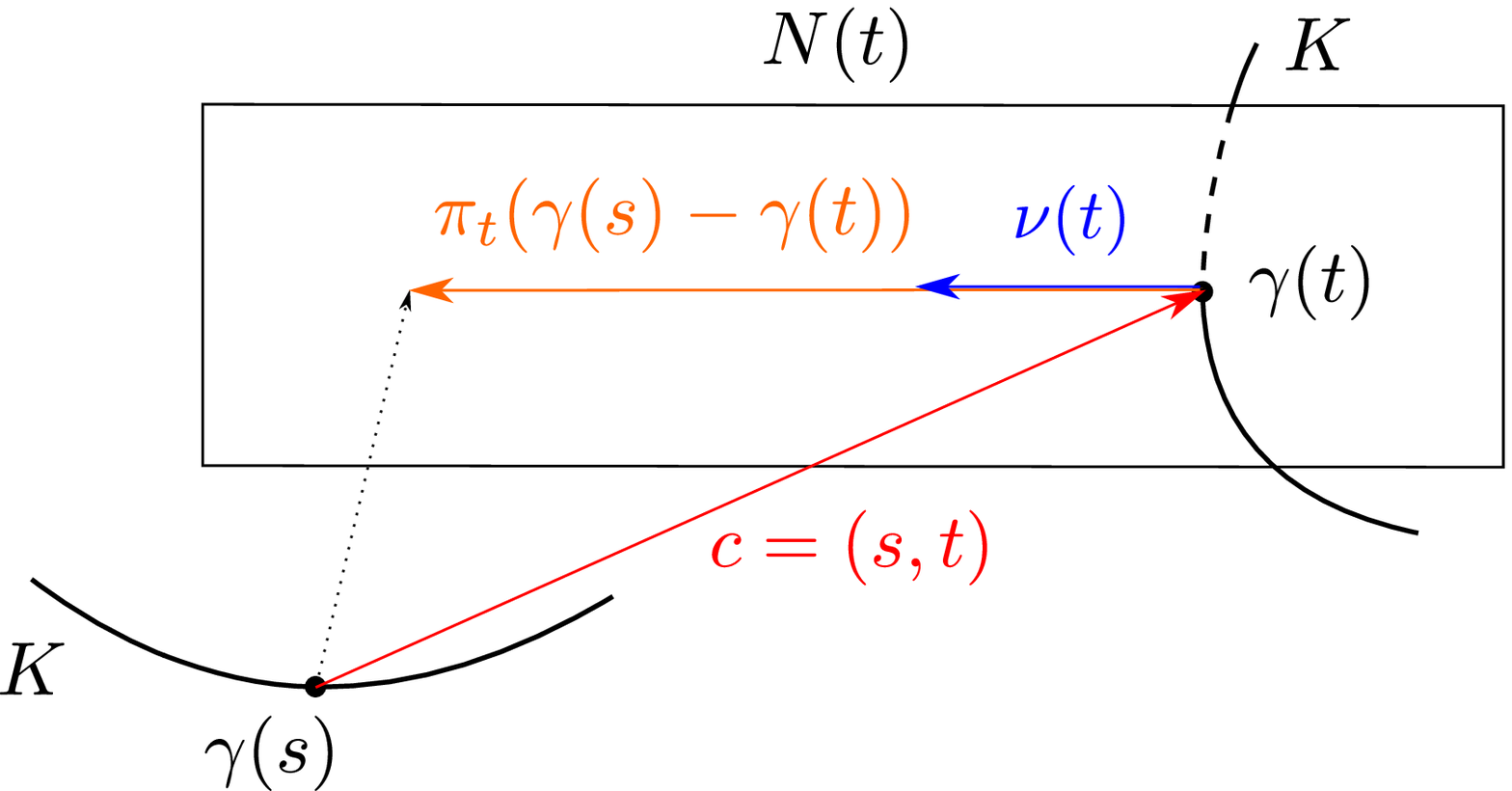}\vspace*{1.3cm}
\caption*{\hspace*{-.2cm}(b) $c$ intersects the framing at its endpoint}
\end{minipage}\\
\caption{A cord $c$ intersects the framing}\label{framingintersection}
\end{figure}
\end{defi}
\begin{rem}\label{symmetryofF}
From this definition follows: If a cord $(s,t)\in K\times K$ intersects the framing, so does the reverse oriented cord $(t,s)$. Thus, the set of cords intersecting the framing is symmetric with respect to the diagonal in $K\times K$.
\end{rem}
\begin{rem}
Let $\varepsilon>0$ such that the strip $\lbrace\gamma(t)+\alpha\nu(t):t\in[0,L],\alpha\in[0,\varepsilon]\rbrace$ has no self-intersections, i.e. the map
\begin{align*}
[0,L]\times[0,\varepsilon]&\to\mathbb{R}^{3}\\
(t,\alpha)&\mapsto\gamma(t)+\alpha\nu(t)
\end{align*}
is injective. In diagrams we will draw the set $K^{\prime}:=\lbrace\gamma(t)+\varepsilon\nu(t):t\in[0,L]\rbrace$ to visualize the framing. $K^{\prime}$ is also called \textit{framing}. An intersection of a cord with the framing according to Definition \ref{framingintersectiondef} corresponds approximately to an intersection of the cord with the set $K^{\prime}$ if the curvature of $K^{\prime}$ is not too strong in a neighborhood of this intersection. Also, there can be very short cords that do not intersect $K^{\prime}$, but intersect the framing according to definition~\ref{framingintersectiondef}. Therefore, $K^{\prime}$ is to be understood only as visualization of the framing. In order to determine intersections of cords with the framing, it may be necessary to use the above definition.
\end{rem}
Let
\begin{align*}
E:K\times K&\to\mathbb{R}\\
(x,y)&\mapsto\frac{1}{2}\vert x-y\vert^{2}
\end{align*}
be the \textit{energy function} on the space of cords, where $x$ resp.~$y$, as described above, is the startpoint resp.~endpoint of a cord (cf. \cite{Cie3}). Using the parametrization $\gamma$ we can write
\[E(s,t)=\frac{1}{2}\vert\gamma(s)-\gamma(t)\vert^{2}.\]
Furthermore, we will need the following subsets of $K\times K$: 
\begin{itemize}\itemsep0pt
\item Let $S\subset K\times K$ be the set of cords that intersect the knot $K$ in their interior.
\item Let $F\subset K\times K$ be the set of cords that intersect the framing. Since $F$  is symmetric with respect to the diagonal in $K\times K$ according to Remark \ref{symmetryofF}, $F$ can be split into
\[F=F^{s}\cup F^{e}\]
where $F^{s}$ resp.~$F^{e}$ is the set of cords that intersect the framing at their startpoint resp. endpoint. The following holds:
\[F^{e}=\lbrace(s,t)\in K\times K:(t,s)\in F^{s}\rbrace.\]
\item Let $B\subset K\times K$ be the set of cords that begin or end at the base point $\ast$. If we choose a parametrization with $\gamma(0)=\ast$, then $B=(K\times\lbrace0\rbrace)\cup(\lbrace0\rbrace\times K)$. 
\end{itemize}
\begin{lem}[\cite{Cie3}, Lemma 7.10]\label{cordlemma}
For a generic knot $K\subset\mathbb{R}^{3}$ the following holds for the space $K\times K$ of cords (see Figure \ref{cordintersect}):
\begin{itemize}\itemsep0pt
\item[(i)]$E$ attains its minimum 0 along the diagonal, which is a Bott nondegenerate
critical manifold; the other critical points are nondegenerate binormal cords of index
0, 1, 2.
\item[(ii)]The subset $S\subset K\times K$ of cords meeting $K$ in their interior is an immersed curve with boundary consisting of finitely many cords tangent to $K$ at one endpoint, and with finitely many transverse self-intersections consisting of finitely many cords meeting $K$ twice in their interior.
\item[(iii)]The negative gradient $-\nabla E$ is not pointing into $S$ at the boundary points.
\end{itemize}
\begin{figure}[H]\centering
\includegraphics[scale=0.3]{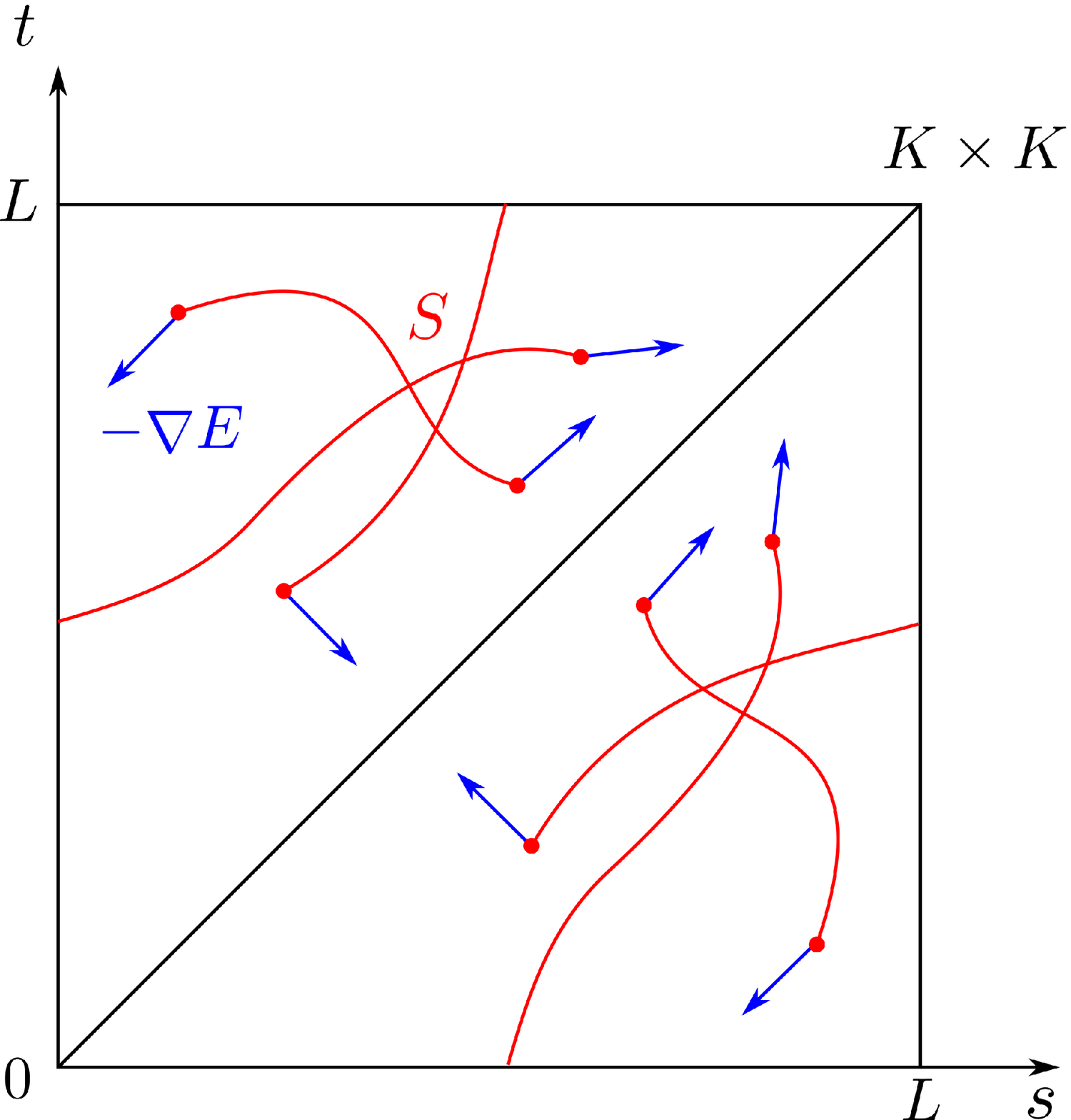}
\caption{The space $K\times K$ of cords}\label{cordintersect}
\end{figure}%
\end{lem}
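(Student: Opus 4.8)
The plan is to prove (i), (ii) and (iii) in turn: (i) and (ii) are parametric (Sard--Smale / jet) transversality statements over the space of embeddings $S^{1}\hookrightarrow\mathbb{R}^{3}$, whereas (iii) is a local computation at $\partial S$. For (i), note first that $E\geq 0$ with equality exactly on the diagonal $\Delta=\{(s,s)\}$, and that in the arclength parametrization the differential of $E$ in the coordinates $(s,t)$ is $(\partial_{s}E,\partial_{t}E)=(\langle\gamma(s)-\gamma(t),\dot\gamma(s)\rangle,\langle\gamma(t)-\gamma(s),\dot\gamma(t)\rangle)$; hence the critical set is $\Delta$ together with the cords for which $\gamma(s)-\gamma(t)$ is orthogonal to $K$ at both endpoints, i.e. the binormal cords. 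Along $\Delta$ the Hessian in $(s,t)$ is $\bigl(\begin{smallmatrix}1&-1\\-1&1\end{smallmatrix}\bigr)$, which has kernel $\mathbb{R}(\partial_{s}+\partial_{t})$ tangent to $\Delta$ and a positive eigenvalue transverse to it, so $\Delta$ is a Bott nondegenerate critical manifold carrying the minimum. For the binormal cords I would use parametric transversality: regard $\Phi(\gamma,s,t):=(\langle\gamma(s)-\gamma(t),\dot\gamma(s)\rangle,\langle\gamma(t)-\gamma(s),\dot\gamma(t)\rangle)$ as a map on a Banach manifold of embeddings times $S^{1}\times S^{1}\setminus\Delta$ and check that a variation $\eta$ of $\gamma$ supported near one endpoint $\gamma(s_{0})$ already makes $D_{\gamma}\Phi$ onto $\mathbb{R}^{2}$ — the component of $\dot\eta(s_{0})$ along the chord controls the first entry and the component of $\eta(s_{0})$ along $\dot\gamma(t_{0})$ the second. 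Applying the Sard--Smale theorem (Appendix~\ref{Backgroundmaterial}) to the index-$0$ Fredholm projection $\Phi^{-1}(0)\to\{\gamma\}$ yields, for generic $\gamma$, that $D_{(s,t)}\Phi=\Hess E$ is invertible at every binormal cord; being a symmetric $2\times 2$ matrix its Morse index is $0$, $1$ or $2$. Finiteness follows because $\partial_{s}E(s,t)=(s-t)(1+O((s-t)^{2}))$ near $\Delta$, so binormal cords lie in a compact subset bounded away from $\Delta$ and, being nondegenerate, are isolated.

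For (ii), write $S$ as the image in $K\times K$ of the solution set of the secant equation $G(s,t,r,\tau):=\gamma(r)-(1-\tau)\gamma(s)-\tau\gamma(t)=0$ with $\tau\in(0,1)$ and $r\notin\{s,t\}$. Since $\gamma(r)\neq\gamma(s),\gamma(t)$, a perturbation of $\gamma$ near $\gamma(r)$ makes $D_{\gamma}G$ onto $\mathbb{R}^{3}$, so by Sard--Smale the solution set is generically a $1$-manifold (count $4-3=1$); a further transversality removes the locus where the chord is tangent to $K$ at the interior point $\gamma(r)$, so the projection to $(s,t)$ is an immersion and $S$ an immersed curve. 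Letting $\tau\to 0^{+}$ forces $r\to s$; writing $r=s+\tau\sigma$ and dividing $G$ by $\tau$ gives the limiting equation $\sigma\dot\gamma(s)+\gamma(s)-\gamma(t)=0$, i.e. $\gamma(t)-\gamma(s)\parallel\dot\gamma(s)$, so the boundary cords are exactly those tangent to $K$ at one endpoint, finitely many generically. Self-intersections correspond to cords satisfying the secant equation twice (six equations in the six unknowns $s,t,r_{1},r_{2},\tau_{1},\tau_{2}$, count $0$): generically finitely many, with the two branches meeting transversally, by the multijet transversality theorem applied to $\gamma$.

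For (iii), let $c_{0}=(s_{0},t_{0})\in\partial S$ with $\gamma(t_{0})-\gamma(s_{0})=\ell\dot\gamma(s_{0})$ for some $\ell\neq 0$ (tangency at the startpoint, say). Then $\nabla E(c_{0})=(\partial_{s}E,\partial_{t}E)(c_{0})=(-\ell,\ \ell\langle\dot\gamma(s_{0}),\dot\gamma(t_{0})\rangle)$; in particular $\partial_{s}E(c_{0})=-\ell\neq 0$, so $c_{0}$ is not a critical point. The direction in which $S$ emanates from $c_{0}$ is the kernel direction, taken with $\tau$ increasing, of the linearization at $(s_{0},t_{0},\ell,0)$ of the rescaled secant map $H(s,t,\sigma,\tau)=\sigma\dot\gamma(s)+\gamma(s)-\gamma(t)+\tfrac12\tau\sigma^{2}\ddot\gamma(s)+\cdots$; solving for $(\delta s,\delta t)$ in terms of $\delta\tau>0$ by the implicit function theorem, using the arclength relations $|\dot\gamma|\equiv 1$ and $\langle\dot\gamma,\ddot\gamma\rangle\equiv 0$, and comparing the result with $-\nabla E(c_{0})=(\ell,\ -\ell\langle\dot\gamma(s_{0}),\dot\gamma(t_{0})\rangle)$ shows that $-\nabla E(c_{0})$ is not a positive multiple of the emanation direction, i.e. $-\nabla E$ does not point into $S$ at $c_{0}$. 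I expect (iii) to be the main obstacle: (i) and (ii) are routine once the universal maps are written down, but (iii) requires pinning down the precise local model of $S$ at its boundary and then carrying out the sign bookkeeping in the comparison with $\nabla E$ with care. (All three items, with full details, are Lemma~7.10 of \cite{Cie3}.)
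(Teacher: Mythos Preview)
Your sketch is correct and follows the same approach as the proof in \cite{Cie3}, which is all this paper does as well: the paper does not give its own argument for Lemma~\ref{cordlemma} but simply states ``The proof of this lemma is given in \cite{Cie3}.'' Your parametric transversality setup for (i) and (ii) and the local computation for (iii) match that reference; in particular, the local model for a boundary cord tangent at one endpoint that you would need to complete (iii) is exactly the one written out in Appendix~\ref{framinglemmaproof} of this paper (in the proof of Lemma~\ref{framinglemma}, step 1), where $K$ near the tangency point is taken as the graph $y=\kappa x^{2},\,z=0$ and near the other endpoint as $x=1,\,y=0$.
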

The proof of this lemma is given in \cite{Cie3}.\\[1em]
From now on, the knot $K$ is as in Lemma \ref{cordlemma}.
\begin{rem}(\cite{Pet})\label{Morsegeneration}
The Morse Bott function $E$ can be converted into a Morse function by adding a smooth function $f:K\times K\to\mathbb{R}$ to $E$ that satisfies the following properties:
\begin{itemize}\itemsep0pt
\item[(i)]$f$ has exactly two critical points along the diagonal of $K\times K$, a minimum $m$ and a maximum~$M$. 
\item[(ii)]Outside the diagonal, the function $f$ is smoothly extended in such a way that it vanishes outside a small neighborhood of the diagonal, especially at all other critical points.
\end{itemize}
In the following, we assume that $E$ is a Morse function since this can be achieved by adding an arbitrarily small perturbation $f$. 
\end{rem}
\begin{rem}
The set $S$ is obviously symmetric with respect to the diagonal in $K\times K$: If the cord $(s,t)$ intersects the knot in its interior, so does the reverse oriented cord $(t,s)$. The same also holds for the boundary of $S$. So $\partial S$ can be split into
\[\partial S=\partial^{s}S\cup\partial^{e}S\]
where $\partial^{s}S$ resp.~$\partial^{e}S$ is the set of cords that are tangent to $K$ at their startpoint resp.~endpoint.
\end{rem}
\begin{lem}\label{framinglemma}
For a generic knot $K$ and a generic framing $\nu:S^{1}\to S^{2}$ the following holds for the space $K\times K$ of cords:\\
The set $F^{s}\subset K\times K$ of cords that intersect the framing at their startpoint is a one-dimensional submanifold with boundary and $\partial F^{s}=\partial^{s}S$.
\end{lem}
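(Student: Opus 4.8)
The plan is to realize $F^{s}$ as an open subset of a regular level set of an explicit smooth function on the torus $K\times K$ and to read off its boundary from a local analysis at three kinds of points. Concretely, I would fix the smooth periodic orthonormal frame $(\dot\gamma(s),\nu(s),e_{3}(s))$ along $K$ with $e_{3}(s):=\dot\gamma(s)\times\nu(s)$, so that $\nu(s),e_{3}(s)$ span the normal plane $N(s)$, and set $\phi(s,t):=\langle\gamma(t)-\gamma(s),e_{3}(s)\rangle$ and $\psi(s,t):=\langle\gamma(t)-\gamma(s),\nu(s)\rangle$, so that $\pi_{s}(\gamma(t)-\gamma(s))=\psi(s,t)\,\nu(s)+\phi(s,t)\,e_{3}(s)$. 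By Definition~\ref{framingintersectiondef} this gives at once $F^{s}=\{\phi=0\}\cap\{\psi>0\}$. A cord is tangent to $K$ at its startpoint exactly when $\pi_{s}(\gamma(t)-\gamma(s))=0$, i.e.\ off the diagonal $\Delta=\{(s,s)\}$ exactly when $\phi=\psi=0$; by Lemma~\ref{cordlemma}(ii) this locus is precisely $\partial^{s}S$, a finite subset of $(K\times K)\setminus\Delta$.

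Next I would invoke the transversality and jet transversality theorems and their relative versions (the knot is to remain generic while the framing is perturbed) to arrange three generic conditions: (a) $0$ is a regular value of $\phi$ along $F^{s}$; (b) $0\in\mathbb{R}^{2}$ is a regular value of $(\psi,\phi)$ on $(K\times K)\setminus\Delta$, so that $d\psi$ and $d\phi$ are linearly independent at every point of $\partial^{s}S$; and (c) the function $a(s):=\langle\ddot\gamma(s),e_{3}(s)\rangle$ has $0$ as a regular value on $S^{1}$. Conditions (a) and (b) are generic in the framing because, away from $\Delta$, the vector $\gamma(t)-\gamma(s)$ is proportional to $\dot\gamma(s)$ only on the finite, knot-determined set $\partial^{s}S$, so a perturbation of $\nu$ moves $\phi$, $\psi$ and their first derivatives in enough directions; (c) just says the framing crosses the direction of $\ddot\gamma$ transversally in the normal circle.

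It then remains to assemble the manifold-with-boundary structure from three local pictures. Away from $\Delta\cup\partial^{s}S$, condition (a) makes $F^{s}=\{\phi=0\}\cap\{\psi>0\}$ a smooth embedded $1$-submanifold. Near a point $p\in\partial^{s}S$, condition (b) shows $\{\phi=0\}$ is a smooth arc through $p$ on which $\psi$ has a regular zero at $p$, so $\{\phi=0,\psi\ge0\}$ is a half-open arc ending at $p$; hence $F^{s}\cup\partial^{s}S$ is a manifold with boundary near $\partial^{s}S$, with boundary $\partial^{s}S$. Near a diagonal point $(s_{0},s_{0})$, writing $t=s+\tau$ and Taylor expanding gives $\phi(s,s+\tau)=\tfrac12\tau^{2}a(s)+O(\tau^{3})$ and $\psi(s,s+\tau)=\tfrac12\tau^{2}\langle\ddot\gamma(s),\nu(s)\rangle+O(\tau^{3})$: if $\langle\ddot\gamma(s_{0}),\nu(s_{0})\rangle\le0$ or $a(s_{0})\ne0$ then $F^{s}$ is empty near $(s_{0},s_{0})$, while in the remaining case $\nu(s_{0})$ points along $\ddot\gamma(s_{0})$, $a(s_{0})=0$, and one further Taylor term together with (c) shows that $\{\phi=0\}$ near $(s_{0},s_{0})$ is the diagonal together with exactly one smooth arc crossing it transversally, along which $\psi>0$ for $\tau\ne0$; thus $F^{s}$ near $(s_{0},s_{0})$ is that arc with the single point $(s_{0},s_{0})$ deleted, still a smooth $1$-manifold and contributing no boundary. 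Gluing the three pictures yields that $F^{s}$ is a $1$-dimensional submanifold with boundary and $\partial F^{s}=\partial^{s}S$.

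The main difficulty is the diagonal. Because there exist arbitrarily short cords meeting the framing, $F^{s}$ genuinely accumulates on $\Delta$ — precisely at the finitely many points where the framing points along $\ddot\gamma$ — so one must verify that no spurious boundary is created there, which is exactly the role of condition (c) and the second-order expansion above. The other point requiring care is the genericity step itself: the perturbation of the framing must simultaneously achieve (a)--(c) while leaving the knot-determined finite set $\partial^{s}S$ unchanged, which is why the relative transversality theorems are used rather than the plain ones.
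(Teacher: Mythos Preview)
Your approach is correct and genuinely different from the paper's. You work in the framing-adapted orthonormal frame $(\dot\gamma,\nu,e_{3})$ and realize $F^{s}$ directly as $\{\phi=0\}\cap\{\psi>0\}$ for two scalar functions; the paper instead uses the Frenet-type frame $(\dot\gamma,\ddot\gamma/|\ddot\gamma|,\dot\gamma\times\ddot\gamma/|\cdot|)$ and the normalized $S^{1}$-valued projection $\hat n:(s,t)\mapsto \pi_{s}(\gamma(t)-\gamma(s))/|\pi_{s}(\gamma(t)-\gamma(s))|$, comparing its graph to that of the framing. Your formulation is considerably more streamlined: condition~(a) is exactly ``$0$ is a regular value of $\phi$ on $\{\psi>0\}$'', and the parametric transversality step becomes transparent because the derivative of $\phi$ with respect to the framing in the $e_{3}$-direction is $-\psi$, which is nonzero precisely where you need it. By contrast, the paper's transversality condition splits into two cases and its verification occupies several pages involving auxiliary sets $M_{1},M_{2},R_{1},R_{2}$ and a separate Sard--Smale argument. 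Your condition~(c) coincides exactly with the paper's assumption~(ii) in Remark~\ref{framingassumption} (that $\dot\nu\ne\dot{\bar n}$ where $\nu=\bar n$), so the diagonal analyses agree.

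One correction: your justification for condition~(b) is wrong. At a point $(s_{0},t_{0})\in\partial^{s}S$ one has $\gamma(t_{0})-\gamma(s_{0})=c\,\dot\gamma(s_{0})$, and a short computation (using $\langle\dot\gamma,\dot e_{3}\rangle=-\langle\ddot\gamma,e_{3}\rangle$, $\langle\dot\gamma,\dot\nu\rangle=-\langle\ddot\gamma,\nu\rangle$) gives
\[
\det D_{(s_{0},t_{0})}(\phi,\psi)=c\,\bigl\langle\dot\gamma(t_{0}),\,\dot\gamma(s_{0})\times\ddot\gamma(s_{0})\bigr\rangle,
\]
which is \emph{independent of the framing}. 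So (b) is a genericity condition on the knot, not on $\nu$; it is exactly the condition (implicit in Lemma~\ref{cordlemma} and the local model borrowed from \cite{Cie3}) that near $q=\gamma(t_{0})$ the knot can be written as a graph over the axis $\dot\gamma(s_{0})\times\ddot\gamma(s_{0})$. Since the lemma allows a generic knot this does no harm, but the perturbation you invoke cannot produce it. With this adjustment your sketch goes through; it also dispenses with the paper's separate assumption~(i) (that $\dot\nu\ne0$ at tangency points), which in your setup is subsumed by the knot condition~(b).
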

The proof of this lemma is given in Appendix \ref{framinglemmaproof}.
\begin{rem}
Lemma \ref{framinglemma} implies that $F^{s}$ has no self-intersections. Since $F$ is symmetric, an analogous statement holds for $F^{e}$. So the self-intersections of $F$ are the intersections of $F^{s}$ and $F^{e}$. These are either located on the diagonal of $K\times K$ or occur in pairs symmetrically to the diagonal and contain the cords that intersect the framing at their startpoint and endpoint. 
\end{rem}
\begin{defi}(\cite{Pet})
Let $K$ be a knot and $E:K\times K\to~\mathbb{R}$ be the energy function that has been perturbed to a Morse function as described above. Denote by $Crit_{k}(E)$ the set of critical points of Morse index $k$ of the function $E$. 
\end{defi}
\begin{lem}\label{cordlemmaadd1}
For a generic knot $K$, a generic base point and a generic framing the following holds for the space $K\times K$ of cords:
\[Crit_{0,1}\cap(B\cup F\cup S)=\emptyset\]
where $Crit_{0,1}:=Crit_{0}\cup Crit_{1}$.
\end{lem}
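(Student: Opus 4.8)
The plan is to prove the three disjointness statements $Crit_{0,1}\cap S=\emptyset$, $Crit_{0,1}\cap B=\emptyset$ and $Crit_{0,1}\cap F=\emptyset$ one at a time, exploiting that $S$ depends only on $K$, that $B$ depends only on $K$ and the base point $\ast$, and that $F$ depends only on $K$ and the framing $\nu$, whereas $Crit_{0,1}=Crit_{0}\cup Crit_{1}$ depends only on $K$: by Lemma~\ref{cordlemma}(i) and compactness of $K\times K$ this set is finite, the two critical points $m,M$ produced by the Morse perturbation (Remark~\ref{Morsegeneration}) lie on the diagonal, and every other point of $Crit_{0,1}$ is a nonconstant nondegenerate binormal cord, i.e.\ a critical point of $E$ itself (since $f$ vanishes off the diagonal). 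A constant cord has empty interior and cannot satisfy $\pi_{s}(0)=\alpha\nu(s)$ with $\alpha>0$, so the diagonal is disjoint from both $S$ and $F$ and meets $B$ only in $(0,0)$; hence $m$ and $M$ cause no trouble once $\ast$ avoids the (at most two) knot points underlying them, and it remains only to control the finitely many binormal cords in $Crit_{0,1}$. In fact the argument below works for all binormal cords, regardless of their index.

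For $S$ I would run a multijet transversality argument over the space $\mathcal{K}$ of embeddings $S^{1}\hookrightarrow\mathbb{R}^{3}$. A nonconstant cord $(s,t)$ is binormal and meets $K$ at an interior point $\gamma(u)$ exactly when $\langle\gamma(t)-\gamma(s),\dot\gamma(s)\rangle=0$, $\langle\gamma(t)-\gamma(s),\dot\gamma(t)\rangle=0$ and $\gamma(u)-\gamma(s)=\lambda(\gamma(t)-\gamma(s))$ for some $\lambda\in(0,1)$: five scalar equations in the four parameters $(s,t,u,\lambda)$. The three feet $\gamma(s),\gamma(t),\gamma(u)$ are pairwise distinct (injectivity of $\gamma$ together with $\lambda\in(0,1)$), so a localized perturbation of $\gamma$ near $\gamma(u)$ moves the three ``on-segment'' equations freely while fixing the other two, and rotations of $\dot\gamma(s)$, resp.\ $\dot\gamma(t)$, with the foot fixed move the two binormality equations independently; thus the universal evaluation map has $0$ as a regular value. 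Parametric transversality then gives, for generic $K$, a solution set of formal dimension $4-5<0$, hence empty, so no binormal cord meets $K$ in its interior and $Crit_{0,1}\cap S=\emptyset$. A binormal cord also never lies in $\partial S$, since tangency and perpendicularity to $K$ at the same endpoint would force the cord to be constant, so the boundary of the immersed curve $S$ is irrelevant. We take $K$ generic also in the senses of Lemmas~\ref{cordlemma} and~\ref{framinglemma}; the intersection of these open dense conditions is again open dense.

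With such a $K$ fixed, $Crit_{0,1}=\{p_{1},\dots,p_{N}\}$ with $p_{i}=(s_{i},t_{i})$ is a fixed finite set. For $B=(K\times\{0\})\cup(\{0\}\times K)$ it suffices that the base point $\ast=\gamma(0)$ lie outside the finite set $\{\gamma(s_{1}),\gamma(t_{1}),\dots,\gamma(s_{N}),\gamma(t_{N})\}$, an open dense condition on $\ast$ (which also disposes of $m,M$); then $Crit_{0,1}\cap B=\emptyset$. For $F$, note that at a binormal cord the orthogonal projection in Definition~\ref{framingintersectiondef} is the identity, so a binormal $p_{i}=(s_{i},t_{i})$ lies in $F^{s}$ iff $\nu(s_{i})=(\gamma(t_{i})-\gamma(s_{i}))/|\gamma(t_{i})-\gamma(s_{i})|$ and in $F^{e}$ iff $\nu(t_{i})=(\gamma(s_{i})-\gamma(t_{i}))/|\gamma(s_{i})-\gamma(t_{i})|$, the right-hand sides being unit normal vectors by binormality, while $m,M$ satisfy neither. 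Each such condition prescribes the value of $\nu$ at a single point, i.e.\ is codimension one in the space of framings, so a generic framing avoids all finitely many of them and $Crit_{0,1}\cap F=\emptyset$. Combining the three cases yields $Crit_{0,1}\cap(B\cup F\cup S)=\emptyset$.

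The only step with genuine content is the transversality argument for $S$: verifying that the five equations can be moved independently by perturbing the embedding, i.e.\ that the relevant jet evaluation map is transverse to the origin. The $B$- and $F$-parts reduce to finite lists of codimension-one avoidance conditions on the base point and on the framing and are immediate.
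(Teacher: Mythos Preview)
Your proposal is correct. The treatment of $B$ and $F$ is essentially identical to the paper's: both reduce to avoiding finitely many codimension-one conditions by moving the base point, respectively perturbing the framing near the relevant endpoints.

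For $S$ the two arguments differ in style. The paper proceeds by a direct local perturbation: given a binormal cord $k\in Crit_{0,1}$ that meets $K$ at an interior point $p$, one pushes $K$ slightly near $p$ (away from the segment) so that $k$, whose endpoints and binormality are unaffected, no longer meets $K$; finiteness of $Crit_{0,1}\cap S$ makes this a finite procedure. Your approach replaces this picture by a parametric transversality/dimension count: five independent scalar equations in four parameters $(s,t,u,\lambda)$ force the solution set to be generically empty. Your version is cleaner and yields the slightly stronger statement that \emph{no} binormal cord, of any index, lies in $S$ for generic $K$; the paper's version is more elementary and makes the required perturbation completely explicit (and shows visually why moving $K$ near $p$ does not disturb the critical point $k$). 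The observation that a binormal cord cannot lie in $\partial S$ is a nice addition that the paper leaves implicit.
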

\begin{proof}
According to Lemma \ref{cordlemma}(i), $Crit_{0,1}$ is a finite set. The sets $B$, $F$, and $S$ can be considered separately from each other:\\[.5em]
If $Crit_{0,1}\cap B\neq\emptyset$:\\
An arbitrarily small shift of the base point is sufficient to achieve $Crit_{0,1}\cap B=\emptyset$.\\[.5em]
If $Crit_{0,1}\cap F^{s}\neq\emptyset$:\\
Let $k\in Crit_{0,1}\cap F^{s}$. If the framing is changed in an arbitrarily small neighborhood of the startpoint of the cord $k$ by an arbitrarily small perturbation, $k$ no longer intersects the framing at its startpoint. Thus, also the reverse oriented cord, which was an element of $F^{e}$, does not intersect the framing at its endpoint anymore. This procedure is necessary only finitely many times because $Crit_{0,1}\cap F$ is finite.\\[.5em]
If $Crit_{0,1}\cap S\neq\emptyset$:\\
Let $k\in Crit_{0,1}\cap S$ and $p$ be the intersection point of $k$ with the knot in the interior of $k$. If the knot is changed in an arbitrarily small neighborhood of $p$ by an arbitrarily small perturbation in a suitable direction (see Figure \ref{Crit01inS}), $k$ no longer intersects the knot in its interior. Since $Crit_{0,1}\cap S$ is finite, only finitely many such perturbations have to be made.
\begin{figure}[ht]\centering
\subfigure{\includegraphics[scale=0.35]{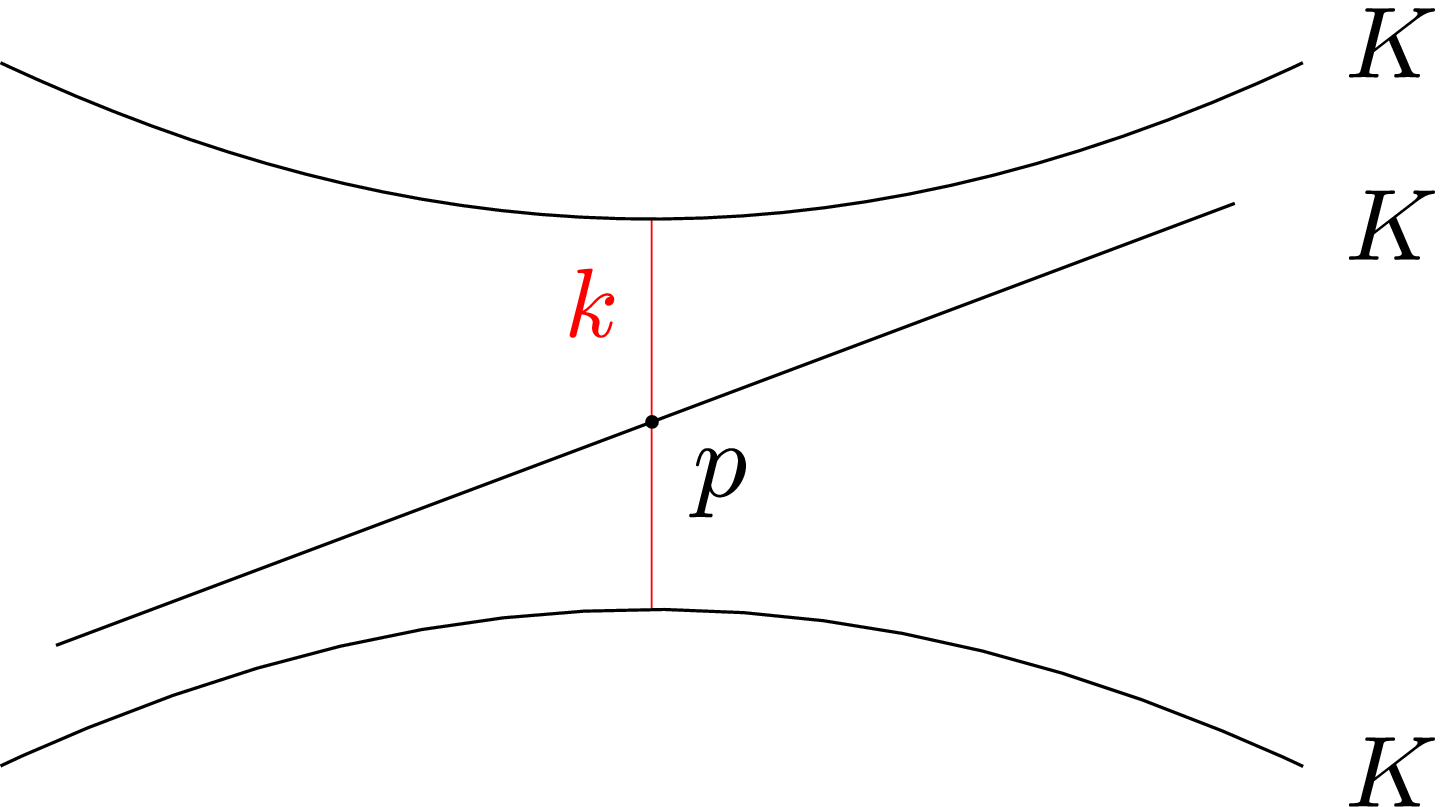}}\hspace*{2cm}
\subfigure{\includegraphics[scale=0.35]{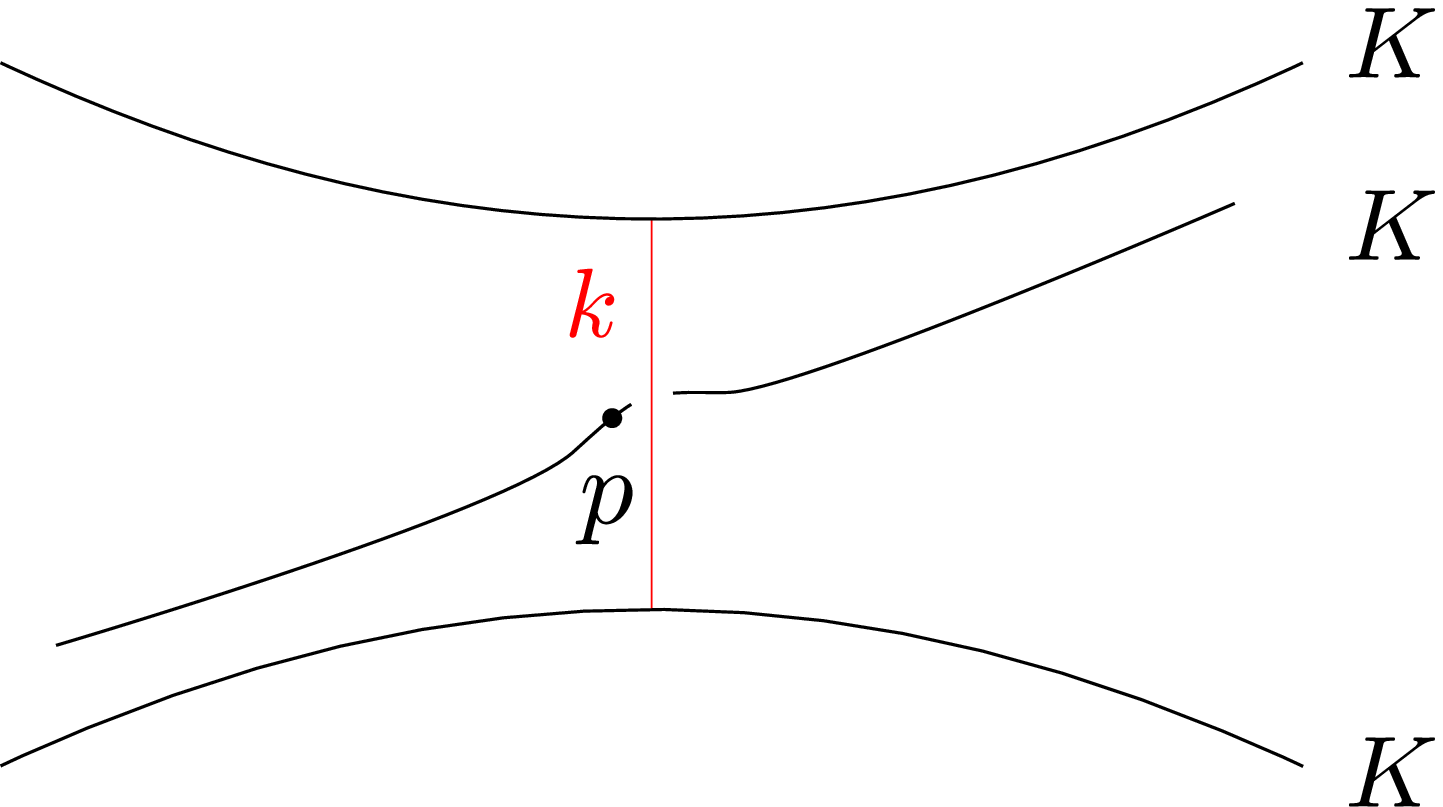}}
\caption{Perturbation of the knot in a neighborhood of $p$ such that the cord $k$ does not intersect the knot in its interior anymore}\label{Crit01inS}
\end{figure}
\vspace*{-.5cm}

\end{proof}
From now on the knot $K$ is as in Lemma \ref{cordlemmaadd1}, the energy function $E$ as in Remark \ref{Morsegeneration} and the framing as in Lemma \ref{framinglemma}.\\[.5em]
We will show more generic properties of the function $E$. For this it may be necessary to perturb the function $E$ with the help of the (jet) transversality theorem, i.e. to vary $E$ in the function space $C^{n}(T^{2},\mathbb{R})$ for an $n>0$. Such a variation can be considered as the addition of another smooth function $g:K\times K\to\mathbb{R}$ to $E$. Denote by
\[E_{g}:=E+g\]
the perturbed function. The function $g$ can always be chosen so that it is arbitrarily small (in the $C^{n}$ sense) (according to the (jet) transversality theorem) and vanishes in a small neighborhood of the diagonal in $K\times K$ and small neighborhoods of all critical points of $E$ (since $E$ does not have to be perturbed in these neighborhoods, because the properties shown in Lemma~\ref{cordlemmaadd2} and Lemma~\ref{cordlemmaadd3} for generic knots are already satisfied on the diagonal and because, according to Lemma~\ref{cordlemmaadd1}, the following holds: $Crit_{0. 1}\cap(B\cup F\cup S)=\emptyset$). Thus, all critical points remain unchanged, i.e. the following holds for $k=0,1,2$: 
\[Crit_{k}(E_{g})=Crit_{k}(E).\]
Likewise, the sets $S$, $F$, and $B$ do not change, since the knot itself is not changed.\\
We will consider the flow $\varphi^{s}$ of the negative gradient $-\nabla E$, in the perturbed case the flow $\varphi_{g}^{s}$ of $-\nabla E_{g}$. The stable resp.~unstable manifold of a critical point $k$ of $E$ is 
\begin{align*}
W^{s}(k)&=\lbrace x\in T^{2}:\lim_{s\to+\infty}\varphi^{s}(x)=k\rbrace\text{ resp.}\\
W^{u}(k)&=\lbrace x\in T^{2}:\lim_{s\to-\infty}\varphi^{s}(x)=k\rbrace.
\end{align*}
Similarly, the stable and unstable manifolds of critical points $k$ of $E_{g}$ are denoted by $W^{s}_{g}(k)$ and $W^{u}_{g}(k)$, respectively.\\
Let $k$ be a critical point of $E$ of index 1. Then $k$ is also a critical point of $E_{g}$ since $g$ vanishes in a neighborhood of $k$. So $W^{u}(k)$ and $W^{u}_{g}(k)$ coincide in this neighborhood. Choose a point $x\in W^{u}(k)\cap W^{u}_{g}(k)$ close to $k$, but $x\neq k$. Since the solution of a differential equation depends continuously on the function and the function $g$ is arbitrarily small, it can be guaranteed that $\varphi^{T}(x)$ and $\varphi^{T}_{g}(x)$, for a finite time $T>0$, lie in a small neighborhood of a critical point $c$ of index~0. Since $g$ also vanishes in this neighborhood, the following holds:
\[\lim_{s\to+\infty}\varphi^{s}(x)=\lim_{s\to+\infty}\varphi^{s}_{g}(x)=c.\]
A similar consideration results in: If $W^{u}(k)$ intersects one of the sets $S$, $F$, or $B$ (for $S$ and $F$: in the interior), so does $W^{u}_{g}(k)$ if $g$ is chosen small enough.\\[.5em]
To be able to show some generic properties of the function $E_{g}$, we need the following statement: If we perturb the unstable manifold of a critical point of index 1 a little bit, we can find a function~$\tilde{g}$ such that the perturbed unstable manifold is realized by the flow of the negative gradient of the function $E_{\tilde{g}}$. First, we show the following lemma.
\begin{lem}(\cite{Cie2})\label{cutterfunction}
For all $\delta,\varepsilon>0$ there exists a smooth function $\chi:\mathbb{R}\to[0,1]$ with the following properties:
\begin{itemize}\itemsep0pt
\item[(i)]$\chi$ is non-decreasing on $\mathbb{R}_{-}$ and non-increasing on $\mathbb{R}_{+}$.
\item[(ii)]$\chi$ is constant 1 in a neighborhood of 0.
\item[(iii)]$\chi$ is constant 0 on $\mathbb{R}\setminus(-\delta,\delta)$.
\item[(iv)]For all $x\in\mathbb{R}$ the following holds:
\[\vert x\chi^{\prime}(x)\vert<\varepsilon.\]
\end{itemize}
\end{lem}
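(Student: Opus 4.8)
The plan is to build $\chi$ first on the half-line $[0,\infty)$ as a non-increasing function that equals $1$ near $0$ and $0$ beyond $\delta$, and then extend it evenly to all of $\mathbb{R}$. The key device for controlling $|x\chi'(x)|$ is the substitution $u=\log x$: if one prescribes $\chi'(x)=-\tfrac1x\,\psi(\log x)$ for an auxiliary bump $\psi\ge 0$, then $|x\chi'(x)|=|\psi(\log x)|\le\|\psi\|_\infty$, so property (iv) reduces to making $\|\psi\|_\infty$ small, while the total drop of $\chi$ from $1$ to $0$ equals $\int_{\mathbb R}\psi$. The point is that we may take the support of $\psi$ arbitrarily long (equivalently, the inner radius where $\chi=1$ arbitrarily small), so $\|\psi\|_\infty$ can be pushed below any prescribed $\varepsilon$.

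Concretely I would fix once and for all a smooth function $\eta\ge 0$ supported in $[0,1]$ with $\int_{\mathbb R}\eta=1$, choose a length $\ell>\|\eta\|_\infty/\varepsilon$, and set $\psi(u)=\tfrac1\ell\,\eta\!\left(\tfrac{u-(\log\delta-\ell)}{\ell}\right)$, so that $\psi\ge 0$ is supported in $[\log\delta-\ell,\log\delta]$, $\int_{\mathbb R}\psi=1$, and $\|\psi\|_\infty=\|\eta\|_\infty/\ell<\varepsilon$. Put $a:=\delta e^{-\ell}\in(0,\delta)$ and define, for $x>0$,
\[
\chi(x):=1-\int_{-\infty}^{\log x}\psi(u)\,du .
\]
Since $\psi\ge 0$ has total integral $1$, this $\chi$ is smooth on $(0,\infty)$, non-increasing, takes values in $[0,1]$, equals $1$ for $0<x\le a$ (because then $\log x\le\log a=\log\delta-\ell$ lies below $\supp\psi$) and equals $0$ for $x\ge\delta$. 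Differentiating gives $\chi'(x)=-\psi(\log x)/x$, hence $|x\chi'(x)|=|\psi(\log x)|<\varepsilon$ on $(0,\infty)$.

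Finally I would extend $\chi$ to $\mathbb R$ by $\chi(0):=1$ and $\chi(-x):=\chi(x)$. Because $\chi\equiv 1$ on $(0,a]$, the even extension is $\equiv 1$ on $(-a,a)$ and therefore smooth across the origin; this yields (ii), and (iii) is immediate since $\chi\equiv 0$ outside $(-\delta,\delta)$. For (i), $\chi$ is non-increasing on $\mathbb R_+$ by construction, and on $\mathbb R_-$ it is the reflection of a non-increasing function, hence non-decreasing. For (iv) at $x<0$ write $t=-x>0$; then $\chi'(x)=-\chi'(t)$, so $x\chi'(x)=t\chi'(t)$ and the half-line bound applies verbatim, while at $x=0$ we have $\chi'(0)=0$. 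I do not expect a genuine obstacle: the only point needing a word of care is smoothness of the even extension at $0$, which is free because $\chi$ is locally constant there, and the one nontrivial idea — that $\|\psi\|_\infty$ can be made smaller than $\varepsilon$ by lengthening its support — is exactly what the logarithmic substitution makes transparent.
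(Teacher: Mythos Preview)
Your proof is correct and rests on the same key observation as the paper's: in the variable $u=\log x$ one has $x\chi'(x)=d\chi/du$, so making $|x\chi'(x)|<\varepsilon$ amounts to letting $\chi$ drop from $1$ to $0$ over a $u$-interval of length at least $1/\varepsilon$. The paper implements this by starting from the linear-in-$u$ function $f(x)=\varepsilon\log(\delta/x)$ (for which $xf'(x)\equiv-\varepsilon$), truncating via $\max(f,1)$, shifting, extending by $0$, and then invoking an unspecified smoothing step; you instead integrate a rescaled smooth bump $\psi$ in the $u$-variable, which yields a smooth $\chi$ directly and makes the bound $|x\chi'(x)|\le\|\psi\|_\infty<\varepsilon$ immediate. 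The two constructions are minor variants of the same idea; your version has the advantage of being fully explicit where the paper's sketch leaves the smoothing to the reader.
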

\begin{proof}(\cite{Cie2})
We consider the function $f:[0,\delta]\to\mathbb{R},x\mapsto\varepsilon\log(\frac{\delta}{x})$. $f$ satisfies $xf^{\prime}(x)=-\varepsilon,f(\delta)=0$ and $f(\delta e^{-\frac{1}{\varepsilon}})=1$. Now shift the function $\max(f,1)$ slightly to the left, extend by 0 to the right, smoothen it and mirror it on the y-axis. This gives us the function $\chi$ we are looking for.
\end{proof}
\begin{lem}\label{perturbationofWug}
Let $a,b,c,d\in\mathbb{R}$ with $a<0<b$ and $c<d$. Let $f\in C^{\infty}([c,d],(a,b))$ be a function with the following properties:
\begin{itemize}\itemsep0pt
\item[(i)]$f(c)=0,f(d)=0$
\item[(ii)]for $1\leq k\leq\infty$ the following holds: $f^{(k)}(c)=f^{(k)}(d)=0$.
\end{itemize}
$\tilde{f}:[c,d]\to(a,b)\times[c,d],y\mapsto(f(y),y)$ is the graph of $f$ over the $y$-axis. Let $g:\mathbb{R}^{2}\to\mathbb{R},(x,y)\mapsto y$, be the projection to the second coordinate.\\
Then there exists a function $\tilde{g}:\mathbb{R}^{2}\to\mathbb{R}$ such that the following holds:
\begin{align*}
\tilde{g}&=g\text{ on }\mathbb{R}^{2}\setminus((a,b)\times(c,d))\\ 
\im(\tilde{f})&=\lbrace\varphi_{\tilde{g}}^{s}((0,d)):0\leq s\leq \tilde{t}\rbrace,
\end{align*}
where $\varphi_{\tilde{g}}^{s}$ is the flow along the vector field $-\nabla\tilde{g}$ and $\varphi^{\tilde{t}}_{\tilde{g}}((0,d))=(0,c)$.
\end{lem}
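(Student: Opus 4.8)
The plan is to build $\tilde g$ as a controlled modification of $g$ inside the box $(a,b)\times(c,d)$, designed so that its negative gradient flow sweeps out exactly the graph $\im(\tilde f)$. The guiding idea is that if a function has the form $\tilde g(x,y) = y - h(x,y)$ where $h$ is small and supported in the box, then $-\nabla\tilde g = (h_x, h_y - 1)$, and I want the integral curve starting at $(0,d)$ to trace the curve $x = f(y)$ as $y$ decreases from $d$ to $c$. Along that curve the velocity should be proportional to $(f'(y),1)$ pointing in the direction of decreasing $y$, i.e. proportional to $(-f'(y),-1)$. Matching with $(h_x, h_y-1)$ forces $h_x(f(y),y) = -f'(y)(1 - h_y(f(y),y))$; the cleanest way to arrange this is to first reparametrize so that the target curve becomes the straight segment $\{0\}\times[c,d]$, run the trivial flow there, and then push forward by a diffeomorphism. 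Concretely: let $\Phi(x,y) = (x + f(y)\,\chi(x), y)$, where $\chi$ is the cutoff function from Lemma \ref{cutterfunction} with $\delta$ small enough that $(-\delta,\delta)\subset(a,b)$ and $\varepsilon$ small enough (with the bound $|x\chi'(x)|<\varepsilon$) to guarantee $\Phi$ is a diffeomorphism of $\mathbb R^2$ fixing everything outside the box; note $\Phi$ maps $\{0\}\times[c,d]$ onto $\im(\tilde f)$ because $\chi\equiv 1$ near $0$, and $\Phi$ equals the identity outside $(a,b)\times(c,d)$ because $f$ and all its derivatives vanish at $c,d$ and $\chi$ vanishes outside $(-\delta,\delta)$.

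Next I would set $\tilde g := g\circ\Phi^{-1}$. Outside the box $\Phi$ is the identity, so $\tilde g = g$ there, giving the first required equality. For the flow statement, observe that $-\nabla g = (0,-1)$, whose flow through $(0,d)$ is $s\mapsto(0,d-s)$, reaching $(0,c)$ at time $d-c$; this trajectory is exactly $\{0\}\times[c,d]$. In general $\nabla(g\circ\Phi^{-1})$ is \emph{not} simply the pushforward of $\nabla g$ — the gradient is metric-dependent and $\Phi$ need not be an isometry — so I cannot literally say "$\varphi_{\tilde g}$ is the conjugate of $\varphi_g$ by $\Phi$". However, what survives is the statement about \emph{level sets and the image of a single trajectory}: the level sets of $\tilde g$ are the $\Phi$-images of the level sets of $g$, which are horizontal lines, so the level sets of $\tilde g$ foliate the box by the curves $\Phi(\{y=\text{const}\})$. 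The trajectory of $-\nabla\tilde g$ through a point is transverse to these level sets and descends through them; since $\tilde g$ has no critical points in the box (because $g$ has none and $\Phi$ is a diffeomorphism), the trajectory starting at $(0,d)$ must pass through every level set $\tilde g^{-1}(y)$ for $y\in[c,d]$ exactly once, hitting each in a single point. The point where it meets $\tilde g^{-1}(y) = \Phi(\{y'=y\})$ need not be $\Phi(0,y) = \tilde f(y)$, though — so this naive conjugation argument gives the wrong curve.

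To fix this I would instead choose $\Phi$ more carefully, or — cleaner — not use $g\circ\Phi^{-1}$ at all but directly prescribe $\tilde g$ so that $\im(\tilde f)$ is literally a flow line. Here is the approach I expect to work: let $\beta:[c,d]\to\mathbb R$ be a primitive chosen so that the $1$-form $\omega := dy + \text{(correction)}$ along the curve is exact, and define, in a tubular neighborhood of $\im(\tilde f)$ given by normal coordinates $(u,y)$ (where $u=0$ is the curve), a function of the form $\tilde g(u,y) = y + u\,k(y) + O(u^2)$ with $k$ chosen so that $\partial_u\tilde g = 0$ on $\{u=0\}$ — i.e. $k\equiv 0$ — which makes $\im(\tilde f)$ a gradient trajectory automatically since then $\nabla\tilde g$ is tangent to $\{u=0\}$ there and nonvanishing. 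I then extend $\tilde g$ off the tube and cut it off to agree with $g$ outside the box using Lemma \ref{cutterfunction} again, checking via the $|x\chi'(x)|<\varepsilon$ bound that no new critical points are introduced. Finally, the time $\tilde t$ with $\varphi^{\tilde t}_{\tilde g}((0,d)) = (0,c)$ exists and is finite because along the compact trajectory $\im(\tilde f)$ the gradient $\nabla\tilde g$ is bounded away from $0$, so the speed is bounded below, and both endpoints $(0,d),(0,c)$ lie on the trajectory by construction.

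\textbf{Main obstacle.} The crux is precisely the point flagged above: arranging that the \emph{chosen} curve $\im(\tilde f)$ — not merely \emph{some} curve $\Phi$-equivalent to the segment — is an honest integral curve of $-\nabla\tilde g$ for the Euclidean metric, while simultaneously keeping $\tilde g = g$ outside the box and introducing no spurious critical points. Getting all three conditions at once is the delicate part; the normal-coordinate / cutoff construction in the last paragraph is the route I would pursue, with Lemma \ref{cutterfunction} doing the work of controlling the perturbation so that $-\nabla\tilde g$ stays a small modification of $(0,-1)$ and hence has no zeros in the box.
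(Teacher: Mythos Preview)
Your diagnosis of the obstacle is right, but you never actually overcome it: the first construction $\Phi(x,y)=(x+f(y)\chi(x),y)$ preserves the second coordinate, so $g\circ\Phi^{-1}=g$ identically and nothing has changed; and the second construction in normal coordinates is only a sketch (``$\tilde g(u,y)=y+uk(y)+O(u^2)$ with $k\equiv 0$'' does not determine $\tilde g$, and you do not explain how to extend and cut off while keeping the curve an exact Euclidean gradient trajectory). So the proof has a genuine gap at precisely the point you flag.

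The paper resolves this with a trick you came close to but missed: instead of a \emph{horizontal} shear mapping the segment to the curve, use a \emph{vertical} shear
\[
\Phi(x,y)=(x,\,y+\Psi(x,y))
\]
that \emph{fixes the curve pointwise} (require $\Psi(f(y),y)=0$) but tilts the horizontal level sets so that they become orthogonal to the curve (require $\Psi_x(f(y),y)=-f'(y)$). Then the level sets of $\tilde g:=g\circ\Phi^{-1}$, which are the $\Phi$-images of horizontals, have tangent $(1,-f'(y))$ at $(f(y),y)$, orthogonal to the curve's tangent $(f'(y),1)$. Since $\nabla\tilde g$ is orthogonal to the level sets in the Euclidean metric, it is tangent to $\im(\tilde f)$, and the curve is an honest flow line of $-\nabla\tilde g$. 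The explicit choice $\Psi(x,y)=-f'(y)(x-f(y))\chi(x-f(y))$ with $\chi$ from Lemma~\ref{cutterfunction} satisfies both conditions, vanishes on $\partial R$, and the bound $|z\chi'(z)|<\varepsilon$ together with $\delta$ small ensures $1+\Psi_y>0$ everywhere, so $\Phi$ is a genuine diffeomorphism and $\tilde g$ has no critical points. This sidesteps your metric-dependence worry entirely: you are not pushing forward the gradient, you are arranging the level sets so that orthogonality does the work for you.
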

\begin{proof}
Figure \ref{Niveaug}(a) shows $\im(\tilde{f})$ and the level sets of $g$ in $R:=[a,b]\times[c,d]$. We now want to change $g$ on $R$ to $\tilde{g}$ so that the level sets of $\tilde{g}$ are perpendicular to $\im(\tilde{f})$ (see Figure \ref{Niveaug}(b)), but outside of $R$ coincide with the level sets of $g$. To do this we use a diffeomorphism 
\begin{align*}
\Phi:R&\to R\\
(x,y)&\mapsto(x,y+\Psi(x,y)),
\end{align*}
where $\Psi\in C^{\infty}(R,\mathbb{R})$ satisfies the following properties: 
\begin{itemize}\itemsep0pt
\item[(i)]$\Psi(x,y)=0$ for all $(x,y)\in\partial R$.
\item[(ii)]$\Psi(f(y),y)=0$.
\item[(iii)]$\frac{\partial\Psi}{\partial x}(f(y),y)=-f^{\prime}(y)$.
\end{itemize}
\begin{figure}[ht]\centering
\subfigure[$\im(\tilde{f})$ as a graph over the $y$-axis and the level sets of $g$]{\includegraphics[scale=0.5]{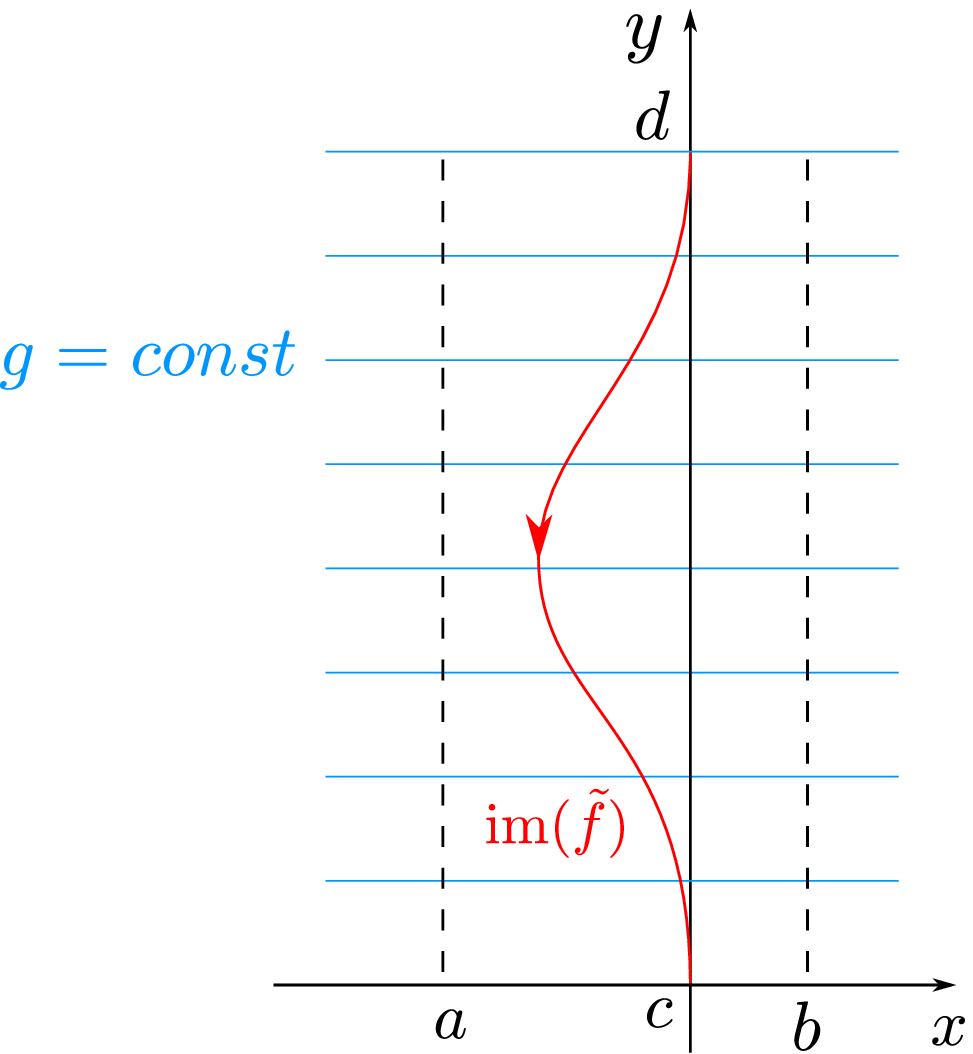}}\hspace*{1cm}
\subfigure[Changed level sets]{\includegraphics[scale=0.5]{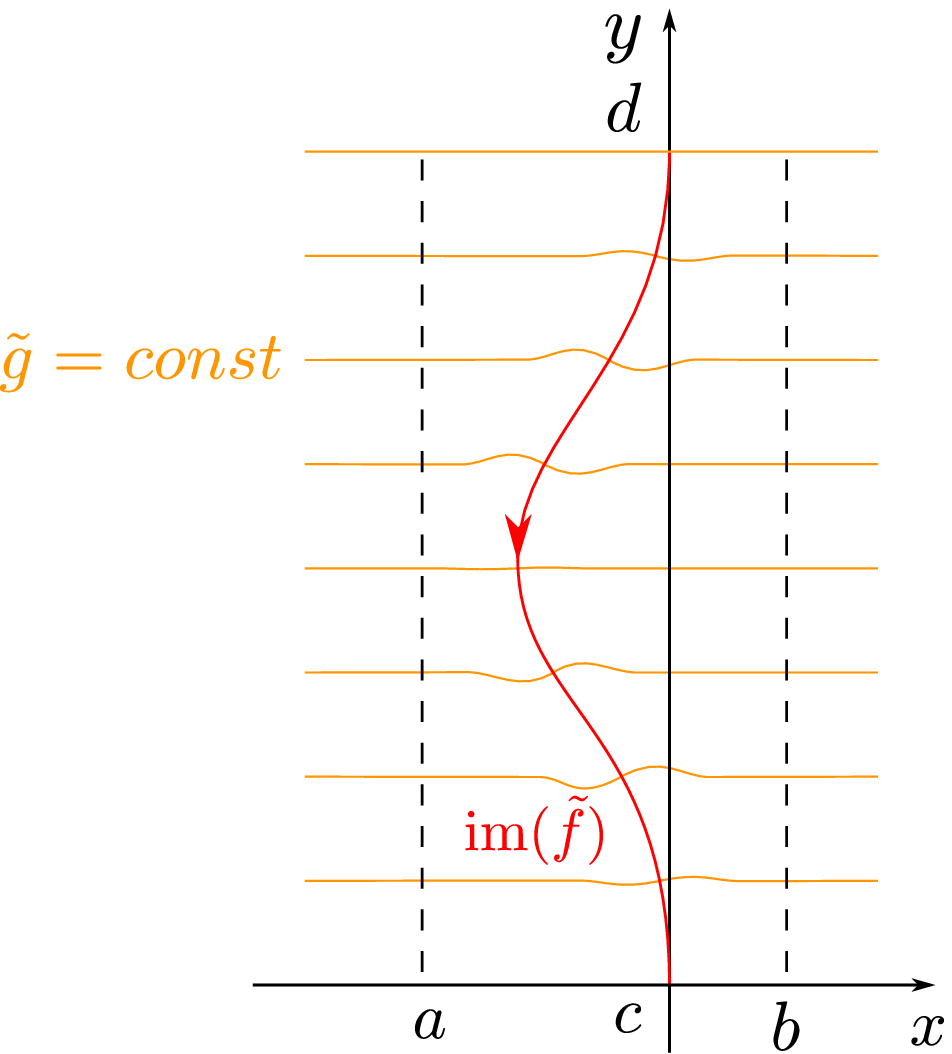}}
\caption{Adjustment of the level sets}\label{Niveaug}
\end{figure}%
With this function $\Phi$ we set
\[\tilde{g}:=g\circ\Phi^{-1}.\]
Thus, the level sets of $\tilde{g}$ are as required. The flow of the negative gradient runs perpendicular to the level sets and is unique. So we have
\[\im(\tilde{f})=\lbrace\varphi_{\tilde{g}}^{s}((0,d)):0\leq s\leq \tilde{t}\rbrace\]
with $\tilde{t}>0$ where $\varphi^{\tilde{t}}_{\tilde{g}}((0,d))=(0,c)$.\\[.5em]
So it remains to show that such a function $\Psi$ exists so that $\Phi$ is a diffeomorphism: We choose
\begin{align*}
\Psi:R&\to\mathbb{R}\\
(x,y)&\mapsto-f^{\prime}(y)(x-f(y))\chi(x-f(y)),
\end{align*}
where $\chi:\mathbb{R}\to\mathbb{R}$ is a smooth cutter function with the following properties:
\begin{itemize}\itemsep0pt
\item[(i)]$\chi(0)=1$.
\item[(ii)]$\supp\chi\subset[-\delta,\delta]$, where $\delta>0$ must be chosen so that $a+\delta<f(y)<b-\delta$ for all $y\in[c,d]$.
\item[(iii)]$\vert z\chi^{\prime}(z)\vert<\varepsilon$ for all $z\in\mathbb{R}$ and an $\varepsilon>0$, which will be determined more precisely later.
\end{itemize}
According to Lemma \ref{cutterfunction}, such a function $\chi$ exists for any $\delta,\varepsilon>0$. All properties required for $\Psi$ are satisfied. The following also holds:
\begin{align*}
\frac{\partial\Psi}{\partial y}(x,y)&=-f^{\prime\prime}(y)(x-f(y))\chi(x-f(y))+\underbrace{(f^{\prime}(y))^{2}\chi(x-f(y))}_{\geq0}+(f^{\prime}(y))^{2}(x-f(y))\chi^{\prime}(x-f(y))\\
&\geq-\vert f^{\prime\prime}(y)(x-f(y))\chi(x-f(y))\vert-\vert(f^{\prime}(y))^{2}(x-f(y))\chi^{\prime}(x-f(y))\vert.
\end{align*}
The first term in the last line does not vanish only if $\vert x-f(y)\vert<\delta$. After possibly decreasing $\delta$, $\vert f^{\prime\prime}(y)(x-f(y))\chi(x-f(y))\vert<\frac{1}{2}$ can be achieved. Also $\vert(x-f(y))\chi^{\prime}(x-f(y))\vert<\varepsilon$. So by choosing $\varepsilon$ small enough, we can achieve $\vert(f^{\prime}(y))^{2}(x-f(y))\chi^{\prime}(x-f(y))\vert<\frac{1}{2}$. Altogether, $\frac{\partial\Psi}{\partial y}(x,y)>-1$ can be guaranteed for all $(x,y)\in R$. \\
Now we can show that the above defined function $\Phi$ is a diffeomorphism: First, we consider 
\[D_{(x,y)}\Phi=
\begin{pmatrix}
1&0\\
\frac{\partial\Psi}{\partial x}(x,y)&1+\frac{\partial\Psi}{\partial y}(x,y)
\end{pmatrix}.\]
Since $1+\frac{\partial\Psi}{\partial y}(x,y)>0$ for all $(x,y)\in R$, $D_{(x,y)}\Phi$ is an isomorphism for all $(x,y)\in R$.\\
From $1+\frac{\partial\Psi}{\partial y}(x,y)>0$ also follows that the map $y\mapsto y+\Psi(x,y)$ is strictly increasing. Thus, $\Phi$ is injective.\\
Since $\Phi$ is continuous and $\Phi\vert_{\partial R}=id$, the surjectivity of $\Phi$ follows.\\
Thus, $\Phi$ is a diffeomorphism and this proves the lemma.
\end{proof}
Now the following generic properties of the function $E_{g}$ can be shown:
\begin{lem}\label{cordlemmaadd2}
For a generic function $g$ the following holds for the space $K\times K$ of cords:
\begin{itemize}\itemsep0pt
\item[(i)]For all critical points $k$ of the function $E_{g}$ of index 1 the following holds: $W_{g}^{u}(k)\pitchfork B$, $W_{g}^{u}(k)\pitchfork S$, and $W_{g}^{u}(k)\pitchfork F$.
\item[(ii)]For all critical points $k$ of the function $E_{g}$ of index 1 the following holds: $W_{g}^{u}(k)\cap\partial S=\emptyset$, $W_{g}^{u}(k)\cap\partial F=\emptyset$, and $W_{g}^{u}(k)\cap S_{2}=\emptyset$ where $S_{2}\subset S$ is the set of self-intersections of $S$.
\item[(iii)]For all critical points $k$ of the function $E_{g}$ of index 1 the following holds: $W_{g}^{u}(k)\cap B\cap S=\emptyset$ and $W_{g}^{u}(k)\cap F\cap S=\emptyset$.
\item[(iv)]There is no trajectory along the vector field $-\nabla E_{g}$ between two critical points of index 1.
\end{itemize}
\end{lem}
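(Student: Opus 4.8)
The plan is to deduce all four statements at once from the relative transversality theorem of the appendix, using the perturbation function $g$ as the parameter and Lemma~\ref{perturbationofWug} as the source of the freedom needed to move the unstable manifolds. Since $Crit_1(E_g)=Crit_1(E)$ is a finite set (Lemma~\ref{cordlemma}(i)), it suffices to prove each assertion for a single critical point — resp. a single ordered pair of critical points — and then intersect finitely many residual subsets of the space of admissible perturbations. A preliminary remark removes any difficulty near critical points: $g$ vanishes in a neighbourhood of each $k\in Crit_1$, and by Lemma~\ref{cordlemmaadd1} we have $k\notin B\cup F\cup S\supset\partial S\cup\partial F\cup S_2$, so a whole neighbourhood of $k$ is disjoint from these sets; hence every intersection or tangency phenomenon occurring in (i)--(iii) takes place along the part of $W^u_g(k)$ lying away from $k$, where the trajectory is non-stationary.

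\emph{Parts (i)--(iii).} Fix $k\in Crit_1$ and parametrise the two rays of $W^u_g(k)\setminus\{k\}$ by flow time, obtaining for each admissible $g$ a smooth map $u_g$ of two copies of $(0,\infty)$ into $T^2=K\times K$; assembling these over $g$ gives an evaluation map $U$. The key point is that $U$ is submersive at every point $(g,\tau)$ with $\tau>0$: choosing a small box around a compact subsegment of the trajectory that contains $u_g(\tau)$ in its interior and is disjoint from the diagonal and from all critical points, Lemma~\ref{perturbationofWug} lets us reroute that subsegment to any sufficiently $C^\infty$-close curve with the same endpoints, so $u_g(\tau)$ can be displaced in every direction of $T^2$ by a variation of $g$. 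Therefore $U$ is transverse to the $1$-dimensional sets $B$, $F$ and each local sheet of the immersed curve $S$, and transverse to the finite sets $\partial S$, $\partial F=\partial^{s}S\cup\partial^{e}S$, $S_2$, $B\cap S$ and $F\cap S$ — the last two being finite because, for our generic knot, base point and framing, $B$ and $F$ meet $S$ transversally away from the finitely many tangencies and self-intersections of $S$ (Lemma~\ref{cordlemma}(ii), Lemma~\ref{framinglemma}). The relative transversality theorem then produces a residual set of $g$ for which $W^u_g(k)=\im u_g$ is transverse to $B$, $S$, $F$ and disjoint from $\partial S$, $\partial F$, $S_2$, $B\cap S$, $F\cap S$ (a curve that is transverse in a surface to a point is disjoint from it). Intersecting over the finitely many $k$ yields (i), (ii) and (iii).

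\emph{Part (iv).} Let $p,q\in Crit_1$. A trajectory of $-\nabla E_g$ from $p$ to $q$ is, together with its flow orbit, a subset of $W^u_g(p)\cap W^s_g(q)$. Running the same parametric argument on the universal space $\mathcal M(p,q)=\{(g,x):x\in W^u_g(p)\cap W^s_g(q)\}/\mathbb R$ — using Lemma~\ref{perturbationofWug} to reroute a compact middle segment of a putative connecting orbit, chosen disjoint from $p$, $q$, the diagonal and the remaining critical points — shows that $\mathcal M(p,q)$ is cut out transversally, hence a manifold whose fibre over a generic $g$ has dimension $\dim W^u_g(p)+\dim W^s_g(q)-\dim T^2-1=1+1-2-1=-1$; so for generic $g$ there is no such trajectory. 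Equivalently: for generic $g$ one has $W^u_g(p)\pitchfork W^s_g(q)$, and since a transverse intersection of two curves in a surface is $0$-dimensional while any nonempty $W^u_g(p)\cap W^s_g(q)$ contains a $1$-dimensional flow orbit, the intersection must be empty. Intersecting over the finitely many ordered pairs $(p,q)$ and combining with the residual set from (i)--(iii) proves the lemma.

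\emph{Main obstacle.} The technical heart is verifying the submersivity of $U$ and the transverse cut-out of $\mathcal M(p,q)$: one must check that the local reroutings of Lemma~\ref{perturbationofWug} live in regions where $g$ is permitted to vary, that they patch into genuine elements of the perturbation space $C^n(T^2,\mathbb R)$, and that the non-compactness of $W^u_g(k)$ — its closure absorbing index-$0$ critical points — is harmless, which it is precisely because $g\equiv 0$ and $B\cup F\cup S$ is empty near every critical point. Once submersivity is in hand, the dimension bookkeeping and the observation in (iv) that a transverse intersection forced to contain a flow orbit must be empty are routine.
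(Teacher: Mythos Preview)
Your argument is correct and rests on the same mechanism as the paper --- Lemma~\ref{perturbationofWug} supplies the freedom to displace the one-dimensional unstable manifold inside $T^2$ away from the critical points --- but you package the transversality differently.

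The paper works ``curve first, function second'': it parametrises $W^u_g(k)$ by $f\in C^\infty(\mathbb{R},T^2)$, restricts to a compact interval $I$ with $f(I)=W^u_g(k)\setminus U$, perturbs $f|_I$ in $C^n(I,T^2)$ using the relative (jet) transversality theorems \ref{Transvthmrel} and \ref{JetTransvthmrel} to achieve the required transversality or emptiness, and only then invokes Lemma~\ref{perturbationofWug} to realise the perturbed curve as $W^u_{\tilde g}(k)$. In particular, for part~(i) the paper passes to $1$-jets and shows $j^1\tilde f\pitchfork(\mathbb{R}\times T\widehat S)$ with $\widehat S=S\setminus(\partial S\cup S_2)$, which forces $\dot{\tilde f}(t)\notin T_{\tilde f(t)}\widehat S$ at every intersection point. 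You instead work ``function first'': you vary $g$ directly, use Lemma~\ref{perturbationofWug} to prove that the universal evaluation $U:(g,\tau)\mapsto u_g(\tau)$ is submersive, and then appeal to parametric transversality (Sard--Smale) to obtain a residual set of good $g$. This avoids the jet machinery entirely --- submersivity of $U$ already gives transversality of $u_g$ to any submanifold for generic $g$ --- and handles the $0$-dimensional targets $\partial S$, $\partial F$, $S_2$, $B\cap S$, $F\cap S$ uniformly. Two small remarks: the theorem you are actually invoking is parametric transversality rather than the relative transversality theorem of the appendix (Theorem~\ref{Transvthmrel}); and for part~(iv) your Smale-condition formulation (a transverse intersection of two curves in a surface containing a flow line must be empty) is cleaner and more explicit than the paper's one-line reduction to the codimension-$2$ set $Crit_1$.
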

\begin{rem}
The set of self-intersections of $F$ does not have to be considered, because at these finitely many points a cord is multiplied by $\mu$ or $\mu^{-1}$ from the left and from the right. This is not a problem. The set $S_{2}$ must be avoided because at these points a cord would be divided into three parts. However, this situation is not covered by any relation. 
\end{rem}
\begin{proof}
According to Lemma \ref{cordlemmaadd1}, we have $Crit_{0,1}\cap(B\cup S\cup F)=\emptyset$. Thus, there exist open neighborhoods~$U_{k}$ of all $k\in Crit_{0,1}$ such that $U_{k}\cap(B\cup S\cup F)=\emptyset$. Consequently, for all $k\in Crit_{1}$ the following holds:
\[(W^{u}_{g}(k)\cap\widetilde{U})\pitchfork(B\cup S\cup F),\]
where $\widetilde{U}:=\bigcup\limits_{k^{\prime}\in Crit_{0,1}}\hspace*{-8pt}U_{k^{\prime}}$. We shrink $\widetilde{U}$ to $U$ such that $\bar{U}\subset\widetilde{U}$ holds for the closure $\bar{U}$. Then in the following it is sufficient to perturb $W^{u}_{g}(k)$ to $W^{u}_{\tilde{g}}(k)$ for all $k\in Crit_{1}$ on the subset $W^{u}_{g}(k)\setminus U$ with the help of the relative version of the jet transversality theorem (Theorem \ref{JetTransvthmrel}) so that the following holds:
\[W^{u}_{\tilde{g}}(k)\cap(\widetilde{U}\setminus U)=W^{u}_{g}(k)\cap(\widetilde{U}\setminus U)\]
and
\[(W^{u}_{\tilde{g}}(k)\setminus U)\pitchfork(B\cup S\cup F).\]
(i), (ii) Let $k$ be a critical point of index 1 and $f\in C^{\infty}(\mathbb{R},T^{2})$ such that $\im(f)=W_{g}^{u}(k)$. Let $I\subset\mathbb{R}$ be the compact interval for which the following holds (see Figure \ref{nbhdcritpts}):
\[\im(f\vert_{I})=W^{u}_{g}(k)\setminus U.\]
\begin{figure}[ht]\centering
\includegraphics[scale=0.35]{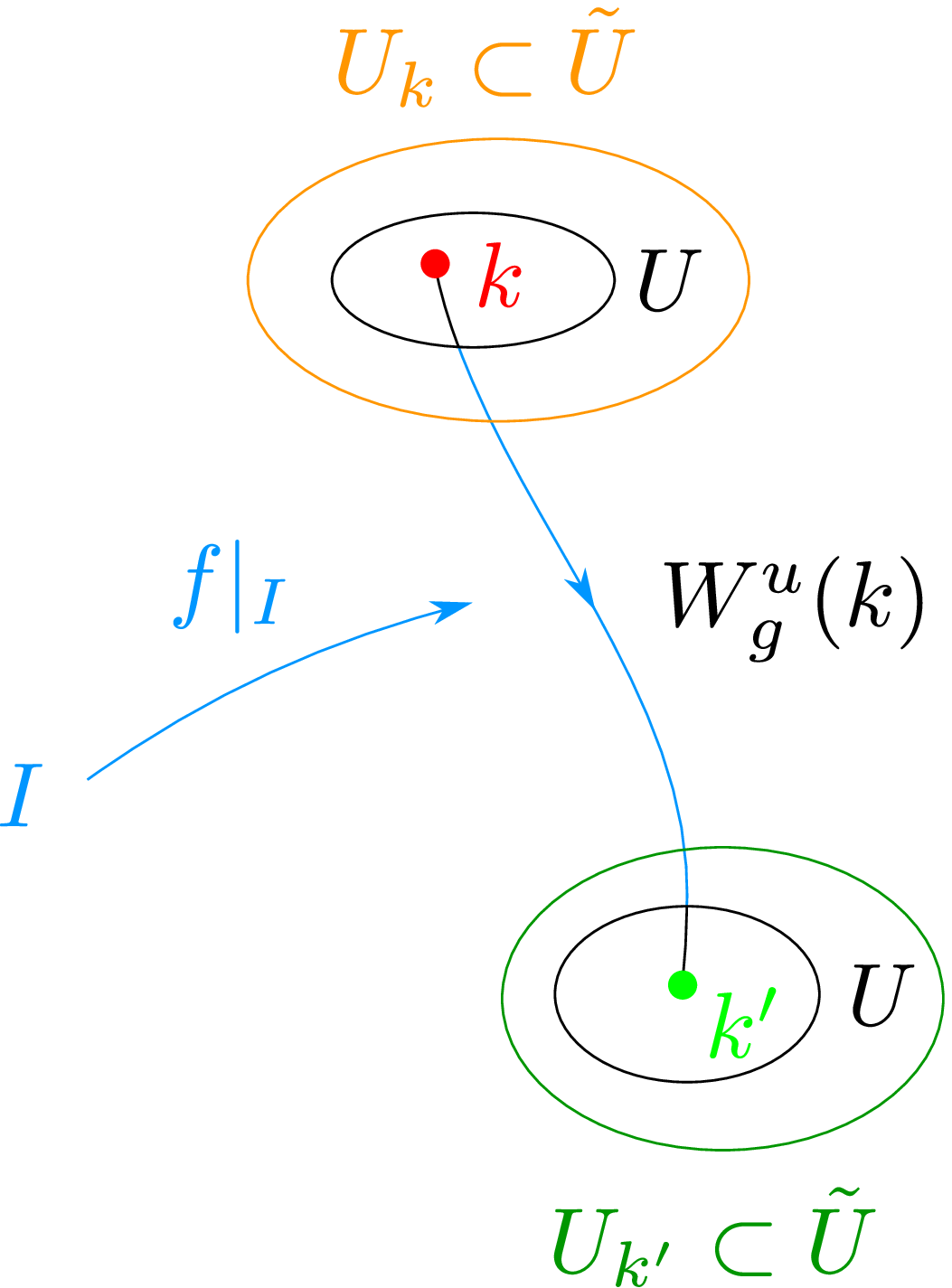}
\caption{Neighborhoods of critical points $k$ and $k^{\prime}$ of index 1 and 0, respectively, and the unstable manifold of $k$}\label{nbhdcritpts}
\end{figure}%
Let $W^{u}_{g}(k)\cap S\neq\emptyset$ and $p\in W^{u}_{g}(k)\cap S$.\\
\textbf{Case 1}: $p\in\partial S$. We construct a map $\tilde{f}:\mathbb{R}\to T^{2}$ as follows:
\begin{itemize}
\item $\tilde{f}\vert_{\mathbb{R}\setminus I}=f\vert_{\mathbb{R}\setminus I}$
\item $\partial S$ is a submanifold of $T^{2}$. Using the relative version of the transversality theorem (Theorem \ref{Transvthmrel}) we can perturb $f\vert_{I}$ in the space $C^{n}(I,T^{2})$, for $n$ big enough, to a map $\tilde{f}\vert_{I}$ such that $\tilde{f}\vert_{I}\pitchfork\partial S$ and $\im(\tilde{f}\vert_{I})\cap(\widetilde{U}\setminus U)=\im(f\vert_{I})\cap(\widetilde{U}\setminus U)$.
\end{itemize}
Thus, we get $\tilde{f}\pitchfork\partial S$. It follows that $\tilde{f}^{-1}(\partial S)=\emptyset$ since $\codim(\partial S\subset T^{2})=2$. In suitable local coordinates $\im(f)$ can be represented as a subset of the $y$-axis in $\mathbb{R}^{2}$. Since $\tilde{f}$ can be chosen arbitrarily close to $f$, we can guarantee that in these coordinates $\tilde{f}$ is a graph over the $y$-axis. According to Lemma \ref{perturbationofWug}, there exists a function $\tilde{g}$ for which $W^{u}_{\tilde{g}}(k)\cap U=W^{u}_{g}(k)\cap U$ and $W^{u}_{\tilde{g}}(k)\setminus U=\im(\tilde{f}\vert_{I})$. Thus, we get $W^{u}_{\tilde{g}}(k)\cap\partial S=\emptyset$.\\
\textbf{Case 2}: Since the set $S_{2}\subset S$ of self-intersections of $S$ is finite according to Lemma \ref{cordlemma}(ii), it follows analogously to case 1 that $W^{u}_{\tilde{g}}(k)\cap S_{2}=\emptyset$.\\
\textbf{Case 3}: $p\in\widehat{S}:=S\setminus(\partial S\cup S_{2})$.\\
The set of all 1-jets from $\mathbb{R}$ to $T^{2}$ is
\[J^{1}(\mathbb{R},T^{2})=\mathbb{R}\times T^{2}\times\mathbb{R}^{2}.\]
Consider the map
\begin{align*}
h:\mathbb{R}&\to J^{1}(\mathbb{R},T^{2})\\
t&\mapsto(t,f(t),\dot{f}(t)).
\end{align*}
We construct a map $\tilde{f}:\mathbb{R}\to T^{2}$ as follows:
\begin{itemize}
\item $\tilde{f}\vert_{\mathbb{R}\setminus I}=f\vert_{\mathbb{R}\setminus I}$
\item The tangent bundle $T\widehat{S}$ is a submanifold of $T(T^{2})=T^{2}\times\mathbb{R}^{2}$. So $\mathbb{R}\times T\widehat{S}$ is a submanifold of $\mathbb{R}\times T(T^{2})=\mathbb{R}\times T^{2}\times\mathbb{R}^{2}$. Using the relative version of the jet transversality theorem (Theorem~\ref{JetTransvthmrel}) we can perturb $f\vert _{I}$ in the space $C^{n}(I,T^{2})$, for $n$ big enough, to a map~$\tilde{f}\vert_{I}$ such that $\im(\tilde{f}\vert_{I})\cap(\widetilde{U}\setminus U)=\im(f\vert_{I})\cap(\widetilde{U}\setminus U)$ and $\tilde{h}\vert_{I}\pitchfork T\widehat{S}$ with the map $\tilde{h}\vert_{I}, t\mapsto(t,\tilde{f}(t),\dot{\tilde{f}}(t))$ which is also perturbed.
\end{itemize}
Thus, we get $\tilde{h}\pitchfork\mathbb{R}\times T\widehat{S}$. It follows that $\tilde{h}^{-1}(\mathbb{R}\times T\widehat{S})=\emptyset$ since $\codim(\mathbb{R}\times T\widehat{S}\subset\mathbb{R}\times T(T^{2}))=2$. So $\im(\tilde{f})\pitchfork\widehat{S}$ since for all $t\in\mathbb{R}$ with $\tilde{f}(t)\in\widehat{S}$ we have $\dot{\tilde{f}}(t)\notin T_{\tilde{f}(t)}\widehat{S}$. In suitable local coordinates $\im(f)$ can be represented as a subset of the $y$-axis in $\mathbb{R}^{2}$. Since $\tilde{f}$ can be chosen arbitrarily close to $f$, we can guarantee that in these coordinates $\tilde{f}$ is a graph over the $y$-axis. According to Lemma~\ref{perturbationofWug}, there exists a function~$\tilde{g}$ for which $W^{u}_{\tilde{g}}(k)\setminus U=\im(\tilde{f}\vert_{I})$ and $W^{u}_{\tilde{g}}(k)\cap U=W^{u}_{g}(k)\cap U$.\\[.5em]
Altogether, we get $W_{\tilde{g}}^{u}(k)\pitchfork S$.\\
Since $Crit_{1}$ is a finite set, finitely many (arbitrarily small) perturbations suffice to guarantee transversality to $S$ for all unstable manifolds of critical points of index 1.\\[.5em]
The proof of the statements $W_{g}^{u}(k)\pitchfork B, W_{g}^{u}(k)\pitchfork F$ and $W_{g}^{u}(k)\cap\partial F=\emptyset$ is analogous to that of the statements $W_{g}^{u}(k)\pitchfork S$ and $W_{g}^{u}(k)\cap\partial S=\emptyset$.\\[.5em]
(iii) These two statements can be shown by similar transversality arguments as before.\\[.5em]
(iv) Let $k$ be a critical point of index 1 and $f\in C^{\infty}(\mathbb{R},T^{2})$ with $\im(f)=W_{g}^{u}(k)$.\\
According to Lemma \ref{cordlemma}(i), $Crit_{1}$ is a submanifold of $T^{2}$ with $\codim(Crit_{1}\subset T^{2})=2$. Therefore, the statement follows analogously to (ii).
\end{proof}
Let $R:=\mathbb{Z}[\lambda^{\pm1},\mu^{\pm1}]$ be the commutative ring over $\mathbb{Z}$, generated by $\lambda,\lambda^{-1},\mu$ and $\mu^{-1}$ modulo the relations $\lambda\cdot\lambda^{-1}=\mu\cdot\mu^{-1}=1$. Let $C_{1}$ be the $\mathbb{Z}$-vector space generated by $Crit_{1}(E)$ and $C_{0}$ the non-commutative $R$-algebra generated by $Crit_{0}(E)$, i.e. $\lambda$ and $\mu$ commute with each other, but not with any elements of $Crit_{0}(E)$. These in turn do not commute at all. In the following we will let flow cords along the negative gradient $-\nabla E$. It can happen that a cord converges to a point, intersects the base point or the framing or intersects the knot in its interior. For these cases we define the relations in Figure \ref{relations}.
\begin{figure}[ht]
\hspace*{3cm}\subfigure{\includegraphics[scale=0.35]{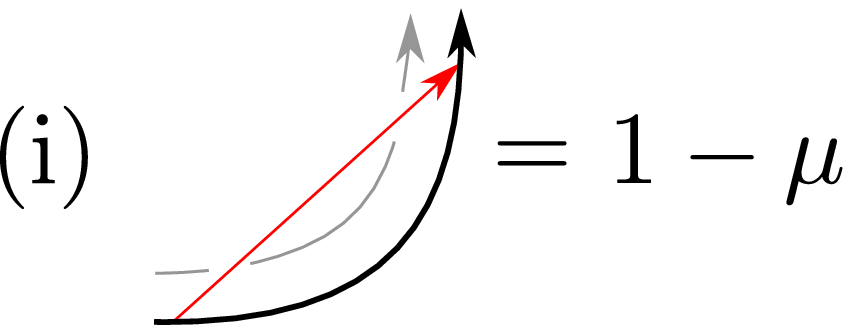}}\\
\hspace*{3cm}\subfigure{\includegraphics[scale=0.35]{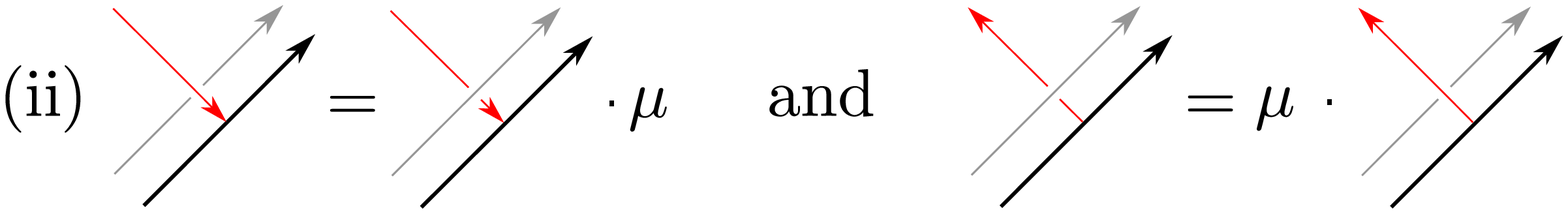}}\\
\hspace*{3cm}\subfigure{\includegraphics[scale=0.35]{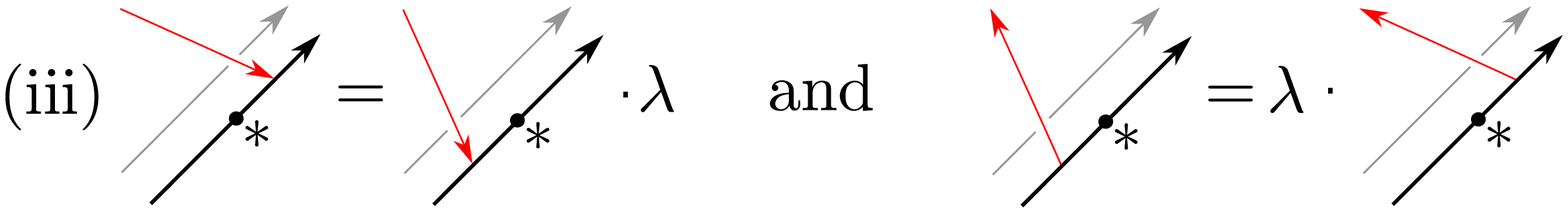}}\\
\hspace*{3cm}\subfigure{\includegraphics[scale=0.35]{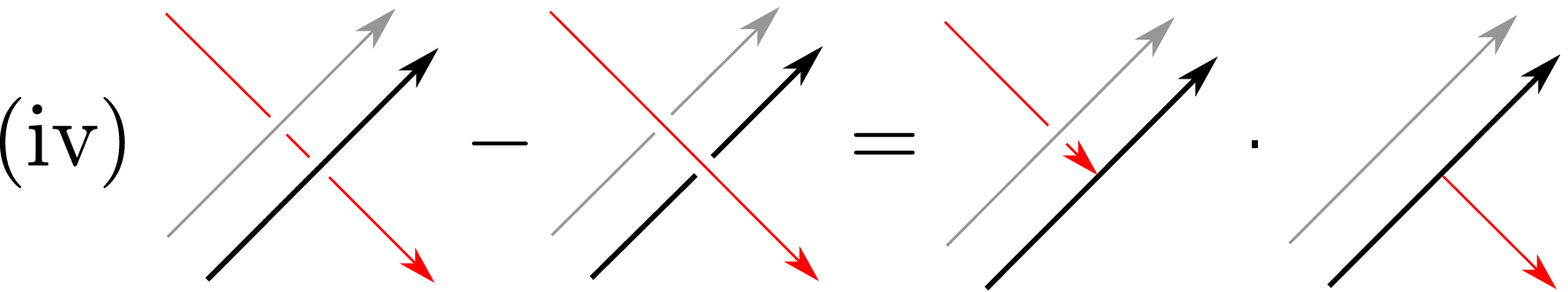}}
\caption{Relations for cords moving along the vector field $-\nabla E$ }\label{relations}
\end{figure}%
There the knot $K$ is drawn in black, the framing $K^{\prime}$ in grey and the cords in red. Analogous to Remark \ref{remng}, only the relevant parts of the knot are drawn in the pictures. The diagrams each show a small region of the knot. The first line refers to any contractible cord. In the second, third and fourth relation, the diagrams agree outside the drawn region. In the fourth line, the first two diagrams show a cord that runs once over and once under a strand of the knot. At the transition from the first to the second diagram, or vice versa, with the help of a homotopy, the cord intersects the knot. At this point, the cord is split into two parts, which is shown in the other two diagrams. All diagrams are to be understood as objects in three-dimensional space. Therefore, relation (iv) is for example equivalent to the relation shown in Figure \ref{relationivequiv}.
\begin{figure}[ht]\centering
\includegraphics[scale=0.35]{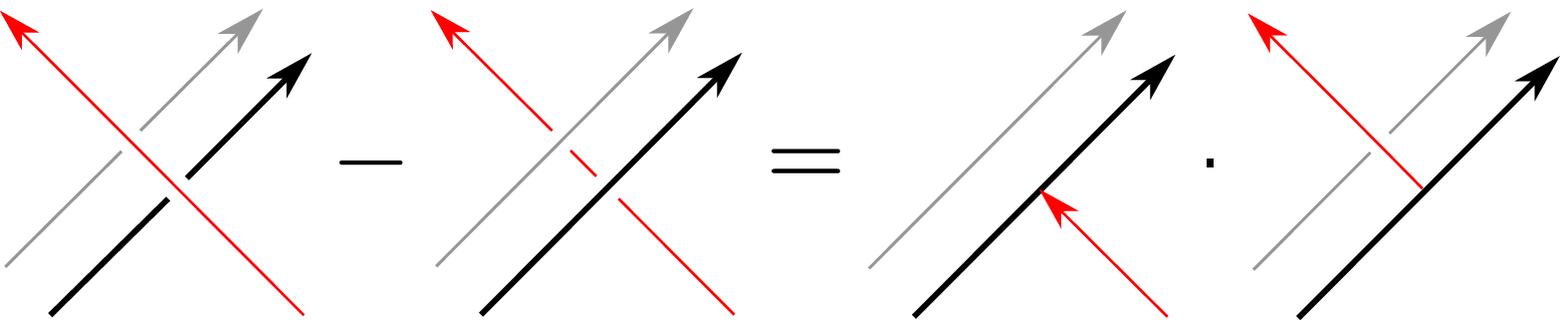}
\caption{Equivalent representation of relation (iv)}\label{relationivequiv}
\end{figure}%
\begin{lem}\label{finitelymanyintersections}
For a generic function $g$ the following holds for the perturbed function $E_{g}$:\\
Let $c\in W_{g}:=\hspace*{-.1cm}\bigcup\limits_{k\in Crit_{1}}\hspace*{-.2cm}W^{u}_{g}(k)\subset K\times K$. If $c$ is split into two parts according to relation~(iv) during the movement along the negative gradient $-\nabla E_{g}$, both parts and the original cord flow further along $-\nabla E_{g}$ and can be split again according to relation (iv). The following holds: These splits can occur only finitely many times, i.e. relation (iv) is applied only finitely many times. 
\end{lem}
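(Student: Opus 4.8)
The plan is to record the splitting process as a rooted tree $\mathcal{T}$ whose vertices are the cords that occur under the flow $\varphi_g^{s}$ and to prove that $\mathcal{T}$ is finite. The root is $c$; whenever a cord $c'$ already in $\mathcal{T}$ reaches $S$ at a point $p=\varphi_g^{s}(c')$, relation (iv) cuts the straight cord $p$ at its interior intersection point with $K$ into two straight sub-cords, which we declare to be the children of $c'$, while (as in the statement) $c'$ itself keeps flowing and may acquire further children at later crossings of $S$. Each application of relation (iv) produces exactly two new vertices, so finitely many applications occur if and only if $\mathcal{T}$ has finitely many vertices; since $\mathcal{T}$ will turn out to be finitely branching, it then suffices to bound its depth. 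Two elementary facts are used repeatedly: (a) every cord occurring in $\mathcal{T}$ has $E_g$-energy at most $E_{\max}:=E_g(k)$, where $k$ is the index-$1$ critical point with $c\in W^u_g(k)$, because $E_g$ is nonincreasing along flow lines and cutting a straight cord of length $\ell$ at an interior point into pieces of lengths $a,b$ yields $a+b=\ell$ with $a,b>0$, hence each piece has $E_g<\tfrac12\ell^{2}$; and (b) a split occurs only at a point of $S$, hence at energy $\geq e_{\min}:=\min_S E_g$, and $e_{\min}>0$ since $S$ is compact and disjoint from the diagonal.

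To see that $\mathcal{T}$ is finitely branching I would show that the trajectory of any cord meets $S$ only finitely often; suppose not. After the Morse perturbation of Remark~\ref{Morsegeneration} and after passing to a pseudo-gradient satisfying the Smale condition, the flow has only Morse rest points and no nonconstant periodic orbit, so the trajectory has a single critical point $p_\infty$ as $\omega$-limit. If its crossing times with $S$ are unbounded, the corresponding crossing points converge to $p_\infty$, which then lies in $S$ because $S$ is closed; but by Lemma~\ref{cordlemmaadd1} no critical point of index $0$ or $1$ lies in $S$, and a critical point of index $2$ is a local maximum of $E_g$, hence not an $\omega$-limit of a nonconstant trajectory --- a contradiction. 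If the crossing times accumulate at a finite time $s^{*}$, then $\varphi_g^{s^{*}}(c')\in S$ and a local contradiction arises: for $c'=c$ one has $W^u_g(k)\pitchfork S$ away from $\partial S\cup S_2$ by Lemma~\ref{cordlemmaadd2}(i),(ii), so this intersection is discrete and cannot accumulate; in general one imposes on $g$ a further finite (hence generic) family of conditions, compatible with Lemma~\ref{cordlemmaadd2}, so that $-\nabla E_g$ has at most finitely many isolated nondegenerate tangencies with $\widehat S:=S\setminus(\partial S\cup S_2)$ and is transverse to both local branches of $S$ at each of the finitely many points of $S_2$, and uses Lemma~\ref{cordlemma}(iii) near the finitely many points of $\partial S$; near each point of $S$ a trajectory then meets $S$ only finitely often, again a contradiction.

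For the depth bound, consider a branch $c=d_0\to d_1\to d_2\to\cdots$ of $\mathcal{T}$, let $e_i$ be the energy at which $d_i$ is split so as to produce $d_{i+1}$, and let $a_i,b_i>0$ be the lengths of the two pieces created there, with $a_i$ the length of $d_{i+1}$, so that $a_i+b_i=\ell_i:=\sqrt{2e_i}\geq\ell_{\min}:=\sqrt{2e_{\min}}$. Since $E_g$ is nonincreasing along the flow,
\[e_i-e_{i+1}\ \geq\ e_i-E_g\bigl(d_{i+1}\text{ at birth}\bigr)\ =\ \tfrac12\ell_i^{2}-\tfrac12 a_i^{2}\ =\ \tfrac12\,b_i\,(\ell_i+a_i)\ \geq\ \tfrac12\,\ell_{\min}\,b_i .\]
The key point, and the step I expect to be the main obstacle, is a uniform lower bound $b_i\geq\beta_0>0$ on the lengths of the cut-off pieces. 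If $b_i\to0$ along a branch, then the split points $d_i^{\mathrm{split}}\in S$ accumulate on the finite set $\partial S$ of cords tangent to $K$ at an endpoint (this is exactly the locus in $S$ where $\min(a,b)\to0$), and consequently $d_{i+1}$ at birth, and the entire tail of the branch, accumulate on some boundary cord $q\in\partial S$; I plan to exclude this by a compactness argument using that $-\nabla E_g$ does not point into $S$ along $\partial S$ (Lemma~\ref{cordlemma}(iii)) and that $W^u_g(k)\cap\partial S=\emptyset$ (Lemma~\ref{cordlemmaadd2}(ii)), together with genericity of $g$. Granting $b_i\geq\beta_0$, one gets $e_i-e_{i+1}\geq\eta:=\tfrac12\ell_{\min}\beta_0>0$ while $e_{\min}\leq e_i\leq E_{\max}$, so each branch has at most $1+(E_{\max}-e_{\min})/\eta$ vertices. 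Combined with finite branching, $\mathcal{T}$ is finite, hence relation (iv) is applied only finitely many times.
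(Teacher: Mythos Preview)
Your tree framework and the energy bookkeeping along a branch are the right setup, and the paper argues in essentially the same spirit. However, the step you flag as the ``main obstacle'' is a genuine gap as written: the uniform bound $b_i\geq\beta_0>0$ simply does not hold. Near a point of $\partial S$ the cord that is being split is almost tangent to $K$ at one endpoint, so one of the two pieces created by relation~(iv) is arbitrarily short; nothing in Lemma~\ref{cordlemma}(iii) or Lemma~\ref{cordlemmaadd2}(ii) forbids a single such split. Your displayed inequality $e_i-e_{i+1}\geq\tfrac12\ell_{\min}b_i$ only records the energy lost \emph{at the moment of splitting}, and that quantity really can be arbitrarily small. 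What Lemma~\ref{cordlemma}(iii) actually buys you is different: since $-\nabla E$ does not point into $S$ along $\partial S$, there is a neighbourhood $U$ of $\partial S$ and an $\varepsilon>0$ such that any cord in $U\cap S$, and likewise the longer of the two pieces created there, must lose at least $\varepsilon$ in length \emph{under the flow} before it can meet $S$ again. Away from $U$ both pieces are already at least some $\delta>0$ shorter than their parent at the instant of splitting. This yields a uniform lower bound on $e_i-e_{i+1}$ directly (combining the drop at the split with the drop during the subsequent flow), not via a lower bound on $b_i$; that is precisely how the paper closes the argument.

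There is also a smaller issue in your finite--branching step. You propose to impose ``a further finite (hence generic) family of conditions'' on $g$ so that every trajectory arising in the tree meets $S$ discretely. But you cannot know this family is finite before you know the tree is finite, so as stated the argument is circular. The paper avoids this by first establishing the uniform length decrement $\min(\varepsilon,\delta)$ from geometric considerations (independent of any transversality of the descendant trajectories to $S$), and only \emph{then} perturbing $g$ step by step, using the decrement to guarantee that only finitely many perturbations are required. Once you repair the depth estimate as above, you can follow the same order of argument.
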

\begin{proof}
1) Lemma \ref{cordlemma}(iii) implies that there exists a neighborhood $U\subset K\times K$ of the finitely many cords $\partial S$ that are tangent to $K$ at one endpoint and an $\varepsilon>0$ with the following property: Each cord in $U\cap S$ decreases in length by more than $\varepsilon$ under the flow of $-\nabla E_{g}$ before it meets $S$ again, and the same holds for the longer cord resulting from the splitting according to relation (iv). On the other hand, if a string $s\in S\setminus U$ is split at its intersection with the knot, both pieces are shorter than $s$ by at least a fixed length $\delta>0$ since $s$ can be neither tangent nor ``almost'' tangent to $K$. In total, each piece will be at least $\min(\varepsilon,\delta)$ shorter than the original cord. Since its length is finite, however, only finitely many intersections can happen. (cf. \cite{Cie3}, proof of Proposition 7.14)\\[.5em]
2) According to Lemma \ref{cordlemmaadd2}(i), we have $W_{g}\pitchfork S$ for a generic function $g$. It follows that there exist only finitely many intersections of $W_{g}$ with $S$ since $W_{g}$ is compact. The cords arising from splitting at these intersections according to relation~(iv) are, according to 1), at least $\min(\varepsilon,\delta)$ shorter than the original cords. Let $k\in Crit_{1}$. At the first intersection of $W^{u}_{g}(k)$ with $S$, starting from $k$ in one of the two possible directions, the cord is split into two cords $k_{1}$ and $k_{2}$. If $k_{1}$ lies now on $W^{u}_{g}(k)$, nothing more has to be done, because $W^{u}_{g}(k)$ is already transverse to $S$. Otherwise the trajectory of $k_{1}$ along the negative gradient $-\nabla E_{g}$ can be perturbed outside of $W^{u}_{g}(k)$ according to Lemma~\ref{perturbationofWug} such that this trajectory becomes transverse to $S$. This is possible because $W^{u}_{g}(k)$ and the trajectory of $k_{1}$ are disjoint outside the critical points.\\
The same procedure is used for $k_{2}$: If $k_{2}$ is neither on $W^{u}_{g}(k)$ nor on the trajectory starting at $k_{1}$ that may have been perturbed, we perturb the trajectory starting at $k_{2}$ with the help of Lemma~\ref{perturbationofWug} outside of $W^{u}_{g}(k)$ and the trajectory starting at $k_{1}$ so that it becomes transverse to~$S$.\\
All other intersections of $W^{u}_{g}(k)$ with $S$ are handled in the same way. Likewise the unstable manifolds of all other cords from $Crit_{1}$.\\
Since $W_{g}$ has only finitely many intersections with $S$, the use of relation~(iv) also results in a finite number of cords. The $\varepsilon$ from 1) may change due to the necessary perturbations of $g$. But since the perturbed function is arbitrarily close to $g$, the new $\varepsilon$ is arbitrarily close to the original one. Therefore, it can be achieved that all resulting cords are still at least $\min(\varepsilon,\delta)$ shorter than the original cords.\\
The newly created cords are now handled in the same way as the unstable manifolds of the critical points, and the function $g$ may be perturbed accordingly. Since the trajectories of these cords are transverse to $S$, they intersect $S$ only finitely many times. This process is continued as long as further intersections of a trajectory with $S$ occur and relation (iv) is applied. Since each cord has finite length and becomes shorter at a splitting of at least $\min(\varepsilon,\delta)$, this process ends after finitely many steps and the function $g$ has to be adapted only to finitely many trajectories, which are pairwise disjoint outside the critical points.\\[.5em]
All in all: Relation (iv) is used only finitely many times.
\end{proof}
Now we can show further properties of a generic function $E_{g}$: 
\begin{lem}\label{cordlemmaadd3}
For a generic function $g$ the following holds for the function $E_{g}$ and the space $K\times K$ of cords:\\
Let $k\in Crit_{1}$ with $W_{g}^{u}(k)\cap S\neq\emptyset$. If $k$ is moved along its unstable manifold and split according to relation (iv), denote by $c$ one of the resulting cords or a cord that results from further applying relation (iv) to an already split cord. Then the following statements hold for all such cords $c$:
\begin{itemize}\itemsep0pt
\item[(i)]$\varphi_{g}^{s}(c)\pitchfork B,\varphi_{g}^{s}(c)\pitchfork S$, and $\varphi_{g}^{s}(c)\pitchfork F$ for $s\geq0$.
\item[(ii)]The following holds for $s\geq0$: $\varphi_{g}^{s}(c)\cap\partial S=\emptyset, \varphi_{g}^{s}(c)\cap\partial F=\emptyset$, and $\varphi_{g}^{s}(c)\cap S_{2}=\emptyset$ where $S_{2}$ is the set of self-intersections of $S$.
\item[(iii)]The following holds for $s\geq0$: $\varphi_{g}^{s}(c)\cap B\cap S=\emptyset$ and $\varphi_{g}^{s}(c)\cap F\cap S=\emptyset$.
\item[(iv)]$c\notin(B\cup F\cup S)$.
\item[(v)]$\lim\limits_{s\to\infty}\varphi_{g}^{s}(c)\notin Crit_{1}$, i.e. no such cord runs along $-\nabla E_{g}$ to a critical point of index 1.
\item[(vi)]$c\notin Crit_{k}$ for $k=0,1,2$.
\end{itemize} 
\end{lem}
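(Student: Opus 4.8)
The plan is to bootstrap on Lemma~\ref{finitelymanyintersections} and then re-run, trajectory by trajectory, the perturbation machinery already used for Lemma~\ref{cordlemmaadd2}. By Lemma~\ref{finitelymanyintersections}, for a generic $g$ relation~(iv) is applied only finitely many times, so the cords $c$ in question form a finite collection; together with the unstable manifolds $W^u_g(k)$, $k\in Crit_1$ (already under control by Lemmas~\ref{cordlemmaadd2} and \ref{finitelymanyintersections}), their forward trajectories $\{\varphi^s_g(c):s\ge 0\}$ form a finite family of curves in $T^2$ which — as shown in the proof of Lemma~\ref{finitelymanyintersections} — may be taken transverse to $S$, with only finitely many crossings each, and pairwise disjoint away from the critical points. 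Two observations organise the family. First, a split cord $c=(s,r)$ produced when its parent crosses $S$ transversally at $(s,t)$ is a proper sub-segment of that parent cord, so $r\ne s$, $r\ne t$, and $c$ is a different point of $T^2$ from the parent's position at the crossing; since the flow of $-\nabla E_g$ is deterministic, the trajectory through $c$ is therefore either a sub-trajectory of its parent's (or of an earlier member of the family) or disjoint from all of them off the critical points. Second, being a sub-segment of a cord that meets $K$ transversally exactly once in its interior (its parent, which lies in $S\setminus(\partial S\cup S_2)$ by~(ii) for the parent trajectory, or by Lemma~\ref{cordlemmaadd2}(ii) if the parent is an unstable manifold), the cord $c$ automatically satisfies $c\notin S\cup\partial S$.

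I would then process the family in order of generation. If a trajectory $\tau$ already lies on an earlier member, it inherits (i)--(iii), and it only remains to ensure its initial cord $c$ avoids $B\cup F\cup Crit$ and $\bigcup_{k'\in Crit_1}W^s(k')$; this is done by an arbitrarily small perturbation of $g$ supported near the crossing with $S$ of the parent trajectory that defines $c$ — such a perturbation moves $c$ a little, and $B\cup F\cup Crit_0\cup Crit_1\cup Crit_2$ is a finite set while $\bigcup_{k'}W^s(k')$ is $1$-dimensional in $T^2$, so $c$ can be pushed off all of them, giving (iv), (v), (vi), and the parent's transversality, an open condition, survives. Otherwise $\tau$ is disjoint from all earlier members off the critical points, so we take a tubular neighborhood of $\tau$ disjoint from fixed small neighborhoods of all critical points and from all earlier members, and inside it perturb $g$ exactly as in the proof of Lemma~\ref{cordlemmaadd2}: using the relative (jet) transversality theorem, perturb a compact piece of $\tau$ — keeping it a graph over a coordinate axis — to be transverse to each of $B$, $F$, $S$, $\partial S$, $\partial F$, $S_2$, $B\cap S$, $F\cap S$, so that in particular the last five, being of codimension $\ge 2$ in $T^2$, are missed; then invoke Lemma~\ref{perturbationofWug} to realise the perturbed curve as a flow line of $-\nabla E_{\tilde g}$, and combine this with the perturbation of $c$ above. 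This yields (i)--(iii) for $\tau$, (iv) and (vi) for its initial cord, and (v) for its limit; since $Crit_1\subset T^2$ has codimension $2$, the argument for (v) is formally identical to that for Lemma~\ref{cordlemmaadd2}(iv).

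The last issue is consistency: every perturbation is taken arbitrarily $C^n$-small and supported away from the critical points and from the previously treated trajectories, so no critical point and none of $B$, $F$, $S$ changes, every transversality already achieved persists, and, Lemma~\ref{finitelymanyintersections} being stable under such small perturbations, the number of applications of relation~(iv) — hence the length of the list and the combinatorial shape of the splitting tree — does not increase. After finitely many steps, all of (i)--(vi) hold for every cord in the family, and the resulting $g$ is the desired generic function. The part requiring the most care is exactly this bookkeeping: one must order the finitely many trajectories and localise the perturbations so that fixing one never destroys the transversality or the finiteness already arranged for the earlier ones, which is possible precisely because the trajectories are pairwise disjoint off the critical points; everything else is a localised repetition of the arguments in Lemmas~\ref{perturbationofWug}, \ref{finitelymanyintersections} and \ref{cordlemmaadd2}.
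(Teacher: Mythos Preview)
Your approach is essentially the paper's: invoke Lemma~\ref{finitelymanyintersections} to get finitely many cords $c$, then re-run the perturbation arguments of Lemmas~\ref{cordlemmaadd1} and~\ref{cordlemmaadd2} trajectory by trajectory via Lemma~\ref{perturbationofWug}. The paper compresses this into two sentences; you have spelled out the bookkeeping (ordering by generation, localising the supports, stability of earlier transversalities), which is exactly the content the paper leaves implicit.

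There is one slip. You assert that ``$B\cup F\cup Crit_0\cup Crit_1\cup Crit_2$ is a finite set''. This is false: $B$ and $F$ are one-dimensional in $T^2$. The conclusion you want still holds, but it needs a codimension argument rather than a cardinality one. Perturbing $g$ near the parent's crossing with $S$ moves that crossing point along the curve $S$, and hence moves $c$ along the image curve $\Psi_i(S)$ (in the notation of Lemma~\ref{ExnbhdDxequalsDq}). The condition $\Psi_i(p)\in B$ (or $\in F$, or $\in\bigcup_{k'}W^s_g(k')$) cuts out a discrete, hence finite, subset of the one-dimensional piece of $S$ in question, so a generic small perturbation moves $c$ off all of them. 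Your separate observation that $c\notin S$ holds automatically (since the parent lies in $S\setminus(\partial S\cup S_2)$ and $c$ is a proper sub-segment) is correct and takes care of the $S$-part of (iv). With this correction the argument is complete and agrees with the paper.
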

\begin{proof}
The statements (i) to (v) follow analogously to the proof of Lemma \ref{cordlemmaadd1} and Lemma~\ref{cordlemmaadd2}, since $Crit_{1}$ is a finite set and relation (iv) is applied only finitely many times according to Lemma~\ref{finitelymanyintersections}.\\
(vi) According to Lemma \ref{finitelymanyintersections}, the set of all such cords $c$ is a finite set, likewise $Crit_{k}$ is finite for $k=0,1,2$. Therefore, the statement can be reached by a small perturbation $g$, analogous to the proof of Lemma \ref{cordlemmaadd2}(ii). 
\end{proof}
\subsection{Representation of knots as closed braids}
Every knot can be represented as a closed braid \cite{Liv}. To define the cord algebra of a knot, we draw the knot as a closed braid along an ellipse in the following way (cf. \cite{Pet}):\\
The braid should be positioned in such a way that the binormal cords do not intersect the knot in their interior. This can be achieved by arranging the strands of the braid one above the other, i.e. from the drawing plane, with each strand being placed above a slightly larger ellipse than the one below. In addition, the strands should have a very small distance from each other. This ensures that no unwanted binormal cords are created that run from the area of the crossings of the knot to ``opposite'' strands. In order to be able to distinguish the strands in the diagram well, however, they are drawn with larger distance. They are also numbered from the outside to the inside.\\
By an arbitrarily small perturbation it can be achieved that the torsion vanishes only at finitely many points.\\
All crossings of the knot are drawn in a quarter of the ellipse so that they lie between two endpoints of the main axes. In the remaining three quarters the strands run parallel to each other. In Figure~\ref{index012} this is shown by the example of the right-handed trefoil knot, equipped with the Seifert framing. The knot is drawn in black and the framing in grey.\\
\begin{figure}[htbp]\centering 
\includegraphics[scale=0.6]{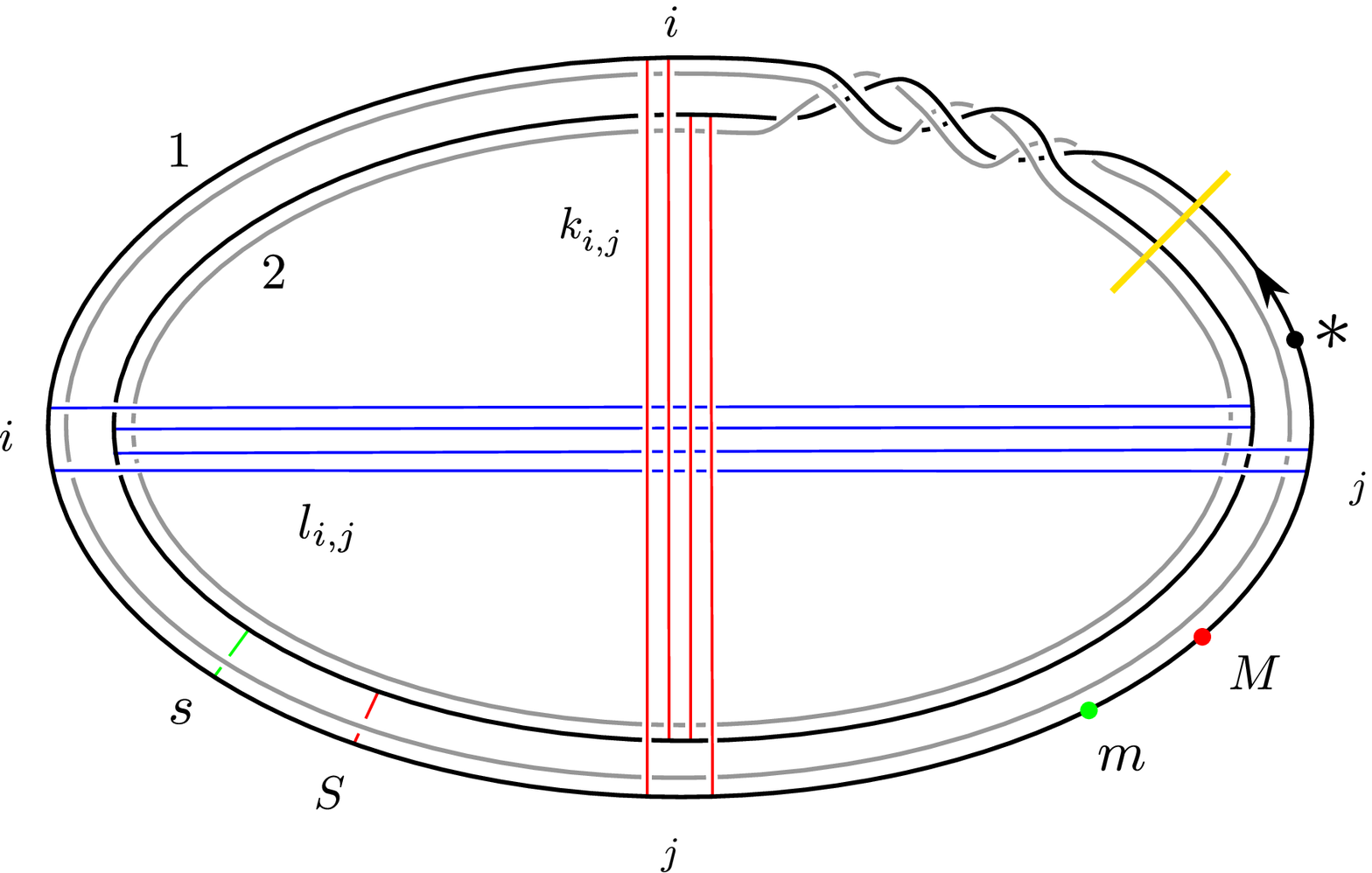}
\caption{Critical points of the function $E$ on the right-handed trefoil}\label{index012}
\end{figure}%
\begin{figure}[htbp]\centering 
\includegraphics[scale=0.3]{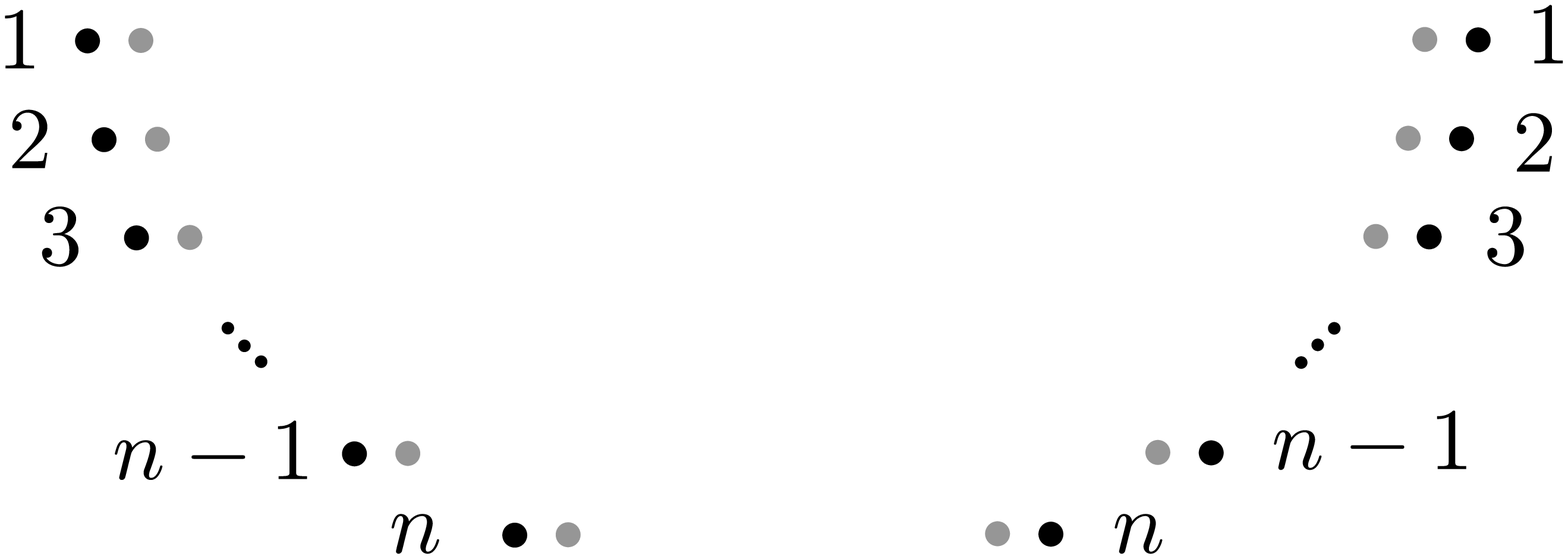}
\caption{Arrangement of $n$ strands in the cross section}\label{morethantwostrands}
\end{figure}%
The non-trivial binormal cords are, as is easy to see, the cords that run parallel to the main axes of the ellipse, as well as the very short cords that connect the strands together. However, the way of drawing the knot produces a one-dimensional critical submanifold for the latter. By a small perturbation of the knot this can be cleared up in such a way that exactly two critical points arise, one of the index 0, marked with $s$, and one of the index 1, marked with $S$. In addition, if there are more than two strands, there may be other short binormal cords between the strands in the area of the crossings. In the case of more than two strands, they are placed in such a way that the arrangement as shown in Figure \ref{morethantwostrands} is achieved. This guarantees that the very short cords connecting the strands do not intersect the knot in their interior. To ensure that no binormal cord intersects the framing, a small perturbation of the framing may be necessary. Another small perturbation of the framing may be necessary to satisfy the conditions listed in Remark \ref{framingassumption} (these conditions are necessary for the proof of Lemma \ref{framinglemma}).\\
In Figure \ref{index012} the critical points, i.e. the binormal cords, are distinguished by color: 
\begin{itemize}
\item Critical points of index 0 are marked green. In this example $m$ and $s$.
\item Critical points of index 1 are marked red. Here $M, S$ and the cords labeled with $k_{i,j}$. The indexing means that the cord runs from strand $i$ to strand $j$. In order to determine on which strand an endpoint of the cord lies, one moves, starting from this endpoint, along the knot until one reaches the numbering of the strands. The yellow mark must not be exceeded. The numbering is therefore unique.
\item Critical points of index 2 are marked blue, here $l_{i,j}$. These are only mentioned for the sake of completeness and will not be required further.
\end{itemize}
Since the knot $K$ is parametrized by $\gamma$, we get a canonical orientation of the knot, which is marked by an arrow. The position of the base point is chosen as shown in Figure \ref{index012}. This choice ensures that no binormal cord has the base point as its startpoint or endpoint.\\
Each cord $c=(s,t)\in K\times K$ can also be assigned an orientation as described above: $c$ is oriented from $s$ to $t$. If we consider the orientation, all cords that are not on the diagonal of $K\times K$ occur in pairs: Once with orientation in one direction and once with reverse orientation. In order to indicate the orientation of a cord in its labeling, another index is introduced, which is noted in the upper right of the name. Denote by $c^{s}=(s^{\prime},t)$ the cord $c$ for which $s^{\prime}<t$ holds, and by $c^{t}=(s,t^{\prime})$ the cord $c$ with $t^{\prime}<s$. For example, in Figure \ref{index012} $k_{1,1}^{s}$ is the longest cord of index 1 with orientation from top to bottom. This indexing is not necessary for points on the diagonal of $K\times K$, because then we have $s=t$, the corresponding cords have vanishing length and occur only once.\\
\begin{figure}[ht]\centering 
\subfigure[Labeling of the critical points]{\includegraphics[scale=0.4]{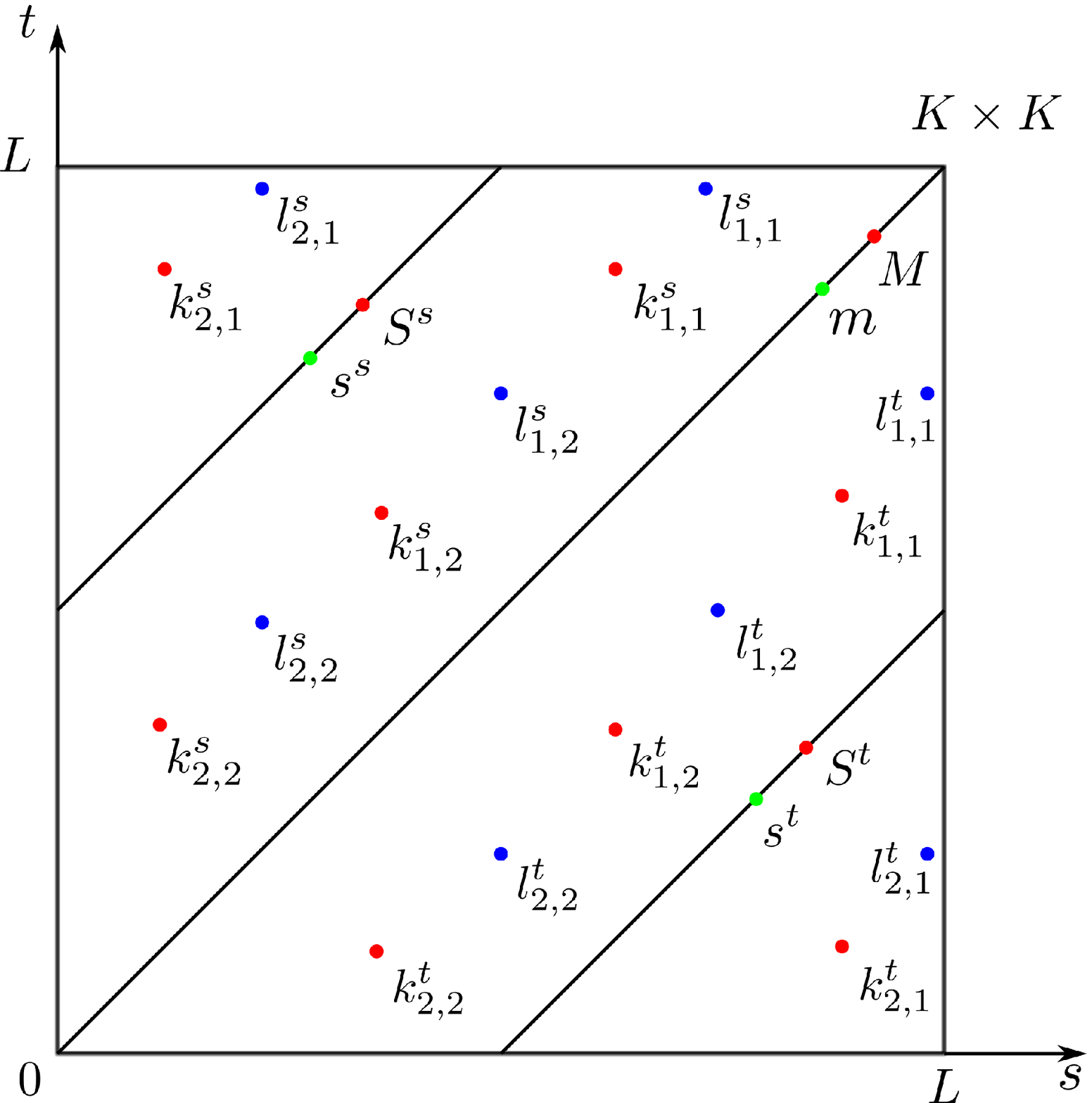}}
\subfigure[Gradient flow $-\nabla E$]{\includegraphics[scale=0.4]{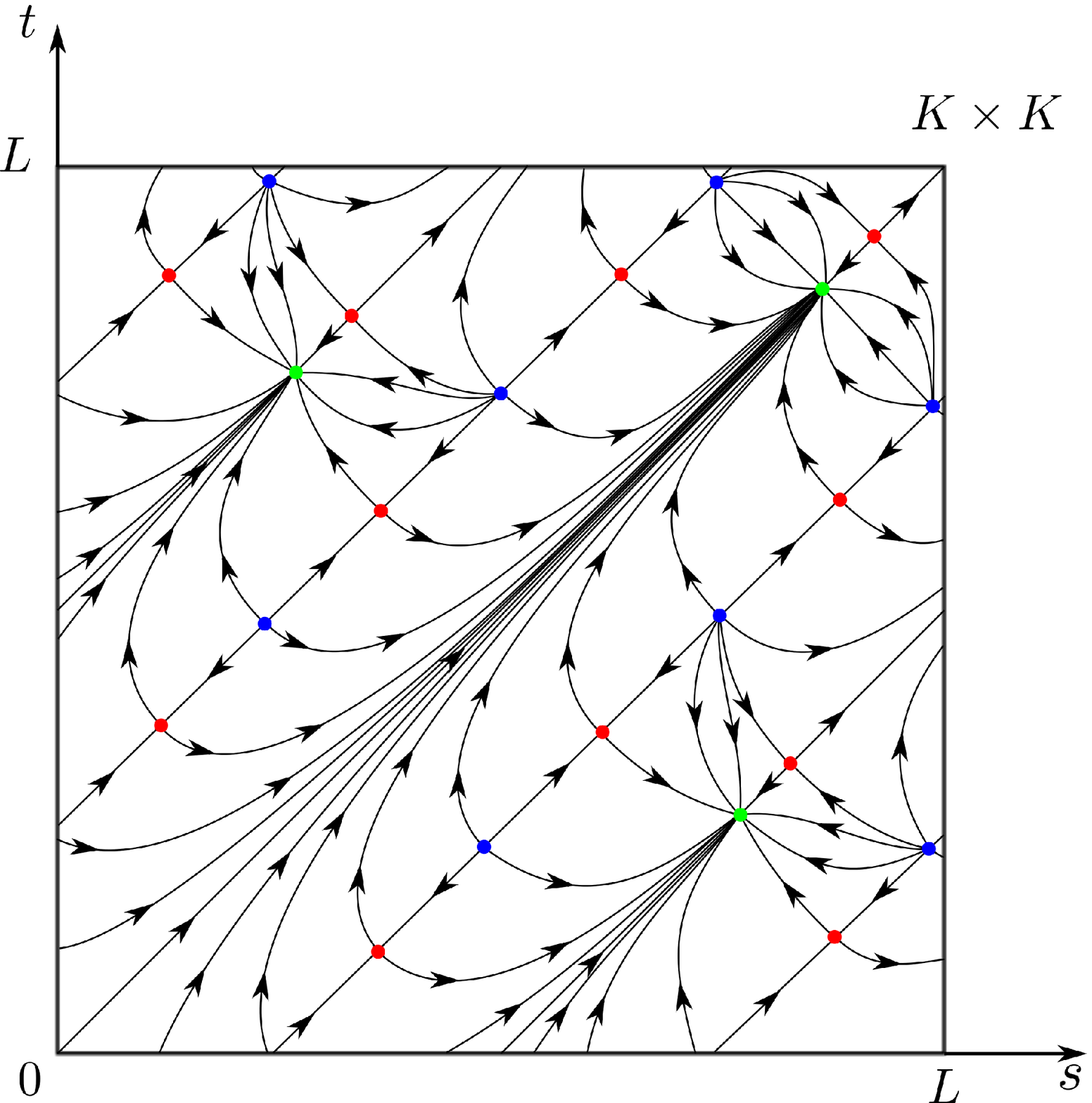}}
\caption{Critical points in $K\times K$ for the knot $K$ in Figure \ref{index012}}\label{KKcrit}
\end{figure}%
Figure \ref{KKcrit} shows the same critical points as Figure \ref{index012}, but here their positions are shown on $K\times K\cong T^{2}$. On the left side the critical points are labeled with their names. The arrows on the right indicate the direction of the gradient flow of the function $E$.\\
The figure shows that the position of $M$ on the diagonal can be chosen almost arbitrarily: $M$ must not be chosen in such a way that the gradient flow of a critical point of index 1 runs in the direction of $M$, as shown in Figure \ref{Mwrong}. 
\begin{figure}[ht]\centering
\includegraphics[scale=0.55]{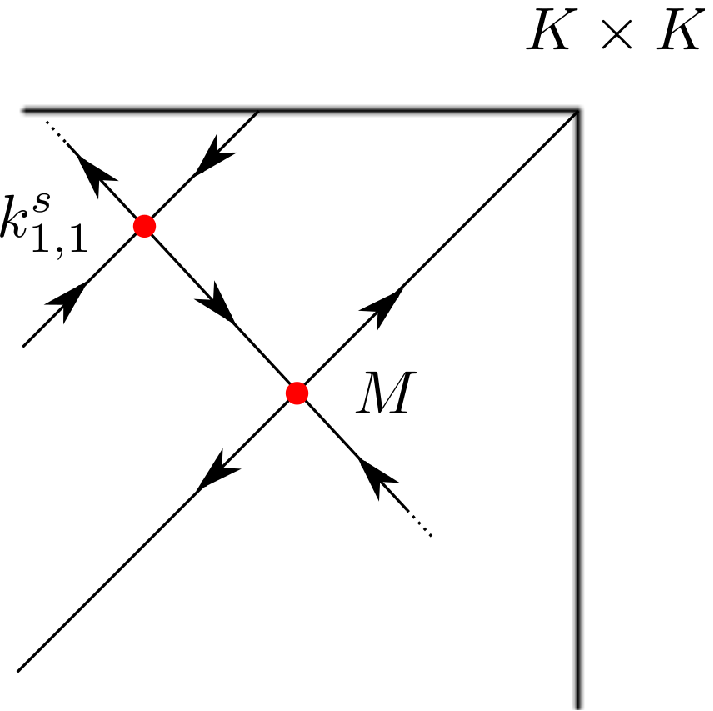}
\caption{Wrong position of $M$}\label{Mwrong}
\end{figure}
If $M$ is placed as shown in Figure \ref{Mwrong}, the Smale condition is not satisfied (see Corollary \ref{smale}).\\
$S$ is positioned analogously.\\
For all other critical points the Smale condition is satisfied because the unstable manifolds of points of index 2 and the stable manifolds of points of index 0 are already two-dimensional. So the tangent spaces at these manifolds are two-dimensional and the transversality is ensured.\\
Thus, $-\nabla E$ represents a gradient field that satisfies the Smale-condition.
\subsection{Definition of the cord algebra}\label{Cord}
Before defining the cord algebra of a knot $K$, we will first define a map $\widehat{D}:(K\times K)\setminus A\to C_{0}$, where the exceptional set $A\subset K\times K$ contains all points $c\in K\times K$ that satisfy at least one of the following properties: 
\begin{itemize}
\item[(1)] $c\in Crit_{2}(E)\cup\left(\bigcup\limits_{k\in Crit_{1}(E)}\hspace{-.35cm}W^{s}(k)\right)\cup Crit_{0}(E)$,
\item[(2)] $c\in(S\cup F\cup B)$,
\item[(3)] $\varphi^{s}(c)\ntransv B, \varphi^{s}(c)\ntransv F$, or $\varphi^{s}(c)\ntransv S$ for $s\geq0$,
\item[(4)] $\varphi^{s}(c)\cap\partial S\neq\emptyset, \varphi^{s}(c)\cap\partial F\neq\emptyset$, or $\varphi^{s}(c)\cap S_{2}\neq\emptyset$ for $s\geq0$,
\item[(5)] $\varphi^{s}(c)\in S$ for an $s>0$ and one of the properties (1) to (4) is true for one of the cords resulting from (possibly multiple) splitting according to relation (iv).
\end{itemize}
So now we can define the map $\widehat{D}$ as
\begin{align*}
\widehat{D}:(K\times K)\setminus A&\to C_{0}\\
c&\mapsto\partial(c)+\delta(c).
\end{align*}
The two maps $\partial$ and $\delta$ are described below:\\
First, let $\partial$ be the map
\begin{align*}
\partial:(K\times K)\setminus A&\to C_{0}\\
c&\mapsto\sum_{d\in Crit_{0}(E)}\hspace{-.3cm}n(c,d)\lambda^{\alpha_{1}}\mu^{\beta_{1}}d\lambda^{\alpha_{2}}\mu^{\beta_{2}},
\end{align*}
where
\begin{itemize}
\item $n(c,d)\in\lbrace0,1\rbrace$ is the number of trajectories along the vector field $-\nabla E$ from $c$ to $d$.
\item $\alpha_{1},\alpha_{2},\beta_{1},\beta_{2}\in\mathbb{Z}$ are such that the intersections with the framing or the base point occurring during the movement of the cord $c$ along $-\nabla E$ are taken into account according to relations (ii) or (iii), respectively.
\end{itemize}
$\partial$ is well defined since each point $c\in(K\times K)\setminus A$ lies on exactly one trajectory along the vector field $-\nabla E$. This trajectory ends at a critical point of index 0, so there is exactly one $d\in Crit_{0}(E)$ with $n(c,d)=1$. and we have $n(c,d)=0$ for all other $d\in Crit_{0}(E)$. The exponents $\alpha_{1},\alpha_{2},\beta_{1},\beta_{2}$ are uniquely determined by the relations (ii) and (iii).\\[1em]
To illustrate the procedure for determining $\partial(c)$ for a cord $c\in(K\times K)\setminus A$, we will consider two examples.%
\begin{ex}
We want to determine $\partial(c_{1})$ with $c_{1}$ as in Figure \ref{exdel1}. 
\begin{figure}[H]\centering
\includegraphics[scale=0.5]{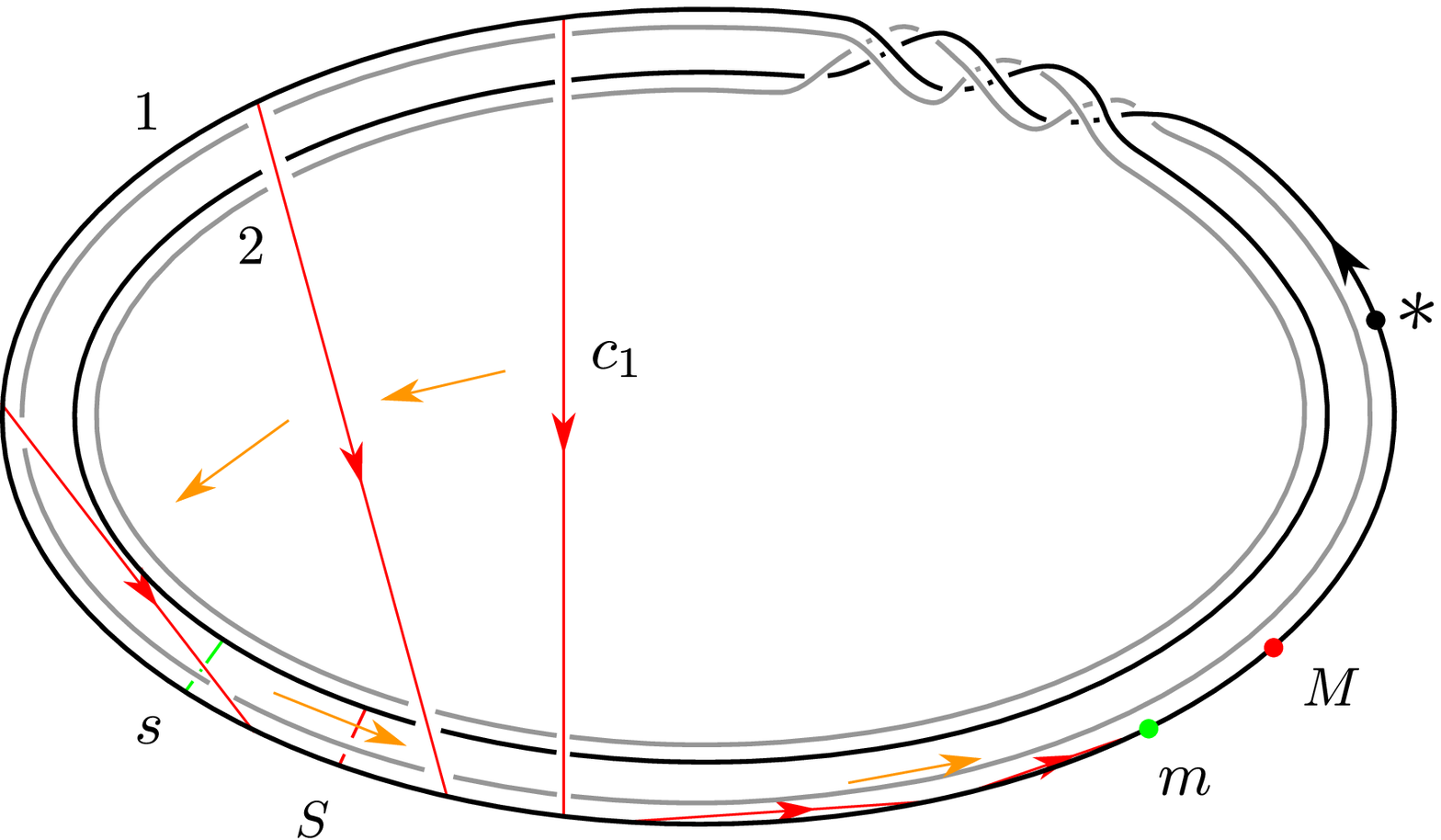}
\caption{Movement of $c_{1}$ along the gradient flow}\label{exdel1}
\end{figure}%
The movement of the cord along the gradient flow is indicated by the orange arrows in the figure. During its movement, the cord neither intersects the base point nor the framing and ends at $m$, so it is contractible. According to relation (i), we get
\[\partial(c_{1})=1-\mu.\]
\end{ex}
\begin{ex}\label{Exdel2}
In the second example we want to determine $\partial(c_{2})$ with $c_{2}$ as in Figure \ref{exdel2}. 
\begin{figure}[ht]\centering 
\includegraphics[scale=0.5]{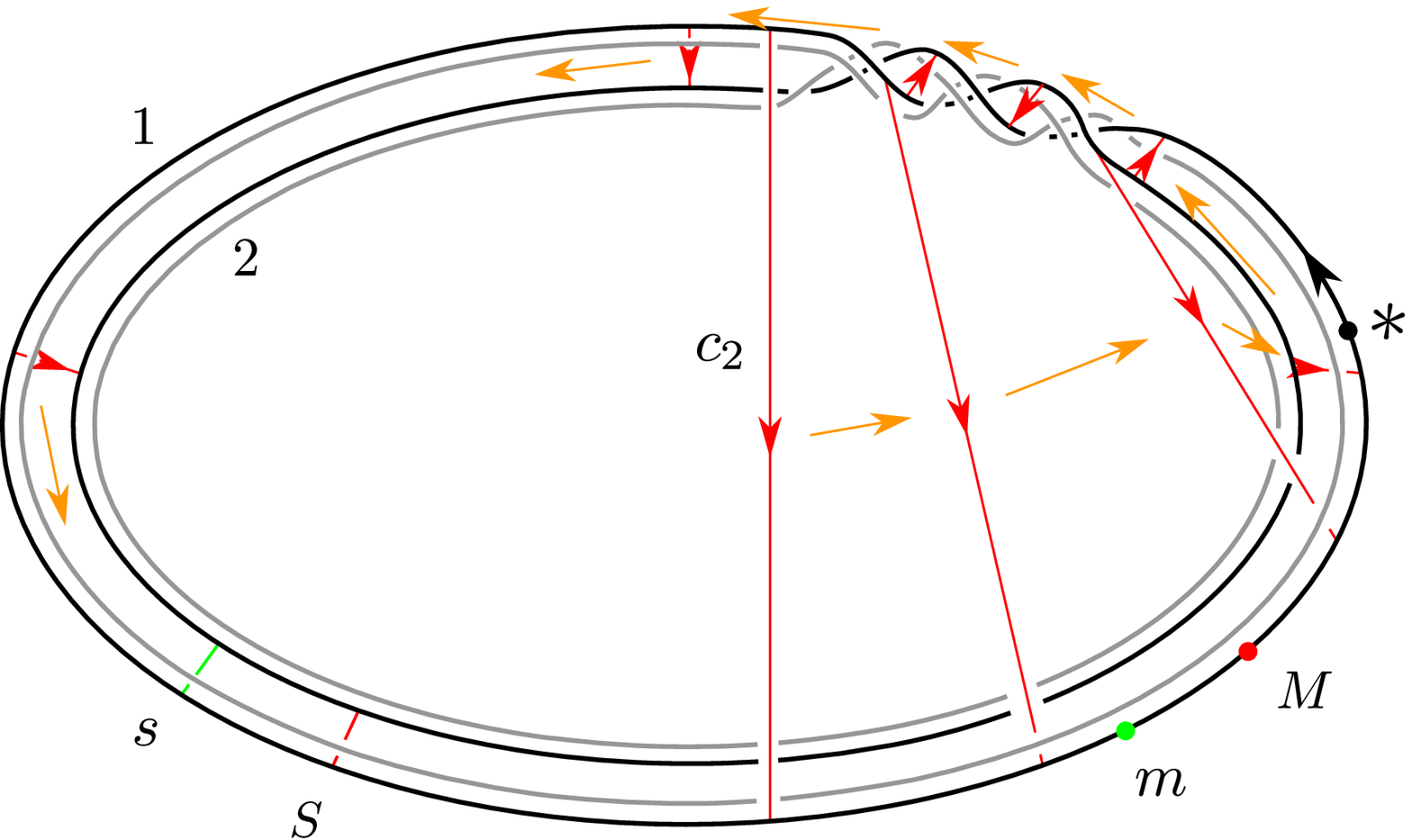}
\caption{Movement of $c_{2}$ along the gradient flow}\label{exdel2}
\end{figure}%
The movement of the cord is again indicated by orange arrows. As can be seen from the figure, the cord first intersects the framing at its endpoint, which results in a multiplication by $\mu$ from the right according to relation (ii), and then at its startpoint (results in multiplication by $\mu^{-1}$ from the left). Then the endpoint of the cord crosses the base point in the direction of the orientation of the knot, which, according to relation (iii), yields a contribution of $\cdot\lambda^{-1}$. At the crossings of the knot, the cord first intersects the framing twice at its endpoint (yields $\cdot\mu^{-2}$), then twice at its startpoint (yields $\mu^{2}\cdot$), and finally twice at its endpoint (yields $\cdot\mu^{-2}$). Then the cord runs without further intersections to the cord $s^{t}$ of index 0. So in total the following results:
\begin{align*}
\partial(c_{2})&=\mu^{2}\mu^{-1}s^{t}\mu\lambda^{-1}\mu^{-2}\mu^{-2}=\\
&=\mu s^{t}\lambda^{-1}\mu^{-3}
\end{align*}
because $\lambda$ and $\mu$ commute with each other, but not with $s^{t}$.
\end{ex}
Let's look at $\delta$ now: When determining $\partial(c)$ for a cord $c\in (K\times K)\setminus A$ it may happen that the cord intersects the knot $K$ in its interior during the movement along the gradient flow. Let $P$ be the set of all these intersections. According to relation (iv), at each $p\in P$ the cord is split into two cords, $c_{p,1}$ and $c_{p,2}$, which are multiplied by each other. Since the algebra $C_{0}$ is not commutative, the order of the factors must be chosen according to the orientation of $c$. Let $c_{p,1}$ be the first and $c_{p,2}$ be the second part in the direction of the orientation of $c$. Then we determine $\widehat{D}(c_{p,1})$ and $\widehat{D}(c_{p,2})$. Now we can define the map $\delta$ recursively: 
\begin{align*}
\delta:(K\times K)\setminus A&\to C_{0}\\
c&\mapsto\sum_{p\in P}\sign(c,p)\lambda^{\alpha_{1}(p)}\mu^{\beta_{1}(p)}\widehat{D}(c_{p,1})\widehat{D}(c_{p,2})\lambda^{\alpha_{2}(p)}\mu^{\beta_{2}(p)},
\end{align*}
where
\begin{itemize}
\item $\alpha_{1}(p),\alpha_{2}(p),\beta_{1}(p),\beta_{2}(p)\in\mathbb{Z}$ are analogous to the above definition such that the intersections with the framing or the base point occurring during the movement of the cord $c$ along $-\nabla E$ up to the point $p$ are taken into account according to the relations (ii) and (iii), respectively, and 
\item $\sign(c,p)\in\lbrace-1,1\rbrace$ is the sign in front of the product of the resulting cords obtained by applying relation (iv) to the cord $c$ at the intersection point $p$. 
\end{itemize}
\begin{rem}
When splitting $c$ into the cords $c_{p,1}$ and $c_{p,2}$ the exact position of the framing relative to the two cords, as shown in relation (iv), must be taken into account. For this it may be necessary to bend the framing a little bit in one direction. Then the framing is brought back to its original position, and one of the parts $c_{p,1}$ or $c_{p,2}$ is intersected and multiplied by $\mu$ or $\mu^{-1}$ from left or right according to relation (ii).
\end{rem}
$\delta$ is well defined:
\begin{itemize}
\item When determining $\widehat{D}(c)$ for a cord $c\in(K\times K)\setminus A$ relation (iv) is applied only finitely many times according to Lemma \ref{finitelymanyintersections}. Thus, both the sum and the recursion depth are finite.
\item The way in which the exceptional set $A$ is constructed prevents the resulting cords from being in $A$ when a cord is split according to relation (iv).
\end{itemize}%
\begin{ex}\label{exdelta}
We look again at $c_{2}$ from Figure \ref{exdel2} to determine $\delta(c_{2})$. Together with the result from Example \ref{Exdel2} we get $\widehat{D}(c_{2})$. If $c_{2}$ is moved along the negative gradient, there is a single intersection with the knot in the interior of the cord at point~$p$. There relation (iv) is applied and the cord is split into $c_{2,p,1}$ and $c_{2,p,2}$, as shown in Figure~\ref{exdelta1}. 
\begin{figure}[ht]\centering 
\subfigure{\includegraphics[scale=0.42]{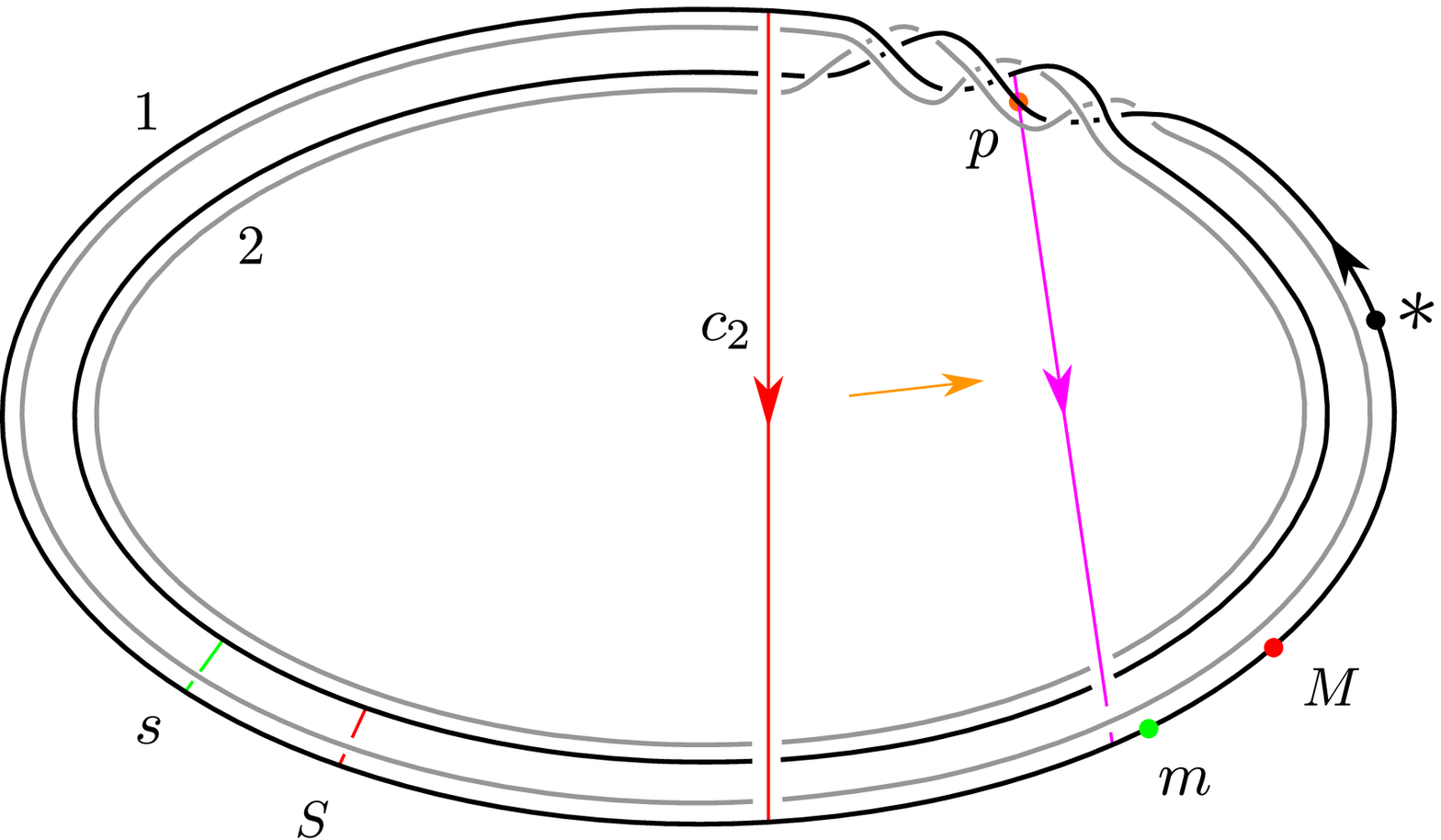}}
\hspace*{.4cm}\subfigure{\includegraphics[scale=0.47]{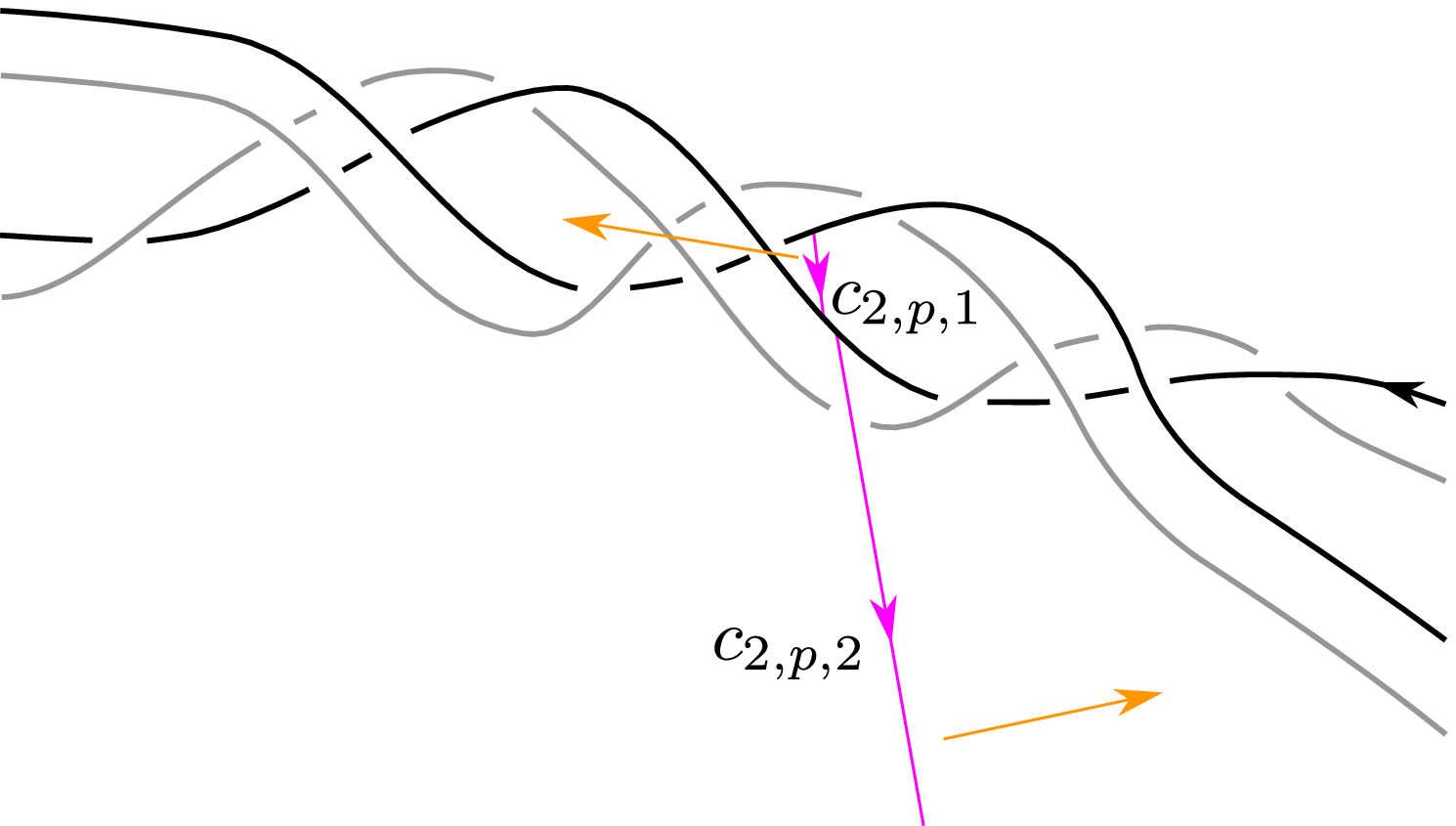}}
\caption{Determination of $\delta(c_{2})$}\label{exdelta1}
\end{figure}%
The position of the framing relative to the cord $c_{2,p,2}$ results from the representation of relation (iv) in Figure \ref{relationivequiv}. This figure also shows $\sign(c_{2},p)=-1$. According to Example \ref{Exdel2}, we get $\alpha_{1}(p)=\alpha_{2}(p)=0$ because the cord $c_{2}$ does not cross the base point before reaching the point $p$, and with relation (ii) we get $\beta_{1}(p)=-1,\beta_{2}(p)=1$ because the cord intersects the framing once at its startpoint and once at its endpoint. Thus, we get
\[\delta(c_{2})=-\mu^{-1}\widehat{D}(c_{2,p,1})\widehat{D}(c_{2,p,2})\mu\]
and we have to determine $\widehat{D}(c_{2,p,1})$ and $\widehat{D}(c_{2,p,2})$:\\
The cord $c_{2,p,1}$ intersects the framing once at its startpoint and twice at its endpoint, but not the base point or the knot in its interior, and then runs to the cord $s^{t}$. Thus, the application of relation~(ii) results in
\[\widehat{D}(c_{2,p,1})=\mu s^{t}\mu^{-2}.\]
For the cord $c_{2,p,2}$ we get according to the relations (i), (ii) and (iii)
\begin{align*}
\partial(c_{2,p,2})&=\mu^{-1}(1-\mu)\lambda^{-1}\mu^{-1}=\\
&=\lambda^{-1}\mu^{-2}-\lambda^{-1}\mu^{-1}
\end{align*}
and we get an intersection of the cord with the knot in its interior at the point $q$. There the cord is split into the parts $c_{2,p,2,q,1}$ and $c_{2,p,2,q,2}$ as shown in Figure \ref{exdelta2}. 
\begin{figure}[ht]\centering 
\subfigure{\includegraphics[scale=0.42]{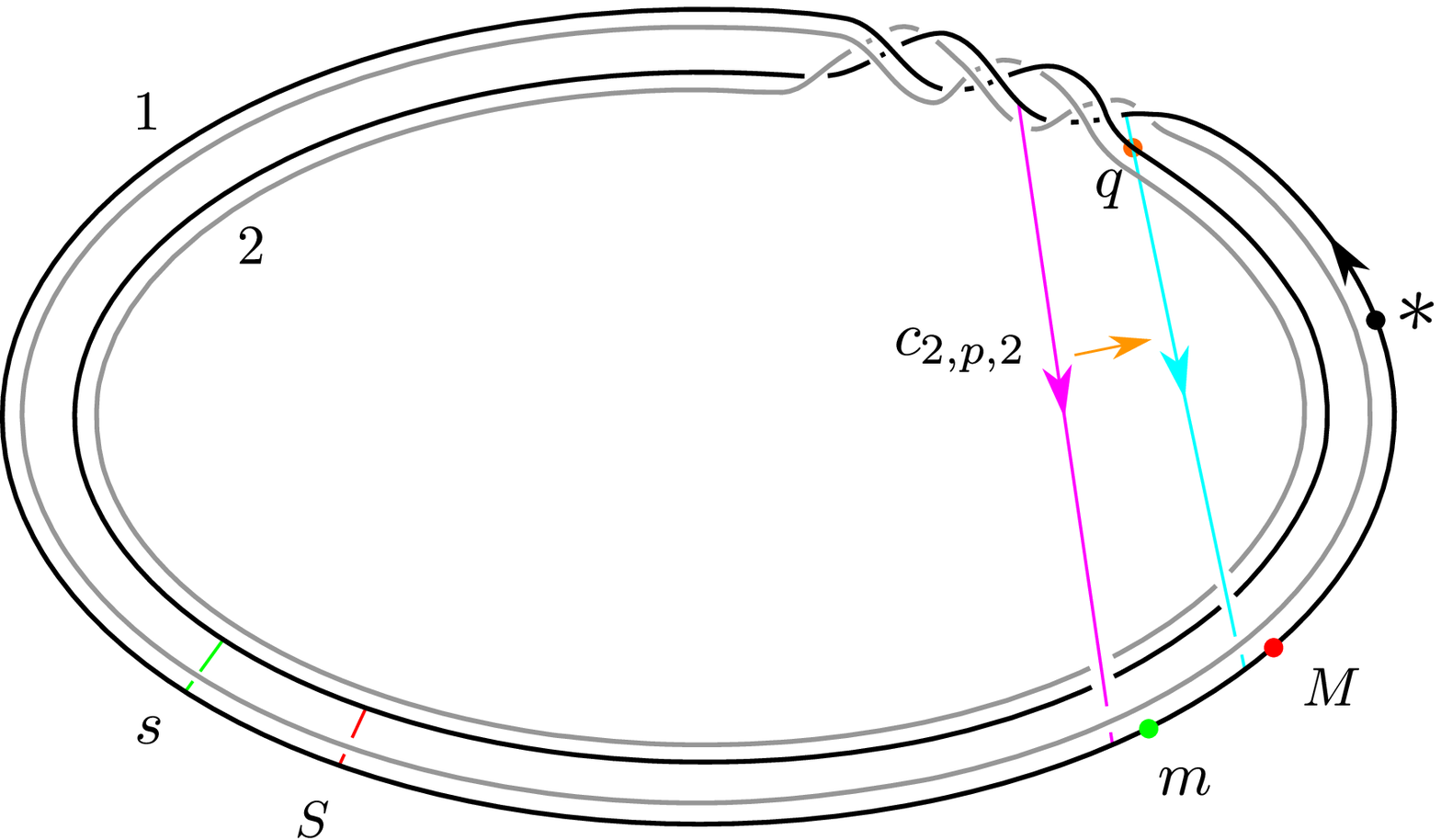}}
\hspace*{.4cm}\subfigure{\includegraphics[scale=0.43]{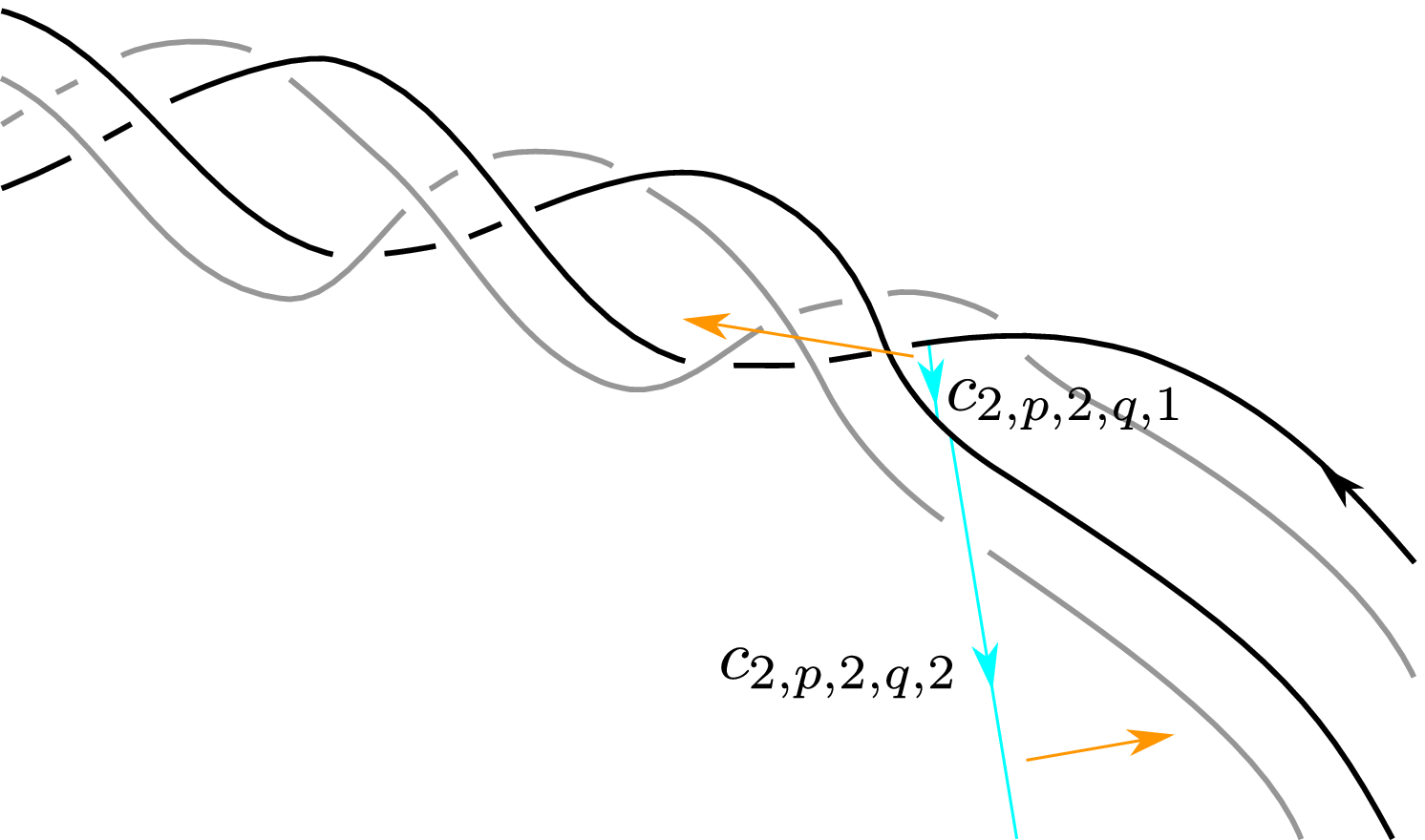}}
\caption{Determination of $\delta(c_{2,p,2})$}\label{exdelta2}
\end{figure}%
Since these two cords have no intersections with the knot in their interior during their movement along the gradient flow, we get
\begin{align*}
\delta(c_{2,p,2})&=-\mu^{-1}\widehat{D}(c_{2,p,2,q,1})\widehat{D}(c_{2,p,2,q,2})=\\
&=-\mu^{-1}(\mu\mu^{2}s^{s}\mu^{-2})(\mu^{2}s^{t}\mu^{-2}\mu^{-2}\lambda^{-1})=\\
&=-\mu^{2}s^{s}s^{t}\lambda^{-1}\mu^{-4}.
\end{align*}
Now we can determine $\delta(c_{2})$:
\begin{align*}
\delta(c_{2})&=-\mu^{-1}\widehat{D}(c_{2,p,1})\widehat{D}(c_{2,p,2})\mu=\\
&=-\mu^{-1}(\mu s^{t}\mu^{-2})(\lambda^{-1}\mu^{-2}-\lambda^{-1}\mu^{-1}-\mu^{2}s^{s}s^{t}\lambda^{-1}\mu^{-4})\mu=\\
&=-s^{t}\lambda^{-1}\mu^{-3}+s^{t}\lambda^{-1}\mu^{-2}+s^{t}s^{s}s^{t}\lambda^{-1}\mu^{-3}.
\end{align*}
Together with the result from Example \ref{Exdel2}, we get
\begin{align*}
\widehat{D}(c_{2})&=\partial(c_{2})+\delta(c_{2})=\\
&=\mu s^{t}\lambda^{-1}\mu^{-3}-s^{t}\lambda^{-1}\mu^{-3}+s^{t}\lambda^{-1}\mu^{-2}+s^{t}s^{s}s^{t}\lambda^{-1}\mu^{-3}.
\end{align*}
This example clearly shows that even for simple knots $K$ the determination of $\widehat{D}(c)$ for a cord $c\in(K\times K)\setminus A$ can be laborious and has to be done very carefully.
\end{ex}
To define the cord algebra of a knot $K$, we first choose two points $k_{+},k_{-}\in W^{u}(k)$ for each critical point $k\in Crit_{1}(E)$ as follows:\\
We move $k$ a little bit in the direction of its unstable manifold such that the startpoint of $k$ moves in the direction of the orientation of the knot. Choose a point $k_{+}$ near $k$ such that none of the sets $S,F$, and $B$ intersects the unstable manifold of $k$ between $k$ and $k_{+}$. Analogously choose $k_{-}$ on the other side of $k$. Examples to illustrate the choice of $k_{+}$ and $k_{-}$ are shown in Figure \ref{kplusminus}: On the left handside as cords on the knot and on the right handside as points in $K\times K$.\\
\begin{figure}[ht]\centering 
\subfigure{\includegraphics[scale=0.45]{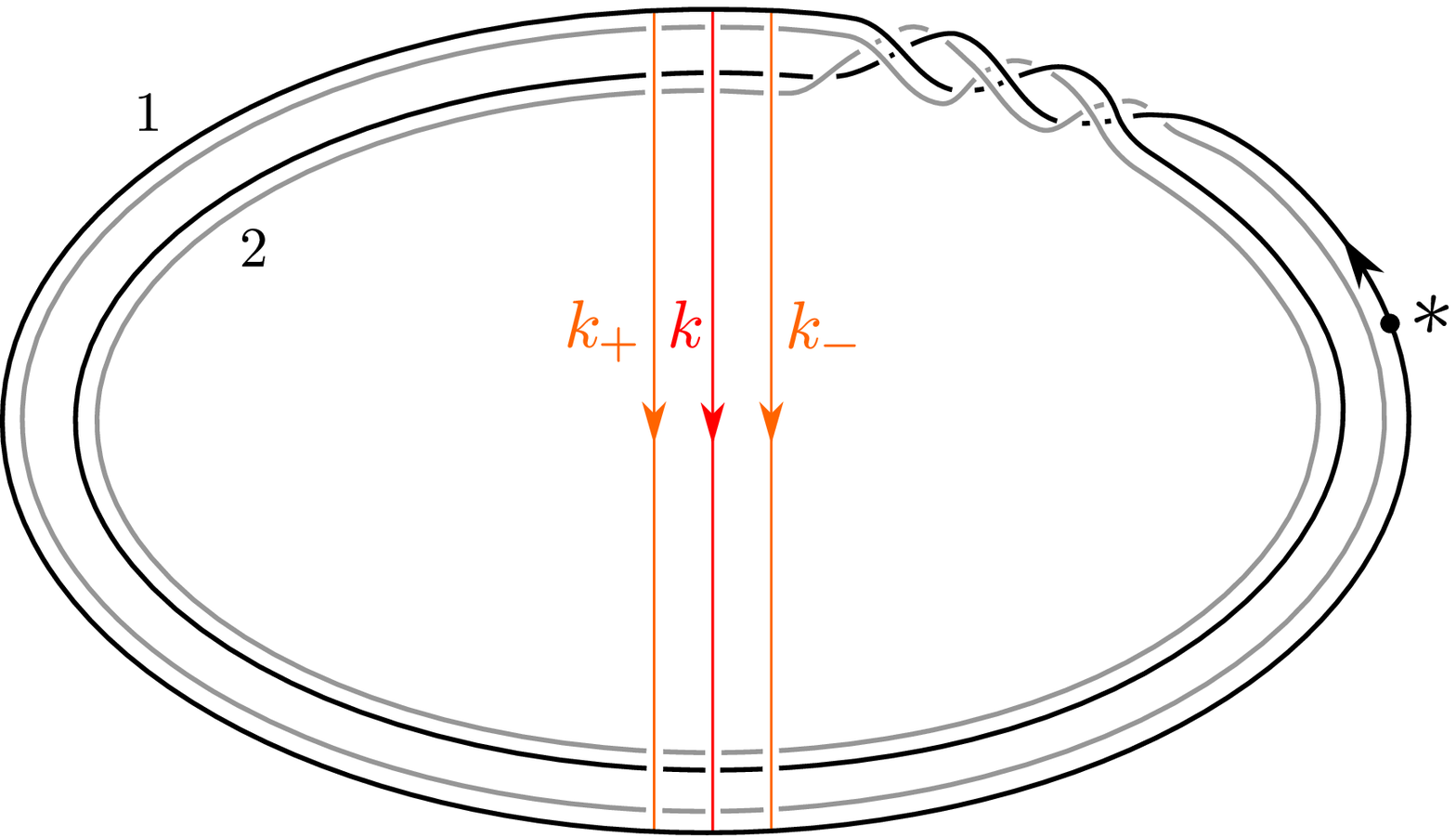}}
\hspace*{1.2cm}\subfigure{\includegraphics[scale=0.5]{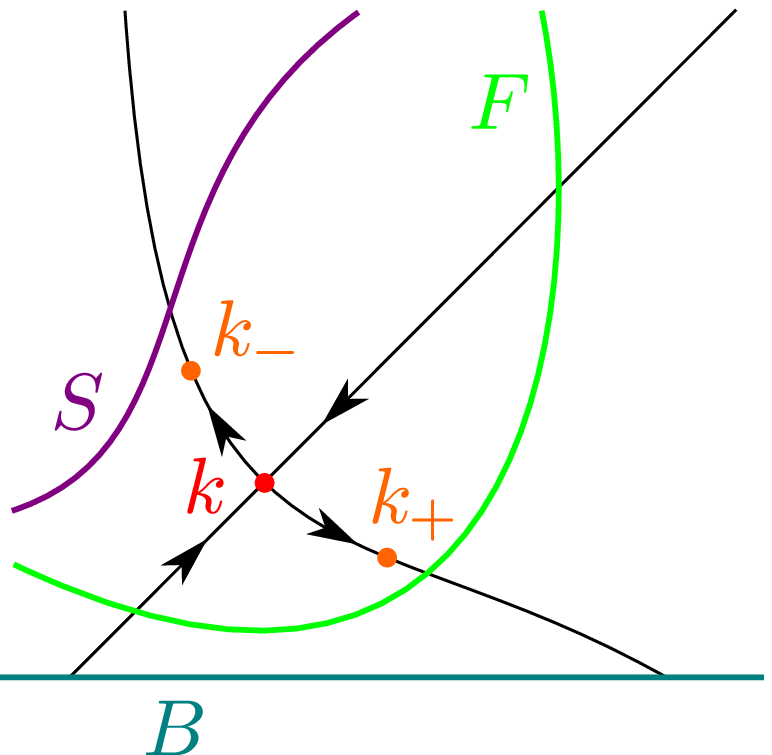}}
\caption{Choice of $k_{+}$ and $k_{-}$}\label{kplusminus}
\end{figure}%

Now we define the linear map $D$ on generators:
\begin{align*}
D:C_{1}&\to C_{0}\\
k&\mapsto\widehat{D}(k_{+})-\widehat{D}(k_{-}).
\end{align*}
\begin{ex}
We want to determine $D(k_{1,1}^{s})$ for the cord $k_{1,1}^{s}$ of the right-handed trefoil knot as in Figure \ref{index012}. As can easily be seen, $(k_{1,1}^{s})_{+}$ can be chosen as the cord $c_{1}$ in Figure \ref{exdel1} and $(k_{1,1}^{s})_{-}$ can be chosen as the cord $c_{2}$ in Figure \ref{exdel2}. With the above results we get
\begin{align*}
D(k_{1,1}^{s})&=\widehat{D}(c_{1})-\widehat{D}(c_{2})=\\
&=1-\mu-\mu s^{t}\lambda^{-1}\mu^{-3}+s^{t}\lambda^{-1}\mu^{-3}-s^{t}\lambda^{-1}\mu^{-2}-s^{t}s^{s}s^{t}\lambda^{-1}\mu^{-3}
\end{align*}
because the cord~$c_{1}$ does not intersect the knot in its interior during its movement along the negative gradient.
\end{ex}
\begin{rem}\label{RemDMequalzero}
On the diagonal of $K\times K$ there are exactly two critical points, $m$ of index 0 and $M$ of index 1 (see Remark \ref{Morsegeneration}). Both sides of the unstable manifold of $M$, which corresponds to the diagonal $\Delta$ in $K\times K$, end at $m$. According to relation~(i), we have $m=1-\mu$. To determine $D(M)$, we select $M_{+}$ and $M_{-}$ as shown in Figure \ref{DMequalzero}. 
\begin{figure}[ht]\centering
\includegraphics[scale=0.45]{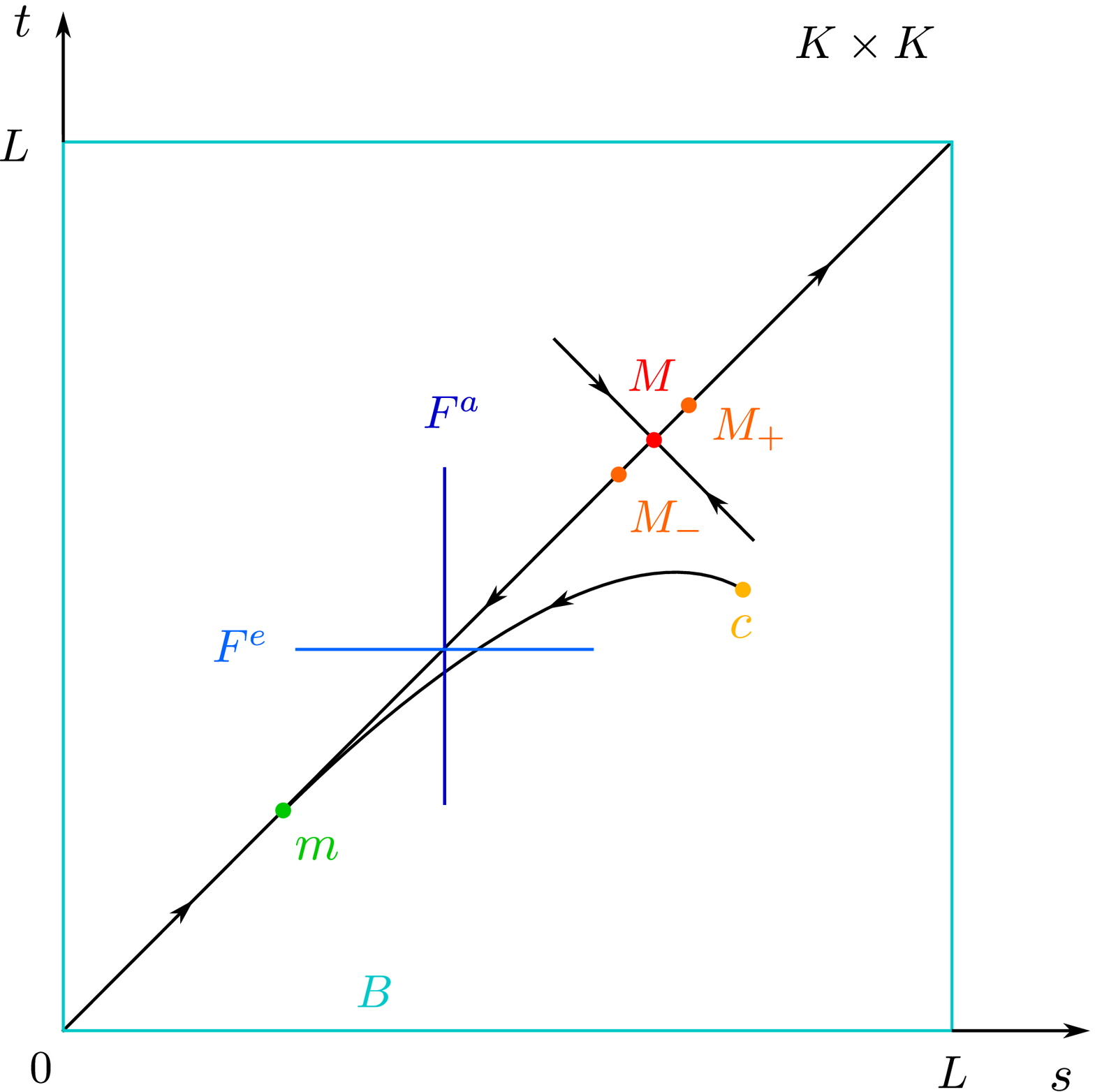}
\caption{Determination of $D(M)$}\label{DMequalzero}
\end{figure}

Since the set $S$ does not intersect the diagonal, we get
\[\delta(M_{+})=\delta(M_{-})=0.\]
But the diagonal intersects $B$ and possibly $F$. When determining $\partial(M_{\pm})$ the relations (ii) and (iii) must therefore be taken into account. Since the startpoint and the endpoint of the cord $M$ coincide, both intersect the base point and the framing. In the example of Figure \ref{DMequalzero} the following therefore holds:
\[\partial(M_{+})=\lambda(1-\mu)\lambda^{-1}=1-\mu\]
and depending on the exact course of the framing (i.e. depending on how relation (ii) is applied):
\begin{align*}
\partial(M_{-})&=\mu(1-\mu)\mu^{-1}=1-\mu\text{ or}\\
\partial(M_{-})&=\mu^{-1}(1-\mu)\mu=1-\mu.
\end{align*}
The same result is obtained if the diagonal intersects the set $F$ several times.\\
These considerations are independent of the concrete knot. Therefore, we get that in total the following holds for all knots: 
\[D(M)=\widehat{D}(M_{+})-\widehat{D}(M_{-})=(1-\mu)-(1-\mu)=0.\]
By an analogous consideration we get for a cord $c$, whose trajectory runs close enough to the diagonal and ends at $m$ (see Figure \ref{DMequalzero}): Also here both startpoint and endpoint of the cord intersect the base point or the framing if at all such an intersection takes place. Therefore, $\partial(c)=1-\mu$ follows here as well.\\
If the trajectory of a cord runs close enough to the diagonal in $K\times K$, the further intersections of this trajectory with $B$ and $F$ can be ignored. 
\end{rem}
Now we can define the cord algebra of a knot:
\begin{defi}\label{cordalgebradef}
Let $K\subset\mathbb{R}^{3}$ be a generic oriented knot, equipped with a Seifert framing and a base point. The \textit{cord algebra} of $K$ is
\[\Cord(K):=C_{0}(K)/I_{K},\]
where $I_{K}=\langle D(C_{1}(K))\rangle\subset C_{0}(K)$ is the twosided ideal generated by the image of $C_{1}(K)$ under the map $D$.
\end{defi}
\begin{rem}
It may happen that the unstable manifold of a critical point $k$ of index 1 is (in a neighborhood of $k$) parallel to the $t$-axis of $K\times K$. In this case the startpoint of $k$ does not move if $k$ is moved along its unstable manifold. Therefore, we can't determine $k_{+}$ and $k_{-}$ as described above and the map $D$ is not welldefined. But we can choose two points near $k$ on both sides of its unstable manifold and call them $k_{+}$ and $k_{-}$ as we like. Then $D$ is welldefined up to a sign. As can be seen in Definition \ref{cordalgebradef}, we get the same result in the quotient, regardless of which cord we designate with $k_{+}$ and $k_{-}$.\\
Therefore, the cord algebra of a knot is welldefined.
\end{rem}

\subsection{Change of framing}\label{Framing}
In the definition of the cord algebra the knot $K$ is equipped with the Seifert framing. However, the determination of $D(C_{1}(K))$ is more convenient if a framing~$\nu$ is used instead of the Seifert framing such that the associated set $K^{\prime}$ is a vertically shifted copy of $K$. For this purpose the strands of the knot are arranged one above the other and each strand is drawn over a slightly larger ellipse than the one below as described above. Then $K^{\prime}$ as a copy of $K$ can be moved vertically upwards by an $\varepsilon>0$ which is small enough. For each $s\in S^{1}$ then $\nu(s)$ is the unit normal vector pointing from the origin of the normal plane~$N(s)$ in the direction of the intersection of $N(s)$ with $K^{\prime}$. In the diagram, however, the framing is drawn slightly outside the corresponding strand in order to clearly distinguish it from the strand. Such a framing is called a \textit{blackboard framing}.
\begin{figure}[!ht]\centering
\begin{minipage}{0.55\textwidth}
\includegraphics[width=1.0\textwidth]{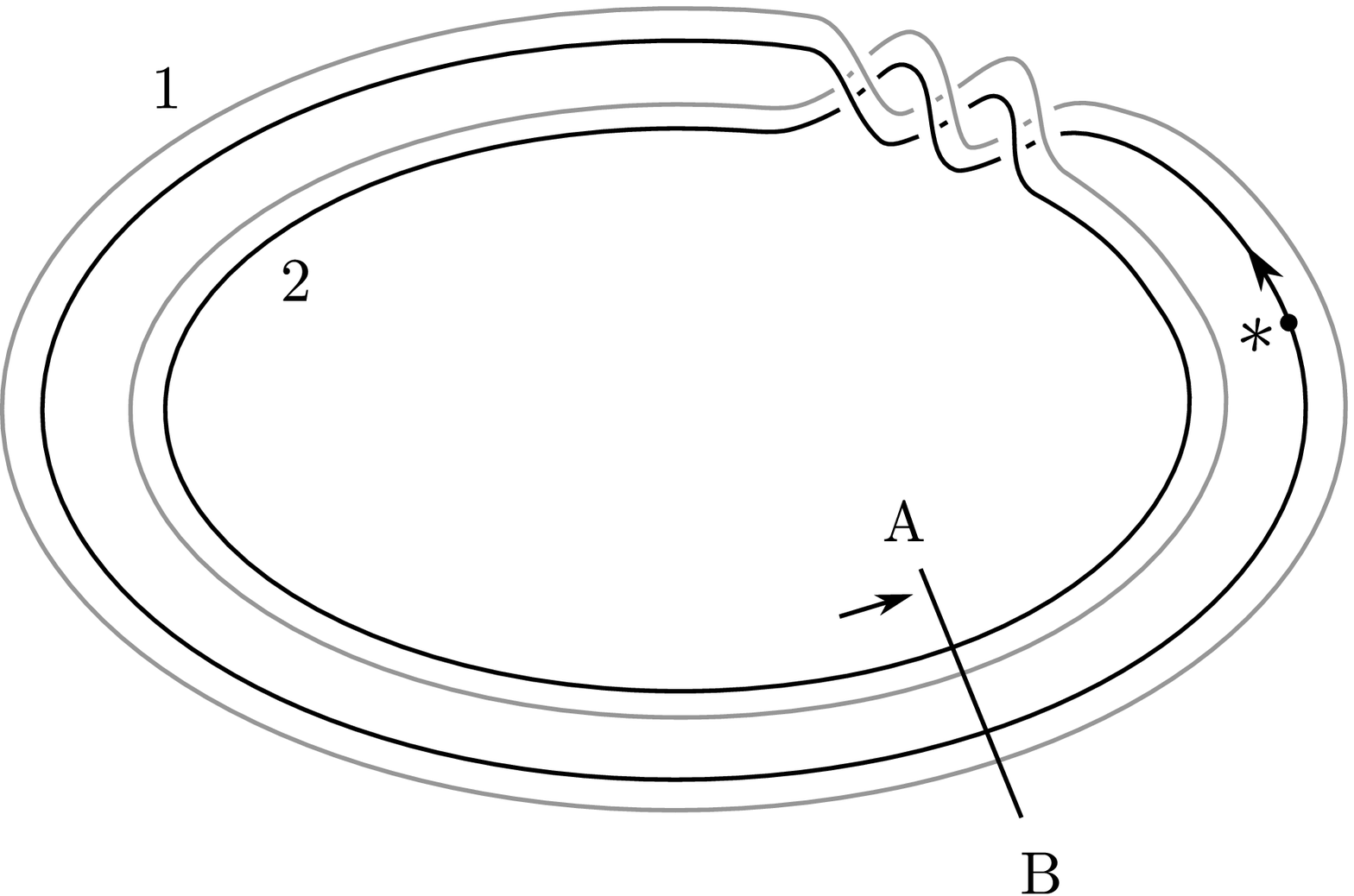}
\end{minipage}\hspace*{1cm}
\begin{minipage}{0.075\textwidth}
\includegraphics[width=1.0\textwidth]{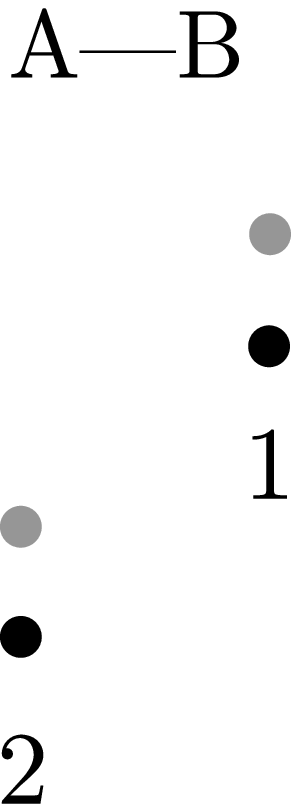}
\end{minipage}\\
\subfigure{\begin{minipage}[b]{0.55\textwidth}~\end{minipage}}
\subfigure{\begin{minipage}[b]{0.075\textwidth}~\end{minipage}}
\caption{Blackboard framing}\label{blackboardframing}
\end{figure}
In Figure \ref{blackboardframing} the right-handed trefoil knot with the blackboard framing is shown on the left side, on the right side a section through the knot from A to B with view direction in the direction of the arrow. This kind of representation ensures that the cords corresponding to critical points in $K\times K$ do not intersect the framing.\\
Using this framing we can now determine $D(C_{1}(K))$ as described above. However, the following adjustments will have to be made in order to get the cord algebra with respect to the Seifert framing:\\
To change the framing, $n$ additional windings of the framing around the knot $K$ are added. If we look in the direction of the orientation of the knot, these additional windings run in a mathematically positive resp.~negative direction (i.e. counterclockwise resp.~clockwise) around the knot and are taken into account with a sign +1 resp.~-1. So we have $n\in\mathbb{Z}$. If we have determined $D(C_{1}(K))$ for a framing $K_{1}^{\prime}$, we get $D(C_{1}(K))$ for a framing $K_{2}^{\prime}$ by applying various transformations. The first transformation is $\lambda\mapsto\lambda\mu^{n}$, where $n$ is the number of additional windings, with sign, of $K_{2}^{\prime}$ compared to $K_{1}^{\prime}$ (see \cite{Cie3}, after Remark 2.3). This results from the following consideration: The additional windings will first be added near the base point. Thus, when crossing the base point each cord gets the factor $\mu^{\pm n}$ according to relation (ii) in addition to the factor $\lambda^{\pm1}$ according to relation~(iii).\\
These additional windings change the linking number of the knot and its framing and we get:
\[\lk(K,K_{2}^{\prime})=\lk(K,K_{1}^{\prime})-n,\]
since the addition of a winding in positive direction around the knot creates two additional left-handed crossings, and thus decreases the linking number by 1. Let $K_{1}^{\prime}=K_{b}^{\prime}$ be the blackboard framing of the knot $K$ and $K_{2}^{\prime}=K_{S}^{\prime}$ be the Seifert framing. Then we have $n=\lk(K,K_{b}^{\prime})$ since $\lk(K,K_{S}^{\prime})=0$. Thus, the first transformation is given by
\[\lambda\mapsto\lambda\mu^{\lk(K,K_{b}^{\prime})}.\]
Then the added windings are shifted along the knot in such a way that a Seifert framing obtained by the Seifert algorithm actually occurs. It can happen that one of these windings intersects a generator of $C_{0}(K)$ at its start or end point. These intersections must now be taken into account in further transformations according to relation (ii), as explained in the following example.
\begin{ex}\label{changeofframing}
For the right-handed trefoil knot $K$ we have $\lk(K,K_{b}^{\prime})=3$. Thus, the first transformation is
\[\lambda\mapsto\lambda\mu^{3}.\]
\begin{figure}[ht]\centering 
\subfigure[Blackboard framing]{\includegraphics[scale=0.39]{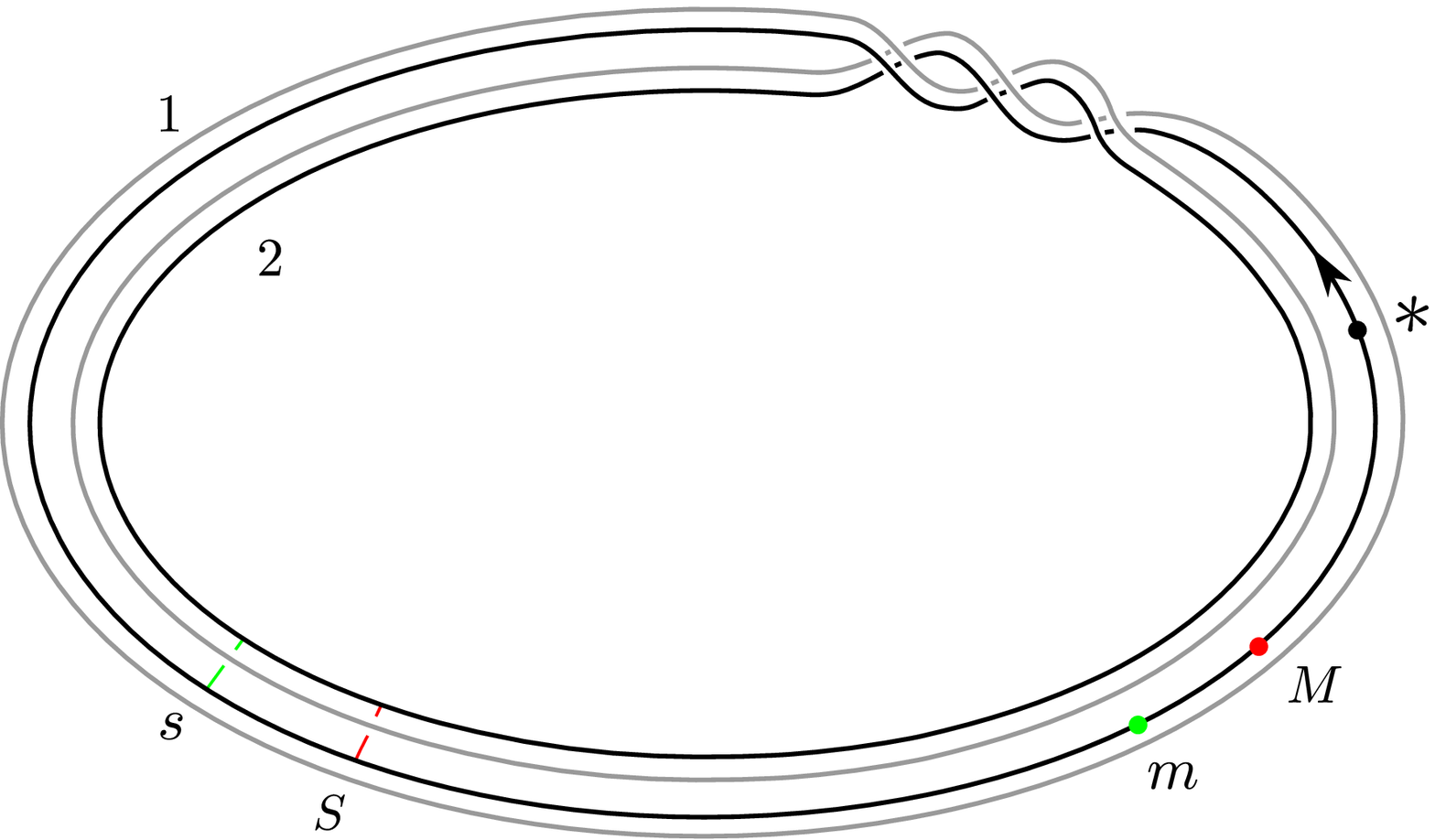}}
\subfigure[Adding 3 windings]{\includegraphics[scale=0.39]{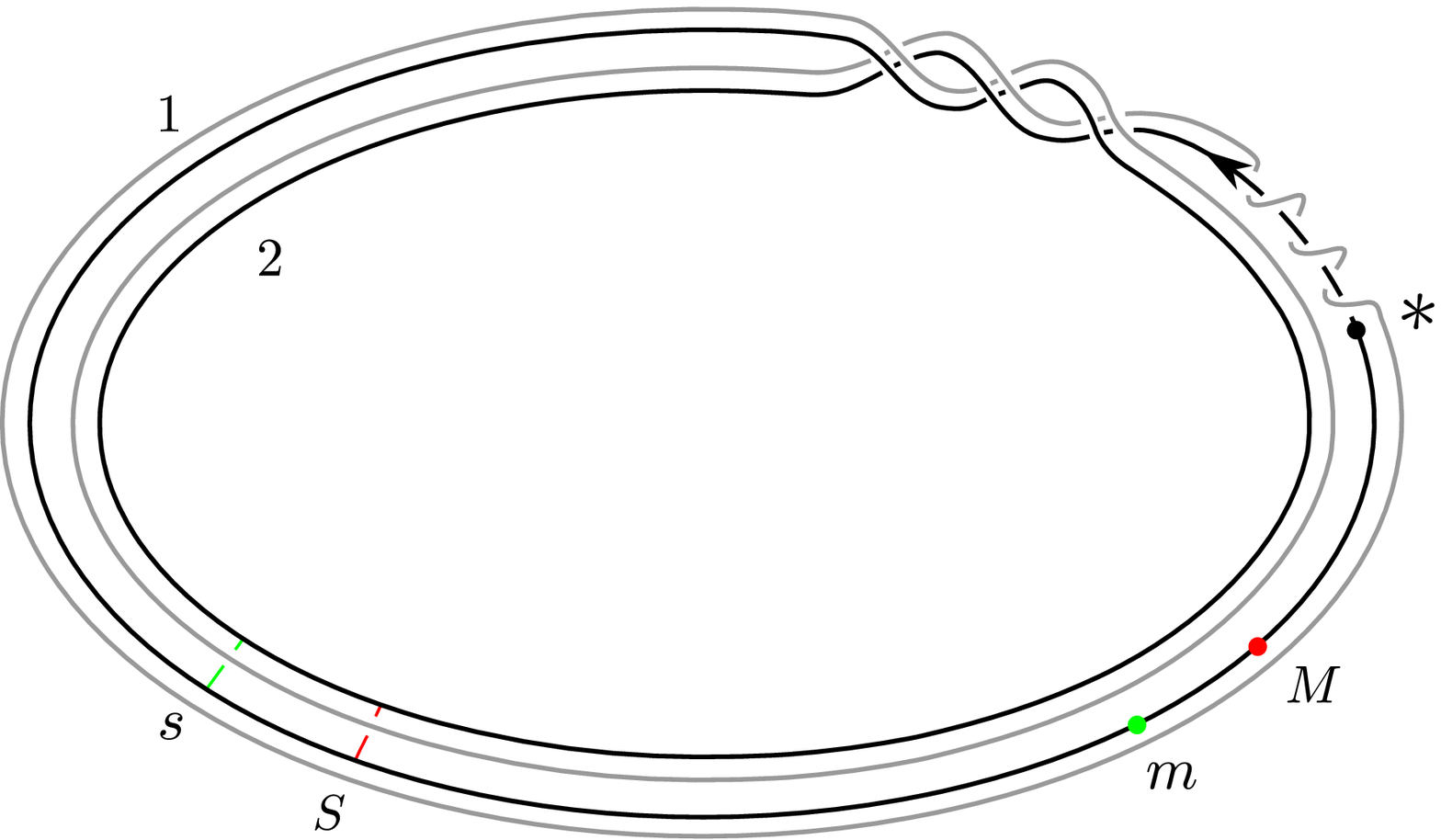}}
\subfigure[Shifting the additional windings]{\includegraphics[scale=0.4]{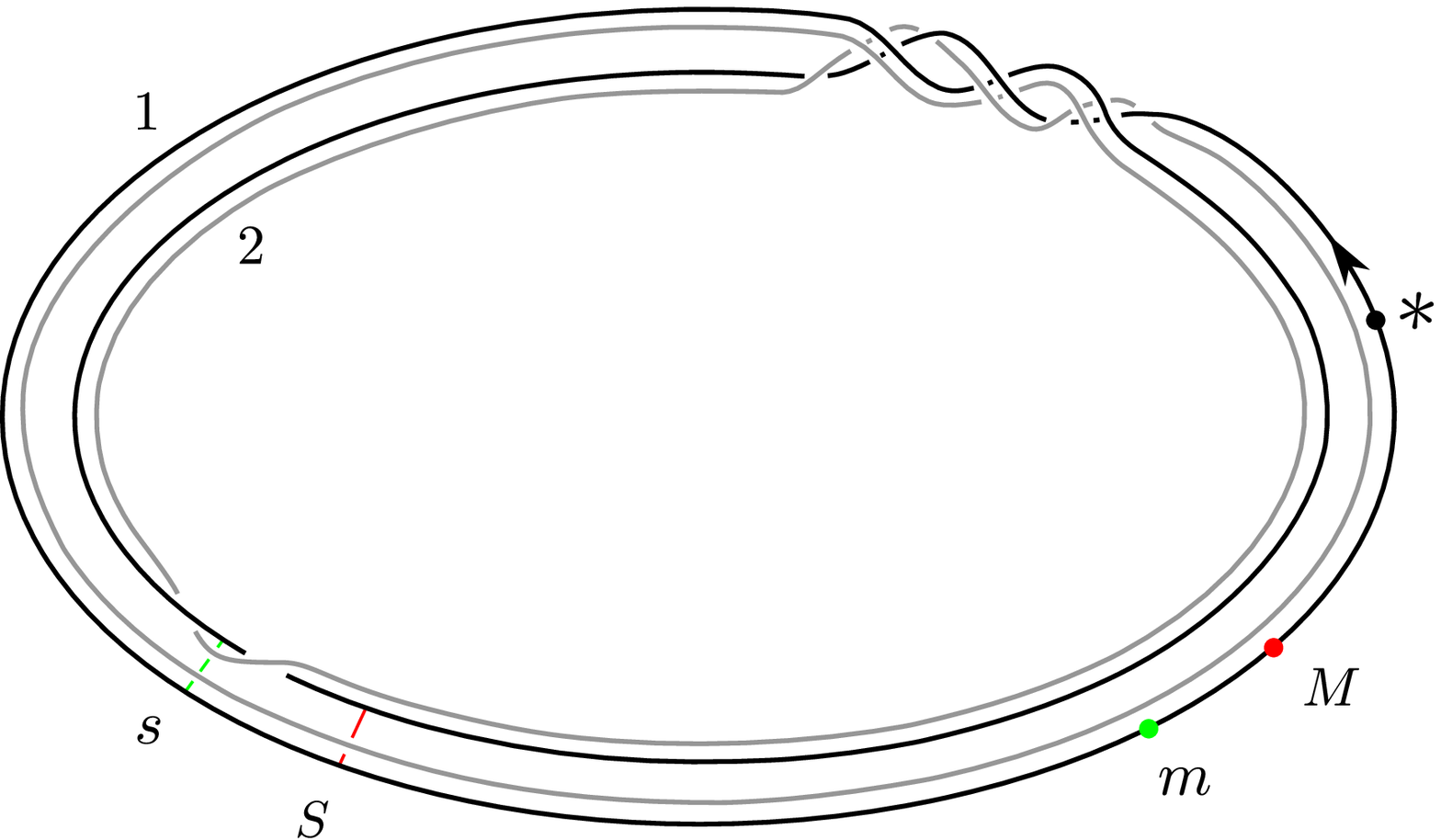}}
\subfigure[Seifert framing]{\includegraphics[scale=0.4]{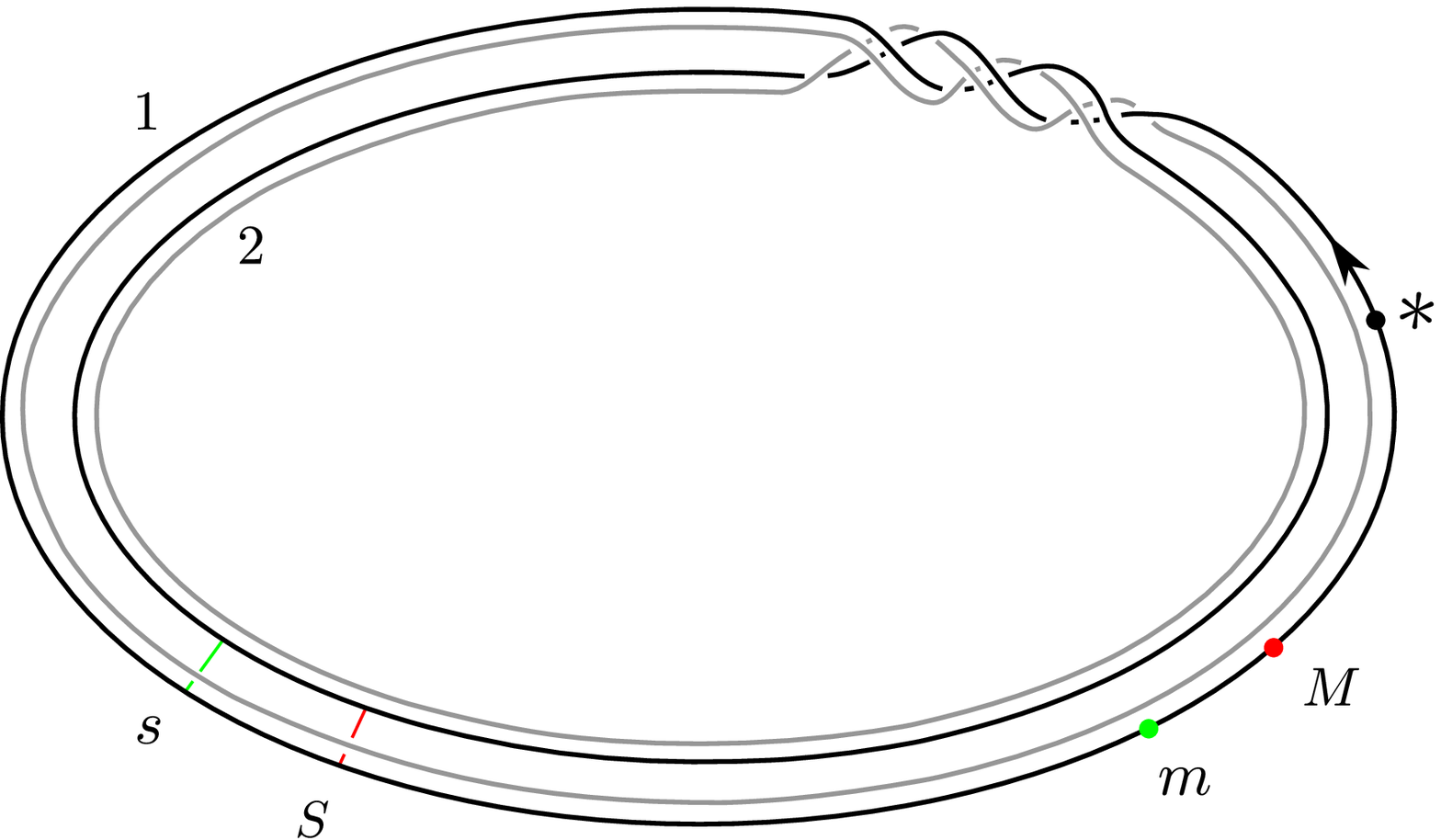}}
\caption{Changing from the blackboard framing to the Seifert framing in the example of the right-handed trefoil knot}\label{Changeofframing}
\end{figure}%
If the three windings of the framing added in the neighborhood of the base point are moved to the correct places, i.e. to the crossings of the knot, one of these windings intersects the cord $s$ during the movement, i.e. the cords $s^{s}$ and $s^{t}$ if the orientation is taken into account, see Figure~\ref{Changeofframing}. According to  relation (ii), the further transformations are 
\begin{align*}
s^{s}&\mapsto\mu^{-1}s^{s}\\
s^{t}&\mapsto s^{t}\mu.
\end{align*}
\end{ex}
\begin{rem}
The blackboard framing may have to be changed by a small perturbation in such a way that the assumptions of Remark \ref{framingassumption} for a generic framing are satisfied.
\end{rem}

\subsection{Examples: Unknot and right handed trefoil}\label{Examples}
\begin{ex}
First, we want to determine the cord algebra of the unknot $U$. We draw the unknot with the blackboard framing and choose a base point as shown in Figure \ref{exunknot}.
\begin{figure}[htbp]\centering
\includegraphics[scale=0.35]{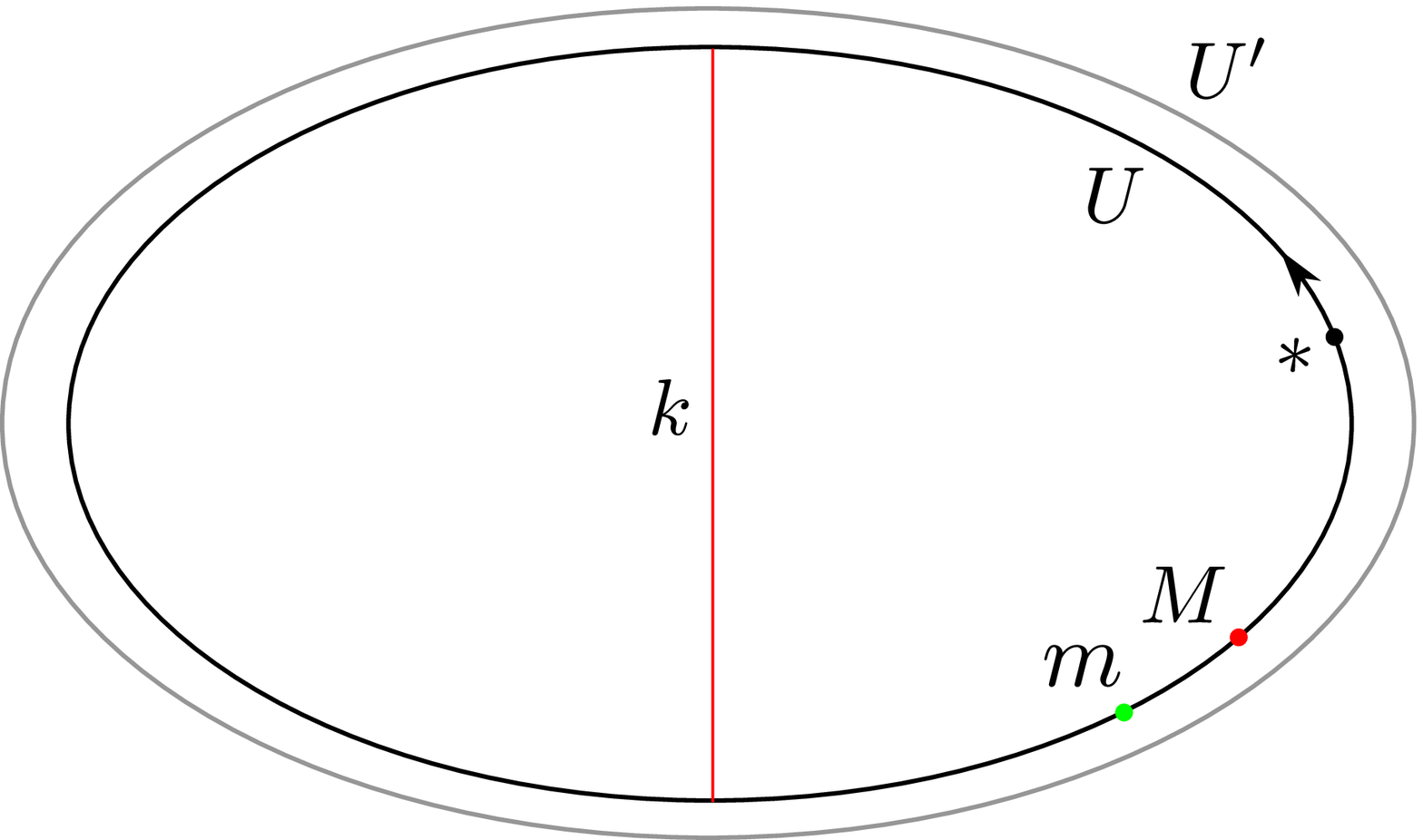}
\caption{The unknot equipped with the blackboard framing }\label{exunknot}
\end{figure}%
Let $R=\mathbb{Z}[\lambda^{\pm1},\mu^{\pm1}]$ be the ring as described above. The critical points of index 0 and 1, and thus $C_{0}$ and $C_{1}$, can be easily determined from the figure:
\begin{align*}
C_{0}&=\langle m\rangle_{R}\overset{\text{Relation (i)}}{=}\langle1-\mu\rangle_{R}=R& C_{1}&=\langle M,k^{s},k^{t}\rangle_{\mathbb{Z}}.
\end{align*}
Since the two non-trivial cords of index 1, $k^{s}$ and $k^{t}$, when moving along the gradient flow, intersect neither the framing nor the knot in their interior, relations (ii) and (iv) need not be considered. The image of $C_{1}$ under $D$ is therefore very easy to determine:
\begingroup
\allowdisplaybreaks
\begin{align*}
D(M)&=0\text{ (see Remark \ref{RemDMequalzero})}\\
D(k^{s})&=1-\mu-(1-\mu)\lambda^{-1}\\
D(k^{t})&=-(1-\mu)+\lambda(1-\mu)
\end{align*}
\endgroup
No transformation is necessary at the transition to the Seifert framing. Thus, in the quotient $C_{0}/I$, where $I=\langle D(C_{1})\rangle$ is the two-sided ideal generated by $C_{1}$ under the map $D$, we get the only relation
\[(\lambda-1)(\mu-1)=0\]
and hence
\[\Cord(U)=\mathbb{Z}[\lambda^{\pm1},\mu^{\pm1}]/((\lambda-1)(\mu-1)).\]
\end{ex}
\begin{ex}
To determine the cord algebra of the right-handed trefoil (also called torus(3,2)), the knot is equipped with the blackboard framing and a base point is chosen, see Figure~\ref{extorus32}.
\begin{figure}[H]\centering
\includegraphics[scale=0.55]{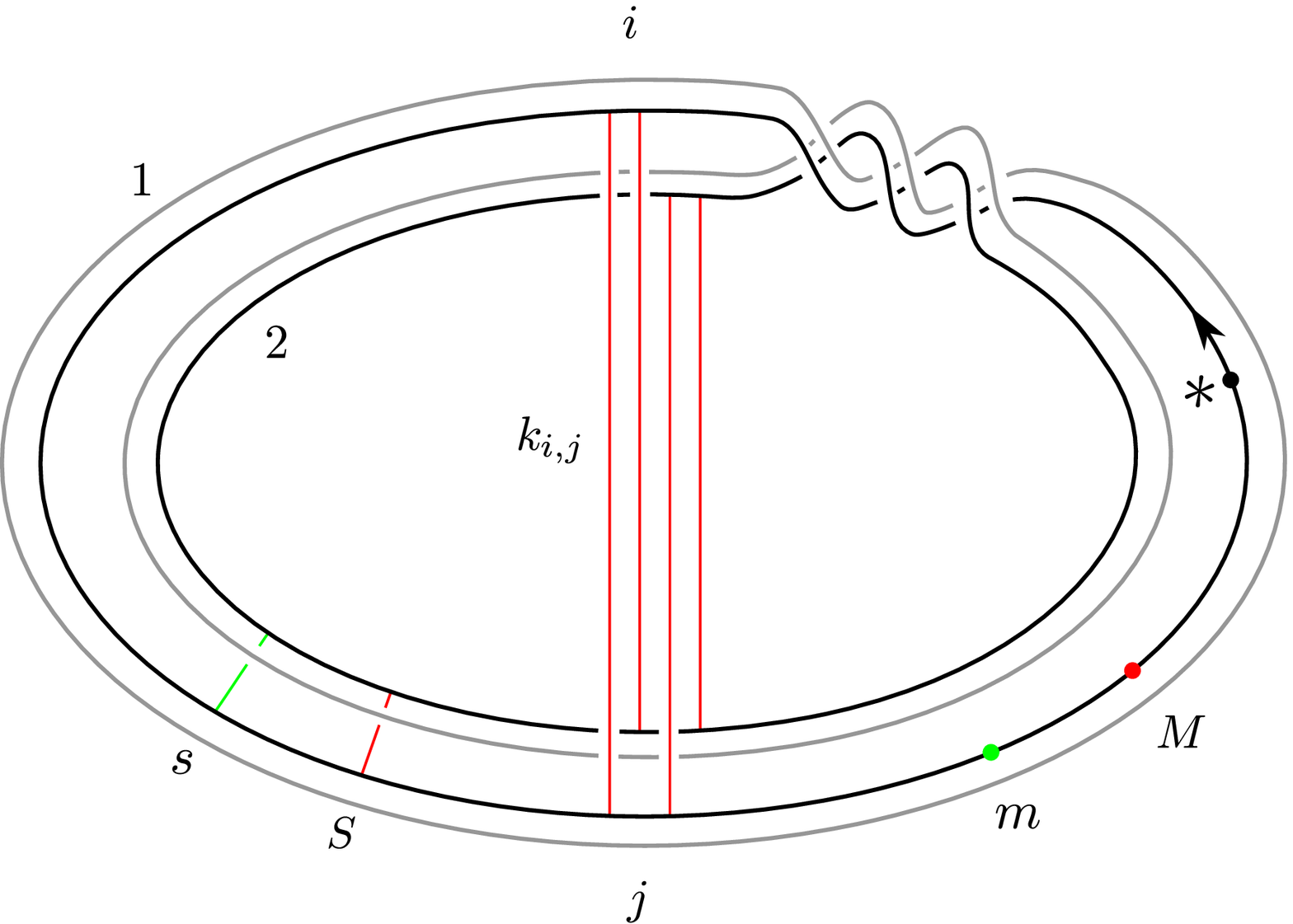}
\caption{The right-handed trefoil equipped with the blackboard framing}\label{extorus32}
\end{figure}
The critical points of index 0 and 1 can be determined from the figure. As before, $R=\mathbb{Z}[\lambda^{\pm1},\mu^{\pm1}]$. We get
\begin{align*}
C_{0}&=\langle m,s^{s},s^{t}\rangle_{R}\\
C_{1}&=\langle M,S^{s},S^{t},k_{1,1}^{s},k_{1,1}^{t},k_{1,2}^{s},k_{1,2}^{t},k_{2,1}^{s},k_{2,1}^{t} ,k_{2,2}^{s},k_{2,2}^{t}\rangle_{\mathbb{Z}}.
\end{align*}
Now we can get the image of $C_{1}$ under the map $D$ by applying $D$ to the generators of $C_{1}$: 
\begingroup
\allowdisplaybreaks
\begin{align*}
D(M)&=0\text{ (see Remark \ref{RemDMequalzero})}\\
D(S^{s})&=-s^{s}+\mu s^{t}\mu^{-1}\mu^{-1}\lambda^{-1}\\
&=-s^{s}+\mu s^{t}\lambda^{-1}\mu^{-2}\numberthis\label{extorus32_1}\\
D(S^{t})&=-s^{t}+\lambda\mu\mu s^{s}\mu^{-1}\\
&=-s^{t}+\lambda\mu^{2}s^{s}\mu^{-1}\numberthis\label{extorus32_2}\\
D(k_{1,1}^{s})&=(1-\mu)-\Big(\mu\mu s^{t}\mu^{-1}\mu^{-1}\lambda^{-1}-\mu s^{t}\mu^{-1}\mu^{-1}\big((1-\mu)\lambda^{-1}-\mu\mu s^{s}\mu^{-1}\mu^{-1}\mu\mu s^{t}\mu^{-1}\mu^{-1}\lambda^{-1}\big)\Big)\\
&=1-\mu-\mu^{2}s^{t}\lambda^{-1}\mu^{-2}+\mu s^{t}\lambda^{-1}\mu^{-2}-\mu s^{t}\lambda^{-1}\mu^{-1}-\mu s^{t}s^{s}s^{t}\lambda^{-1}\mu^{-2}\\
D(k_{1,1}^{t})&=-(1-\mu)+\lambda\mu\mu s^{s}\mu^{-1}\mu^{-1}+\big(\lambda(1-\mu)+\lambda\mu\mu s^{s}\mu^{-1}\mu^{-1}\mu s^{t}\mu^{-1}\mu^{-1}\big)\mu s^{s}\mu^{-1}\\
&=-1+\mu+\lambda\mu^{2}s^{s}\mu^{-2}+\lambda\mu s^{s}\mu^{-1}-\lambda\mu^{2}s^{s}\mu^{-1}+\lambda\mu^{2}s^{s}\mu^{-1}s^{t}\mu^{-1}s^{s}\mu^{-1}\\
D(k_{1,2}^{s})&=-\mu s^{s}+(1-\mu)+\big(\mu\mu s^{t}\mu^{-1}\mu^{-1}+(1-\mu)\mu s^{t}\mu^{-1}\mu^{-1}\big)\mu s^{s}\mu^{-1}\\
&=-\mu s^{s}+1-\mu+\mu s^{t}\mu^{-1}s^{s}\mu^{-1}\numberthis\label{extorus32_3}\\
D(k_{1,2}^{t})&=s^{t}\mu^{-1}-\Big((1-\mu)-\mu s^{t}\mu^{-1}\mu^{-1}\big(\mu\mu s^{s}\mu^{-1}\mu^{-1}-\mu\mu s^{s}\mu^{-1}\mu^{-1}(1-\mu)\big)\Big)\\
&=s^{t}\mu^{-1}-1+\mu+\mu s^{t}s^{s}\mu^{-1}\numberthis\label{extorus32_4}\\
D(k_{2,1}^{s})&=\mu s^{s}-\bigg((1-\mu)\lambda^{-1}-\mu\mu s^{s}\mu^{-1}\mu^{-1}\mu\mu s^{t}\mu^{-1}\mu^{-1}\lambda^{-1}-\mu s^{s}\mu^{-1}\Big(\mu\mu s^{t}\mu^{-1}\mu^{-1}\lambda^{-1}-\\
&\hspace*{.5cm}-\mu s^{t}\mu^{-1}\mu^{-1}\big((1-\mu)\lambda^{-1}-\mu\mu s^{s}\mu^{-1}\mu^{-1}\mu\mu s^{t}\mu^{-1}\mu^{-1}\lambda^{-1}\big)\Big)\bigg)\\
&=\mu s^{s}-\lambda^{-1}+\lambda^{-1}\mu+\mu^{2}s^{s}s^{t}\lambda^{-1}\mu^{-2}+\mu s^{s}\mu s^{t}\lambda^{-1}\mu^{-2}-\mu s^{s}s^{t}\lambda^{-1}\mu^{-2}+\mu s^{s}s^{t}\lambda^{-1}\mu^{-1}+\\
&\hspace*{.5cm}+\mu s^{s}s^{t}s^{s}s^{t}\lambda^{-1}\mu^{-2}\\
D(k_{2,1}^{t})&=-s^{t}\mu^{-1}+\lambda(1-\mu)+\lambda\mu\mu s^{s}\mu^{-1}\mu^{-1}\mu s^{t}\mu^{-1}\mu^{-1}+\\
&\hspace*{.5cm}+\Big(\lambda\mu\mu s^{s}\mu^{-1}\mu^{-1}+\big(\lambda(1-\mu)+\lambda\mu\mu s^{s}\mu^{-1}\mu^{-1}\mu s^{t}\mu^{-1}\mu^{-1}\big)\mu s^{s}\mu^{-1}\Big)s^{t}\mu^{-1}\\
&=-s^{t}\mu^{-1}+\lambda-\lambda\mu+\lambda\mu^{2}s^{s}\mu^{-1}s^{t}\mu^{-2}+\lambda\mu^{2}s^{s}\mu^{-2}s^{t}\mu^{-1}+\lambda\mu s^{s}\mu^{-1}s^{t}\mu^{-1}-\\
&\hspace*{.5cm}-\lambda\mu^{2}s^{s}\mu^{-1}s^{t}\mu^{-1}+\lambda\mu^{2}s^{s}\mu^{-1}s^{t}\mu^{-1}s^{s}\mu^{-1}s^{t}\mu^{-1}\\
D(k_{2,2}^{s})&=(1-\mu)-\bigg(\mu\mu s^{s}\mu^{-1}\mu^{-1}-\mu\mu s^{s}\mu^{-1}\mu^{-1}(1-\mu)-\\
&\hspace*{.5cm}-\mu s^{s}\mu^{-1}\Big((1-\mu)-\mu s^{t}\mu^{-1}\mu^{-1}\big(\mu\mu s^{s}\mu^{-1}\mu^{-1}-\mu\mu s^{s}\mu^{-1}\mu^{-1}(1-\mu)\big)\Big)\bigg)\\
&=1-\mu-\mu^{2}s^{s}\mu^{-1}+\mu s^{s}\mu^{-1}-\mu s^{s}-\mu s^{s}s^{t}s^{s}\mu^{-1}\\
D(k_{2,2}^{t})&=-(1-\mu)+\mu\mu s^{t}\mu^{-1}\mu^{-1}+(1-\mu)\mu s^{t}\mu^{-1}\mu^{-1}+\\
&\hspace*{.5cm}+\Big((1-\mu)+\big(\mu\mu s^{t}\mu^{-1}\mu^{-1}+(1-\mu)\mu s^{t}\mu^{-1}\mu^{-1}\big)\mu s^{s}\mu^{-1}\Big)s^{t}\mu^{-1}\\
&=-1+\mu+\mu s^{t}\mu^{-2}+s^{t}\mu^{-1}-\mu s^{t}\mu^{-1}+\mu s^{t}\mu^{-1}s^{s}\mu^{-1}s^{t}\mu^{-1}
\end{align*}
\endgroup
According to Example \ref{changeofframing}, the transformations to change from the blackboard framing to the Seifert framing are: 
\begin{align*}
\lambda&\mapsto\lambda\mu^{3}\\
s^{s}&\mapsto\mu^{-1}s^{s}\\
s^{t}&\mapsto s^{t}\mu.
\end{align*}
If these transformations are applied to the above results, the lines \eqref{extorus32_1}, \eqref{extorus32_2}, \eqref{extorus32_3} and \eqref{extorus32_4} in the quotient $C_{0}/\langle D(C_{1})\rangle$ result in the following equations: 
\begin{align}
-\mu^{-1}s^{s}+\mu s^{t}\lambda^{-1}\mu^{-4}=0\label{extorus32_5}\\
-s^{t}\mu+\lambda\mu^{4}s^{s}\mu^{-1}=0\label{extorus32_6}\\
-s^{s}+1-\mu+\mu s^{t}\mu^{-1}s^{s}\mu^{-1}=0\label{extorus32_7}\\
s^{t}-1+\mu+\mu s^{t}s^{s}\mu^{-1}=0\label{extorus32_8}
\end{align}
Therefore, we get
\begin{align}
\eqref{extorus32_5}&\Leftrightarrow s^{t}=\mu^{-2}s^{s}\lambda\mu^{4}\label{extorus32_9}\\
\eqref{extorus32_6}&\Leftrightarrow s^{t}=\lambda\mu^{4}s^{s}\mu^{-2}\label{extorus32_10}
\end{align}
In the quotient only one generator exists because $s^{t}$ can be expressed by a term with $s^{s}$. For simplicity, in the following we denote $s^{s}$ by $s$. If we now eliminate $s^{t}$ from \eqref{extorus32_9} and \eqref{extorus32_10}, we get 
\[s\lambda\mu^{6}=\lambda\mu^{6}s.\]
We put \eqref{extorus32_10} into \eqref{extorus32_7} and \eqref{extorus32_8} and get the following relations:
\begin{align*}
1-\mu-s+\lambda\mu^{5}s\mu^{-3}s\mu^{-1}&=0\\
-1+\mu+\lambda\mu^{4}s\mu^{-2}+\lambda\mu^{5}s\mu^{-2}s\mu^{-1}&=0.
\end{align*}
If the transformations are applied to the remaining results from $D(C_{1})$, the resulting equations do not yield new relations. All in all we get:
\[\Cord(\text{Torus(3,2)})=\langle s\rangle_{R}/(s\lambda\mu^{6}-\lambda\mu^{6}s,1-\mu-s+\lambda\mu^{5}s\mu^{-3}s\mu^{-1},-1+\mu+\lambda\mu^{4}s\mu^{-2}+\lambda\mu^{5}s\mu^{-2}s\mu^{-1}).\]
\end{ex}
\section{The Cord Algebra as a Knot Invariant}\label{Knotinv}
The cord algebra in its original definition, as explained at the beginning of Section \ref{Corddef}, is a knot invariant \cite{Cie3}. Now we will prove that the cord algebra in our definition using Morse Theory is also a knot invariant. So we have to show: For generic knots $K_{0}$ and $K_{1}$ for which there exists a smooth isotopy $(K_{r})_{r\in[0,1]}$ of knots, the following holds:
\[\Cord(K_{0})\cong\Cord(K_{1}).\]
So let $K_{0}$ and $K_{1}$ be two knots that are connected by a smooth isotopy $(K_{r})_{r\in[0,1]}$ of knots. For all $r\in[0,1]$ let $\gamma_{r}:[0,L_{r}]\to\mathbb{R}^{3}$ be an arclength parametrization of $K_{r}$ such that the map $r\mapsto\gamma_{r}(0)$ is continuous. Let $\gamma_{r}(0)$ be the base point of $K_{r}$. Thus, the set $B=(S^{1}\times\lbrace0\rbrace)\cup(\lbrace0\rbrace\times S^{1})\subset T^{2}\cong K_{r}\times K_{r}$ is the same for all $K_{r}$. Denote by $S_{r}\subset K_{r}\times K_{r}$ resp.~$F_{r}\subset K_{r}\times K_{r}$ the sets $S$ resp.~$F$ with respect to the knot $K_{r}$. Likewise, let $E_{r}:K_{r}\times K_{r}\to \mathbb{R},(x,y)\mapsto\frac{1}{2}\vert x-y\vert^{2},$ be the energy function with respect to the knot $K_{r}$, $X_{r}:K_{r}\times K_{r}\to\mathbb{R}^{2},(s,t)\mapsto-\nabla E_{r}(s,t)$, the associated gradient vector field on $K_{r}\times K_{r}$, $\varphi_{r}^{s}$ the flow of $X_{r}$, and $W_{r}^{s}(k_{r})$ resp.~$W_{r}^{u}(k_{r})$ the stable resp.~unstable manifold of a critical point $k_{r}$ of the function $E_{r}$. Furthermore, let $\widehat{D}_{r}:(K_{r}\times K_{r})\setminus A_{r}\to C_{0}(K_{r})$ and $D_{r}:C_{1}(K_{r})\to C_{0}(K_{r})$ be the maps belonging to $K_{r}$, where $A_{r}\subset K_{r}\times K_{r}$ is the exceptional set belonging to $K_{r}$.\\[.5em]
First, we need that only finitely many knots in this isotopy are non-generic. These finitely many knots each violate exactly one of the properties shown in the Lemmata \ref{cordlemmaadd1}, \ref{cordlemmaadd2} and \ref{cordlemmaadd3} for generic knots or at these knots pairs of critical points appear or disappear. This statement is formulated in the following lemma. This lemma can be proven by extending the arguments used in the lemmata mentioned above to 1-parameter families of knots and energy functions.
\begin{lem}\label{genericisotopy}
For a generic smooth isotopy $(K_{r})_{r\in[0,1]}$ of knots, where $K_{0}$ and $K_{1}$ are generic, the following holds:
\begin{itemize}
\item[(i)] $K_{r}$ is generic for all $r\in[0,1]\setminus\lbrace r_{1},\dots,r_{n}\rbrace$ for an $n\in\mathbb{N}$ and for $\lbrace r_{1},\dots,r_{n}\rbrace\subset[0,1]$.
\item[(ii)] For $i=1,\dots,n$ the following holds: $K_{r_{i}}$ is generic except exactly one of the following cases:
\begin{itemize}
\item[(1)] $W^{u}_{r_{i}}(k)\ntransv B$ for a critical point $k$ of index 1, and $W^{u}_{r_{i}}(k)$ is tangent to $B$ at most of order 1.
\item[(2)] $W^{u}_{r_{i}}(k)\ntransv S_{r_{i}}$ for a critical point $k$ of index 1, and $W^{u}_{r_{i}}(k)$ is tangent to $S_{r_{i}}$ at most of order 1.
\item[(3)] $W^{u}_{r_{i}}(k)\ntransv F_{r_{i}}$ for a critical point $k$ of index 1, and $W^{u}_{r_{i}}(k)$ is tangent to $F_{r_{i}}$ at most of order 1.
\item[(4)] $W^{u}_{r_{i}}(k)\cap\partial S_{r_{i}}\neq\emptyset$, and thus $W^{u}_{r_{i}}(k)\cap\partial F_{r_{i}}\neq\emptyset$ according to Lemma \ref{framinglemma}, for a critical point $k$ of index 1.
\item[(5)] $W^{u}_{r_{i}}(k)\cap S_{2,r_{i}}\neq\emptyset$ for a critical point $k$ of index 1, where $S_{2,r_{i}}\subset S_{r_{i}}$ is the set of cords that intersect the knot $K_{r_{i}}$ twice in their interior.
\item[(6)] $W^{u}_{r_{i}}(k)\cap B\cap S_{r_{i}}\neq\emptyset$ for a critical point $k$ of index 1.
\item[(7)] $W^{u}_{r_{i}}(k)\cap F_{r_{i}}\cap S_{r_{i}}\neq\emptyset$ for a critical point $k$ of index 1.
\item[(8)] $k\in B$ for a critical point $k$ of index 0 or 1.
\item[(9)] $k\in S_{r_{i}}$ for a critical point $k$ of index 0 or 1.
\item[(10)] $k\in F_{r_{i}}$ for a critical point $k$ of index 0 or 1.
\item[(11)] There exists a trajectory between two critical points of index 1 along the vector field $-\nabla E_{r_{i}}$.
\item[(12)] One of the cases (1) to (10) holds for a cord which is generated by the application of relation~(iv) (where in the statements (1) to (7) the unstable manifolds are to be replaced by trajectories along the vector field $-\nabla E_{r_{i}}$, starting at this cord).
\item[(13)] A cord which is generated by the application of relation~(iv) runs along the vector field $-\nabla E_{r_{i}}$ to a critical point of index 1.
\item[(14)] Creation / cancellation of a pair of critical points of index 0 and 1 or of index 1 and 2.
\end{itemize}
\end{itemize}
\end{lem}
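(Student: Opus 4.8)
The plan is to set up a parametrized version of all the transversality and genericity arguments already carried out in Lemmata~\ref{cordlemmaadd1}, \ref{cordlemmaadd2} and \ref{cordlemmaadd3}, now over the interval $[0,1]$ of isotopy parameters $r$. The basic principle is the standard one from parametrized transversality theory: if a submanifold condition is codimension $d$ for a single knot, then for a generic $1$-parameter family it fails only on a codimension $d-1$ subset of parameter values $r$, hence (when $d=1$) at finitely many $r$, and at those $r$ the failure is of the mildest possible type (a simple tangency of order $1$, a single non-transverse intersection point, etc.). Concretely I would consider the total spaces $\mathcal{E}=\bigsqcup_{r}K_r\times K_r$, $\mathcal{S}=\bigsqcup_r S_r$, $\mathcal{F}=\bigsqcup_r F_r$ together with the fiberwise energy functions $E_r$ and their (perturbed) negative gradient flows, and apply the relative jet transversality theorem (Theorem~\ref{JetTransvthmrel}) to the evaluation maps of the unstable manifolds $W^u_{r}(k)$ against the loci $B$, $S_r$, $F_r$, $\partial S_r$, $S_{2,r}$, $B\cap S_r$, $F_r\cap S_r$, $Crit_1$, exactly as in the proofs of the single-knot lemmata but with one extra parameter. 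Each of these loci has codimension in $T^2$ (or in the appropriate jet space) such that, for a one-dimensional $W^u$, transversality for a fixed knot is an open dense condition; raising the parameter count by one turns "empty for generic $K$" into "nonempty for finitely many $r$, and then in the mildest stratum."

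The key steps, in order, are: (a) fix a generic isotopy $(K_r)$ with $K_0,K_1$ generic, and choose a fiberwise arclength parametrization $\gamma_r$ with continuous base point $\gamma_r(0)$, so that $B$ is $r$-independent; (b) establish finiteness of the bad parameter set for cases (8)--(10): each of $B$, $S_r$, $F_r$ is (at most) one-dimensional and $Crit_{0,1}$ is finite, so "a critical point lies on one of these sets" is a codimension-$1$ condition in $r$, hence generically finite, mirroring Lemma~\ref{cordlemmaadd1}; (c) handle cases (1)--(3): transversality $W^u_r(k)\pitchfork B,S_r,F_r$ is, for one knot, generic by Lemma~\ref{cordlemmaadd2}(i), so failure occurs at finitely many $r$, and at such $r$ the standard one-parameter transversality result gives a tangency of order exactly $1$; (d) handle cases (4)--(7): the relevant loci $\partial S_r$, $S_{2,r}$, $B\cap S_r$, $F_r\cap S_r$ are zero-dimensional in $T^2$ (codimension $2$), so for a fixed knot $W^u_r(k)$ misses them (Lemma~\ref{cordlemmaadd2}(ii),(iii)), and in a one-parameter family they are met at finitely many $r$, each time at a single point; (e) case (11): trajectories between two index-$1$ critical points form a codimension-$1$ phenomenon (this is the Morse-Smale one-parameter statement quoted in the introduction), so it happens at finitely many $r$; (f) cases (12)--(13): by Lemma~\ref{finitelymanyintersections} relation~(iv) is applied only finitely often, producing finitely many families of trajectories of "split" cords; apply the same parametrized transversality arguments to each of these finitely many families, so finitely many more bad parameters are produced; (g) case (14): births and deaths of critical point pairs are exactly the birth-death degeneracies of a generic one-parameter family of (pseudo)gradient vector fields without nonconstant periodic orbits, quoted in the introduction, and occur at finitely many $r$; (h) finally, take the union of all these finite bad sets, call it $\{r_1,\dots,r_n\}$, and observe that by perturbing the isotopy slightly (rel endpoints, since $K_0,K_1$ are already generic) we may assume no two bad phenomena coincide, so that at each $r_i$ exactly one of (1)--(14) occurs.

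For the execution I would lean on the following assumed inputs: the relative transversality theorem (Theorem~\ref{Transvthmrel}), the relative jet transversality theorem (Theorem~\ref{JetTransvthmrel}), Lemma~\ref{perturbationofWug} to realize any perturbed unstable manifold or split-cord trajectory as the gradient flow of a perturbed energy function $E_{\tilde g}$, Lemma~\ref{finitelymanyintersections} for the finiteness of relation-(iv) splittings, and the cited Morse-theoretic facts about generic one-parameter families (birth-death only, no index-$1$ to index-$1$ connections except at isolated $r$). The main obstacle I expect is case (12) together with the bookkeeping in (h): the cords produced by relation~(iv) depend on $r$ in a way that is only piecewise smooth (the splitting points themselves jump as $W^u_r(k)$ crosses $\partial S_r$ or $S_{2,r}$), so one must be careful to first fix the combinatorics of splittings on each subinterval of $[0,1]\setminus\{r_1,\dots,r_n\}$, then argue that within each such subinterval the finitely many split-cord trajectories vary smoothly and the parametrized transversality arguments apply verbatim; the junctions between subintervals are precisely the isolated bad parameters already accounted for in (d). A secondary subtlety is ensuring the perturbations used to separate distinct degeneracies do not reintroduce an earlier one, which is handled as usual by making each perturbation supported near the relevant feature and arbitrarily small, so that openness of the already-achieved conditions is preserved.
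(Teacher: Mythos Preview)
Your proposal is correct and follows exactly the approach the paper indicates: the paper does not give a detailed proof of this lemma at all, but merely states (just before the lemma) that it ``can be proven by extending the arguments used in the lemmata mentioned above to 1-parameter families of knots and energy functions,'' and in the remark immediately after it invokes Lemma~\ref{OneparamfamVF}(ii) for the birth-death case~(14). Your outline is in fact considerably more detailed than what the paper provides, and the bookkeeping concerns you flag (piecewise-smooth dependence of the split cords on $r$, separating coincident degeneracies by small supported perturbations) are exactly the points one would need to address to turn the paper's one-sentence sketch into a full argument.
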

\begin{rem}
According to Lemma \ref{OneparamfamVF}(ii), only degenerations of birth-death type occur in this 1-parameter family of gradient vector fields. Therefore, case (ii, 14) is the only one dealing with degeneracies of the family of vector fields. 
\end{rem}
To show that the cord algebra is a knot invariant, we first need the following lemma:
\begin{lem}\label{ExnbhdDxequalsDq}
For a generic knot $K$ the following holds:\\
Let $q\in(K\times K)\setminus A$, where $A$ is the exceptional set defined at the beginning of Section \ref{Cord}. Then there exists an open neighborhood $V\subset K\times K$ of $q$ with $\widehat{D}(x)=\widehat{D}(q)$ for all $x\in V$.
\end{lem}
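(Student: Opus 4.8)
The plan is to show that the defining data of $\widehat{D}(q)$ — namely the trajectory $\varphi^s(q)$, the finitely many intersections with $B$, $F$, and $S$, the critical point of index $0$ at which this trajectory ends, and, recursively, the finitely many splittings according to relation~(iv) together with the positions of the framing relative to the resulting cords — all vary continuously as $q$ varies in a small neighborhood $V$. Because $q\notin A$, none of the degenerate situations listed in (1)--(5) in the definition of $A$ occurs for $q$: the trajectory through $q$ meets $B$, $F$, and $S$ transversally, never meets $\partial S$, $\partial F$, or $S_2$, never passes through a critical point of index $1$ or $2$, and the same holds recursively for every cord arising from applying relation~(iv). All of these are \emph{open} conditions. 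First I would treat the base case without splitting: $q$ lies on a unique trajectory of $-\nabla E$ (since $q$ is not a critical point and not on a stable manifold of an index $1$ critical point, by (1)), and this trajectory limits to a unique critical point $d\in Crit_0(E)$; by continuous dependence of the flow on initial conditions and the transversality of $\varphi^s(q)$ to $B\cup S\cup F$, for $q'$ close to $q$ the trajectory $\varphi^s(q')$ stays uniformly $C^1$-close to $\varphi^s(q)$ on the (compact) relevant time interval, meets exactly the same sets $B$, $F$, $S$ in the same cyclic order with the same signs, and limits to the same $d$. Hence $\partial(q')=\partial(q)$ on such a neighborhood.

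Next I would handle the splittings. By Lemma~\ref{finitelymanyintersections} (applicable since $q\notin A$, so all the genericity hypotheses of Lemma~\ref{cordlemmaadd2} and Lemma~\ref{cordlemmaadd3} are in force for the trajectories involved), relation~(iv) is applied only finitely many times in computing $\widehat{D}(q)$, so the recursion has finite depth. I would argue by induction on this depth. For $q$ itself, transversality of $\varphi^s(q)$ to $S$ (condition (3)) together with $\varphi^s(q)\cap(\partial S\cup S_2)=\emptyset$ (condition (4)) means the intersection points $p_1,\dots,p_N$ of $\varphi^s(q)$ with $S$ are finitely many, transverse, and lie in the interior $\widehat S=S\setminus(\partial S\cup S_2)$; by the implicit function theorem these persist, remain transverse, and stay in $\widehat S$ for $q'$ near $q$, and the sign $\sign(q',p_j(q'))$, the exponents $\alpha_i(p_j),\beta_i(p_j)$, and the two pieces $c_{p_j,1},c_{p_j,2}$ all depend continuously on $q'$. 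Now condition (5) guarantees that each piece $c_{p_j,1}(q)$, $c_{p_j,2}(q)$ again lies outside $A$, so the inductive hypothesis applies to them: there is a neighborhood of each piece on which $\widehat{D}$ is constant. Shrinking $V$ so that the continuously varying pieces $c_{p_j,i}(q')$ stay inside these neighborhoods, and so that all the finitely many counts above are locally constant, we get that every term in the recursive formula for $\widehat{D}(q')$ equals the corresponding term for $\widehat{D}(q)$; summing, $\widehat{D}(q')=\widehat{D}(q)$.

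The only subtlety — and the step I expect to require the most care — is the \emph{uniformity} of all these finiteness and transversality statements as $q'$ ranges over $V$: a priori the relevant flow time $T$ could blow up, or new intersections with $S$ could appear ``at infinity'' near the index $0$ limit point, or the recursion depth could jump. This is controlled by the length-decrease mechanism in the proof of Lemma~\ref{finitelymanyintersections}: each application of relation~(iv) shortens a cord by at least a fixed $\min(\varepsilon,\delta)>0$, the cords have bounded length, and (by Lemma~\ref{cordlemmaadd3}) the pieces stay away from $\partial S$, $S_2$, and $Crit_1$; these bounds are open conditions and hence hold uniformly on a small enough $V$, so the recursion depth and the number of $S$-intersections are bounded uniformly on $V$. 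With that uniformity in hand, the finitely many locally constant integer-valued quantities (the count $n(q',d)$, the exponents, the signs, the number of splittings at each level) are genuinely constant on a small enough $V$, and since $\widehat{D}(q')$ is assembled from exactly these data, $\widehat{D}$ is constant on $V$. This completes the proof.
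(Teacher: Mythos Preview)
Your proposal is correct and follows essentially the same approach as the paper: both rely on continuous dependence of the flow on initial conditions, the openness of the transversality conditions encoded in the exceptional set $A$, and the finite recursion depth guaranteed by the length-decrease argument of Lemma~\ref{finitelymanyintersections}. The only organizational difference is that you phrase the argument as an induction on the recursion depth of $\delta$, whereas the paper carries out an explicit $\varepsilon$--$\delta$ construction relation by relation (including introducing the splitting maps $\Psi_1,\Psi_2$ and verifying their injectivity to control how the split pieces vary); your inductive packaging is cleaner, while the paper's version makes the uniformity bookkeeping more visible, but the content is the same.
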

\begin{proof}
Since $K$ is generic, all properties from the lemmata \ref{cordlemmaadd1}, \ref{cordlemmaadd2} and \ref{cordlemmaadd3} are satisfied.\\
There exist only finitely many critical points of index 0. Denote these by $g_{1},\dots,g_{n}$ for an $n\in\mathbb{N}$.\\
Then there exist $\varepsilon_{1},\dots,\varepsilon_{n}>0$ such that the following holds for all $i=1,\dots,n$:
\begin{align}
&\forall\ x\in\B_{2\varepsilon_{i}}(g_{i}):\lim_{s\to\infty}\varphi^{s}(x)=g_{i}\label{epsilonprop1}\\
&\B_{2\varepsilon_{i}}(g_{i})\cap(S\cup B\cup F)=\emptyset\label{epsilonprop2}
\end{align}
(\ref{epsilonprop1}) can be achieved, since $\dim W^{s}(g_{i})=2$, and (\ref{epsilonprop2}) can be achieved, since $g_{i}\notin(S\cup B\cup F)$ (according to Lemma \ref{cordlemmaadd1}) and this is an open condition.\\[1em]
Since $\widehat{D}(q)\in C_{0}(K)$ according to the assumption, we have $q\in W^{s}(g_{i_{q}})$ for an $i_{q}\in\lbrace1,\dots,n\rbrace$. Therefore, a $T_{q}\geq0$ exists with
\[\varphi^{T_{q}}(q)\in\B_{\varepsilon_{i_{q}}}(g_{i_{q}}).\]
The solution of a differential equation depends continuously on the initial conditions. So the following holds: 
\begin{align}
\forall\ T<\infty\ \forall\ \varepsilon>0\ \exists\ \delta(T,\varepsilon)>0\ \forall\ x\in K\times K\ \forall\ s\in[0,T]:\vert x-q\vert<\delta\Rightarrow\vert\varphi^{s}(x)-\varphi^{s}(q)\vert<\varepsilon\label{TheoryODE}
\end{align}
Choose $T=T_{q}$ and $\varepsilon=\varepsilon_{i_{q}}$. Then, according to the statement (\ref{TheoryODE}), there exists a $\delta_{q}>0$ such that the following holds for all $x\in\B_{\delta_{q}}(q)$:
\begin{align}
\varphi^{T_{q}}(x)\in\B_{2\varepsilon_{i_{q}}}(g_{i_{q}}).\label{deltaq}
\end{align}
So far, we've shown that each $x\in\B_{\delta_{q}}(q)\cap E^{-1}(E(q))$ flows along the vector field $X=-\nabla E$ to the same critical point of index 0 as $q$, namely to $g_{i_{q}}$.\\
To prove the lemma, we have to consider the four relations that we need to determine $D(q)$:\\[.5em]
Relation (i): Due to the properties (\ref{epsilonprop1}) and (\ref{deltaq}) relation (i) does not lead to different results when determining $\widehat{D}(x)$ for all $x\in\B_{\delta_{q}}(q)\cap E^{-1}(E(q))$.\\[.5em]
Relation (ii): Let $F_{q}:=\lbrace x\in F:\exists\ T>0:\varphi^{T}(q)=x\rbrace$. Since $K$ is generic, we have $\varphi^{s}(q)\pitchfork F$ and $\vert F_{q}\vert<\infty$. So let $F_{q}=\lbrace f_{1},\dots,f_{m_{F}}\rbrace$ for an $m_{F}\in\mathbb{N}_{0}$. For $j=1,\dots,m_{F}$ let $T_{f_{j}}$ be the time for which $\varphi^{T_{f_{j}}}(q)=f_{j}$. Because of $\varphi^{s}(q)\pitchfork F$, there exists an $\varepsilon_{F}>0$ with the following properties:
\begin{itemize}
\item[(i)]For all $j=1,\dots,m_{F}$ and all $x\in E^{-1}(E(q))$ the following holds: There exists at most one $T_{j,x}>0$ such that
\begin{align}
\varphi^{T_{j,x}}(x)\in\B_{\varepsilon_{F}}(f_{j})\cap F,\label{epsilonprop1relii}
\end{align}
i.e. the trajectory starting at $x$ intersects $F$ in $\B_{\varepsilon_{F}}(f_{j})$ at most once.
\item[(ii)]For all $x\notin\left(\bigcup\limits_{j=1}^{m_{F}}\B_{\varepsilon_{F}}(f_{j})\right)\cap F$ the following holds:
\begin{align}
\min_{s\in\mathbb{R}}\vert\varphi^{s}(q)-x\vert<\varepsilon_{F}\Rightarrow x\notin F,\label{epsilonprop2relii}
\end{align}
i.e. outside of $\B_{\varepsilon_{F}}(f_{j})$, $F$ is no closer than $\varepsilon_{F}$ to the trajectory starting from $q$. 
\end{itemize}
For all $j=1,\dots,m_{F}$ there exists, according to statement (\ref{TheoryODE}) with $T=T_{f_{j}}$ and $\varepsilon=\varepsilon_{F}$, a $\delta_{f_{j}}>0$ such that for all $x\in\B_{\delta_{f_{j}}}(q)$ the following holds:
\[\varphi^{T_{f_{j}}}(x)\in\B_{\varepsilon_{F}}(f_{j}).\]
So after reducing $\delta_{f_{j}}$ if necessary, the following holds for all $x\in\B_{\delta_{f_{j}}}(q)\cap E^{-1}(E(q))$:
\[\varphi^{s}(x)\in\B_{\varepsilon_{F}}(f_{j})\cap F\text{ for an } s>0.\]
Let $\delta_{F}:=\min\limits_{j}\delta_{f_{j}}$.\\[.5em]
Relation (iii) (analogous to the above consideration): Let $B_{q}:=\lbrace x\in B:\exists\ T>0:\varphi^{T}(q)=x\rbrace$. Since $K$ is generic, we have $\varphi^{s}(q)\pitchfork B$ and $\vert B_{q}\vert<\infty$. So let $B_{q}=\lbrace b_{1},\dots,b_{m_{B}}\rbrace$ for an $m_{B}\in\mathbb{N}_{0}$. For $j=1,\dots,m_{B}$ let $T_{b_{j}}$ be the time for which $\varphi^{T_{b_{j}}}(q)=b_{j}$. Because of $\varphi^{s}(q)\pitchfork B$, there exists an $\varepsilon_{B}>0$ with the following properties:
\begin{itemize}
\item[(i)]For all $j=1,\dots,m_{B}$ and all $x\in E^{-1}(E(q))$ the following holds: There exists at most one $T_{j,x}>0$ such that
\begin{align}
\varphi^{T_{j,x}}(x)\in\B_{\varepsilon_{B}}(b_{j})\cap B,\label{epsilonprop1reliii}
\end{align}
i.e. the trajectory starting at $x$ intersects $B$ in $\B_{\varepsilon_{B}}(b_{j})$ at most once.
\item[(ii)]For all $x\notin\left(\bigcup\limits_{j=1}^{m_{B}}\B_{\varepsilon_{B}}(b_{j})\right)\cap B$ the following holds:
\begin{align}
\min_{s\in\mathbb{R}}\vert\varphi^{s}(q)-x\vert<\varepsilon_{B}\Rightarrow x\notin B,\label{epsilonprop2reliii}
\end{align}
i.e. outside of $\B_{\varepsilon_{B}}(b_{j})$, $B$ is no closer than $\varepsilon_{B}$ to the trajectory starting at $q$.
\end{itemize}
For all $j=1,\dots,m_{B}$ there exists, according to statement (\ref{TheoryODE}) with $T=T_{b_{j}}$ and $\varepsilon=\varepsilon_{B}$, a $\delta_{b_{j}}>0$ such that for all $x\in\B_{\delta_{b_{j}}}(q)$ the following holds:
\[\varphi^{T_{b_{j}}}(x)\in\B_{\varepsilon_{B}}(b_{j}).\]
So after reducing $\delta_{b_{j}}$ if necessary, the following holds for all $x\in\B_{\delta_{b_{j}}}(q)\cap E^{-1}(E(q))$:
\[\varphi^{s}(x)\in\B_{\varepsilon_{B}}(b_{j})\cap B\text{ for an } s>0.\]
Let $\delta_{B}:=\min\limits_{j}\delta_{b_{j}}$.\\[.5em]
Relation (iv): Let $S_{q}:=\lbrace x\in S:\varphi^{s}(q)=x\text{ for an }s>0\rbrace\subset K\times K$. According to Lemma \ref{finitelymanyintersections}, there exist only finitely many intersections of $\varphi^{s}(q)$ and $S$, i.e. $\vert S_{q}\vert<\infty$. So let $S_{q}=\lbrace q_{1},\dots,q_{m_{S}}\rbrace$ for an $m_{S}\in\mathbb{N}_{0}$. For all $q_{j}\in S_{q}$ let $T_{q_{j}}>0$ be the time for which $\varphi^{T_{q_{j}}}(q)=q_{j}$. Let $S_{2}\subset S$ be the (according to Lemma \ref{cordlemma}(ii) finite) set of self-intersections of $S$, i.e. the set of cords that intersect $K$ twice in their interior.\\
Then there exists an $\varepsilon_{S}>0$ with the property that the following holds for all $j=1,\dots,m_{S}$:
\begingroup
\allowdisplaybreaks
\begin{align}
&\hspace*{-2.7cm}\text{(i)}\ \B_{\varepsilon_{S}}(q_{j})\cap\partial S=\emptyset\label{epsilonprop1reliv}\\
&\hspace*{-2.8cm}\text{(ii)}\ \B_{\varepsilon_{S}}(q_{j})\cap S_{2}=\emptyset\label{epsilonprop2reliv}\\
&\hspace*{-2.9cm}\text{(iii)\hspace{.18cm}For all }x\in E^{-1}(E(q))\text{ there exists at most one }T_{j,x}>0\text{ such that}:\notag\\
&\hspace*{1.5cm}\varphi^{T_{j,x}}(x)\in\B_{\varepsilon_{S}}(q_{j})\cap S\label{epsilonprop3reliv}\\
&\hspace*{-2.9cm}\text{(iv)\hspace{.18cm}For all }x\notin\left(\bigcup\limits_{j=1}^{m_{S}}\B_{\varepsilon_{S}}(q_{j})\right)\cap S\text{ the following holds:}\notag\\
&\hspace*{1cm}\min_{s\in\mathbb{R}}\vert\varphi^{s}(q)-x\vert<\varepsilon_{S}\Rightarrow x\notin S.\label{epsilonprop4reliv}
\end{align}
\endgroup
Such an $\varepsilon_{S}$ exists since:\\
(i) $S_{q}\cap\partial S=\emptyset$\\
(ii) $S_{q}\cap S_{2}=\emptyset$\\
(iii) $\varphi^{s}(q)\pitchfork S$\\
(iv) $\varphi^{s}(q)\cap S=S_{q}$ and $\varphi^{s}(q)\pitchfork S$\\
and (i) - (iii) are open conditions.\\
Therefore, the following holds for $j=1,\dots,m_{S}$: According to statement (\ref{TheoryODE}) with $T=T_{q_{j}}$ and $\varepsilon=\varepsilon_{S}$, there exists a $\delta_{q_{j}}>0$ such that the following holds for all $x\in\B_{\delta_{q_{j}}}(q)$:
\[\varphi^{T_{q_{j}}}(x)\in\B_{\varepsilon_{S}}(q_{j}).\]
Let $\delta_{S}:=\min\limits_{j}\delta_{q_{j}}$.\\[.5em]
Every $x\in S\setminus S_{2}$ splits according to relation (iv) in two cords $x^{1}$ and $x^{2}$, see Figure \ref{splittingofacord}, and we can see:
\begin{itemize}\itemsep0pt
\item$x^{1}$ has the same startpoint as $x$.
\item$x^{2}$ has the same endpoint as $x$.
\item The endpoint of $x^{1}$ is the startpoint of $x^{2}$.
\end{itemize}
\begin{figure}[htbp]\centering 
\includegraphics[scale=0.43]{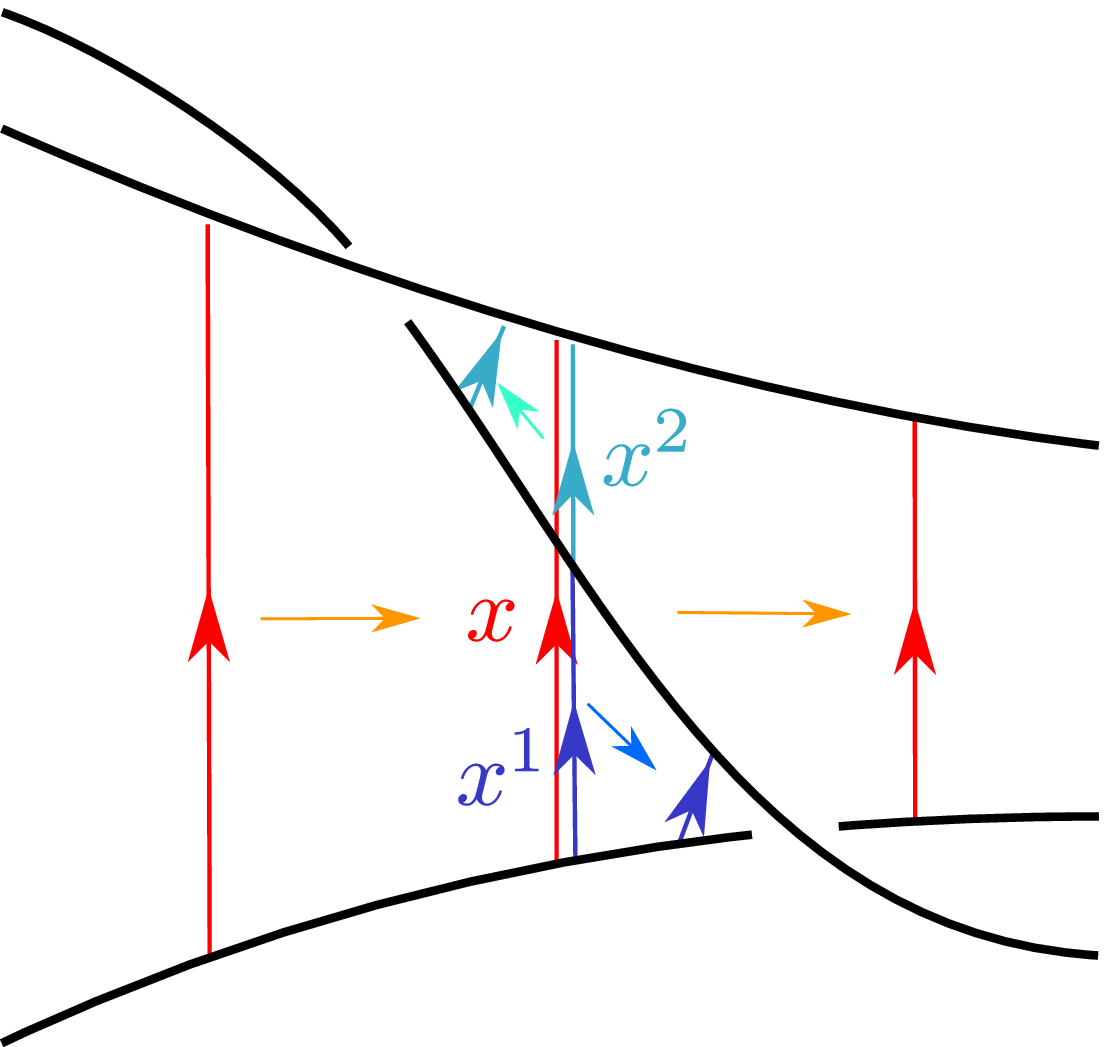}
\caption{Splitting of a cord according to relation (iv)}\label{splittingofacord}
\end{figure}%
Now consider the maps
\begin{align*}
\Psi_{1}:S\setminus S_{2}&\to K\times K,&\Psi_{2}:S\setminus S_{2}&\to K\times K\\
x&\mapsto x^{1}&x&\mapsto x^{2}
\end{align*}
For $j=1,\dots,m_{S}$ let
\begin{align*}
\Psi_{q^{1}_{j}}&:=\Psi_{1}\left(\B_{\varepsilon_{S}}(q_{j})\cap S\right)\\
\Psi_{q^{2}_{j}}&:=\Psi_{2}\left(\B_{\varepsilon_{S}}(q_{j})\cap S\right).
\end{align*}
$\Psi_{q^{1}_{j}}$ and $\Psi_{q^{2}_{j}}$ are welldefined since $\B_{\varepsilon_{S}}(q_{j})\cap S_{2}=\emptyset$. In addition, they are connected sets, since the sets $\B_{\varepsilon_{S}}(q_{j})\cap S$ are connected, the above observation concerning the startpoints and endpoints of $x^{1}$ and $x^{2}$ holds, and $K$ is smooth. See also Figure \ref{splittingofcords}.\\
\begin{figure}[htbp]\centering 
\includegraphics[scale=0.4]{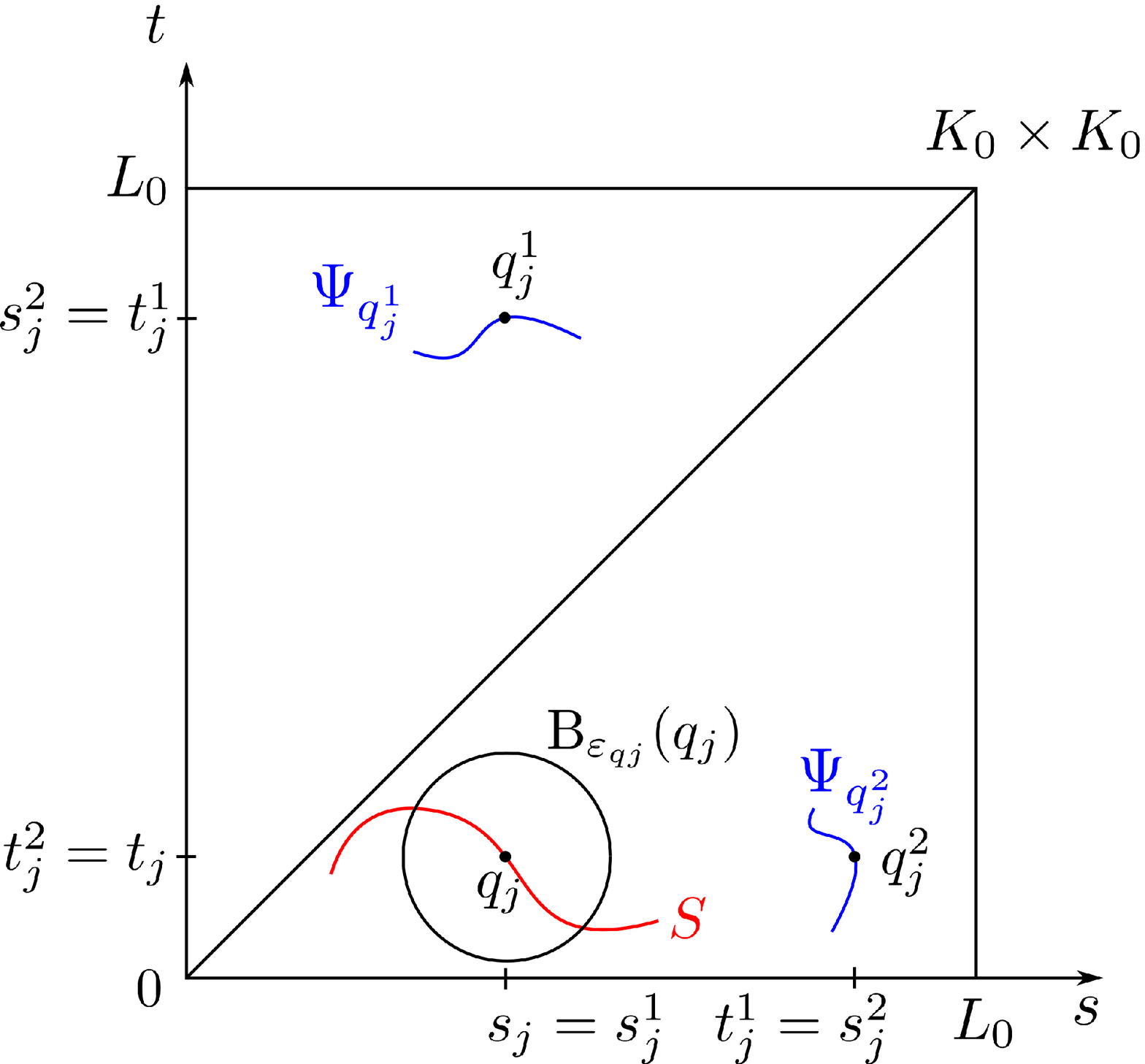}
\caption{Splitting of cords according to relation (iv)}\label{splittingofcords}
\end{figure}%
Now consider the sets $\Psi_{q^{1}_{j}}$ resp.~$\Psi_{q^{2}_{j}}$ under the gradient flow analogous to the consideration of $E^{-1}(E(q))$ under the gradient flow where $q_{j}^{1}=\Psi_{1}(q_{j})$ resp.~$q_{j}^{2}=\Psi_{2}(q_{j})$ take the role of $q$. Again, the relations (i) to (iv) must be taken into account. Thus, for $i=1,2$ and $j=1,\dots,m_{S}$ we get analogous to the above considerations: 
\begin{itemize}
\item $\delta_{q_{j}^{i},F}$ such that for all $x\in\B_{\delta_{q_{j}^{i},F}}(q_{j}^{i})\cap\Psi_{q_{j}^{i}}$ the following holds: $\varphi^{s}(x)$ and $\varphi^{s}(q_{j}^{i})$ intersect the set $F$ for $s\geq0$ so that relation (ii) produces the same result with respect to $\mu^{\pm1}$.
\item $\delta_{q_{j}^{i},B}$ such that for all $x\in\B_{\delta_{q_{j}^{i},B}}(q_{j}^{i})\cap\Psi_{q_{j}^{i}}$ the following holds: $\varphi^{s}(x)$ and $\varphi^{s}(q_{j}^{i})$ intersect the set $B$ for $s\geq0$ so that relation (iii) produces the same result with respect to $\lambda^{\pm1}$.
\item $\delta_{q^{i}_{j,k^{i}}}$ for $k^{i}=1,\dots,m_{S}^{i}$, with $m_{S}^{i}\in\mathbb{N}$, where $q^{i}_{j,k^{i}}$ denote the intersections of $\varphi^{s}(q_{j}^{i})$, for $s\geq0$, with $S$. The following holds for all $x\in\B_{\delta_{q^{i}_{j,k^{i}}}}(q_{j}^{i})\cap\Psi_{q_{j}^{i}}$: $\varphi^{s}(x)$ intersects $S$ in an $\varepsilon_{S}^{i}$-neighborhood of $q^{i}_{j,k^{i}}$ exactly once, but not within an $\varepsilon_{S}^{i}$-tube around $\varphi^{s}(q_{j}^{i})$ outside these neighborhoods.
\end{itemize}
According to Lemma \ref{finitelymanyintersections}, this process ends after finitely many steps.\\
For $i=1,2$ let $\hat{\delta}_{q_{j}^{i}}:=\min\lbrace\delta_{q_{j}^{i},F},\delta_{q_{j}^{i},B},\delta_{q_{j}^{i},1},\dots,\delta_{q_{j}^{i},m_{S}^{i}}\rbrace$. Let $\varepsilon_{q_{j}^{i}}>0$ so that
\[\B_{\varepsilon_{q_{j}^{i}}}(q_{j})\cap S\subset\Psi_{i}^{-1}\left(\B_{\hat{\delta}_{q_{j}^{i}}}(\Psi_{i}(q_{j}))\cap\Psi_{q_{j}^{i}}\right).\]
$\Psi_{1}$ and $\Psi_{2}$ are injective: Suppose $\Psi_{1}$ is not injective. Then there exist $x,y\in S\setminus S_{2}$ with $x\neq y$ and $\Psi_{1}(x)=\Psi_{1}(y)$, i.e. $x^{1}=y^{1}$. Thus, $x$ and $y$ have the same startpoint and a common intersection point with the knot in their interior. But since $x\neq y$, there are only the two possible positions shown in Figure \ref{positionsofxandy}.
\begin{figure}[hbt]\centering 
\subfigure{\includegraphics[scale=0.35]{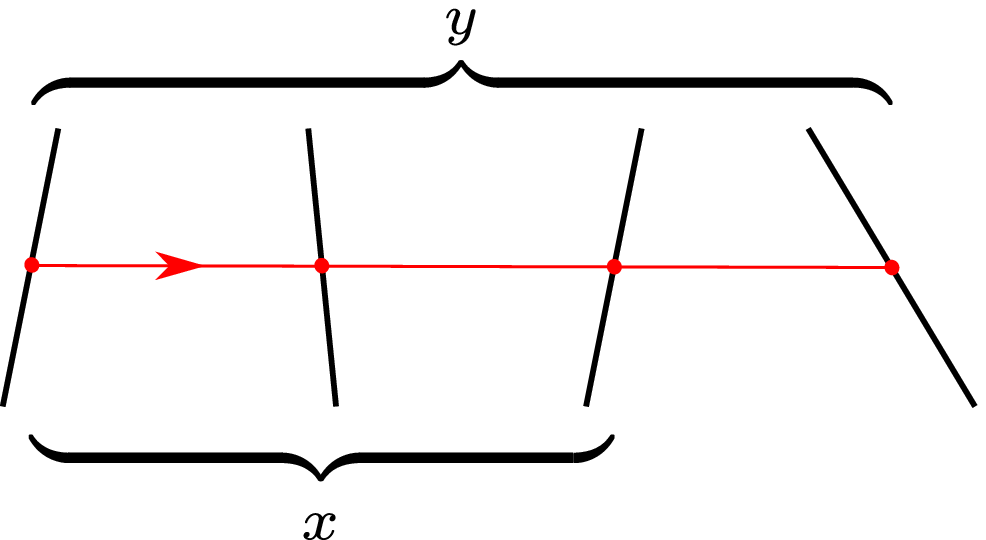}}
\subfigure{\hspace*{1cm}\includegraphics[scale=0.35]{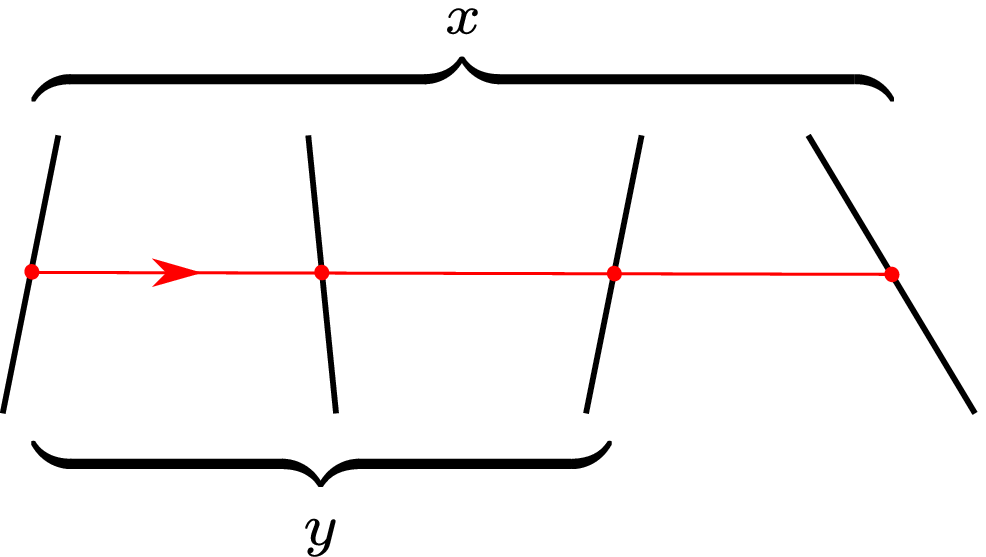}}
\caption{Possible positions of the cords $x$ and $y$}\label{positionsofxandy}
\end{figure}%
It follows that $x\in S_{2}$ or $y\in S_{2}$, but this contradicts the definition of $\Psi_{1}$. Similarly, the injectivity of $\Psi_{2}$ can be shown.\\
Because of this injectivity and the definition of $\Psi_{q_{j}^{i}}$, we get $\Psi_{i}^{-1}\left(\B_{\hat{\delta}_{q_{j}^{i}}}(\Psi_{i}(q_{j}))\cap\Psi_{q_{j}^{i}}\right)\subset\B_{\varepsilon_{S}}(q_{j})$ and thus $\B_{\varepsilon_{q_{j}^{i}}}(q_{j})\cap S\subset\B_{\varepsilon_{S}}(q_{j})$.\\
According to statement (\ref{TheoryODE}) with $T=T_{q_{j}}$ and $\varepsilon=\varepsilon_{q_{j}^{i}}$, there exists a $\delta_{q_{j}^{i}}>0$ such that for all $x\in\B_{\delta_{q_{j}^{i}}}(q)$ the following holds:
\[\varphi^{T_{q_{j}}}(x)\in\B_{\varepsilon_{q_{j}^{i}}}(q_{j}).\]
The same procedure is used for all intersections $q^{i}_{j,k^{i}}$, for $i=1,2,\ j=1,\dots,m_{S}$ and $k^{i}=1,\dots,m_{S}^{i}$, the $\delta$s obtained thereby having to be pulled back by multiple application of $\Psi_{i}^{-1}$ and statement~(\ref{TheoryODE}) into the set $E^{-1}(E(q))$.\\
In total we get the finite set $\Delta=\lbrace\delta_{q},\delta_{q_{1}},...,\delta_{q_{m_{S}}},\delta_{q_{1}^{1}},...,\delta_{q_{m_{S}}^{1}},\delta_{q_{1}^{2}},...,\delta_{q_{m_{S}}^{2}},...\rbrace$. Let $\delta:=\min\Delta$ and $\widetilde{V}:=\B_{\delta}(q)\cap E^{-1}(E(q))$. Then we have $\widehat{D}(x)=\widehat{D}(q)$ for all $x\in \widetilde{V}$ because of the properties~(\ref{epsilonprop1}) and (\ref{epsilonprop2}) and the construction of $\delta$. Since the solution of a differential equation depends continuously on the initial conditions, $\widetilde{V}$ can be extended to an open neighborhood $V$ of $q$ with $\widetilde{V}\subset V$ such that the following holds for all $x\in V$:
\[\widehat{D}(x)=\widehat{D}(q).\]
\end{proof}
\begin{thm}
The cord algebra as in Definition \ref{cordalgebradef} is a knot invariant.
\end{thm}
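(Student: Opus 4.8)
The plan is to reduce the invariance statement to a local question along an isotopy and then to check it event by event. So let $(K_r)_{r\in[0,1]}$ be a generic smooth isotopy connecting $K_0$ and $K_1$; by Lemma~\ref{genericisotopy} there are finitely many parameters $0<r_1<\dots<r_n<1$ such that $K_r$ is generic for $r\notin\{r_1,\dots,r_n\}$ and at each $r_i$ exactly one of the fourteen degenerations listed there occurs. It then suffices to prove two claims: \textbf{(A)} $\Cord(K_r)\cong\Cord(K_{r'})$ whenever $r$ and $r'$ lie in the same connected component of $[0,1]\setminus\{r_1,\dots,r_n\}$; and \textbf{(B)} $\Cord(K_{r_i-\eta})\cong\Cord(K_{r_i+\eta})$ for small $\eta>0$ and each $i$.

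For \textbf{(A)} the claim is that nothing changes at all. On a component $J$ every critical point of $E_r$ is nondegenerate, the Smale condition holds, and all conclusions of Lemmata~\ref{cordlemmaadd1}--\ref{cordlemmaadd3} persist; by the parametrized Morse lemma the critical points of index $0,1,2$ vary smoothly with $r\in J$, keeping their number and indices, so $C_0(K_r)$ and $C_1(K_r)$ are canonically identified for all $r\in J$ and the auxiliary points $(k_r)_\pm$ can be chosen continuous in $r$. I would then run the proof of Lemma~\ref{ExnbhdDxequalsDq} in families, with the single knot replaced by $(K_r)_{r\in J}$ and the exceptional set $A$ replaced by the closed set $\bigcup_{r\in J}\{r\}\times A_r\subset J\times T^2$, which is disjoint from the section $r\mapsto(r,(k_r)_\pm)$; this gives that $r\mapsto\widehat D_r((k_r)_\pm)$ is locally constant, hence constant on $J$. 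Therefore $D_r$ and the two-sided ideal $I_{K_r}$ are literally independent of $r\in J$, and so is $\Cord(K_r)$.

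For \textbf{(B)} I would go through the fourteen cases of Lemma~\ref{genericisotopy}(ii), grouped by the mechanism involved. \textbf{Cases (1), (3), (8), (10)} concern only $B$ or $F$: crossing $r_i$, the relevant unstable manifold, trajectory, or critical point gains or loses a \emph{pair} of transverse intersections with $B$ or $F$, or the base point effectively slides past a critical point. By relations~(ii) and~(iii) such a pair contributes the canceling factor $\lambda\lambda^{-1}=1$ or $\mu\mu^{-1}=1$, while sliding past a critical point changes the affected generators by powers of $\lambda$ or $\mu$; in each case the change induces an isomorphism of cord algebras. \textbf{Cases (2), (4), (5), (7), (9), (12)} concern $S$ and relation~(iv): an extra pair of $S$-intersections is created, so some cord is split and re-absorbed, the split-off piece being contractible (or becoming so in the limit); using the identities of Remark~\ref{remng} (a contractible cord equals $1-\mu$, and $(c-(1-\mu))c=0$), together with the finiteness of applications of relation~(iv) from Lemma~\ref{finitelymanyintersections} and the control on $-\nabla E$ near $\partial S=\partial F$ and near $S_2$ from Lemma~\ref{cordlemma}(ii)--(iii), I would show the ``before'' and ``after'' expressions for $D$ generate the same two-sided ideal. \textbf{Cases (11) and (13)} are a gradient trajectory between two index-$1$ cords: crossing $r_i$, $D(k)$ changes to $D(k)\pm D(k')$ (or the relation-(iv) analog) for another index-$1$ generator $k'$ with $D(k')\in I_K$, so the ideal is unchanged. \textbf{Case (14)} is a birth or death of a critical pair: for an index-$1$/index-$2$ birth the new index-$1$ generator $h$ has $D(h)=0$ by a local computation as in Remark~\ref{RemDMequalzero}; for an index-$0$/index-$1$ birth the new index-$1$ generator $h$ has $D(h)=\pm(g-w)$ with $g$ the new index-$0$ generator and $w$ a word in the old generators of $C_0$, so passing to the quotient is a Tietze transformation; deaths are handled symmetrically.

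The step I expect to be the main obstacle is the group of cases (2), (4), (5), (7), (9), (12): one has to match, term by term and through the recursion defining $\delta$, the change in $D(k)$ produced by an $S$-crossing degeneration against the cord-algebra relations, in particular understanding precisely how relation~(iv) interacts with contractible cords near the boundary $\partial S$ and near the self-intersections $S_2$. Case (14) requires a similarly careful accounting to confirm that the newly created generator and relation form a Tietze pair rather than imposing a genuinely new relation. Once the exponents of $\lambda$ and $\mu$ are tracked carefully, the remaining cases should be routine.
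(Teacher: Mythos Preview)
Your overall strategy matches the paper's: reduce to a local analysis around each degeneration time and run through the fourteen cases of Lemma~\ref{genericisotopy}. Part~(A) and the handling of the $B/F$-only cases (1), (3), (8), (10) and the handle-slide cases (11), (13) are essentially as in the paper. However, two points deserve attention. First, you omit case~(6), $W^u\cap B\cap S\neq\emptyset$; this is minor since it is parallel to (7). More seriously, your single mechanism for the $S$-cases (``the split-off piece being contractible'') is correct only for case~(4), where the cord becomes tangent and one fragment degenerates to a point. In case~(2) nothing is contractible: the two nearby $S$-crossings produce split-off pairs $(c^1,c^2)$ and $(d^1,d^2)$ with $\widehat D(c^i)=\widehat D(d^i)$ and \emph{opposite} signs from relation~(iv), so they cancel directly. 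Case~(5) is an associativity computation for a triple split; case~(9a) requires a genuine change-of-generators isomorphism $g\mapsto g+\widehat D(a^1)\widehat D(a^2)$. You have correctly flagged this cluster as the main obstacle, but the actual arguments are case-specific rather than a single ``contractible-piece'' identity.

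The most substantive gap is in case~(14), the index-$1$/index-$2$ birth. Your claim that $D(h)=0$ follows from a local computation ``as in Remark~\ref{RemDMequalzero}'' does not hold: that remark works only because both branches of $W^u(M)$ lie on the diagonal and each reduces to $1-\mu$. For a general birth, the two branches of $W^u(k_\varepsilon)$ are global flow lines; they do end at the same index-$0$ point (by a broken-trajectory argument), but they may cross $B$, $F$, $S$ in different orders, so $\widehat D(k_{\varepsilon,+})\neq\widehat D(k_{\varepsilon,-})$ a priori. The paper closes this gap by \emph{perturbing the whole one-parameter family of energy functions} (via Lemma~\ref{perturbationofWug}) so that, at an intermediate parameter $\hat\varepsilon$, the two flow lines from $k_{\hat\varepsilon}$ stay inside a $\delta$-tube and hence meet $B$, $F$, $S$ identically, forcing $D'_{\hat\varepsilon}(k_{\hat\varepsilon})=0$; the remaining segments $[-\varepsilon,0)$, $(0,\hat\varepsilon)$, $(\hat\varepsilon,\varepsilon]$ are then covered by the already-established cases. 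This global perturbation step is the missing idea in your outline.
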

\begin{proof}
We consider a generic smooth isotopy $(K_{r})_{r\in[0,1]}$ of knots where $K_{0}$ and $K_{1}$ are generic, and want to show that $\Cord(K_{0})\cong\Cord(K_{1})$. Therefore, we will look at all the cases listed in Lemma \ref{genericisotopy}. Since there are, according to Lemma \ref{genericisotopy}, only finitely many non-generic knots during this isotopy, we can simplify notation by considering a generic isotopy $(K_{r})_{r\in[-\varepsilon,\varepsilon]}, \varepsilon>0$, such that
\begin{itemize}
\item in case (i) of Lemma \ref{genericisotopy} $K_{r}$ is generic for all $r\in[-\varepsilon,\varepsilon]$.
\item in case (ii) of Lemma \ref{genericisotopy} $K_{r}$ is generic for all $r\in[-\varepsilon,\varepsilon]\setminus\lbrace0\rbrace$ and $K_{0}$ satisfies exactly one of the cases (ii,1) to (ii,14).
\end{itemize}
First, we construct isomorphisms which we will need in the cases (i) and (ii,1) to (ii,13): Let
\[n_{0}:=\vert Crit_{0}(K_{0})\vert\text{ and }n_{1}:=\vert Crit_{1}(K_{0})\vert.\]
Since no critical points appear or disappear (in the cases (i) and (ii,1) to (ii,13)), the following holds for all $r\in[-\varepsilon,\varepsilon]$:
\[\vert Crit_{0}(K_{r})\vert=n_{0}\text{ and }\vert Crit_{1}(K_{r})\vert=n_{1}.\]
So for $r\in[-\varepsilon,\varepsilon]$ let
\[Crit_{0}(K_{r})=\lbrace g_{r}^{1},\dots,g_{r}^{n_{0}}\rbrace\text{ and }Crit_{1}(K_{r})=\lbrace k_{r}^{1},\dots,k_{r}^{n_{1}}\rbrace.\]
Now we define the sets
\begin{align*}
Crit_{0}^{[-\varepsilon,\varepsilon]}&:=\bigcup_{r\in[-\varepsilon,\varepsilon]}\lbrace r\rbrace\times\lbrace g_{r}^{1},\dots,g_{r}^{n_{0}}\rbrace\subset[-\varepsilon,\varepsilon]\times T^{2}\\
Crit_{1}^{[-\varepsilon,\varepsilon]}&:=\bigcup_{r\in[-\varepsilon,\varepsilon]}\lbrace r\rbrace\times\lbrace k_{r}^{1},\dots,k_{r}^{n_{1}}\rbrace\subset[-\varepsilon,\varepsilon]\times T^{2}.
\end{align*}
Since the isotopy is smooth, we can number the critical points such that the maps

\begin{align*}
\Psi_{0}:[-\varepsilon,\varepsilon]\times Crit_{0}(K_{-\varepsilon})&\to Crit_{0}^{[-\varepsilon,\varepsilon]}\\
(r,g_{-\varepsilon}^{i})&\mapsto g_{r}^{i},\ i=1,\dots,n_{0}\\
\Psi_{1}:[-\varepsilon,\varepsilon]\times Crit_{1}(K_{-\varepsilon})&\to Crit_{1}^{[-\varepsilon,\varepsilon]}\\
(r,k_{-\varepsilon}^{i})&\mapsto k_{r}^{i},\ i=1,\dots,n_{1}
\end{align*}
are continuous with respect to the first component. For all $r\in[-\varepsilon,\varepsilon]$ we define the linear maps $\Phi_{0,r}$ and $\Phi_{1,r}$ on generators by
\begin{align*}
\Phi_{0,r}:C_{0}(K_{-\varepsilon})&\to C_{0}(K_{r})\\
g_{-\varepsilon}^{i}&\mapsto\Psi_{0}(r,g_{-\varepsilon}^{i})=g_{r}^{i},\ i=1,\dots,n_{0}\\
\lambda^{\pm1}&\mapsto\lambda^{\pm1}\\
\mu^{\pm1}&\mapsto\mu^{\pm1}\\
\Phi_{1,r}:C_{1}(K_{-\varepsilon})&\to C_{1}(K_{r})\\
k_{-\varepsilon}^{i}&\mapsto\Psi_{1}(r,k_{-\varepsilon}^{i})=k_{r}^{i},\ i=1,\dots,n_{1}.
\end{align*}
For all $r\in[-\varepsilon,\varepsilon]$ we extend $\Phi_{0,r}$ to an algebra homomorphism. Obviously, $\Phi_{0,r}$ and $\Phi_{1,r}$ are isomorphisms for all $r\in[-\varepsilon,\varepsilon]$.\\
For each $k_{r}^{i},\ r\in[-\varepsilon,\varepsilon],i=1,\dots,n_{1}$, choose $k_{r,+}^{i},k_{r,-}^{i}\in W_{r}^{u}(k_{r}^{i})$ to determine $D_{r}(k_{r}^{i})$. Choose $T<\infty$ such that the following holds for all $i=1,\dots,n_{1}$:
\begin{align*}
\varphi_{0}^{T}(k_{0,+}^{i})&\in\B_{\varepsilon_{0}^{j_{i}}}(g_{0}^{j_{i}})\\
\varphi_{0}^{T}(k_{0,-}^{i})&\in\B_{\varepsilon_{0}^{l_{i}}}(g_{0}^{l_{i}}),
\end{align*}
where the $\varepsilon_{0}^{j_{i}},\varepsilon_{0}^{l_{i}}$ are chosen such that the properties \eqref{epsilonprop1} and \eqref{epsilonprop2} are satisfied. This must also hold for all cords created by splitting a cord according to relation~(iv) during its movement along the unstable manifold of $k_{r}^{i}$. Since, according to Lemma \ref{finitelymanyintersections}, these are only finitely many cords and all cords which are moved along the gradient flow reach a $\B_{\varepsilon_{0}^{i}}(g_{0}^{i})$ after finite time, such a $T$ exists. After possibly increasing $T$ it can be achieved that for all $r\in[-\varepsilon,\varepsilon],\ i=1,\dots,n_{1}$, and matching $\varepsilon_{r}^{j_{i}}$ and $\varepsilon_{r}^{l_{i}}$ the following holds (since $[-\varepsilon,\varepsilon]\times T^{2}$ is compact):
\begin{align*}
\varphi_{r}^{T}(k_{r,+}^{i})&\in\B_{\varepsilon_{r}^{j_{i}}}(g_{r}^{j_{i}})\\
\varphi_{r}^{T}(k_{r,-}^{i})&\in\B_{\varepsilon_{r}^{l_{i}}}(g_{r}^{l_{i}}).
\end{align*}
Lemma \ref{ExnbhdDxequalsDq} implies that there exists a $\bar{\delta}>0$ such that the following holds for all $i=1,\dots,n_{1}$ and all $x\in\B_{\bar{\delta}}(k_{0,\pm}^{i})$:
\[\widehat{D}_{0}(x)=\widehat{D}_{0}(k_{0,\pm}^{i}).\]
Let $\hat{\varepsilon}>0$ be small enough such that for all $r\in[-\varepsilon,\varepsilon]$ the properties \eqref{epsilonprop1relii} to \eqref{epsilonprop4reliv} required in the proof of Lemma \ref{ExnbhdDxequalsDq} are satisfied (which are formulated there for $\varepsilon_{F},\varepsilon_{B}$ and $\varepsilon_{S}$). Such an $\hat{\varepsilon}$ exists since the required properties are open and $[-\varepsilon,\varepsilon]$ is compact. The starting times are always $s_{0}=0$. Since the solution of a differential equation depends continuously on the initial conditions and the function, there exists a $\delta>0$ with $\delta<\bar{\delta}$ such that the following holds for all $i=1,\dots,n_{1}$ and all $x\in T^{2}$ with $\vert x-k_{0,\pm}^{i}\vert<\delta$ and $\vert X_{r}(x)-X_{0}(x)\vert<\delta$ for all $s\in[0,T]$:
\[\vert\varphi_{r}^{s}(x)-\varphi_{0}^{s}(k_{0,\pm}^{i})\vert<\hat{\varepsilon}.\]
If $\varepsilon$ is chosen small enough, it follows that the following holds for all $i=1,\dots,n_{1}$ and all $r\in[-\varepsilon,\varepsilon]$:
\[\widehat{D}_{r}(k_{r,\pm}^{i})=\Phi_{0,r}\circ\widehat{D}_{-\varepsilon}(k_{-\varepsilon,\pm}^{i}).\]
So we get for all $i=1,\dots,n_{1}$ and all $r\in[-\varepsilon,\varepsilon]$:
\begin{align*}
D_{r}(k_{r}^{i})&=\Phi_{0,r}\circ D_{-\varepsilon}(k_{-\varepsilon}^{i})\\
&=\Phi_{0,r}\circ D_{-\varepsilon}\circ\Phi_{1,r}^{-1}(k_{r}^{i}).
\end{align*}
Therefore, we have in case (i):
\[D_{r}=\Phi_{0,r}\circ D_{-\varepsilon}\circ\Phi_{1,r}^{-1}.\]
Thus, where $\langle M\rangle$ means the ideal generated by the set $M$: 
\begin{align*}
\Cord(K_{r})&=C_{0}(K_{r})/I_{r}\\
&=C_{0}(K_{r})/\langle D_{r}(C_{1}(K_{r}))\rangle\\
&=\Phi_{0,r}(C_{0}(K_{-\varepsilon}))/\langle\Phi_{0,r}\circ D_{-\varepsilon}\circ\Phi_{1,r}^{-1}(C_{1}(K_{r}))\rangle\\
&=\Phi_{0,r}(C_{0}(K_{-\varepsilon}))/\langle\Phi_{0,r}\circ D_{-\varepsilon}(C_{1}(K_{-\varepsilon}))\rangle\\
&\hspace*{-.04cm}\overset{(1)}{=}\Phi_{0,r}\Big(C_{0}(K_{-\varepsilon})/\langle D_{-\varepsilon}(C_{1}(K_{-\varepsilon}))\rangle\Big)\\
&=\Phi_{0,r}\big(C_{0}(K_{-\varepsilon})/I_{-\varepsilon}\big)\\
&=\Phi_{0,r}(\Cord(K_{-\varepsilon}))\\
&\cong\Cord(K_{-\varepsilon}).
\end{align*}
Equality (1) holds since $\Phi_{0,r}$ is an algebra isomorphism for all $r\in[-\varepsilon,\varepsilon]$.\\
So in case (i) the cord algebra remains the same up to a canonical isomorphism.\\[.5em]
Now we assume that $K_{r}$ is generic for all $r\in[-\varepsilon,\varepsilon]\setminus\lbrace0\rbrace$ and $K_{0}$ satisfies exactly one of the cases (ii,1) to (ii,14) of Lemma \ref{genericisotopy}. We want to show that $\Cord(K_{-\varepsilon})\cong\Cord(K_{\varepsilon})$ in each of these cases. Without loss of generality, we can number the critical points such that in the cases (ii,1) to (ii,10) $k_{0}^{1}$ is the critical point of index 1 that is mentioned in these statements. The above consideration concerning $D_{r}(k_{r}^{i})$ only holds for $i=2,\dots,n_{1}$ and we have to look at $D_{r}(k_{r}^{1})$. Furthermore, in the cases (ii,8) to (ii,10) we number the critical points such that $g_{0}^{1}$ is the critical point of index 0 that is mentioned in these statements.\\[.5em]
Case (ii,1): $W_{0}^{u}(k_{0}^{1})\ntransv B$. We consider the situation as in Figure \ref{WuntransvB}.
\begin{figure}[ht]\centering
\subfigure{\includegraphics[scale=0.3]{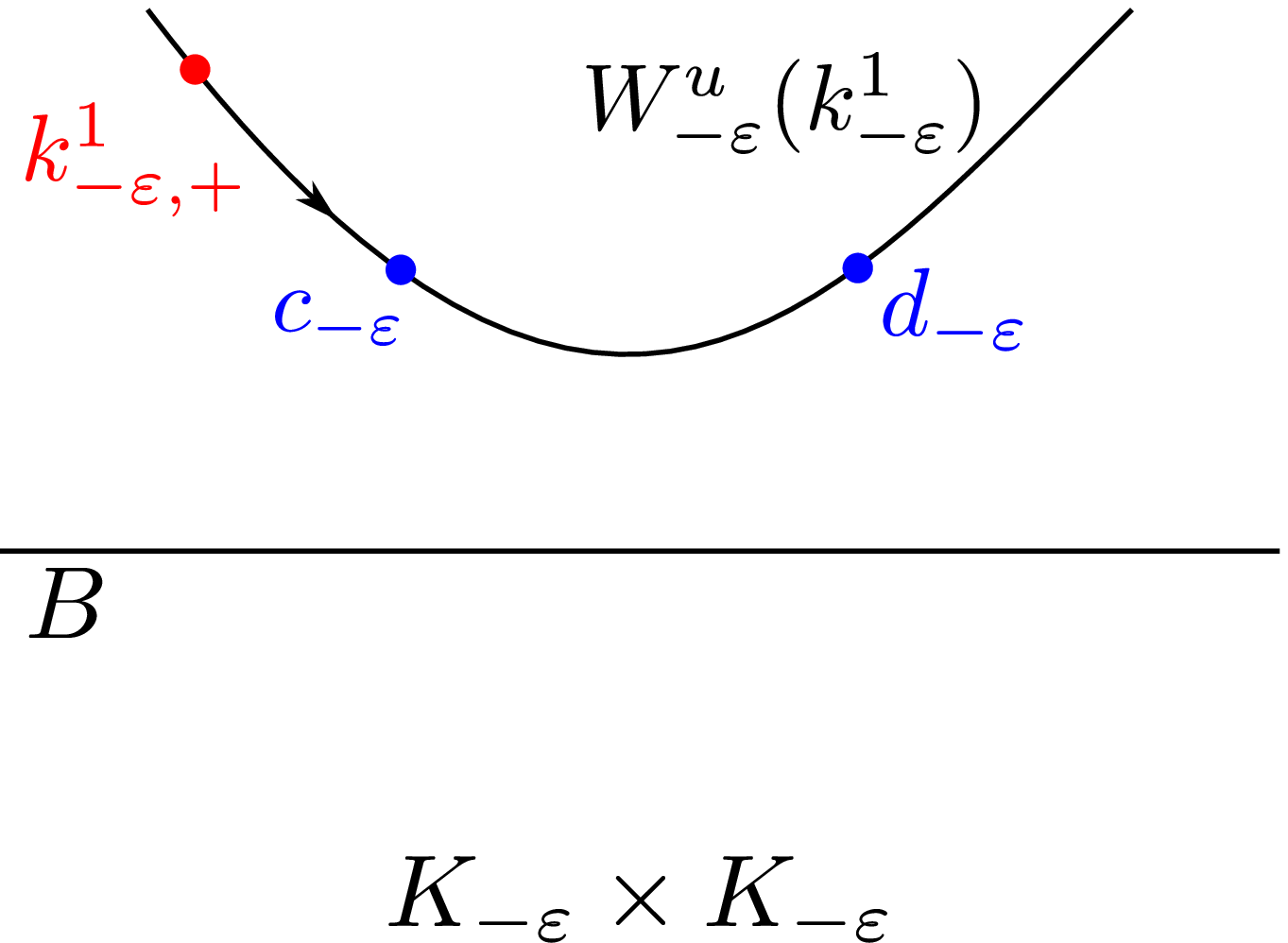}}
\subfigure{\hspace*{1cm}\includegraphics[scale=0.3]{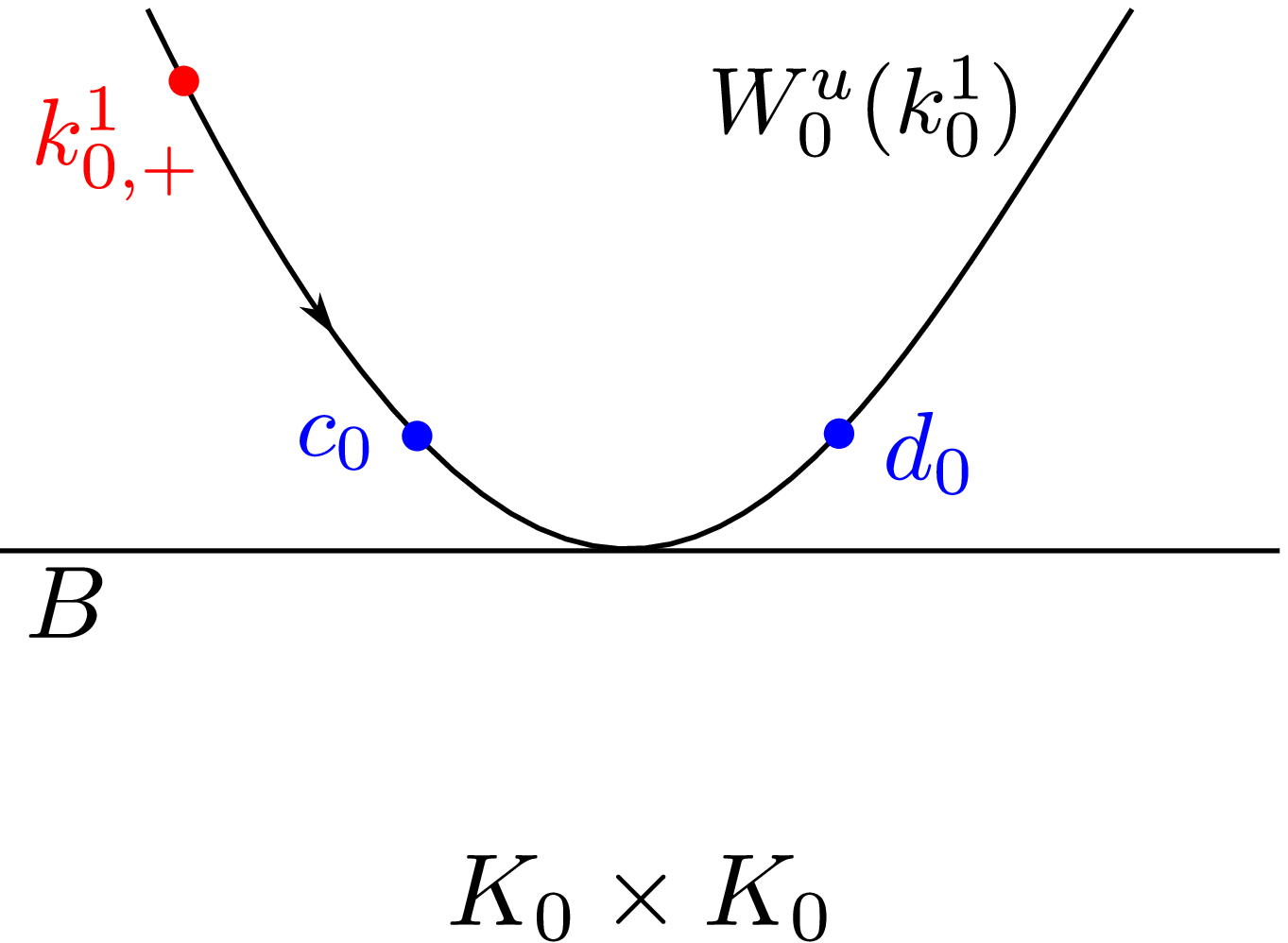}}
\subfigure{\hspace*{1.1cm}\includegraphics[scale=0.3]{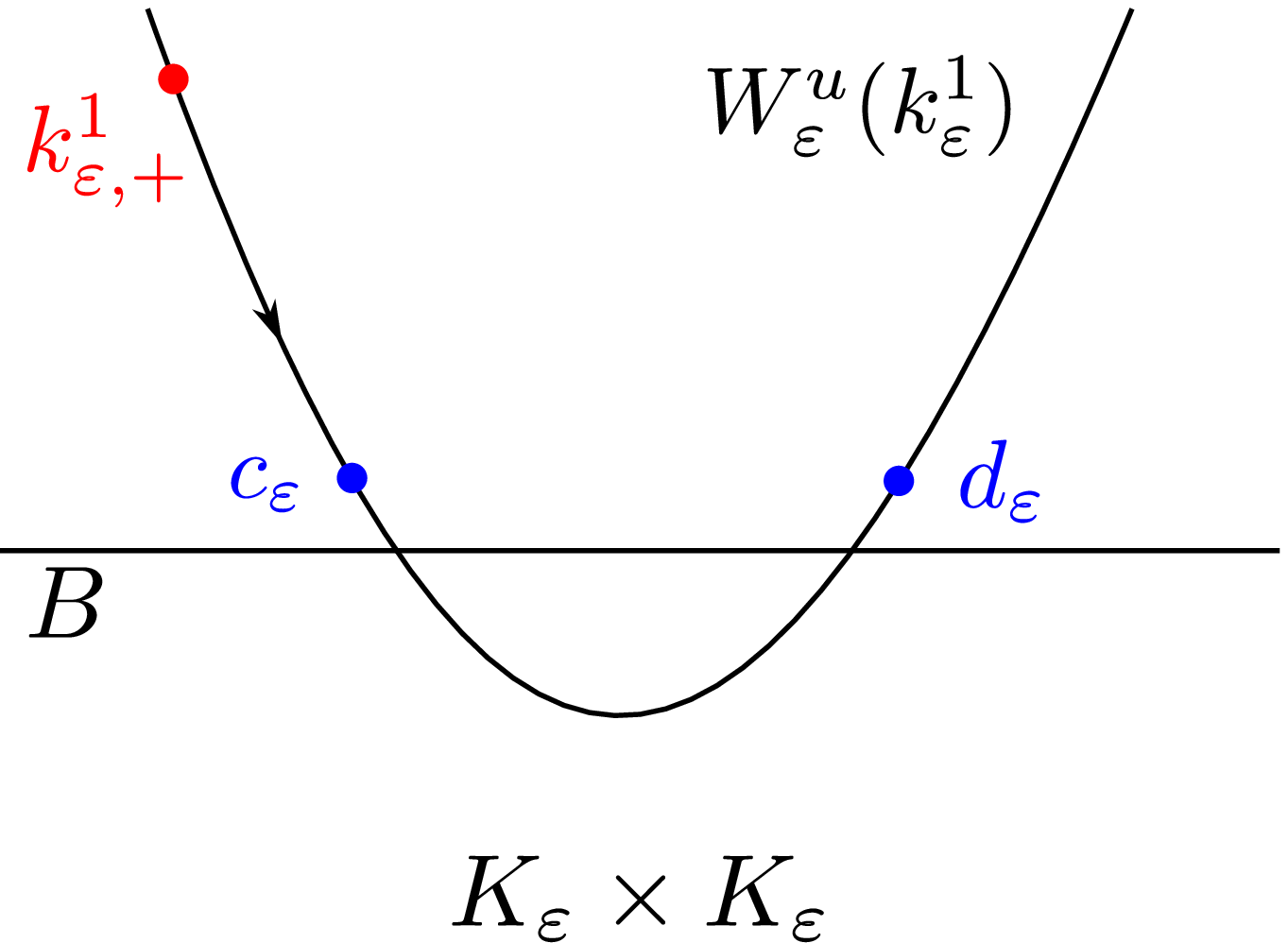}}
\caption{The unstable manifold of the cord $k_{0}^{1}$ is tangent to $B$\label{WuntransvB}}
\end{figure}
The following consideration holds analogously for the other possible situations: direction of the flow from the right to the left, $k_{r,-}^{1}$, and $W_{0}^{u}(k_{0}^{1})$ tangent to $(\lbrace0\rbrace\times K_{0})\subset B$.\\
For $r\in[-\varepsilon,\varepsilon]$ choose $c_{r},d_{r}$ as in Figure \ref{WuntransvB}. Then for $\varepsilon$ small enough we get on the one hand $\widehat{D}_{\varepsilon}(d_{\varepsilon})=\Phi_{0,\varepsilon}\circ\widehat{D}_{-\varepsilon}(d_{-\varepsilon})$ and on the other hand $\widehat{D}_{r}(c_{r})=\widehat{D}_{r}(d_{r})$ for all $r\in[-\varepsilon,0)$ and
\begin{align*}
\widehat{D}_{r}(c_{r})&\overset{\text{rel. (iii)}}{=}\widehat{D}_{r}(d_{r})\lambda\lambda^{-1}\\
&\hspace*{.36cm}=\widehat{D}_{r}(d_{r})
\end{align*}
for all $r\in(0,\varepsilon]$, since the endpoint of the cord intersects the base point once in the reversed direction of the orientation of the knot and once in the direction of the orientation of the knot. Thus, we get
\[D_{\varepsilon}(k_{\varepsilon}^{1})=\Phi_{0,\varepsilon}\circ D_{-\varepsilon}\circ\Phi_{1,\varepsilon}^{-1}(k_{\varepsilon}^{1}).\]
It follows that
\[D_{\varepsilon}=\Phi_{0,\varepsilon}\circ D_{-\varepsilon}\circ\Phi_{1,\varepsilon}^{-1}.\]
By the analogous computation as in case (i) with $r=\varepsilon$ we get $\Cord(K_{\varepsilon})\cong\Cord(K_{-\varepsilon})$.\\[.5em]
Case (ii,2): $W_{0}^{u}(k_{0}^{1})\ntransv S_{0}$. We consider the situation as in Figure \ref{WuntransvS}.
\begin{figure}[ht]\centering 
\subfigure{\includegraphics[scale=0.3]{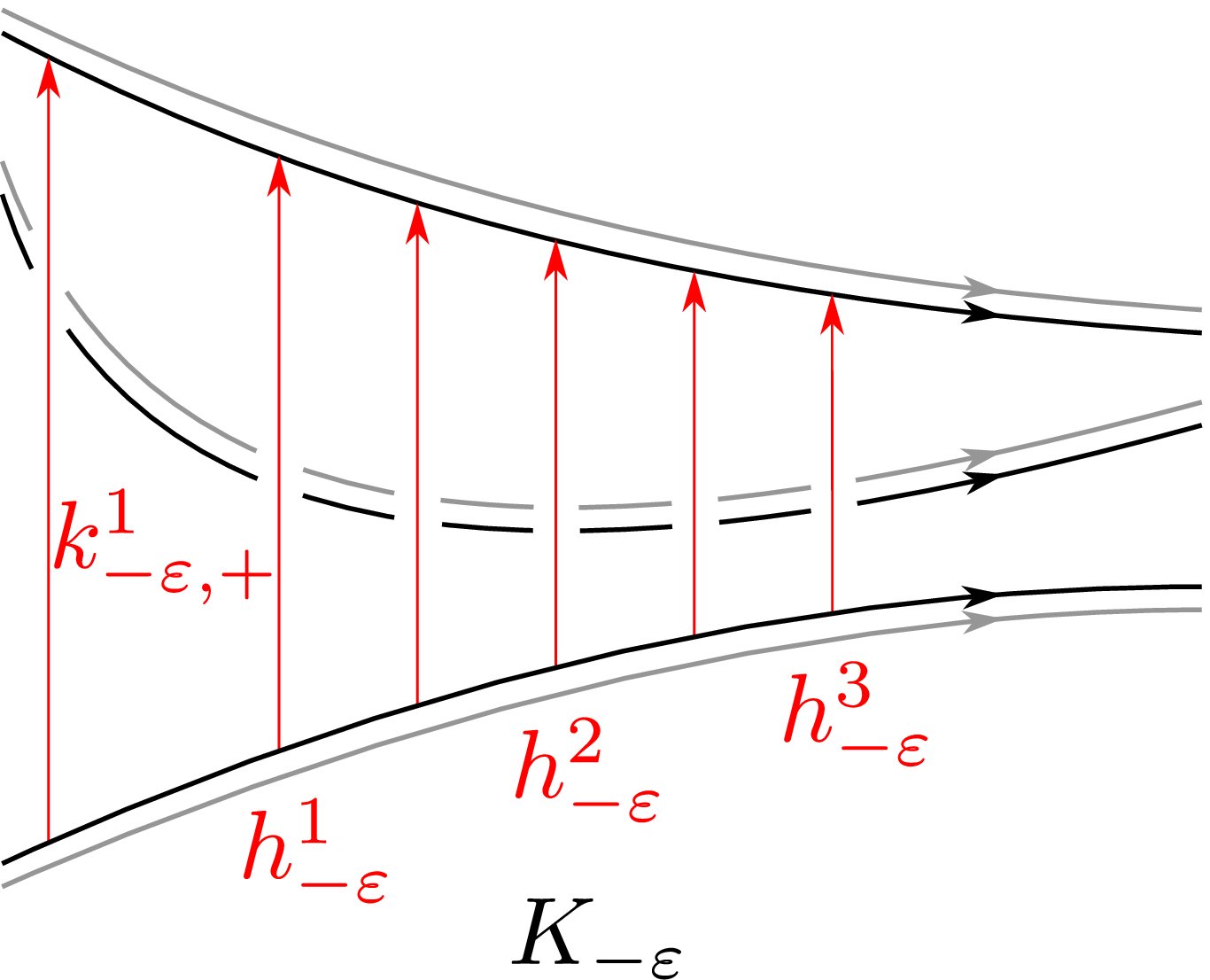}}
\subfigure{\hspace*{1cm}\includegraphics[scale=0.3]{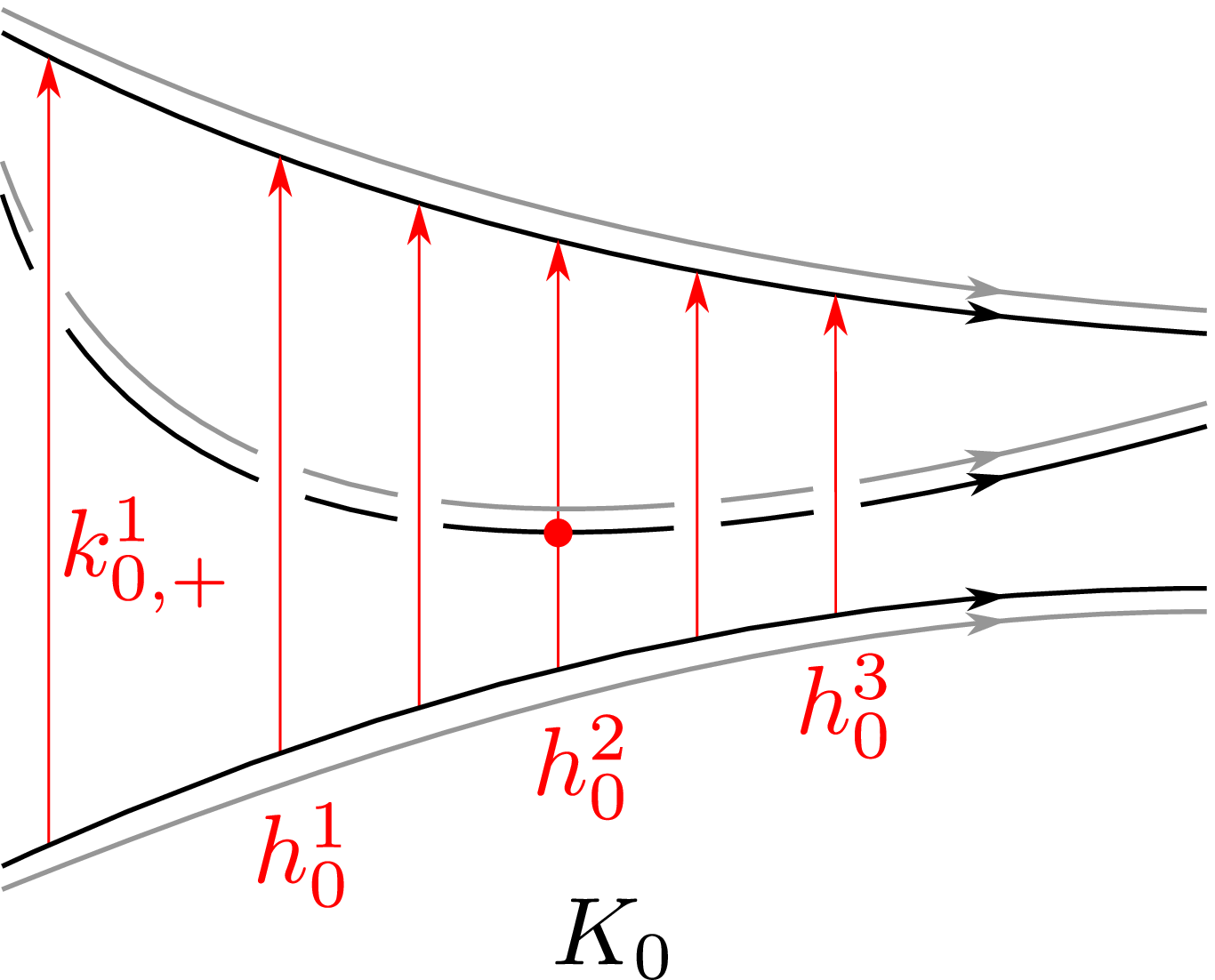}}
\subfigure{\hspace*{1.1cm}\includegraphics[scale=0.3]{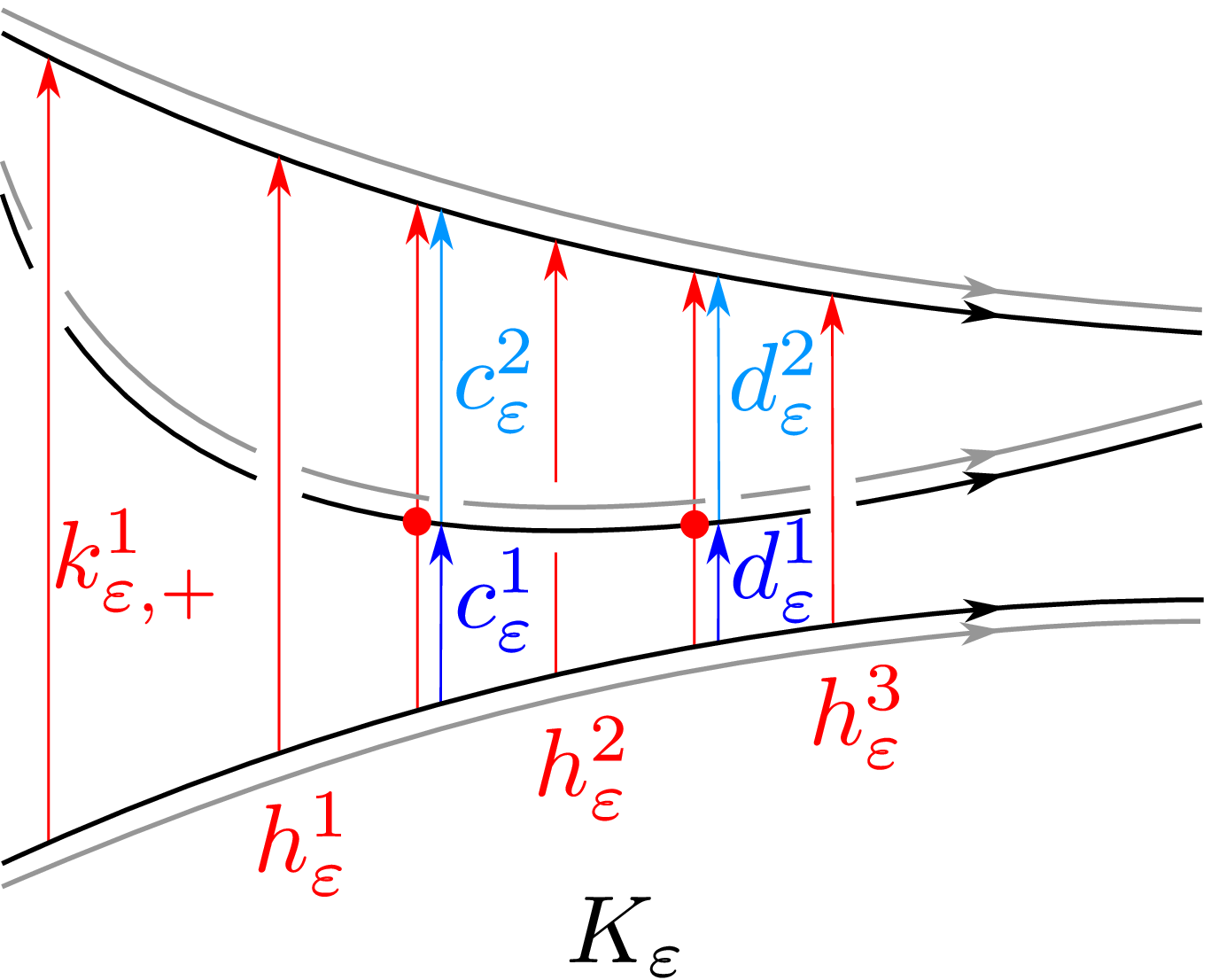}}
\caption{The unstable manifold of the cord $k_{0}^{1}$ is tangent to $S_{0}$\label{WuntransvS}}
\end{figure}
The cords labelled $h_{r}^{i}$, for $i=1,2,3,$ lie on the unstable manifold of $k_{r}^{1}$ such that $\varphi_{r}^{t_{i}}(k_{r,+}^{1})=h_{r}^{i}, t_{i}>0,$ and $t_{i}>t_{j}$ if $i>j$. (We will use this notation in the following cases, too.) For $r\in(0,\varepsilon]$ the cord intersects the knot in its interior at two different times when flowing along the negative gradient, and we get $c_{r}^{1}, c_{r}^{2}, d_{r}^{1}$, and $d_{r}^{2}$ by the application of relation (iv). According to Lemma \ref{genericisotopy}, we can assume that there is no citical point between $c_{r}^{1}$ and $d_{r}^{1}$ and between $c_{r}^{2}$ and $d_{r}^{2}$, otherwise we would have also the case (ii,13), but this would be a contradiction to this Lemma since only one of these cases occurs at any one time. So if $\varepsilon$ is chosen small enough, we have $\widehat{D}_{r}(c_{r}^{1})=\widehat{D}_{r}(d_{r}^{1})$ and $\widehat{D}_{r}(c_{r}^{2})=\widehat{D}_{r}(d_{r}^{2})$ for $r>0$. Thus, we get
\begin{align*}
\widehat{D}_{r}(h_{r}^{1})&\overset{\text{rel. (iv)}}{=}\widehat{D}_{r}(h_{r}^{2})+\widehat{D}_{r}(c_{r}^{1})\widehat{D}_{r}(c_{r}^{2})\\
&\overset{\text{rel. (iv)}}{=}\widehat{D}_{r}(h_{r}^{3})-\widehat{D}_{r}(d_{r}^{1})\widehat{D}_{r}(d_{r}^{2})+\widehat{D}_{r}(c_{r}^{1})\widehat{D}_{r}(c_{r}^{2})\\
&\hspace*{.36cm}=\widehat{D}_{r}(h_{r}^{3}).
\end{align*}
As in case (ii,1) we get $D_{\varepsilon}(k_{\varepsilon}^{1})=\Phi_{0,\varepsilon}\circ D_{-\varepsilon}\circ\Phi_{1,\varepsilon}^{-1}(k_{\varepsilon}^{1}), D_{\varepsilon}=\Phi_{0,\varepsilon}\circ D_{-\varepsilon}\circ\Phi_{1,\varepsilon}^{-1}$ and thus by the analogous computation as above $\Cord(K_{\varepsilon})\cong\Cord(K_{-\varepsilon})$.\\
The other possible situations are to be treated analogously.\\[.5em]
Case (ii,3): $W_{0}^{u}(k_{0}^{1})\ntransv F_{0}$: Analogous to case (ii,1) with relation (ii) instead of relation (iii), i.e. $\mu^{\pm1}$ instead of $\lambda^{\pm1}$.\\[.5em]
Case (ii,4): $W^{u}_{0}(k_{0}^{1})\cap\partial S_{0}\neq\emptyset$ and $W^{u}_{0}(k_{0}^{1})\cap\partial F_{0}\neq\emptyset$. We consider the situation as in Figure \ref{WucapbSbFnempty}.
\begin{figure}[htpb]\centering
\subfigure{\includegraphics[scale=0.3]{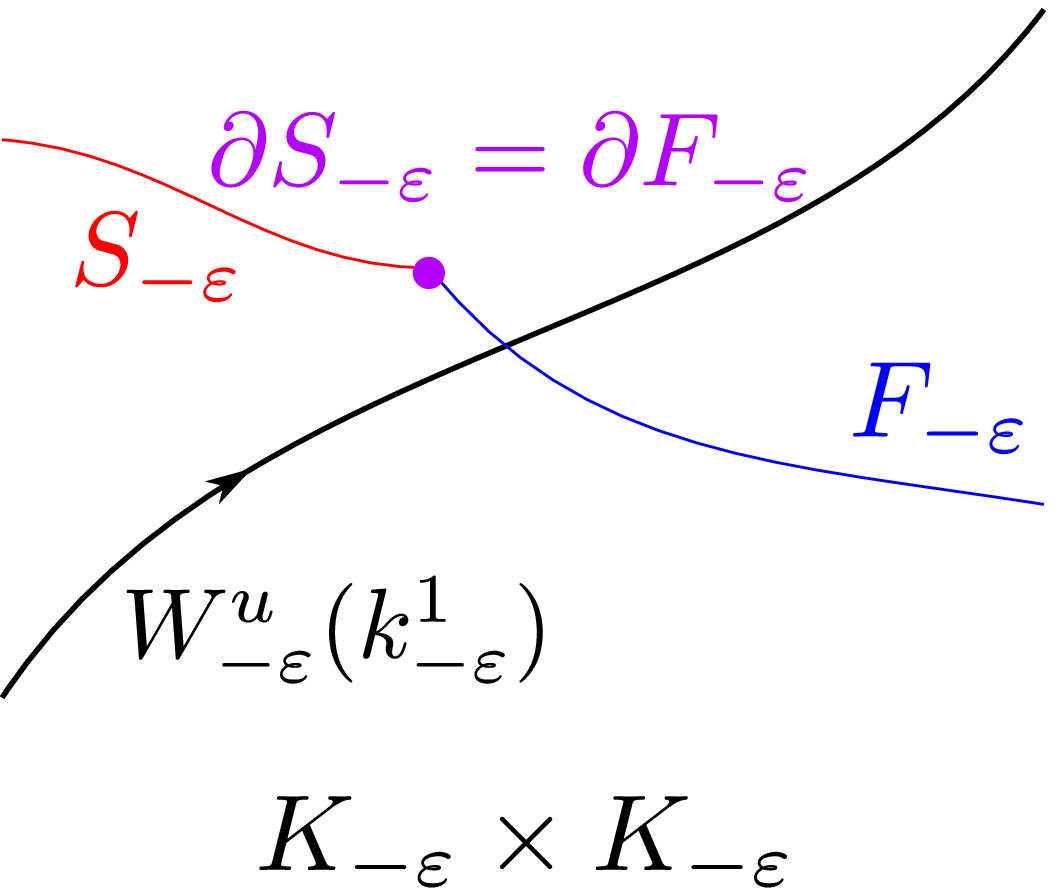}}
\subfigure{\hspace*{1.5cm}\includegraphics[scale=0.3]{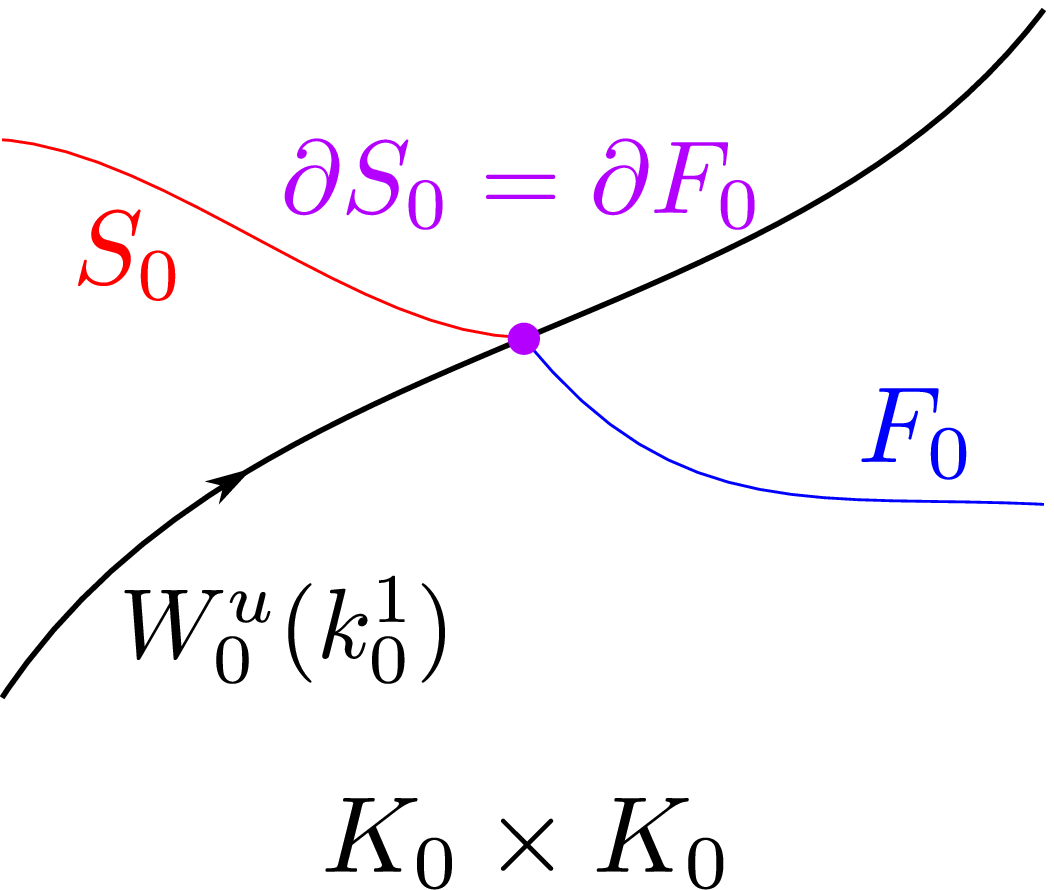}}
\subfigure{\hspace*{1.5cm}\includegraphics[scale=0.3]{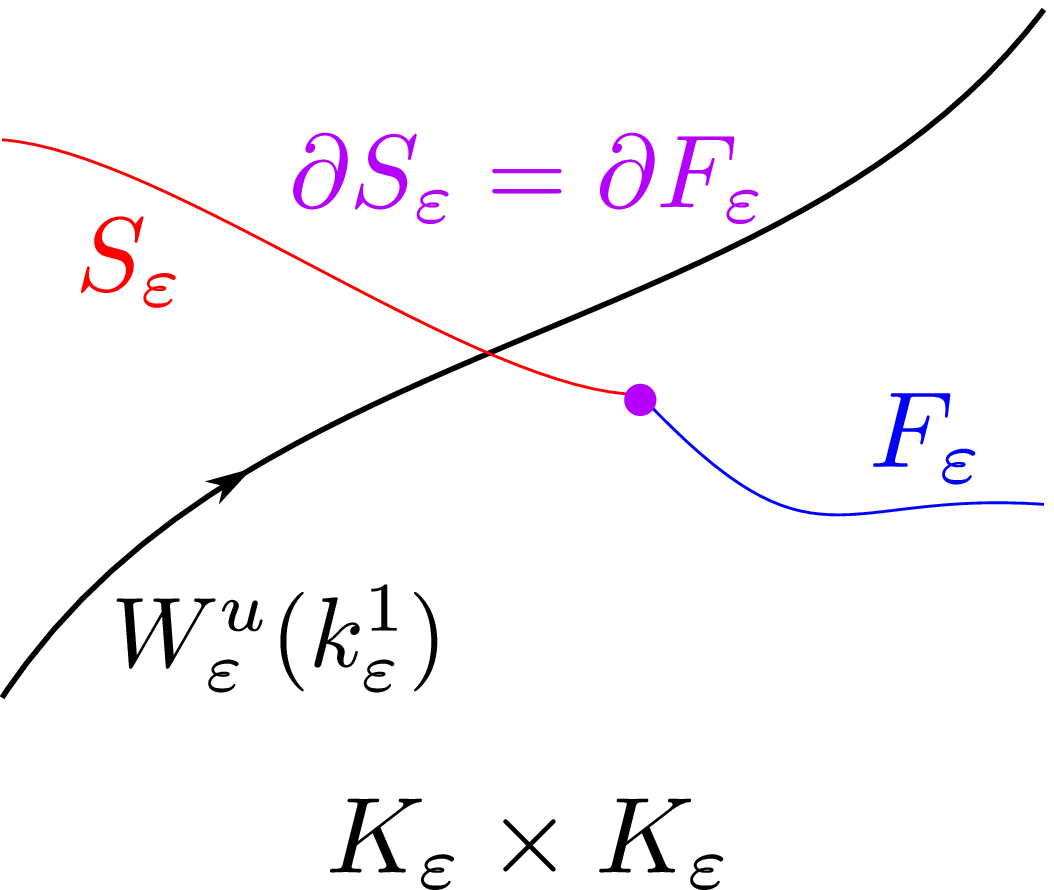}}
\subfigure{\includegraphics[scale=0.3]{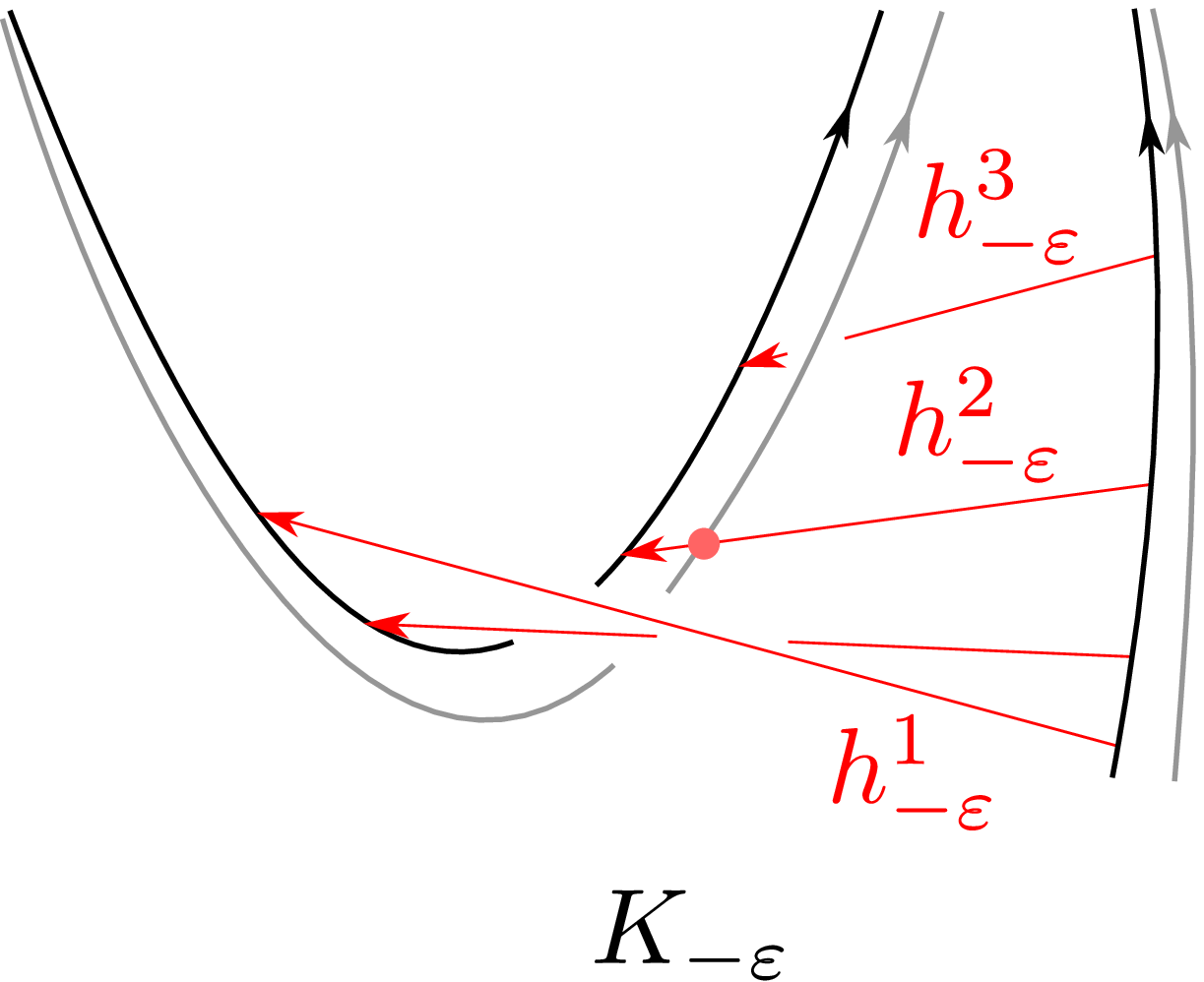}}
\subfigure{\hspace*{1cm}\includegraphics[scale=0.3]{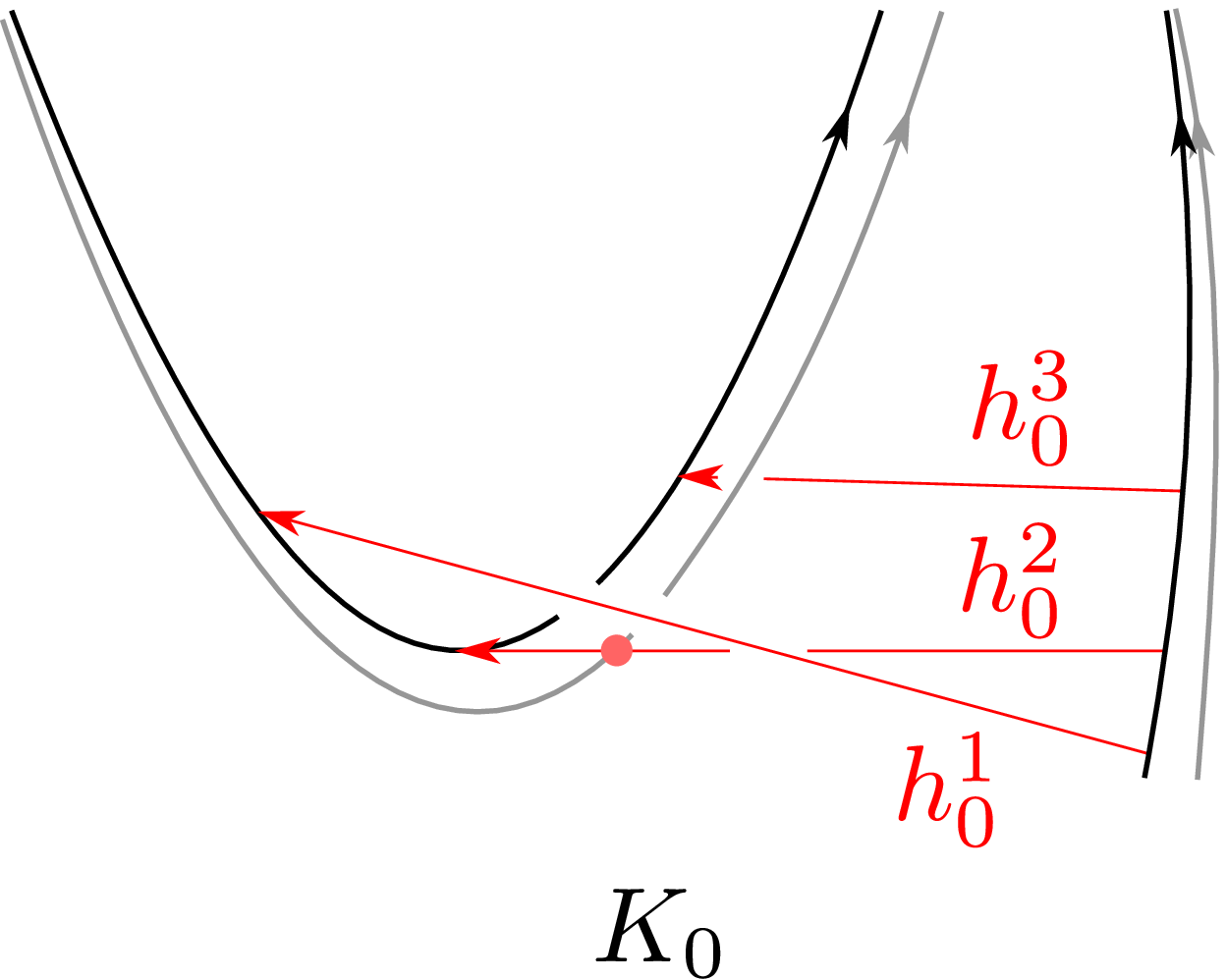}}
\subfigure{\hspace*{1cm}\includegraphics[scale=0.3]{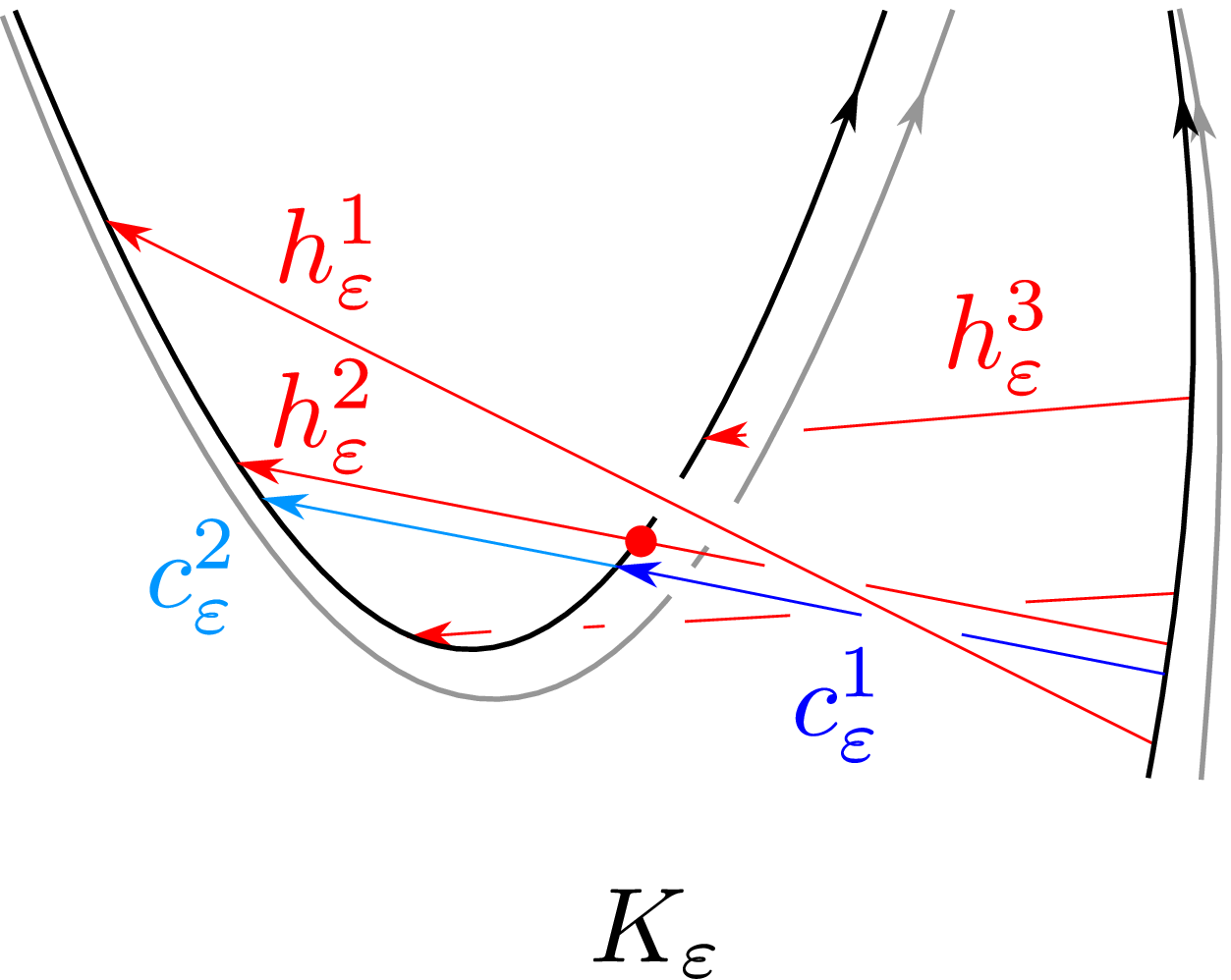}}
\caption{The unstable manifold of the cord $k_{0}^{1}$ intersects $\partial S_{0}$ and $\partial F_{0}$\label{WucapbSbFnempty}}
\end{figure}

As we can see from the figure, we get $\widehat{D}_{r}(h_{r}^{1})\overset{\text{rel. (ii)}}{=}\widehat{D}_{r}(h_{r}^{3})\mu^{-1}$ for $r<0$ and $\widehat{D}_{r}(h_{r}^{3})\overset{\text{rel. (ii)}}{=}\widehat{D}_{r}(c_{r}^{1})\mu$ for $r>0$. If $\varepsilon$ is chosen small enough, we have $\widehat{D}_{r}(h_{r}^{3})=\Phi_{0,r}\circ\widehat{D}_{-\varepsilon}(h_{-\varepsilon}^{3})$. Now we can compute for $r>0$:

\begin{align*}
\widehat{D}_{r}(h_{r}^{1})&\overset{\text{rel. (iv)}}{=}\widehat{D}_{r}(h_{r}^{3})+\widehat{D}_{r}(c_{r}^{1})\widehat{D}_{r}(c_{r}^{2})\\
&\hspace*{.07cm}\overset{\text{rel. (i)}}{=}\widehat{D}_{r}(h_{r}^{3})+\widehat{D}_{r}(h_{r}^{3})\mu^{-1}(1-\mu)\\
&\hspace{.34cm}=\widehat{D}_{r}(h_{r}^{3})\mu^{-1}\\
&\hspace*{.34cm}=\Phi_{0,r}\circ\widehat{D}_{-\varepsilon}(h_{-\varepsilon}^{3})\mu^{-1}\\
&\hspace*{.34cm}=\Phi_{0,r}\circ\widehat{D}_{-\varepsilon}(h_{-\varepsilon}^{1}).
\end{align*}
As in case (ii,1) we get $D_{\varepsilon}(k_{\varepsilon}^{1})=\Phi_{0,\varepsilon}\circ D_{-\varepsilon}\circ\Phi_{1,\varepsilon}^{-1}(k_{\varepsilon}^{1}), D_{\varepsilon}=\Phi_{0,\varepsilon}\circ D_{-\varepsilon}\circ\Phi_{1,\varepsilon}^{-1}$ and thus by the analogous computation as above $\Cord(K_{\varepsilon})\cong\Cord(K_{-\varepsilon})$.\\
The other possible situations are to be treated analogously.\\[.5em]
Case (ii,5): $W^{u}_{0}(k_{0}^{1})\cap S_{2,0}\neq\emptyset$. We consider the situation as in Figure \ref{WucapS2nempty}.
\begin{figure}[htpb]\centering
\subfigure{\includegraphics[scale=0.3]{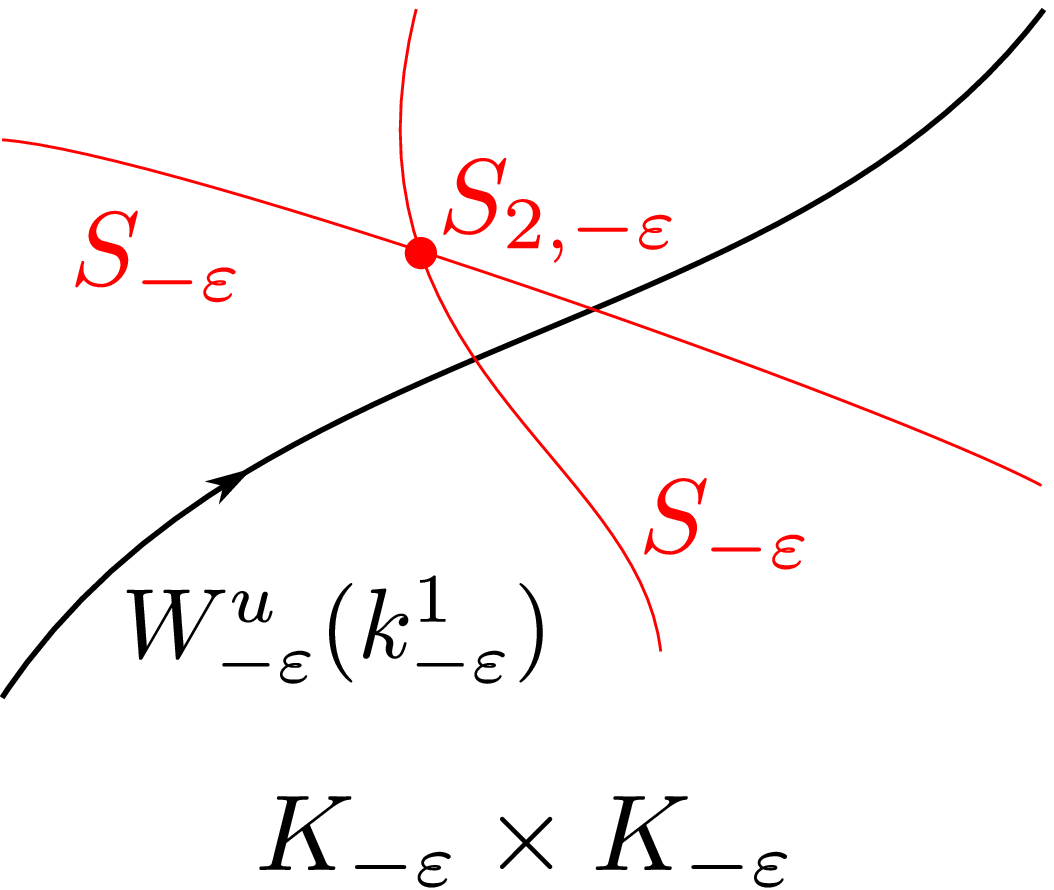}}
\subfigure{\hspace*{1.7cm}\includegraphics[scale=0.3]{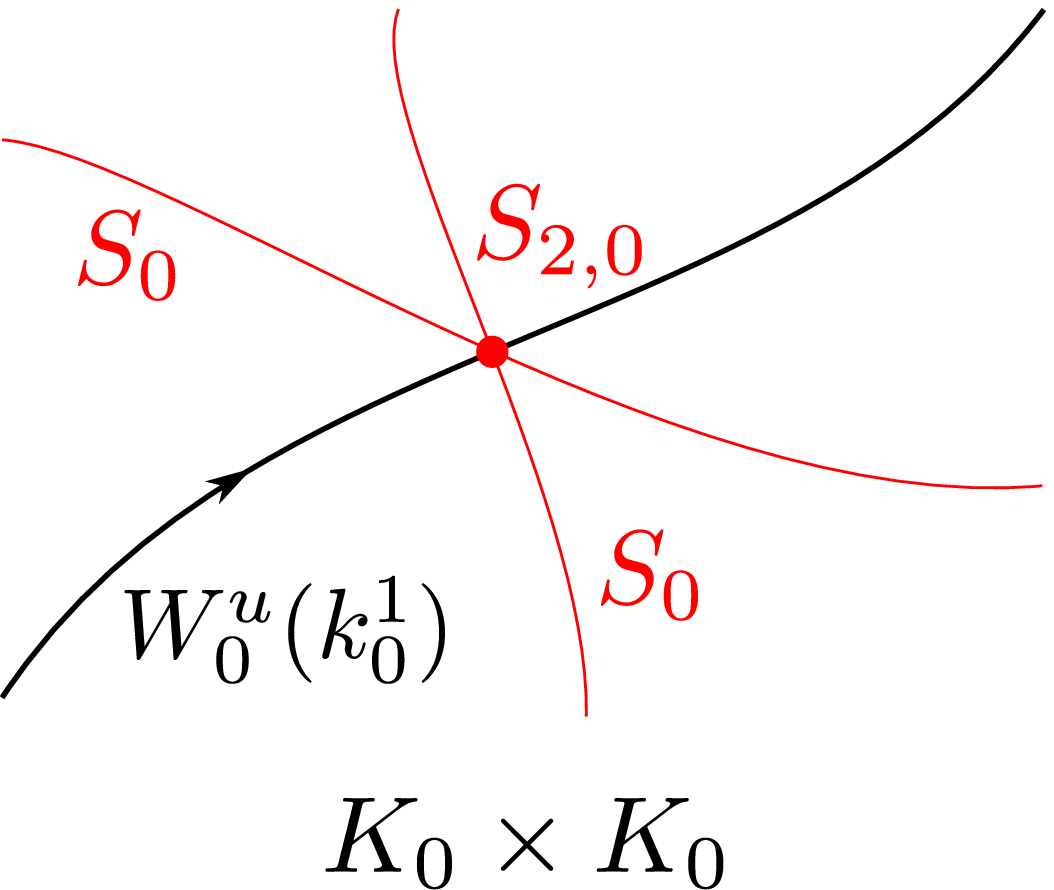}}
\subfigure{\hspace*{1.7cm}\includegraphics[scale=0.3]{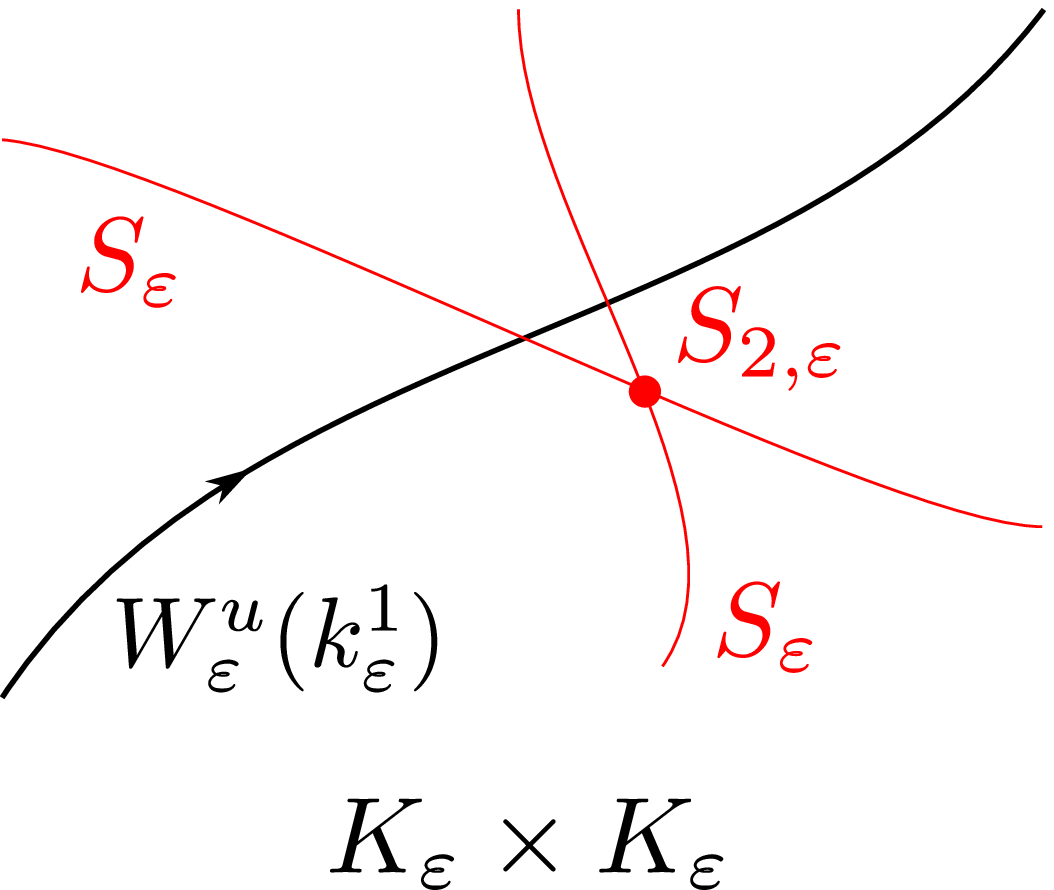}}
\subfigure{\includegraphics[scale=0.3]{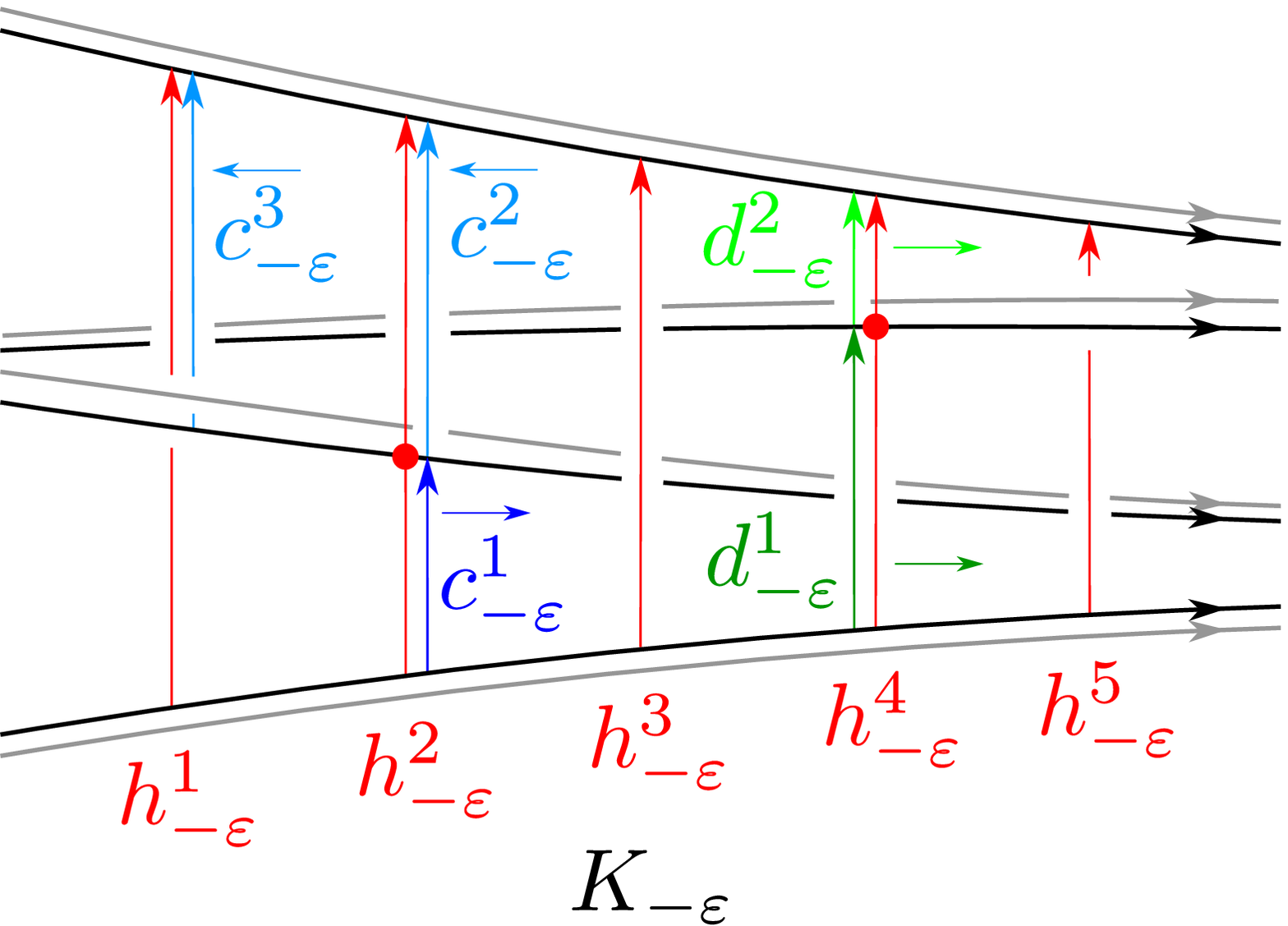}}
\subfigure{\hspace*{1cm}\includegraphics[scale=0.3]{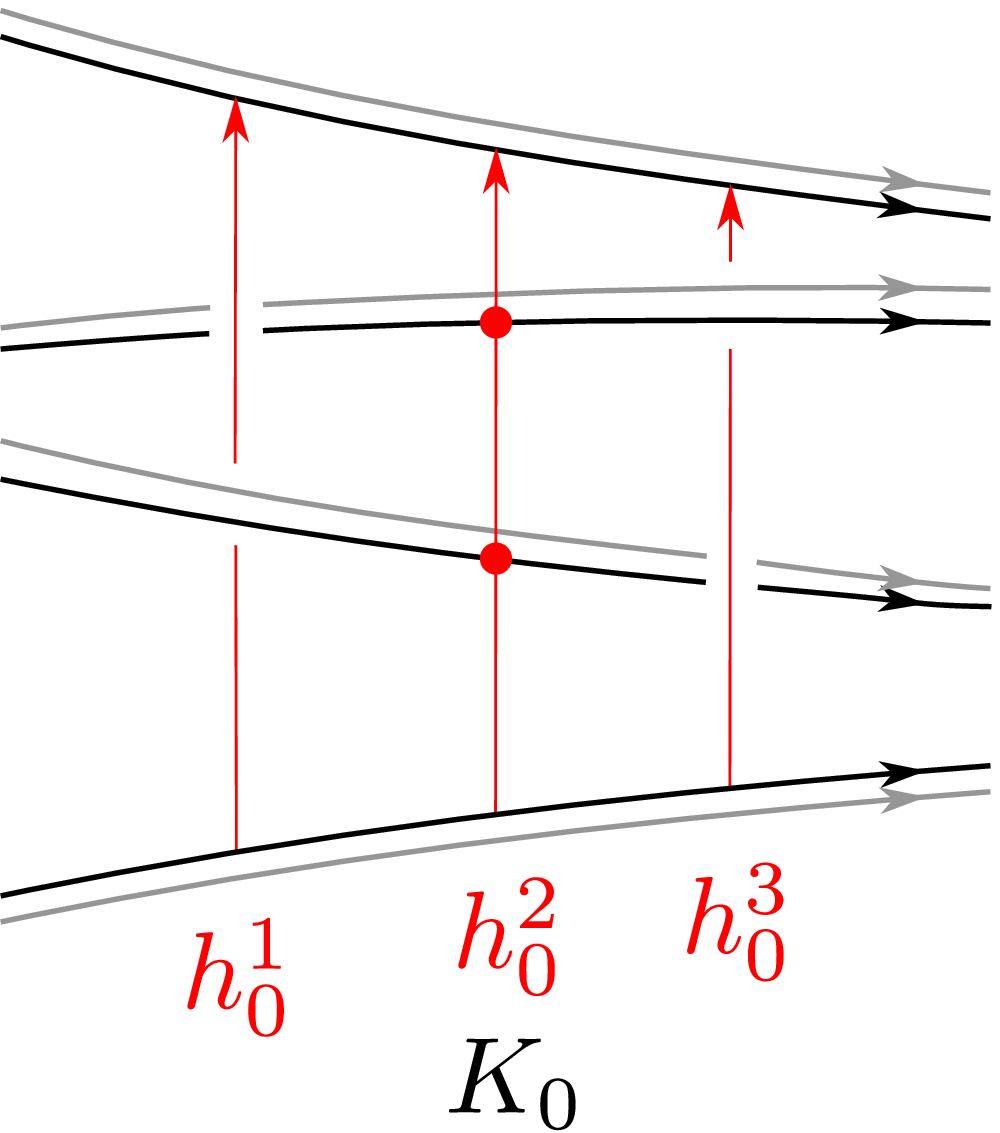}}
\subfigure{\hspace*{1cm}\includegraphics[scale=0.3]{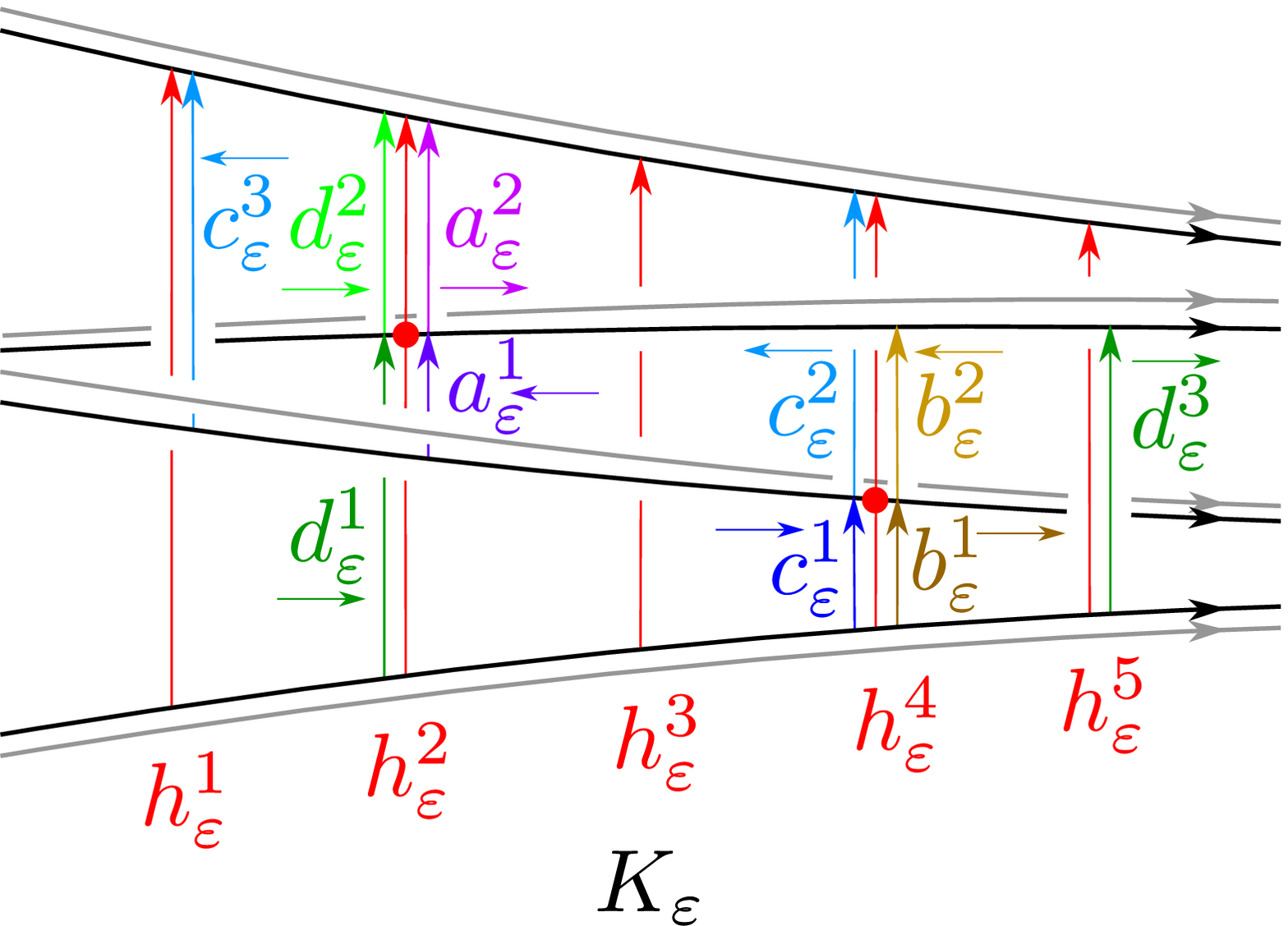}}
\caption{The unstable manifold of the cord $k_{0}^{1}$ intersects $S_{2,0}$\label{WucapS2nempty}}
\end{figure}
As in case (ii,2) we can assume that there are no critical points in the shown region. The small horizontal arrows indicate the direction in which the cords move along the gradient vector field. From the figure we get
\begin{align*}
\widehat{D}_{-\varepsilon}(c_{-\varepsilon}^{2})&\overset{\text{rel. (ii)}}{=}\mu^{-1}\widehat{D}_{-\varepsilon}(c_{-\varepsilon}^{3})&\widehat{D}_{\varepsilon}(a_{\varepsilon}^{2})&=\widehat{D}_{\varepsilon}(d_{\varepsilon}^{2})\\
\widehat{D}_{\varepsilon}(b_{\varepsilon}^{2})&\overset{\text{rel. (ii)}}{=}\mu^{-1}\widehat{D}_{\varepsilon}(a_{\varepsilon}^{1})&\widehat{D}_{\varepsilon}(b_{\varepsilon}^{1})&=\widehat{D}_{\varepsilon}(c_{\varepsilon}^{1}).\\
\end{align*}
If $\varepsilon$ is chosen small enough, the following holds:
\begin{align*}
\widehat{D}_{\varepsilon}(c_{\varepsilon}^{1})&=\Phi_{0,\varepsilon}\circ\widehat{D}_{-\varepsilon}(c_{-\varepsilon}^{1})\\
\widehat{D}_{\varepsilon}(d_{\varepsilon}^{2})&=\Phi_{0,\varepsilon}\circ\widehat{D}_{-\varepsilon}(d_{-\varepsilon}^{2})\\
\widehat{D}_{\varepsilon}(d_{\varepsilon}^{3})&=\Phi_{0,\varepsilon}\circ\widehat{D}_{-\varepsilon}(d_{-\varepsilon}^{1})\\
\widehat{D}_{\varepsilon}(c_{\varepsilon}^{3})&=\Phi_{0,\varepsilon}\circ\widehat{D}_{-\varepsilon}(c_{-\varepsilon}^{3})\\
\widehat{D}_{\varepsilon}(h_{\varepsilon}^{5})&=\Phi_{0,\varepsilon}\circ\widehat{D}_{-\varepsilon}(h_{-\varepsilon}^{5}).
\end{align*}
Now we can compute
\begingroup
\allowdisplaybreaks
\begin{align*}
\widehat{D}_{-\varepsilon}(h_{-\varepsilon}^{1})&\overset{\text{rel. (iv)}}{=}\widehat{D}_{-\varepsilon}(h_{-\varepsilon}^{3})-\widehat{D}_{-\varepsilon}(c_{-\varepsilon}^{1})\widehat{D}_{-\varepsilon}(c_{-\varepsilon}^{2})\\
&\overset{\text{rel. (iv)}}{=}\widehat{D}_{-\varepsilon}(h_{-\varepsilon}^{5})+\widehat{D}_{-\varepsilon}(d_{-\varepsilon}^{1})\widehat{D}_{-\varepsilon}(d_{-\varepsilon}^{2})-\widehat{D}_{-\varepsilon}(c_{-\varepsilon}^{1})\mu^{-1}\widehat{D}_{-\varepsilon}(c_{-\varepsilon}^{3})\\
\widehat{D}_{\varepsilon}(h_{\varepsilon}^{1})&\overset{\text{rel. (iv)}}{=}\widehat{D}_{\varepsilon}(h_{\varepsilon}^{3})+\widehat{D}_{\varepsilon}(d_{\varepsilon}^{1})\widehat{D}_{\varepsilon}(d_{\varepsilon}^{2})\\
&\overset{\text{rel. (iv)}}{=}\widehat{D}_{\varepsilon}(h_{\varepsilon}^{5})-\widehat{D}_{\varepsilon}(c_{\varepsilon}^{1})\widehat{D}_{\varepsilon}(c_{\varepsilon}^{2})+\widehat{D}_{\varepsilon}(d_{\varepsilon}^{1})\widehat{D}_{\varepsilon}(d_{\varepsilon}^{2})\\
&\overset{\text{rel. (iv)}}{=}\widehat{D}_{\varepsilon}(h_{\varepsilon}^{5})-\widehat{D}_{\varepsilon}(c_{\varepsilon}^{1})\left(\mu^{-1}\widehat{D}_{\varepsilon}(c_{\varepsilon}^{3})-\mu^{-1}\widehat{D}_{\varepsilon}(a_{\varepsilon}^{1})\widehat{D}_{\varepsilon}(a_{\varepsilon}^{2})\right)\\
&\hspace*{1.1cm}+\left(\widehat{D}_{\varepsilon}(d_{\varepsilon}^{3})-\widehat{D}_{\varepsilon}(b_{\varepsilon}^{1})\widehat{D}_{\varepsilon}(b_{\varepsilon}^{2})\right)\widehat{D}_{\varepsilon}(d_{\varepsilon}^{2})\\
&\hspace*{.37cm}=\widehat{D}_{\varepsilon}(h_{\varepsilon}^{5})-\widehat{D}_{\varepsilon}(c_{\varepsilon}^{1})\mu^{-1}\widehat{D}_{\varepsilon}(c_{\varepsilon}^{3})+\widehat{D}_{\varepsilon}(b_{\varepsilon}^{1})\widehat{D}_{\varepsilon}(b_{\varepsilon}^{2})\widehat{D}_{\varepsilon}(d_{\varepsilon}^{2})+\widehat{D}_{\varepsilon}(d_{\varepsilon}^{3})\widehat{D}_{\varepsilon}(d_{\varepsilon}^{2})\\
&\hspace*{1.1cm}-\widehat{D}_{\varepsilon}(b_{\varepsilon}^{1})\widehat{D}_{\varepsilon}(b_{\varepsilon}^{2})\widehat{D}_{\varepsilon}(d_{\varepsilon}^{2})\\
&\hspace*{.37cm}=\Phi_{0,\varepsilon}\circ\left(\widehat{D}_{-\varepsilon}(h_{-\varepsilon}^{5})-\widehat{D}_{-\varepsilon}(c_{-\varepsilon}^{1})\mu^{-1}\widehat{D}_{-\varepsilon}(c_{-\varepsilon}^{3})+\widehat{D}_{-\varepsilon}(d_{-\varepsilon}^{1})\widehat{D}_{-\varepsilon}(d_{-\varepsilon}^{2})\right)\\
&\hspace*{.37cm}=\Phi_{0,\varepsilon}\circ\widehat{D}_{-\varepsilon}(h_{-\varepsilon}^{1}).
\end{align*}
\endgroup
As in case (ii,1) we get $D_{\varepsilon}(k_{\varepsilon}^{1})=\Phi_{0,\varepsilon}\circ D_{-\varepsilon}\circ\Phi_{1,\varepsilon}^{-1}(k_{\varepsilon}^{1}), D_{\varepsilon}=\Phi_{0,\varepsilon}\circ D_{-\varepsilon}\circ\Phi_{1,\varepsilon}^{-1}$ and thus by the analogous computation as above $\Cord(K_{\varepsilon})\cong\Cord(K_{-\varepsilon})$.\\
The other possible situations are to be treated analogously.\\[.5em]
Case (ii,6): $W_{0}^{u}(k_{0}^{1})\cap B\cap S_{0}\neq\emptyset$. We consider the situation as in Figure \ref{WucapBcapSnempty}.
\begin{figure}[htpb]\centering
\subfigure{\includegraphics[scale=0.3]{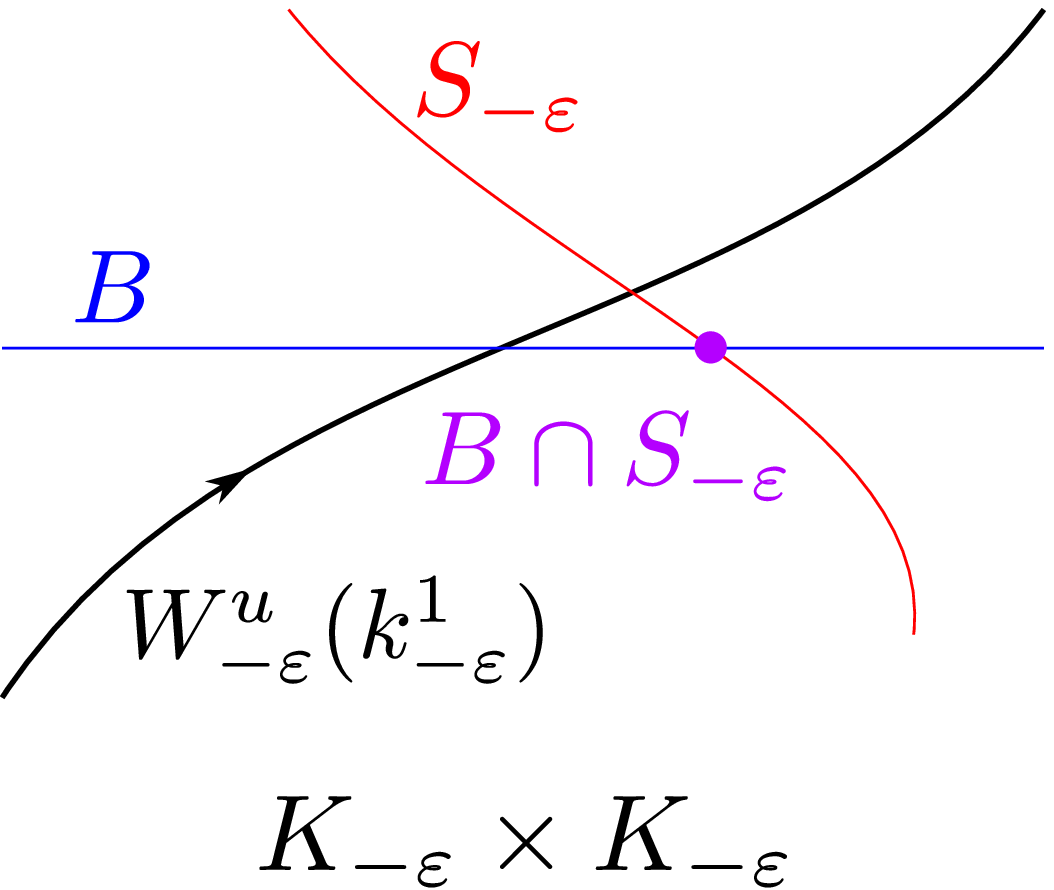}}
\subfigure{\hspace*{1.95cm}\includegraphics[scale=0.3]{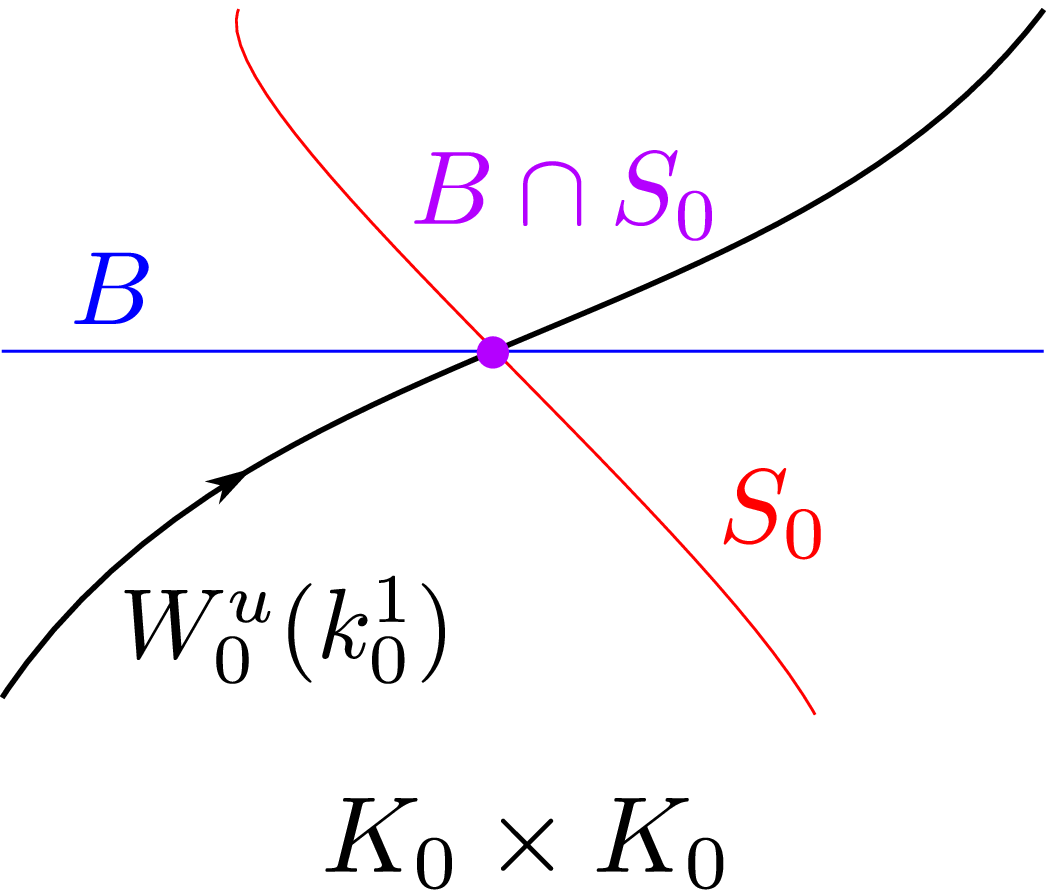}}
\subfigure{\hspace*{1.95cm}\includegraphics[scale=0.3]{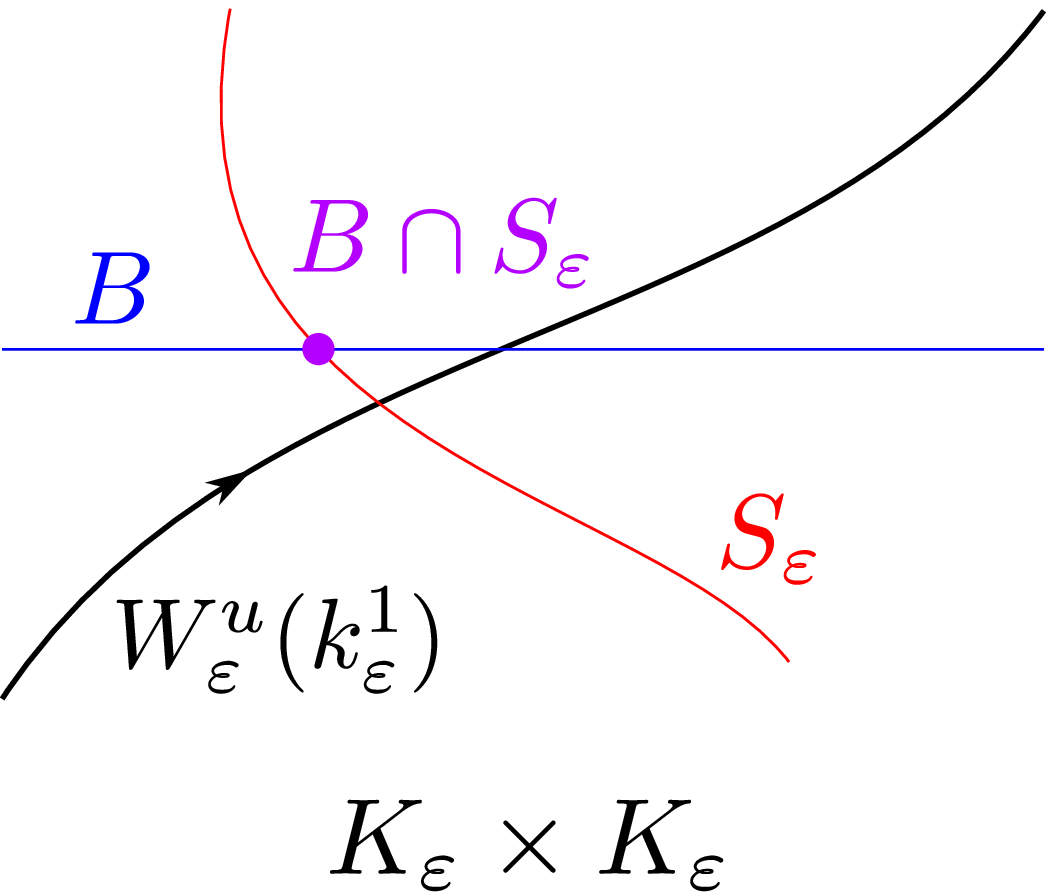}}
\subfigure{\includegraphics[scale=0.3]{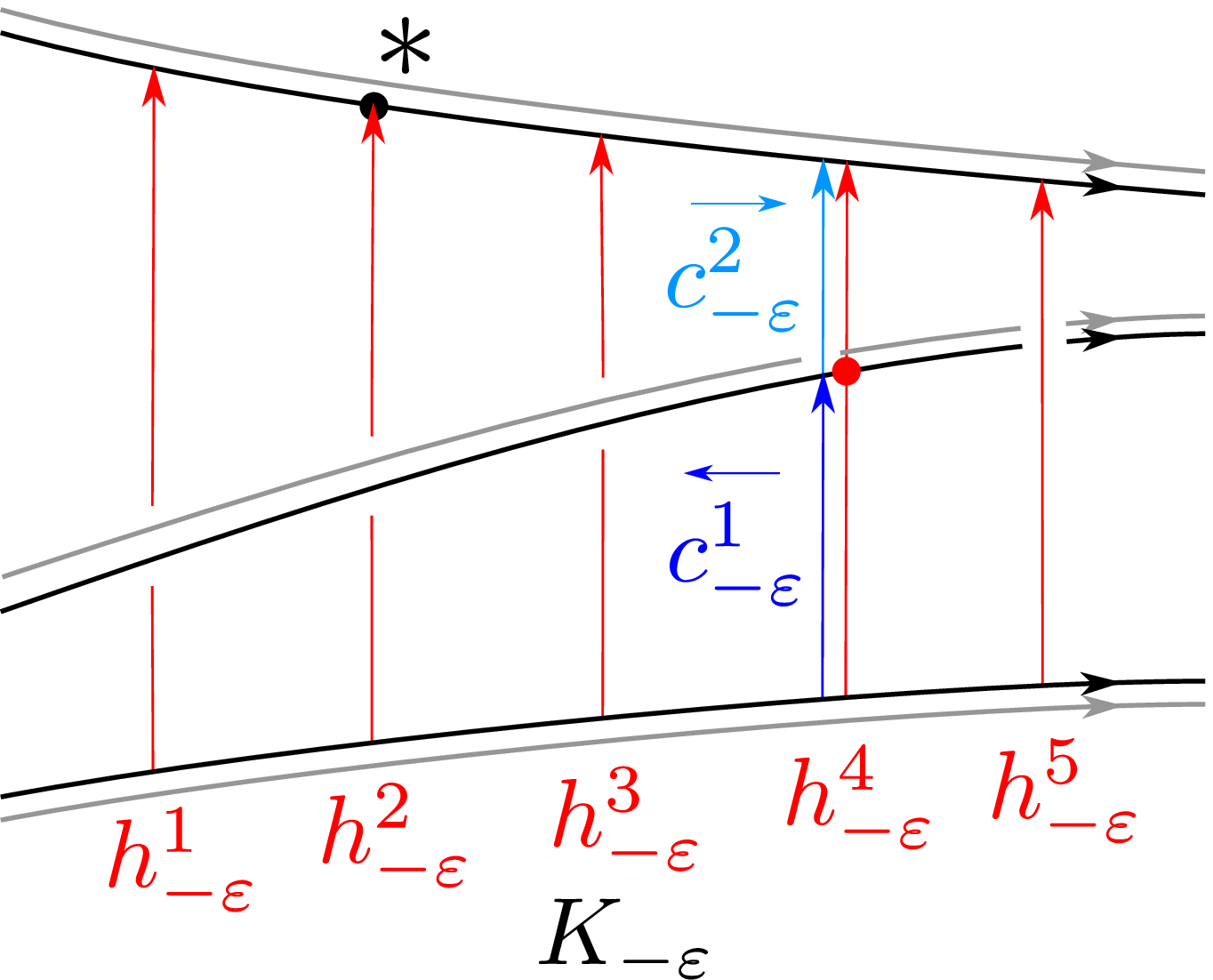}}
\subfigure{\hspace*{1cm}\includegraphics[scale=0.3]{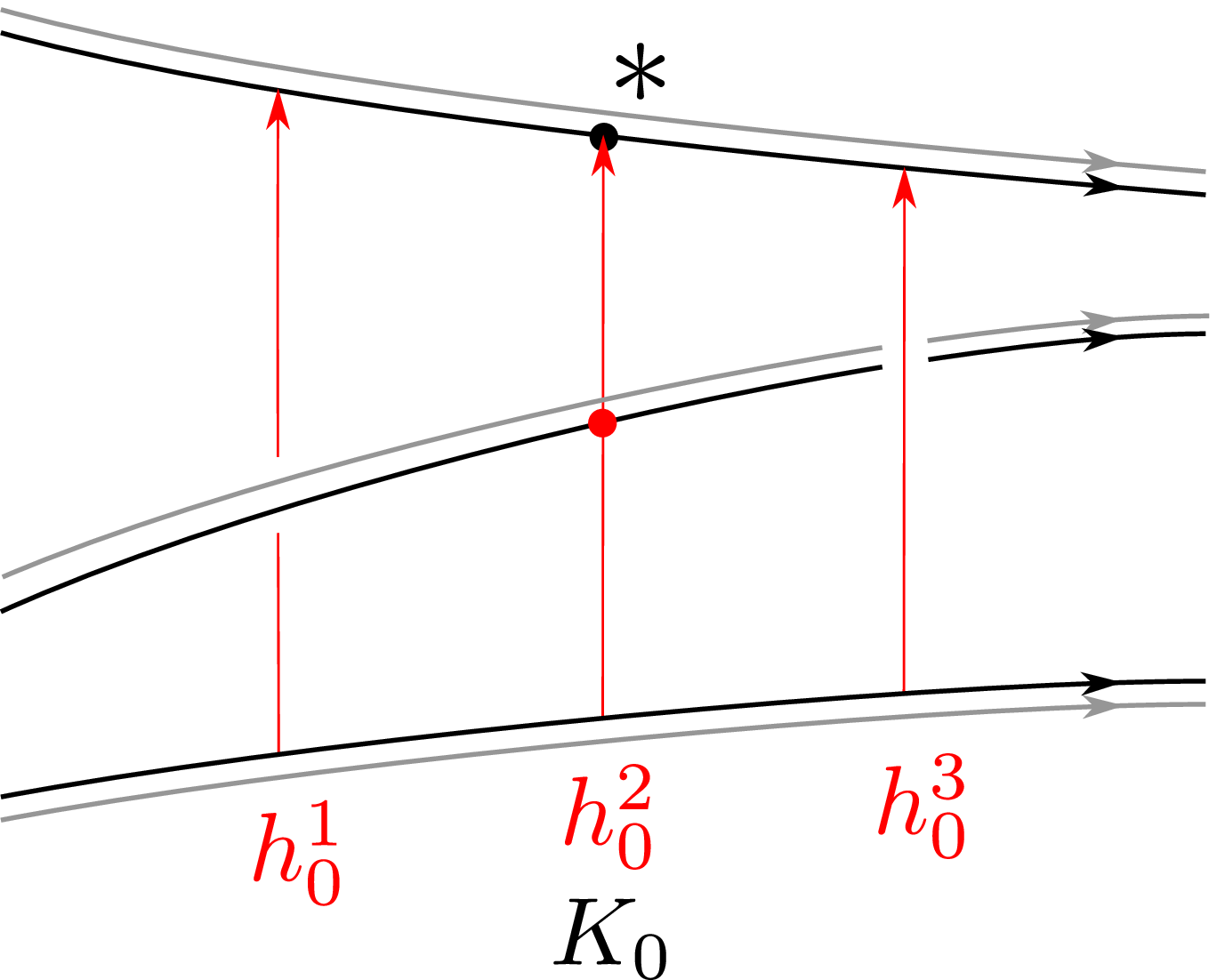}}
\subfigure{\hspace*{1cm}\includegraphics[scale=0.3]{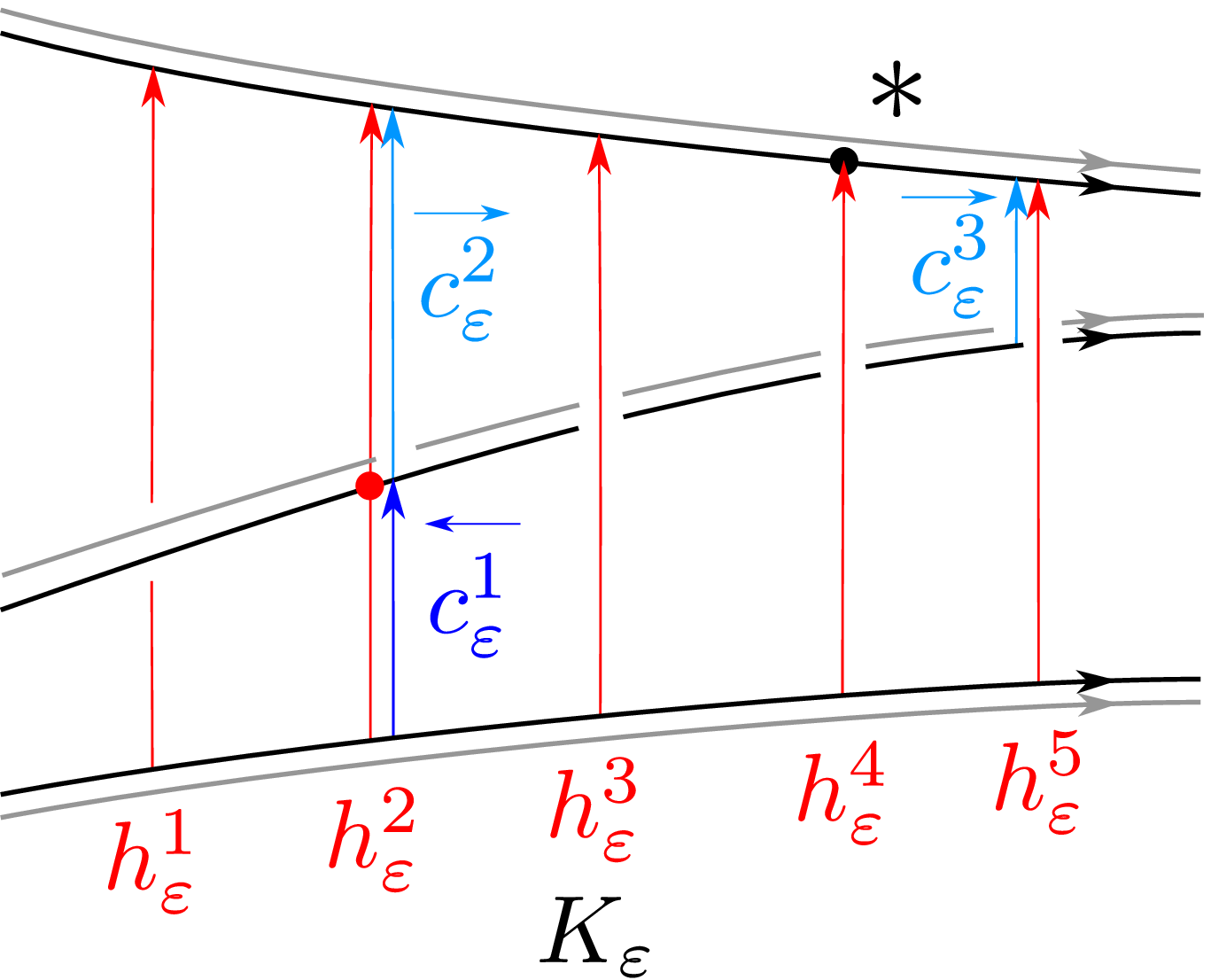}}
\caption{The unstable manifold of the cord $k_{0}^{1}$ intersects $B\cap S_{0}$\label{WucapBcapSnempty}}
\end{figure}
As in case (ii,2) we can assume that there are no critical points in the shown region. If $\varepsilon$ is chosen small enough, the following holds:
\begin{align*}
\widehat{D}_{\varepsilon}(c_{\varepsilon}^{1})&=\Phi_{0,\varepsilon}\circ\widehat{D}_{-\varepsilon}(c_{-\varepsilon}^{1})\\
\widehat{D}_{\varepsilon}(c_{\varepsilon}^{3})&=\Phi_{0,\varepsilon}\circ\widehat{D}_{-\varepsilon}(c_{-\varepsilon}^{2})\\
\widehat{D}_{\varepsilon}(h_{\varepsilon}^{5})&=\Phi_{0,\varepsilon}\circ\widehat{D}_{-\varepsilon}(h_{-\varepsilon}^{5}).
\end{align*}
Now we can compute
\begingroup
\allowdisplaybreaks
\begin{align*}
\widehat{D}_{-\varepsilon}(h_{-\varepsilon}^{1})&\overset{\text{rel. (iii)}}{=}\widehat{D}_{-\varepsilon}(h_{-\varepsilon}^{3})\lambda^{-1}\\
&\overset{\text{rel. (iv)}}{=}\widehat{D}_{-\varepsilon}(h_{-\varepsilon}^{5})\lambda^{-1}-\widehat{D}_{-\varepsilon}(c_{-\varepsilon}^{1})\widehat{D}_{-\varepsilon}(c_{-\varepsilon}^{2})\lambda^{-1}\\
\widehat{D}_{\varepsilon}(h_{\varepsilon}^{1})&\overset{\text{rel. (iv)}}{=}\widehat{D}_{\varepsilon}(h_{\varepsilon}^{3})-\widehat{D}_{\varepsilon}(c_{\varepsilon}^{1})\widehat{D}_{\varepsilon}(c_{\varepsilon}^{2})\\
&\overset{\text{rel. (iii)}}{=}\widehat{D}_{\varepsilon}(h_{\varepsilon}^{5})\lambda^{-1}-\widehat{D}_{\varepsilon}(c_{\varepsilon}^{1})\widehat{D}_{\varepsilon}(c_{\varepsilon}^{3})\lambda^{-1}\\
&\hspace*{.36cm}=\Phi_{0,\varepsilon}\circ\left(\widehat{D}_{-\varepsilon}(h_{-\varepsilon}^{5})\lambda^{-1}-\widehat{D}_{-\varepsilon}(c_{-\varepsilon}^{1})\widehat{D}_{-\varepsilon}(c_{-\varepsilon}^{2})\lambda^{-1}\right)\\
&\hspace*{.36cm}=\Phi_{0,\varepsilon}\circ\widehat{D}_{-\varepsilon}(h_{-\varepsilon}^{1}).
\end{align*}
\endgroup
As in case (ii,1) we get $D_{\varepsilon}(k_{\varepsilon}^{1})=\Phi_{0,\varepsilon}\circ D_{-\varepsilon}\circ\Phi_{1,\varepsilon}^{-1}(k_{\varepsilon}^{1}), D_{\varepsilon}=\Phi_{0,\varepsilon}\circ D_{-\varepsilon}\circ\Phi_{1,\varepsilon}^{-1}$ and thus by the analogous computation as above $\Cord(K_{\varepsilon})\cong\Cord(K_{-\varepsilon})$.\\
The other possible situations are to be treated analogously.\\[.5em]
Case (ii,7): $W_{0}^{u}(k_{0}^{1})\cap F_{0}\cap S_{0}\neq\emptyset$: Analogous to case (ii,6) with relation (ii) instead of relation (iii), i.e. $\mu^{\pm1}$ instead of $\lambda^{\pm1}$.\\[.5em]
Case (ii,8): a) $g_{0}^{1}\in B$. We consider the situation as in Figure \ref{kinBcapCrit0}.
\begin{figure}[ht]\centering
\subfigure{\includegraphics[scale=0.3]{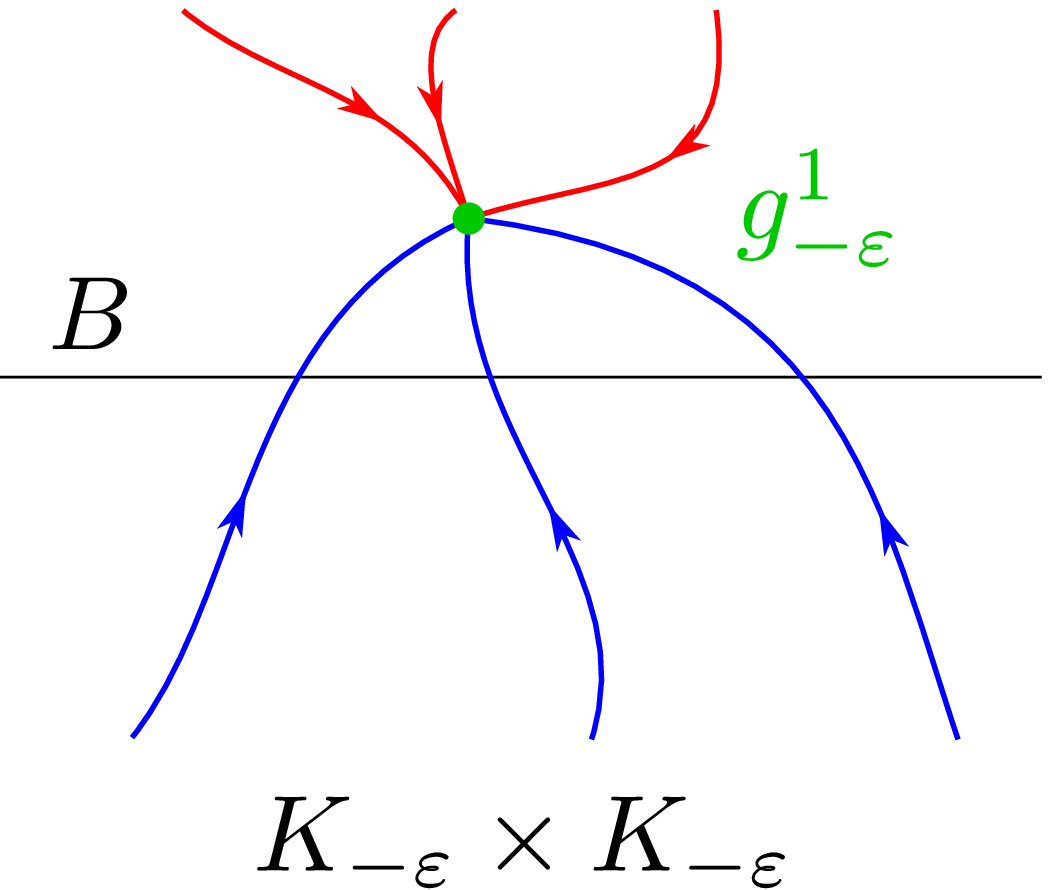}}
\subfigure{\hspace*{1cm}\includegraphics[scale=0.3]{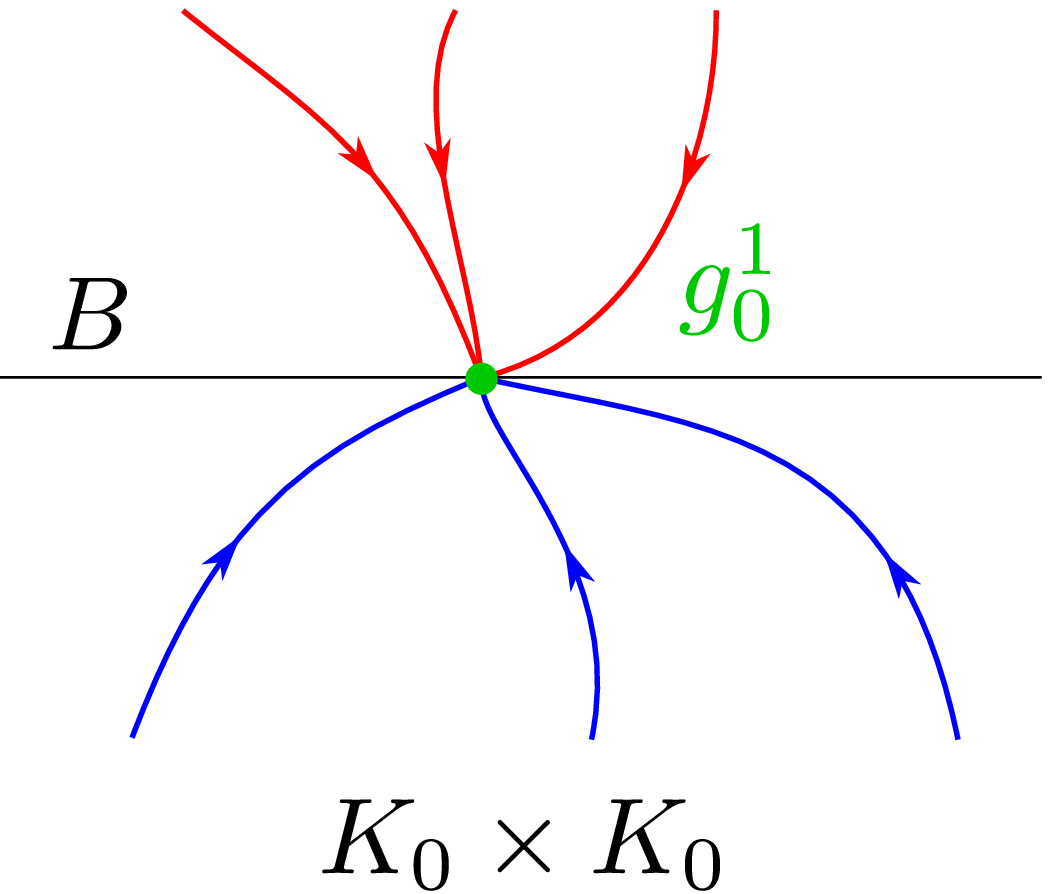}}
\subfigure{\hspace*{1.1cm}\includegraphics[scale=0.3]{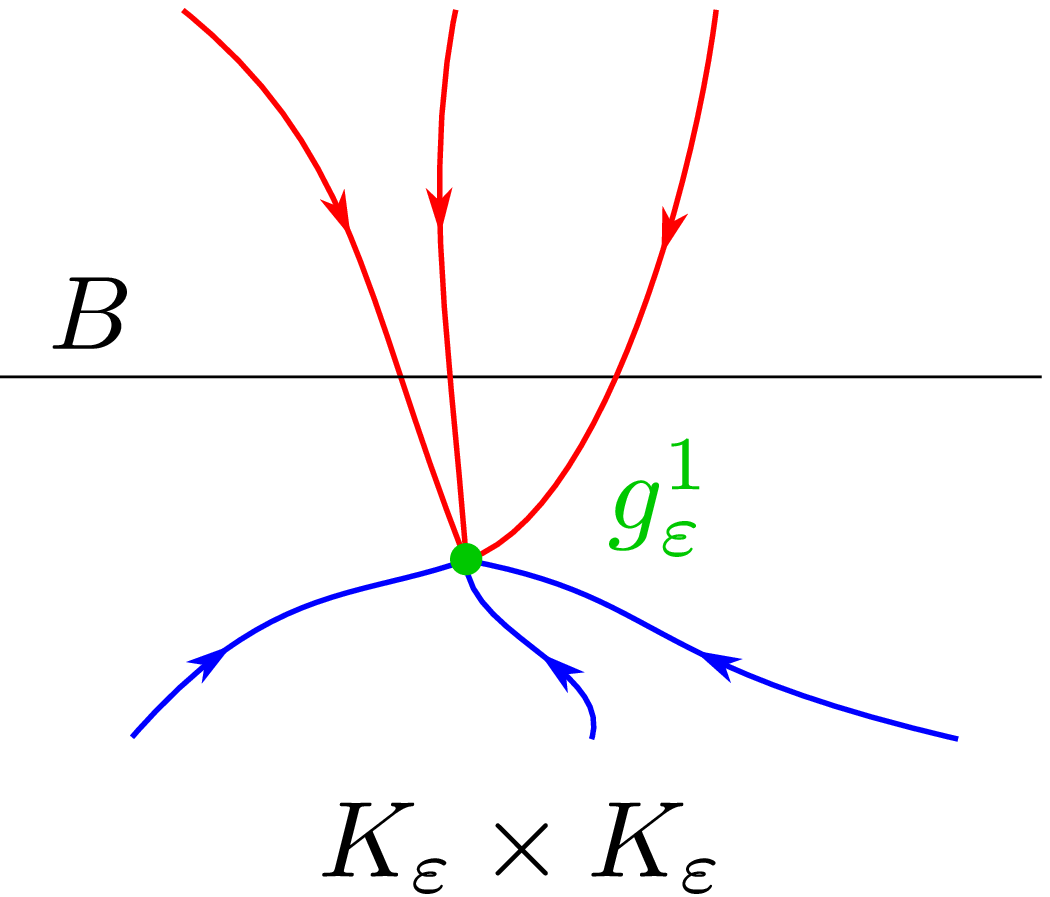}}
\caption{The critical point $g_{0}^{1}$ of index 0 intersects the basepoint\label{kinBcapCrit0}}
\end{figure}
The red and blue trajectories are unstable manifolds of critical points of index 1 or trajectories starting at cords that are generated by splitting according to relation (iv). According to Lemma \ref{finitelymanyintersections}, there are only finitely many such trajectories. In $K_{0}$ none of the red and blue trajectories is tangent to $B$, otherwise we would also have case (ii,1) or (ii,12) of Lemma \ref{genericisotopy}, but this would be a contradiction to this Lemma since only one of these cases occurs at any one time. If $\varepsilon$ is chosen small enough, we can guarantee the following: In $K_{-\varepsilon}$ all of the blue trajectories intersect $B$ in a small neighborhood of $g_{-\varepsilon}^{1}$, but none of the red ones, and in $K_{\varepsilon}$ all of the red trajectories intersect $B$ in a small neighborhood of $g_{\varepsilon}^{1}$, but none of the blue ones. It follows that in $K_{-\varepsilon}$ we have a contribution of $g_{-\varepsilon}^{1}$ along the red trajectories and of $g_{-\varepsilon}^{1}\lambda^{-1}$ along the blue trajectories. In $K_{\varepsilon}$ we have a contribution of $g_{\varepsilon}^{1}\lambda$ along the red trajectories and of $g_{\varepsilon}^{1}$ along the blue trajectories. So we can construct the following canonical isomorphism:
\begin{align*}
\Cord(K_{-\varepsilon})&\overset{\sim}{\to}\Cord(K_{\varepsilon})\\
g_{-\varepsilon}^{1}&\mapsto g_{\varepsilon}^{1}\lambda\\
g_{-\varepsilon}^{i}&\mapsto g_{\varepsilon}^{i},\ i=2,\dots,n_{0}\\
\lambda^{\pm1}&\mapsto\lambda^{\pm1}\\
\mu^{\pm1}&\mapsto\mu^{\pm1}.
\end{align*}
The other possible situations are to be treated analogously.\\[.5em]
b) $k_{0}^{1}\in B$. We consider the situation as in Figure \ref{kinBcapCrit1}.
\begin{figure}[ht]\centering
\subfigure{\includegraphics[scale=0.3]{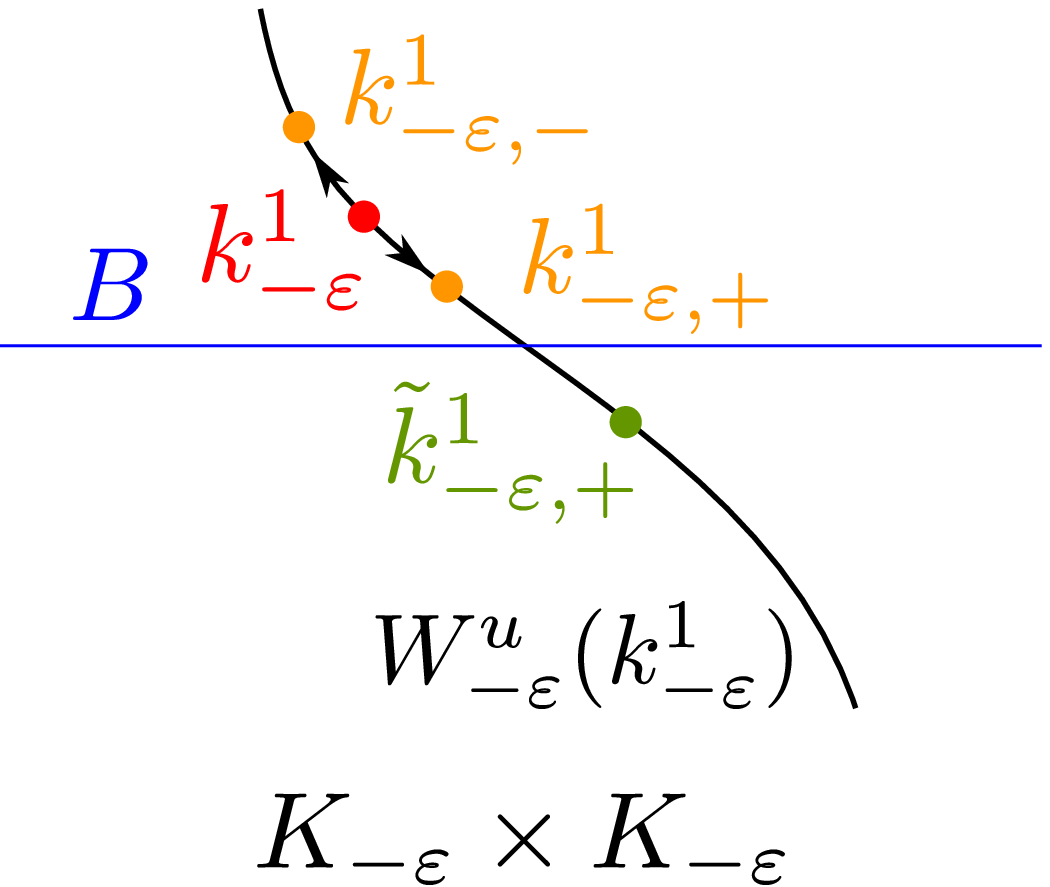}}
\subfigure{\hspace*{1cm}\includegraphics[scale=0.3]{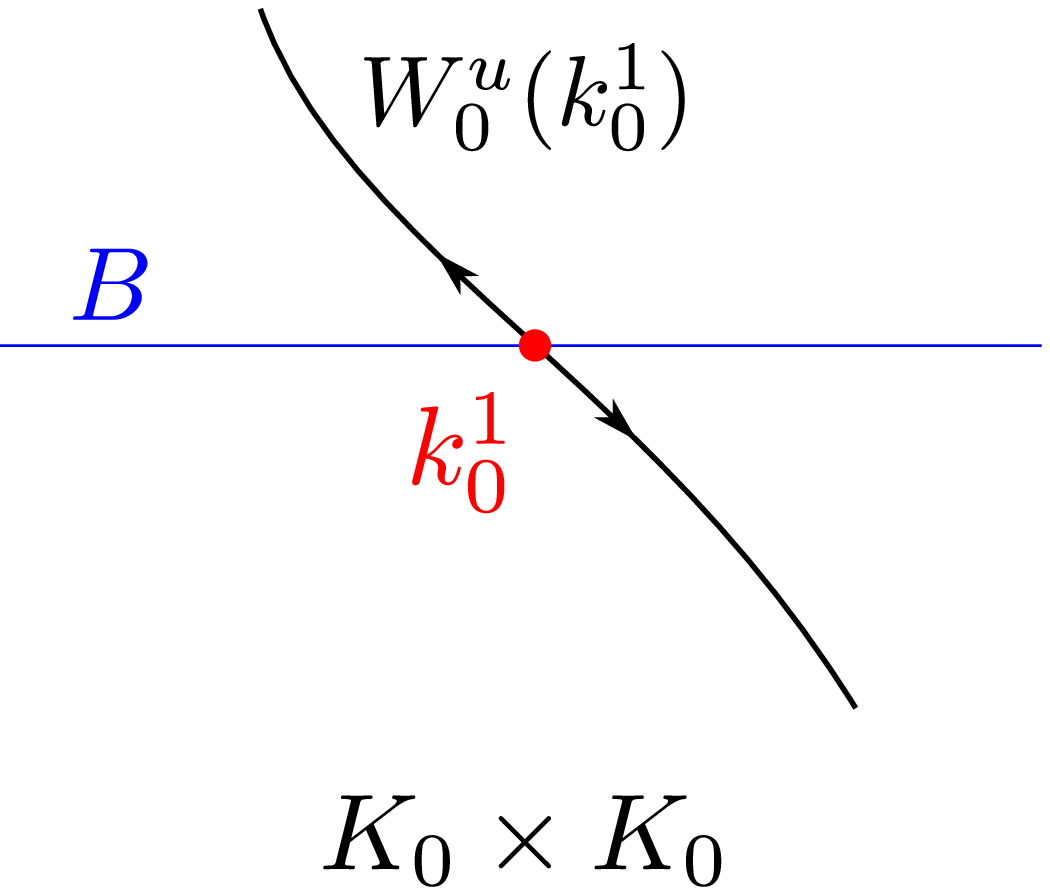}}
\subfigure{\hspace*{1.1cm}\includegraphics[scale=0.3]{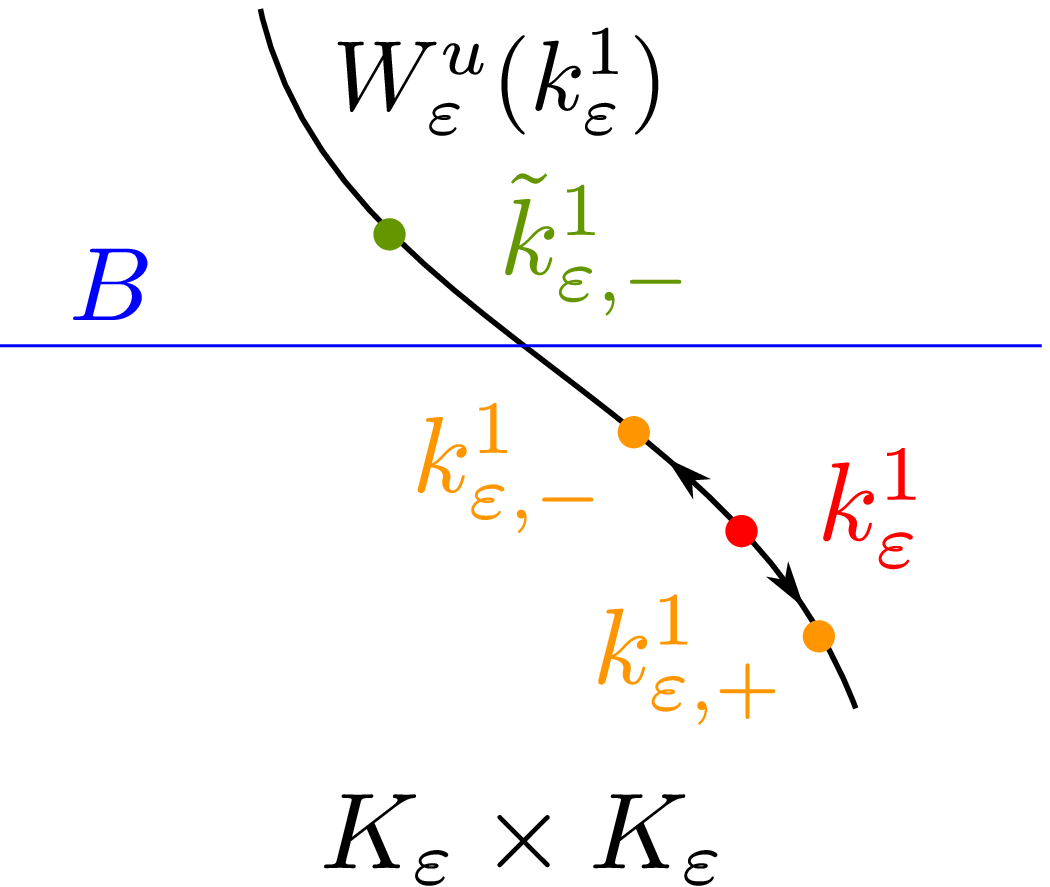}}
\caption{The critical point $k_{0}^{1}$ of index 1 intersects the basepoint\label{kinBcapCrit1}}
\end{figure}
If $\varepsilon$ is chosen small enough, the following holds:
\begin{align*}
\widehat{D}_{\varepsilon}(k_{\varepsilon,+}^{1})&=\Phi_{0,\varepsilon}\circ\widehat{D}_{-\varepsilon}(\tilde{k}_{-\varepsilon,+}^{1})\\
\widehat{D}_{\varepsilon}(\tilde{k}_{\varepsilon,-}^{1})&=\Phi_{0,\varepsilon}\circ\widehat{D}_{-\varepsilon}(k_{-\varepsilon,-}^{1}).
\end{align*}
Now we can compute
\begin{align*}
D_{\varepsilon}(k_{\varepsilon}^{1})&\hspace*{.36cm}=\widehat{D}_{\varepsilon}(k_{\varepsilon,+}^{1})-\widehat{D}_{\varepsilon}(k_{\varepsilon,-}^{1})\\
&\overset{\text{rel. (iii)}}{=}\widehat{D}_{\varepsilon}(k_{\varepsilon,+}^{1})-\widehat{D}_{\varepsilon}(\tilde{k}_{\varepsilon,-}^{1})\lambda^{-1}\\
&\hspace*{.36cm}=\Phi_{0,\varepsilon}\circ\left(\widehat{D}_{-\varepsilon}(\tilde{k}_{-\varepsilon,+}^{1})-\widehat{D}_{-\varepsilon}(k_{-\varepsilon,-}^{1})\lambda^{-1}\right)\\
&\hspace*{.36cm}=\Phi_{0,\varepsilon}\circ\left(\left(\widehat{D}_{-\varepsilon}(\tilde{k}_{-\varepsilon,+}^{1})\lambda-\widehat{D}_{-\varepsilon}(k_{-\varepsilon,-}^{1})\right)\lambda^{-1}\right)\\
&\overset{\text{rel. (iii)}}{=}\Phi_{0,\varepsilon}\circ\left(\left(\widehat{D}_{-\varepsilon}(k_{-\varepsilon,+}^{1})-\widehat{D}_{-\varepsilon}(k_{-\varepsilon,-}^{1})\right)\lambda^{-1}\right)\\
&\hspace*{.36cm}=\Phi_{0,\varepsilon}\circ\left(D_{-\varepsilon}(k_{-\varepsilon}^{1})\lambda^{-1}\right)\\
&\hspace*{.36cm}=\left(\Phi_{0,\varepsilon}\circ D_{-\varepsilon}(k_{-\varepsilon}^{1})\right)\lambda^{-1}.
\end{align*}
With this result we get, where $R=\mathbb{Z}[\lambda^{\pm1},\mu^{\pm1}]$ is the commutative ring as described above and $\langle M\rangle_{R}$ is the ideal generated by the set $M$:
\begingroup
\allowdisplaybreaks
\begin{align*}
\Cord(K_{\varepsilon})&=C_{0}(K_{\varepsilon})/I_{\varepsilon}\\
&=C_{0}(K_{\varepsilon})/\langle D_{\varepsilon}(\lbrace k_{\varepsilon}^{2},\dots,k_{\varepsilon}^{n_{1}}\rbrace),D_{\varepsilon}(k_{\varepsilon}^{1})\rangle_{R}\\
&=C_{0}(K_{\varepsilon})/\langle\Phi_{0,\varepsilon}\circ D_{-\varepsilon}(\lbrace k_{-\varepsilon}^{2},\dots,k_{-\varepsilon}^{n_{1}}\rbrace),\Phi_{0,\varepsilon}\circ D_{-\varepsilon}(k_{-\varepsilon}^{1})\lambda^{-1}\rangle_{R}\\
&=C_{0}(K_{\varepsilon})/\langle\Phi_{0,\varepsilon}\circ D_{-\varepsilon}(\lbrace k_{-\varepsilon}^{2},\dots,k_{-\varepsilon}^{n_{1}}\rbrace),\Phi_{0,\varepsilon}\circ D_{-\varepsilon}(k_{-\varepsilon}^{1})\rangle_{R}\\
&=C_{0}(K_{\varepsilon})/\langle\Phi_{0,\varepsilon}\circ D_{-\varepsilon}(\lbrace k_{-\varepsilon}^{1},\dots,k_{-\varepsilon}^{n_{1}}\rbrace)\rangle_{R}\\
&=\Phi_{0,\varepsilon}(C_{0}(K_{-\varepsilon}))/\Phi_{0,\varepsilon}(I_{-\varepsilon})\\
&\overset{\text{(1)}}{=}\Phi_{0,\varepsilon}\left(C_{0}(K_{-\varepsilon})/I_{-\varepsilon}\right)\\
&=\Phi_{0,\varepsilon}(\Cord(K_{-\varepsilon}))\\
&\cong\Cord(K_{-\varepsilon}).
\end{align*}
\endgroup
Equality (1) holds since $\Phi_{0,\varepsilon}$ is an algebra isomorphism.\\[.5em]
The other possible situations are to be treated analogously.\\[.5em]
Case (ii,9): a) $g_{0}^{1}\in S$. We consider the situation as in Figure \ref{kinScapCrit0}.
\begin{figure}[ht]\centering
\subfigure{\includegraphics[scale=0.3]{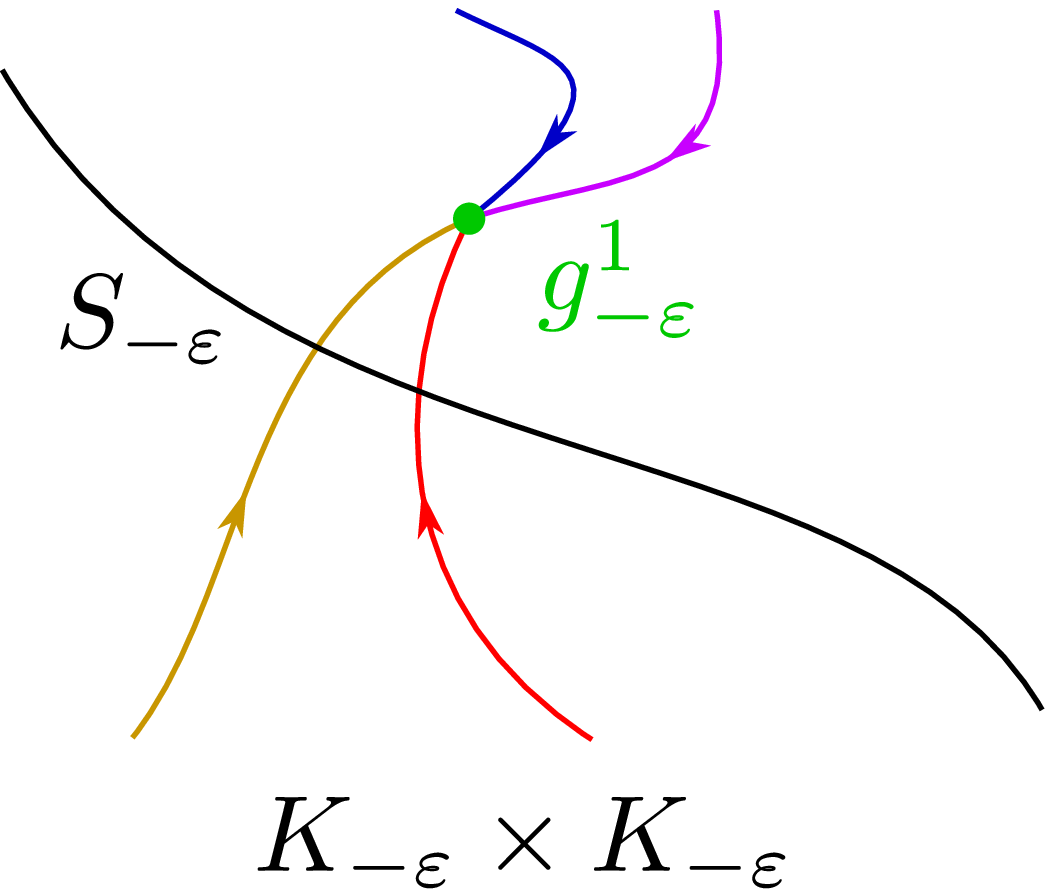}}
\subfigure{\hspace*{1.6cm}\includegraphics[scale=0.3]{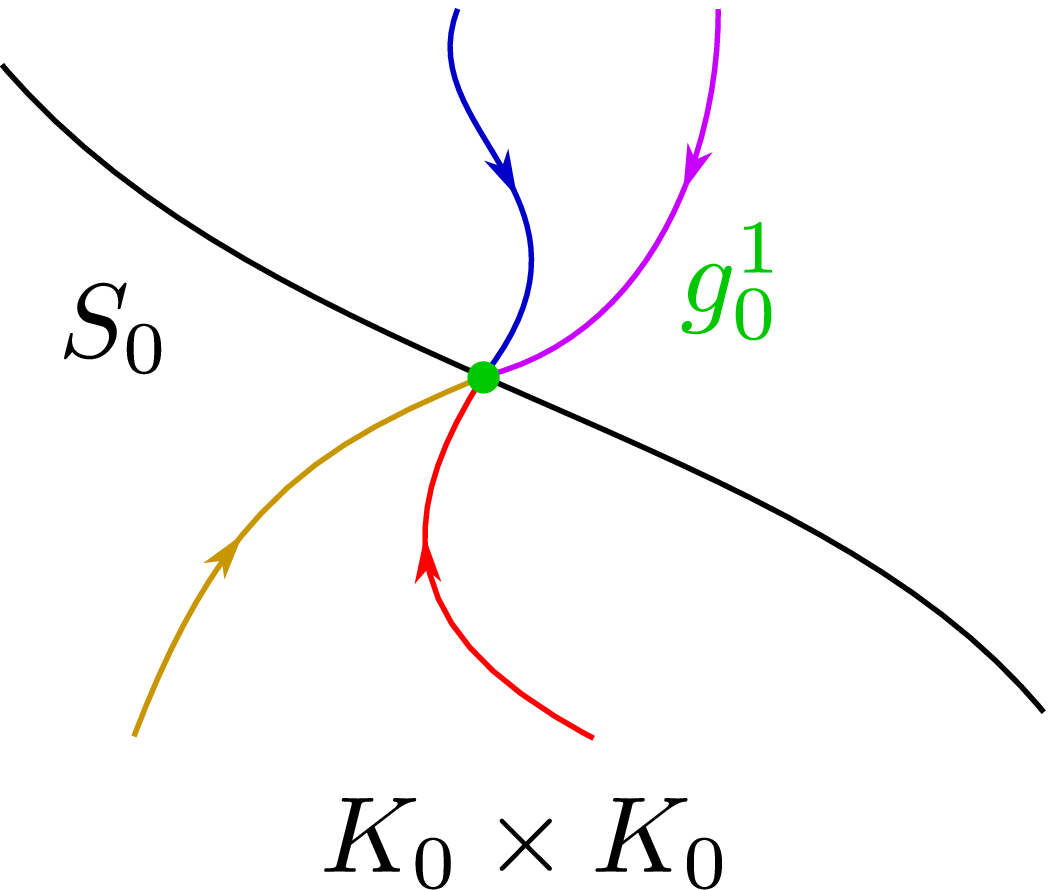}}
\subfigure{\hspace*{1.6cm}\includegraphics[scale=0.3]{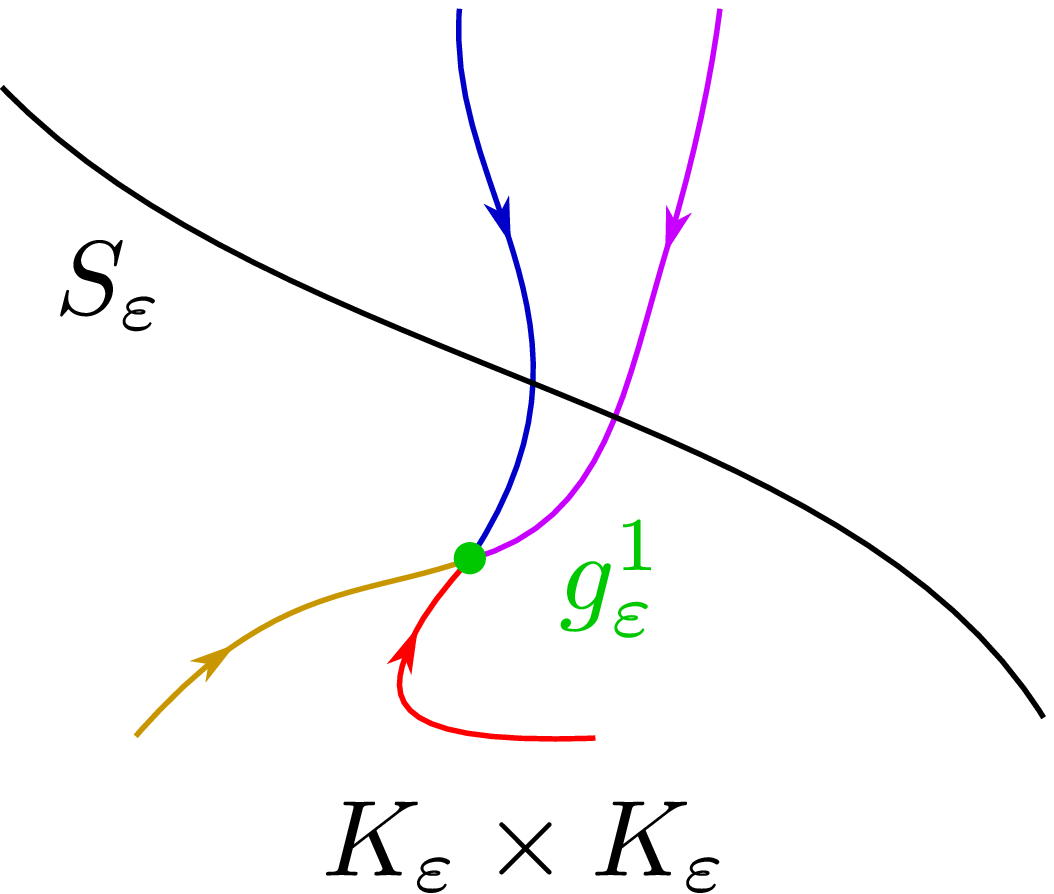}}
\subfigure{\includegraphics[scale=0.3]{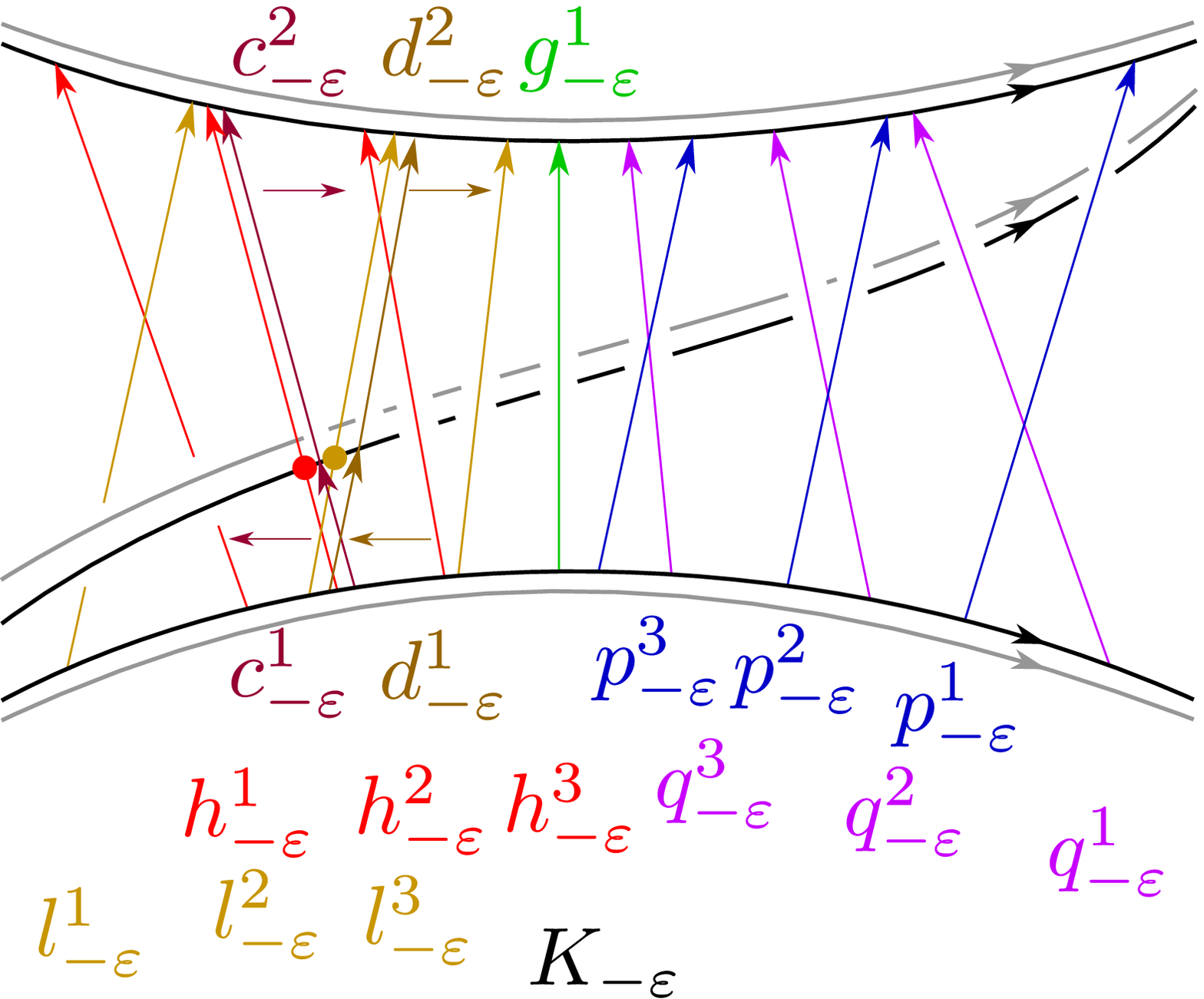}}
\subfigure{\includegraphics[scale=0.3]{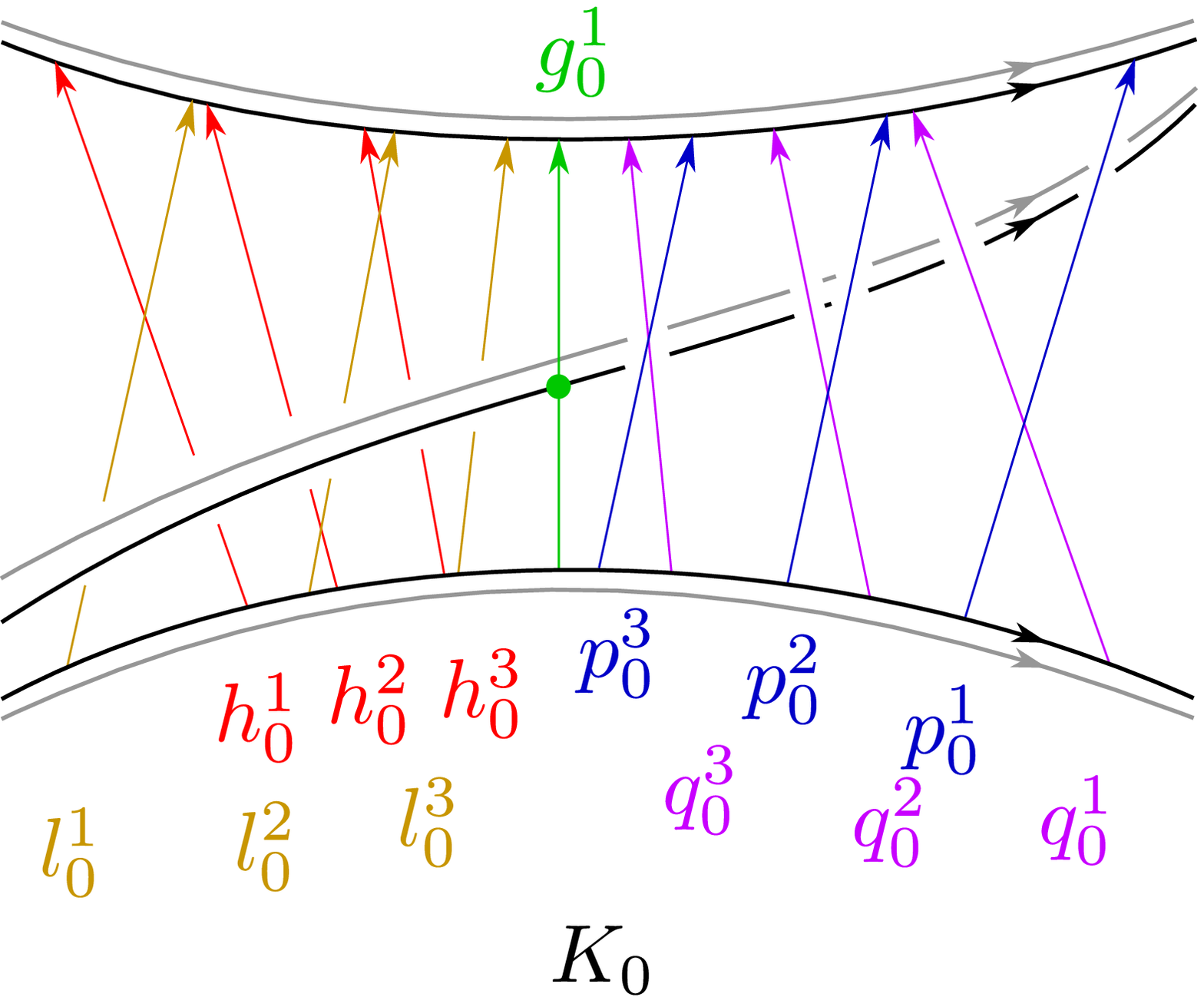}}
\subfigure{\includegraphics[scale=0.3]{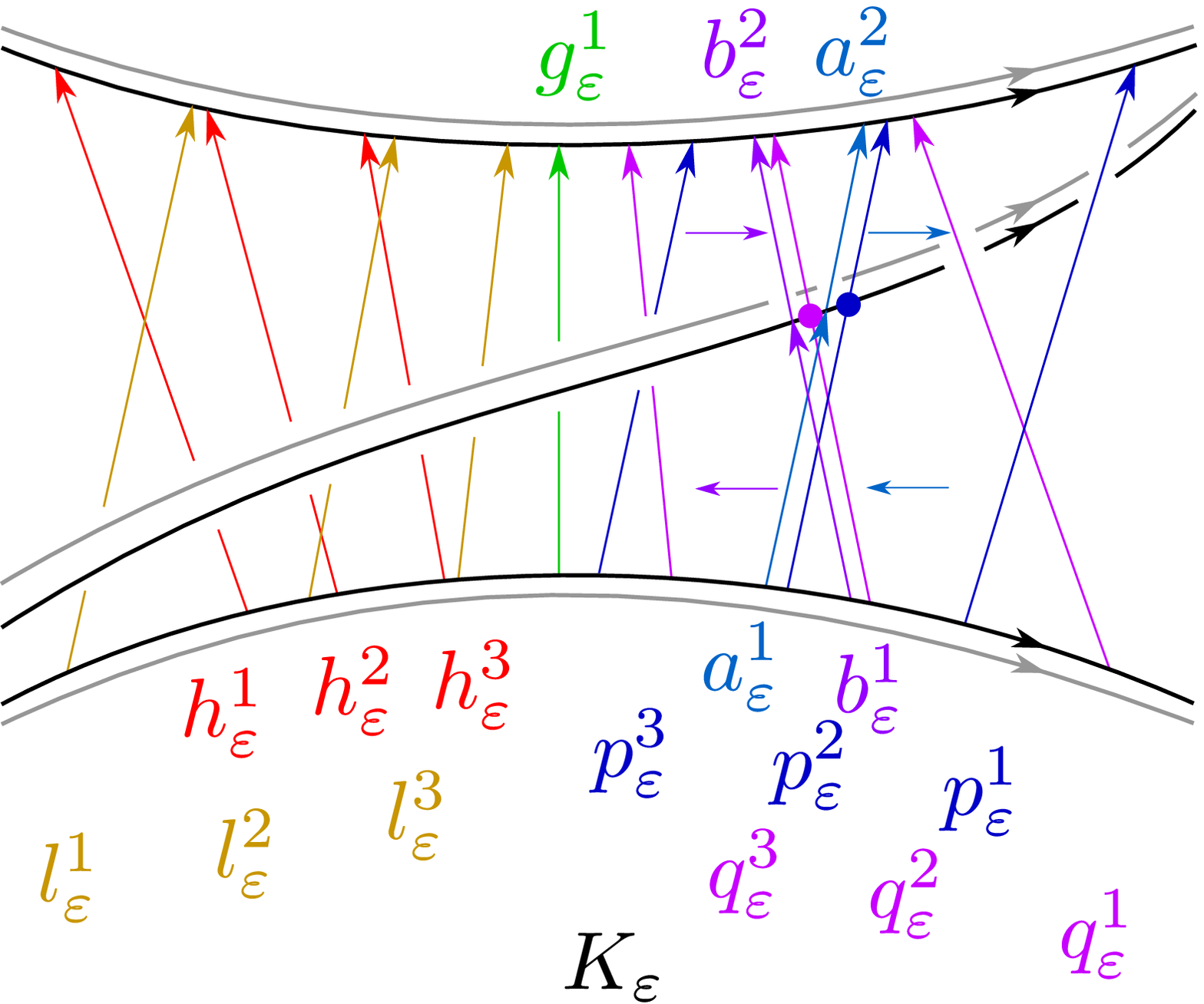}}
\caption{The critical point $g_{0}^{1}$ of index 0 intersects the knot in its interior\label{kinScapCrit0}}
\end{figure}
The trajectories drawn in the upper figures are unstable manifolds of critical points of index 1 or trajectories starting at cords that are generated by splitting according to relation (iv). According to Lemma \ref{finitelymanyintersections}, there are only finitely many such trajectories. In $K_{0}$ none of these trajectories is tangent to $S_{0}$, otherwise we would also have case (ii,2) or (ii,12) of Lemma \ref{genericisotopy}, but this would be a contradiction to this Lemma since only one of these cases occurs at any one time. If $\varepsilon$ is chosen small enough, we can guarantee the following (in the example of Figure \ref{kinScapCrit0}):
\begin{itemize}
\item In $K_{-\varepsilon}$ all of the ``lower'' (red and brown) trajectories intersect $S_{-\varepsilon}$ in a small neighborhood of $g_{-\varepsilon}^{1}$, but none of the ``upper'' (blue and purple) ones.
\item In $K_{-\varepsilon}$ we get for the cords that are split off from the ``lower'' trajectories according to relation (iv): $\widehat{D}_{-\varepsilon}(c_{-\varepsilon}^{1})=\widehat{D}_{-\varepsilon}(d_{-\varepsilon}^{1})$ and $\widehat{D}_{-\varepsilon}(c_{-\varepsilon}^{2})=\widehat{D}_{-\varepsilon}(d_{-\varepsilon}^{2})$.
\item In $K_{\varepsilon}$ all of the ``upper'' trajectories intersect $S_{\varepsilon}$ in a small neighborhood of $g_{\varepsilon}^{1}$, but none of the ``lower'' ones.
\item In $K_{\varepsilon}$ we get for the cords that are split off from the ``upper'' trajectories according to relation~(iv): $\widehat{D}_{\varepsilon}(a_{\varepsilon}^{1})=\widehat{D}_{\varepsilon}(b_{\varepsilon}^{1})$ and $\widehat{D}_{\varepsilon}(a_{\varepsilon}^{2})=\widehat{D}_{\varepsilon}(b_{\varepsilon}^{2})$.
\item Furthermore, we get $\widehat{D}_{\varepsilon}(a_{\varepsilon}^{1})=\Phi_{0,\varepsilon}\circ\widehat{D}_{-\varepsilon}(c_{-\varepsilon}^{1})$ and $\widehat{D}_{\varepsilon}(a_{\varepsilon}^{2})=\Phi_{0,\varepsilon}\circ\widehat{D}_{-\varepsilon}(c_{-\varepsilon}^{2})$.
\end{itemize}
It follows that in $K_{-\varepsilon}$ we have a contribution of $g_{-\varepsilon}^{1}$ along the ``upper'' trajectories and of
\[g_{-\varepsilon}^{1}-\widehat{D}_{-\varepsilon}(c_{-\varepsilon}^{1})\widehat{D}_{-\varepsilon}(c_{-\varepsilon}^{2})\]
along the ``lower'' trajectories according to relation (iv). In $K_{\varepsilon}$ we have a contribution of
\[g_{\varepsilon}^{1}+\widehat{D}_{\varepsilon}(a_{\varepsilon}^{1})\widehat{D}_{\varepsilon}(a_{\varepsilon}^{2})=g_{\varepsilon}^{1}+\Phi_{0,\varepsilon}\circ\left(\widehat{D}_{-\varepsilon}(c_{-\varepsilon}^{1})\widehat{D}_{-\varepsilon}(c_{-\varepsilon}^{2})\right)\]
along the ``upper'' trajectories and of $g_{\varepsilon}$ along the ``lower'' trajectories. So we can construct the following canonical isomorphism:
\begin{align*}
\Cord(K_{-\varepsilon})&\overset{\sim}{\to}\Cord(K_{\varepsilon})\\
g_{-\varepsilon}^{1}&\mapsto g_{\varepsilon}^{1}+\widehat{D}_{\varepsilon}(a_{\varepsilon}^{1})\widehat{D}_{\varepsilon}(a_{\varepsilon}^{2})\\
g_{-\varepsilon}^{i}&\mapsto g_{\varepsilon}^{i},\ i=2,\dots,n_{0}\\
\lambda^{\pm1}&\mapsto\lambda^{\pm1}\\
\mu^{\pm1}&\mapsto\mu^{\pm1}.
\end{align*}
The other possible situations are to be treated analogously.\\[.5em]
b) $k_{0}^{1}\in B$. We consider the situation as in Figure \ref{kinScapCrit1}.\begin{figure}[ht]\centering
\subfigure{\includegraphics[scale=0.3]{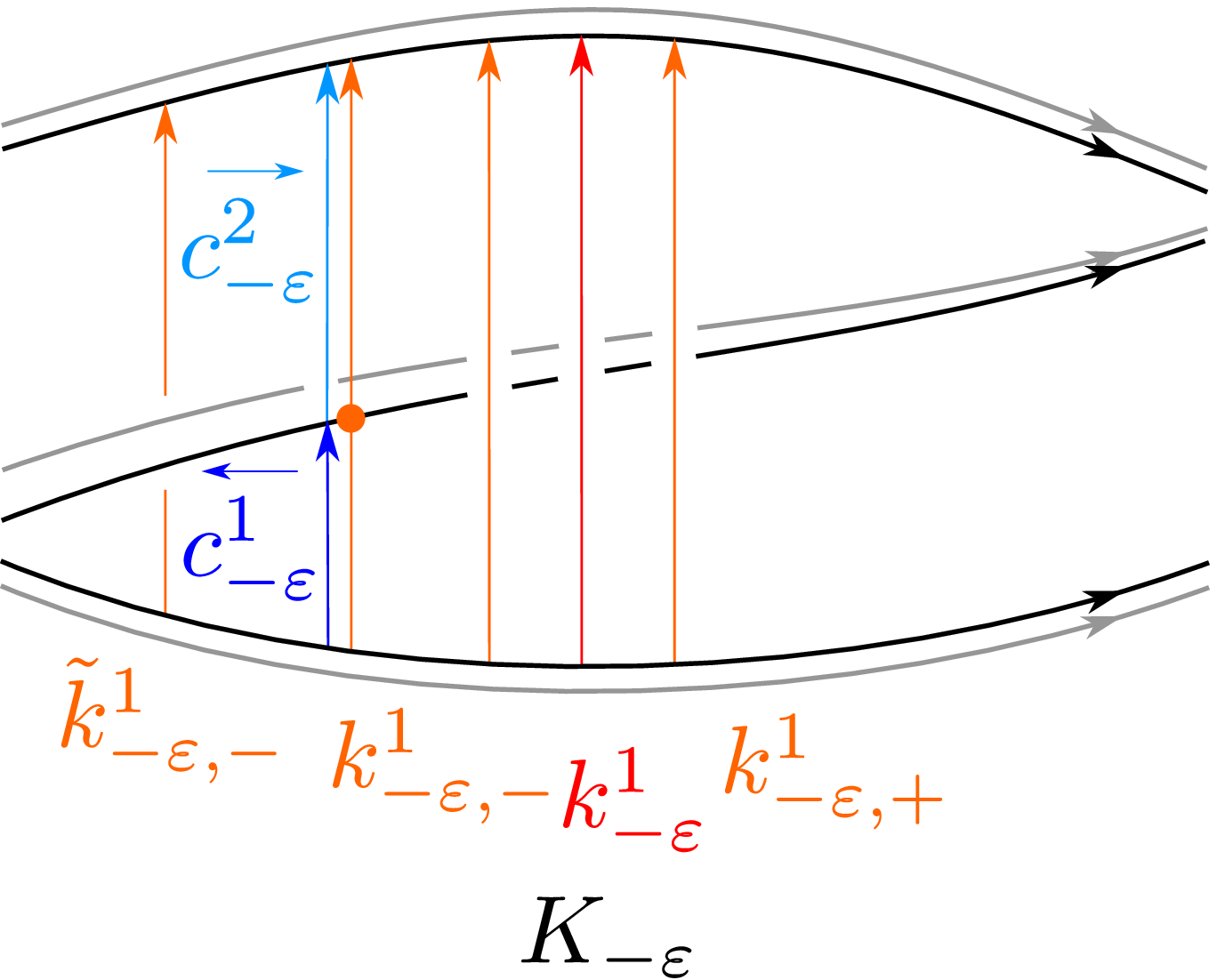}}
\subfigure{\hspace*{1cm}\includegraphics[scale=0.3]{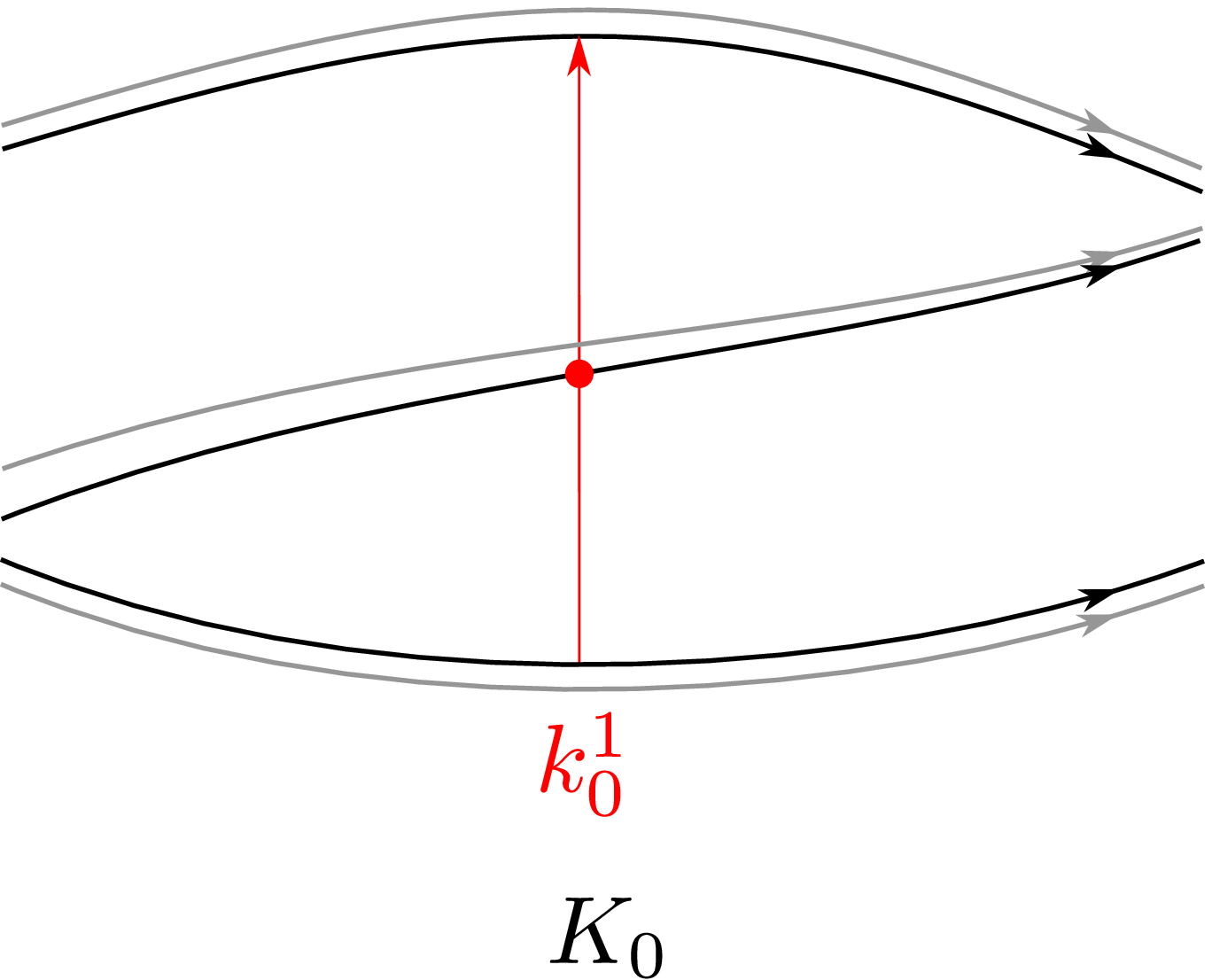}}
\subfigure{\hspace*{1cm}\includegraphics[scale=0.3]{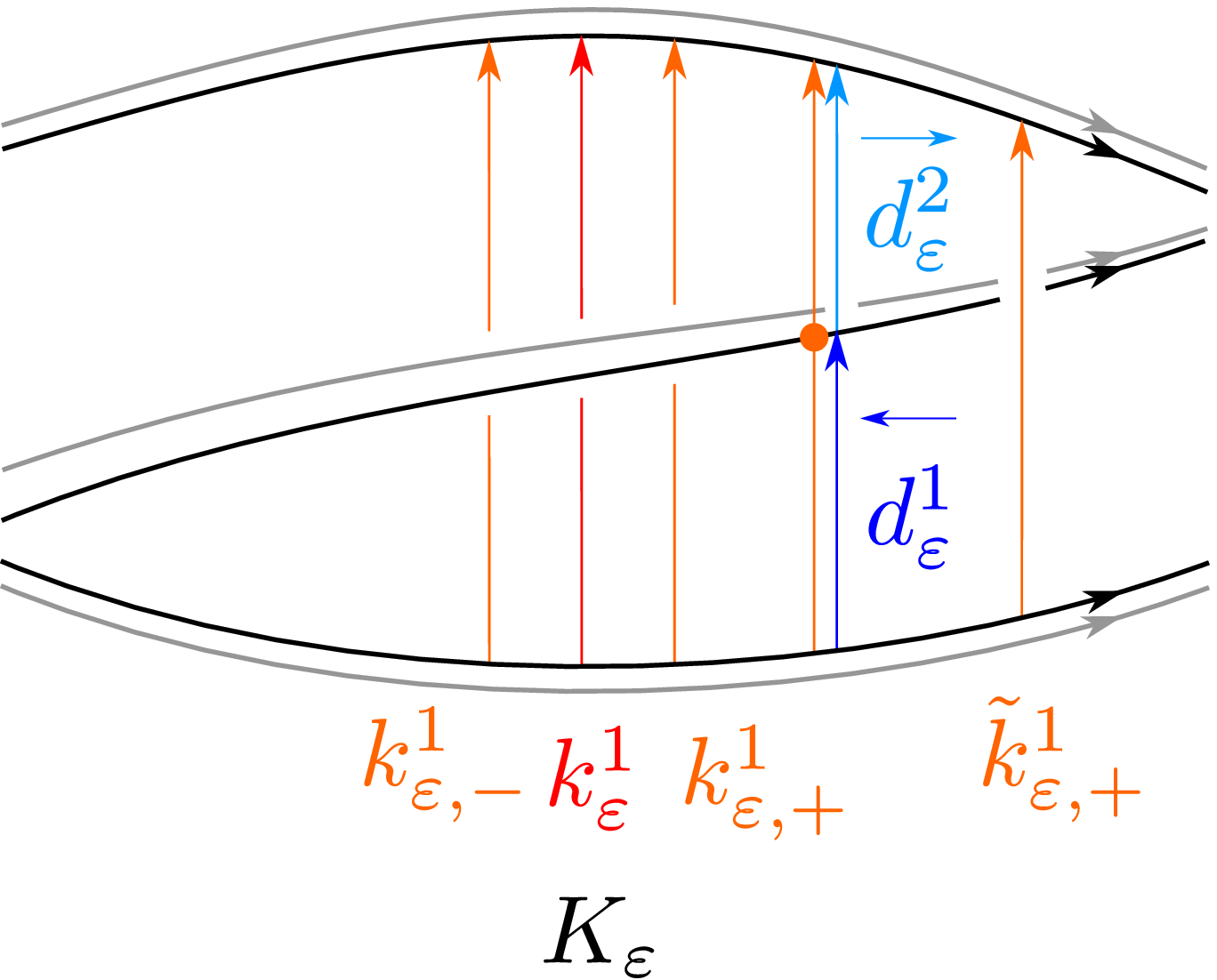}}
\caption{The critical point $k_{0}^{1}$ of index 1 intersects the knot in its interior\label{kinScapCrit1}}
\end{figure}
As before, we can assume, according to Lemma \ref{genericisotopy}, that the only critical point in the shown region is $k_{r}^{1}$. If $\varepsilon$ is chosen small enough, the following holds:
\begin{align*}
\widehat{D}_{\varepsilon}(d_{\varepsilon}^{1})&=\Phi_{0,\varepsilon}\circ\widehat{D}_{-\varepsilon}(c_{-\varepsilon}^{1})\\
\widehat{D}_{\varepsilon}(d_{\varepsilon}^{2})&=\Phi_{0,\varepsilon}\circ\widehat{D}_{-\varepsilon}(c_{-\varepsilon}^{2})\\
\widehat{D}_{\varepsilon}(\tilde{k}_{\varepsilon,+}^{1})&=\Phi_{0,\varepsilon}\circ\widehat{D}_{-\varepsilon}(k_{-\varepsilon,+}^{1})\\
\widehat{D}_{\varepsilon}(k_{\varepsilon,-}^{1})&=\Phi_{0,\varepsilon}\circ\widehat{D}_{-\varepsilon}(\tilde{k}_{-\varepsilon,-}^{1}).
\end{align*}
Now we can compute
\begingroup
\allowdisplaybreaks
\begin{align*}
D_{\varepsilon}(k_{\varepsilon}^{1})&\hspace*{.34cm}=\widehat{D}_{\varepsilon}(k_{\varepsilon,+}^{1})-\widehat{D}_{\varepsilon}(k_{\varepsilon,-}^{1})\\
&\overset{\text{rel. (iv)}}{=}\widehat{D}_{\varepsilon}(\tilde{k}_{\varepsilon,+}^{1})-\widehat{D}_{\varepsilon}(d_{\varepsilon}^{1})\widehat{D}_{\varepsilon}(d_{\varepsilon}^{2})-\widehat{D}_{\varepsilon}(k_{\varepsilon,-}^{1})\\
&\hspace{.34cm}=\Phi_{0,\varepsilon}\circ\left(\widehat{D}_{-\varepsilon}(k_{-\varepsilon,+}^{1})-\widehat{D}_{-\varepsilon}(c_{-\varepsilon}^{1})\widehat{D}_{-\varepsilon}(c_{-\varepsilon}^{2})-\widehat{D}_{-\varepsilon}(\tilde{k}_{-\varepsilon,-}^{1})\right)\\
&\overset{\text{rel. (iv)}}{=}\Phi_{0,\varepsilon}\circ\left(\widehat{D}_{-\varepsilon}(k_{-\varepsilon,+}^{1})-\widehat{D}_{-\varepsilon}(c_{-\varepsilon}^{1})\widehat{D}_{-\varepsilon}(c_{-\varepsilon}^{2})-\left(\widehat{D}_{-\varepsilon}(k_{-\varepsilon,-}^{1})-\widehat{D}_{-\varepsilon}(c_{-\varepsilon}^{1})\widehat{D}_{-\varepsilon}(c_{-\varepsilon}^{2})\right)\right)\\
&\hspace{.34cm}=\Phi_{0,\varepsilon}\circ\left(\widehat{D}_{-\varepsilon}(k_{-\varepsilon,+}^{1})-\widehat{D}_{-\varepsilon}(k_{-\varepsilon,-}^{1})\right)\\
&\hspace{.34cm}=\Phi_{0,\varepsilon}\circ D_{-\varepsilon}(k_{-\varepsilon}^{1})
\end{align*}
\endgroup
As before we get $D_{\varepsilon}(k_{\varepsilon}^{1})=\Phi_{0,\varepsilon}\circ D_{-\varepsilon}\circ\Phi_{1,\varepsilon}^{-1}(k_{\varepsilon}^{1}), D_{\varepsilon}=\Phi_{0,\varepsilon}\circ D_{-\varepsilon}\circ\Phi_{1,\varepsilon}^{-1}$ and thus by the analogous computation as above $\Cord(K_{\varepsilon})\cong\Cord(K_{-\varepsilon})$.\\
The other possible situations are to be treated analogously.\\[.5em]
Case (ii,10): $k\in F_{0}$ for a critical point $k$ of index 0 or 1: Analogous to case (ii,8) with relation (ii) instead of relation (iii).\\[.5em]
Case (ii,11): There exists a trajectory between two critical points of index 1 along the vector field $-\nabla E_{0}$. We consider the situation as in Figure \ref{WuendsinCrit1}.
\begin{figure}[htpb]\centering
\subfigure{\includegraphics[scale=0.3]{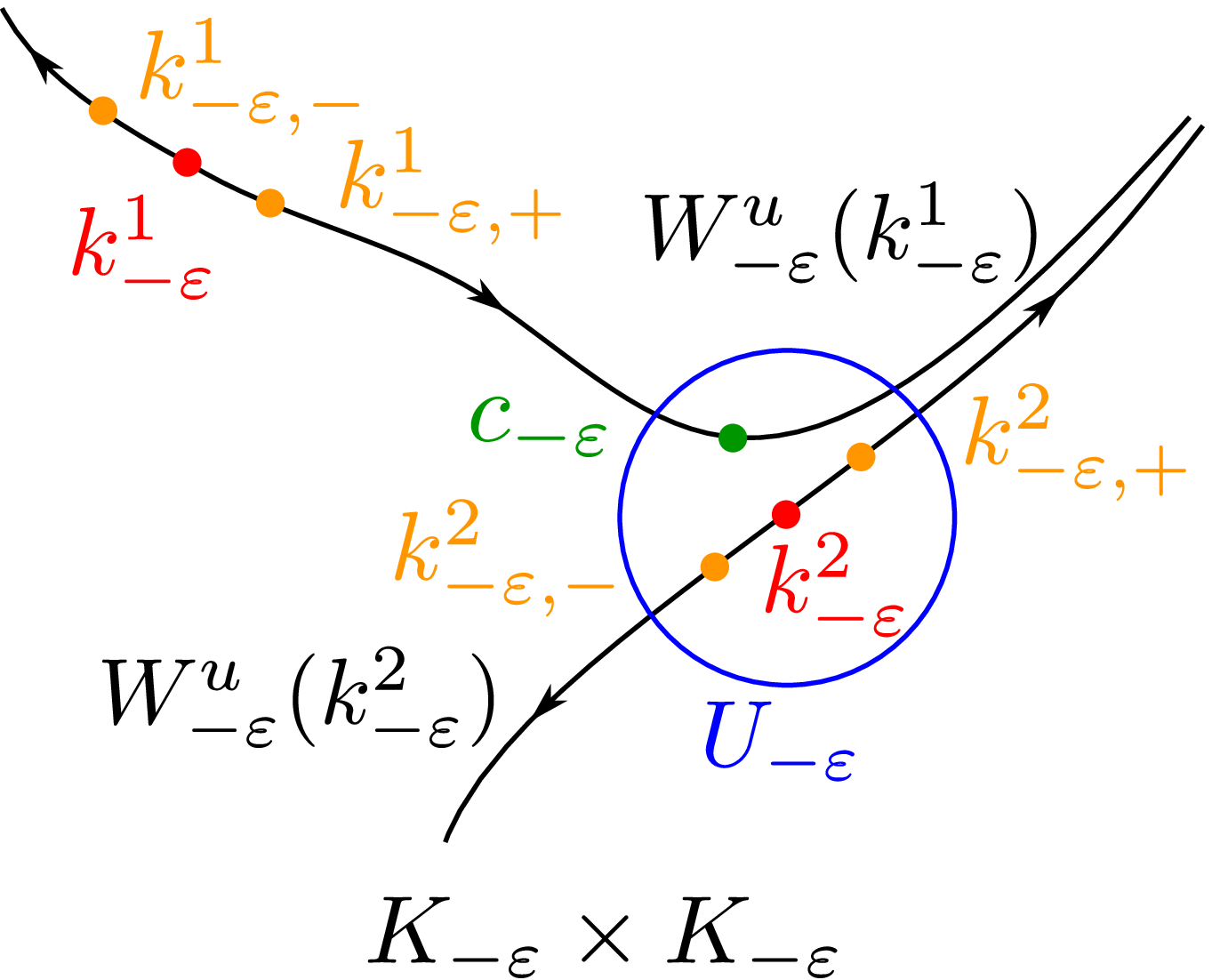}}
\subfigure{\hspace*{1cm}\includegraphics[scale=0.3]{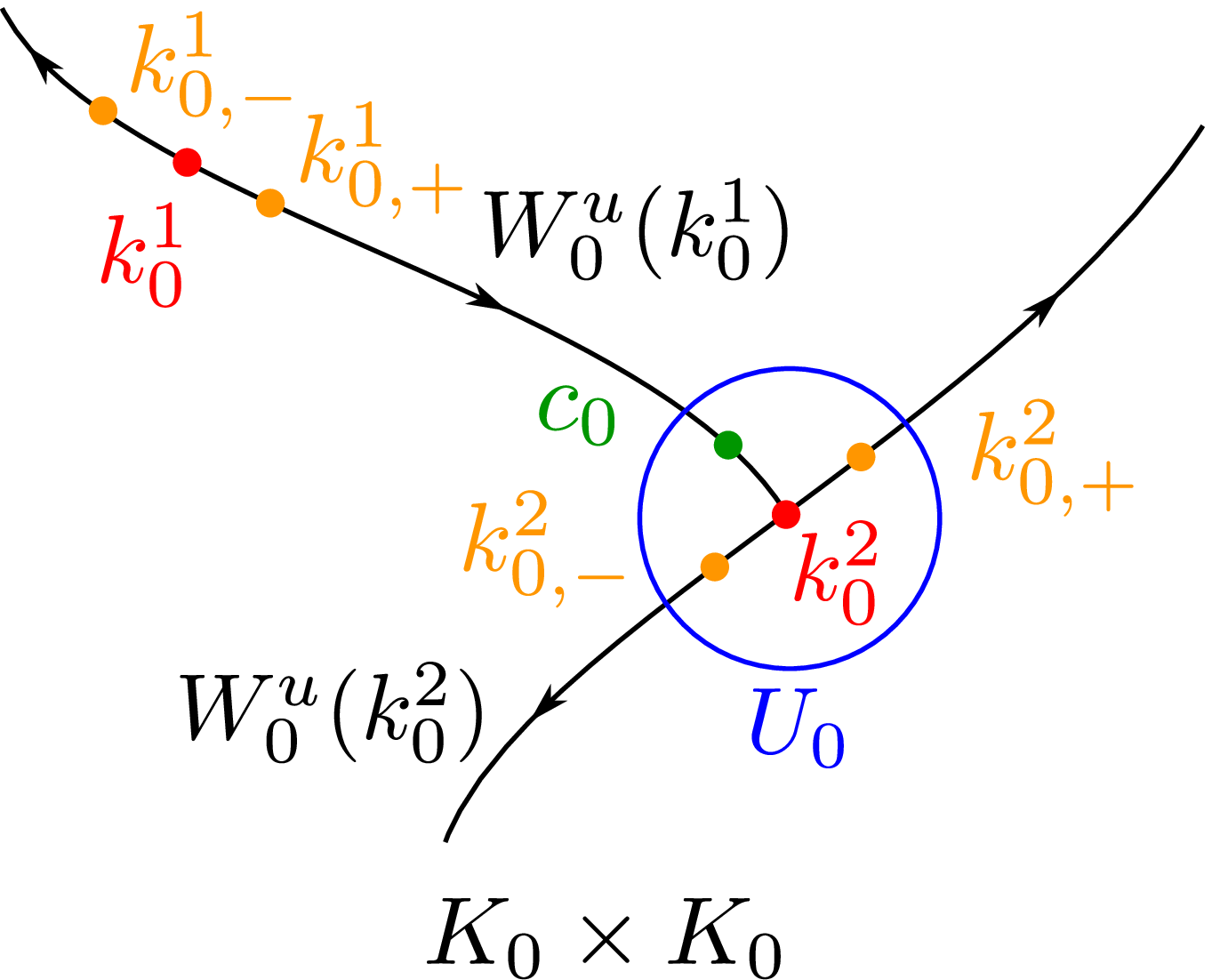}}
\subfigure{\hspace*{1cm}\includegraphics[scale=0.3]{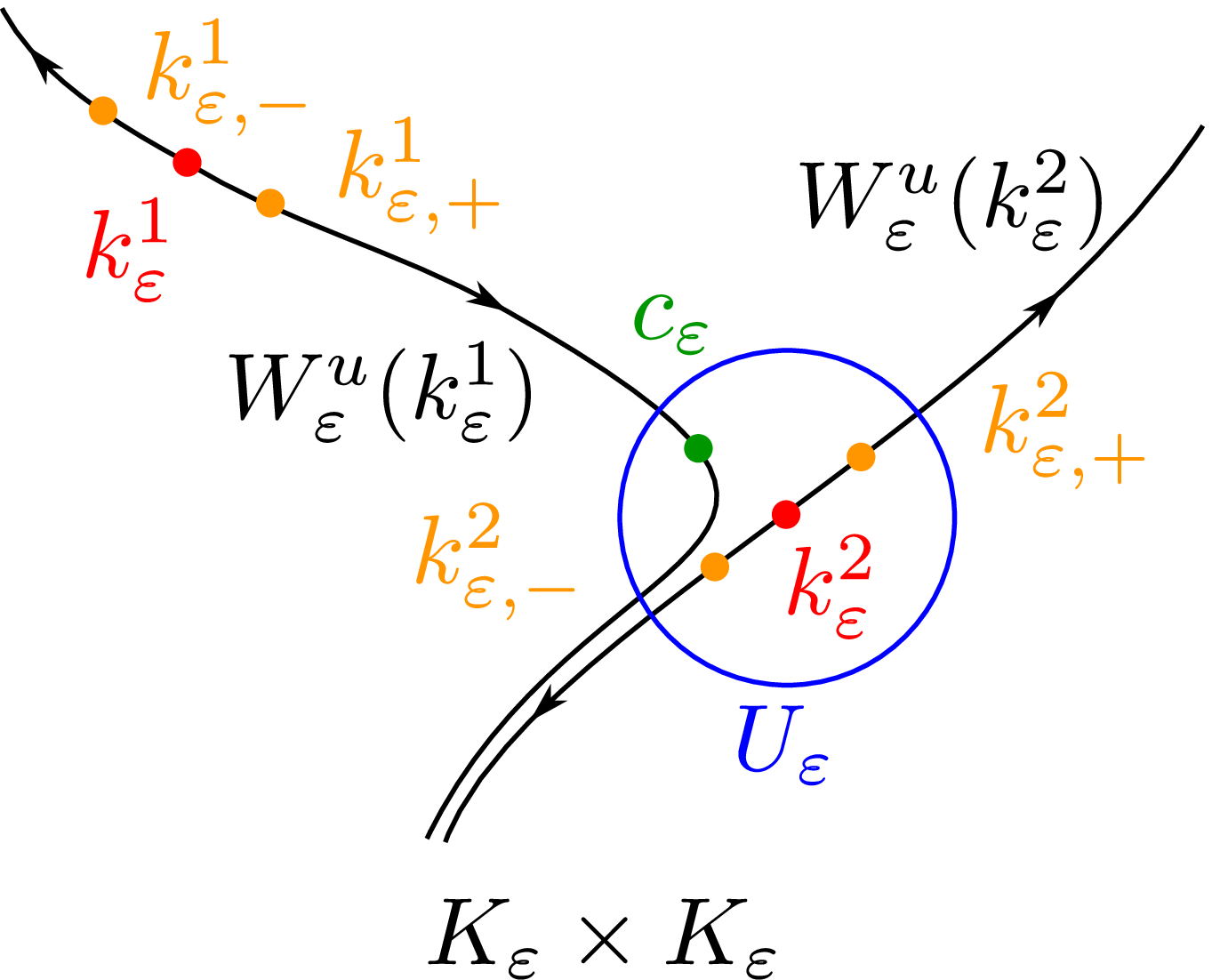}}
\caption{The unstable manifold of the cord $k_{0}^{1}$ ends at the critical point $k_{0}^{2}$ of index 1\label{WuendsinCrit1}}
\end{figure}

According to Lemma \ref{genericisotopy}, we have $k_{r}^{2}\notin(B\cup S_{r}\cup F_{r})$ for all $r\in[-\varepsilon,\varepsilon]$. Since this is an open property, we can choose neighborhoods $U_{r}$ of every $k_{r}^{2}$ such that $U_{r}\cap(B\cup S_{r}\cup F_{r})=\emptyset$. For all $r\in[-\varepsilon,\varepsilon]$ choose $c_{r}\in W^{u}_{r}(k_{r}^{1})\cap U_{r}$. If $\varepsilon$ is small enough, we get $\widehat{D}_{r}(c_{r})=\widehat{D}_{r}(k_{r,+}^{2})$ for $r<0$ and $\widehat{D}_{r}(c_{r})=\widehat{D}_{r}(k_{r,-}^{2})$ for $r>0$. If $W_{0}^{u}(k_{0}^{1})$ intersects $B,F_{0}$, or $S_{0}$ between $k_{0,+}^{1}$ and $c_{0}$, we can choose $\varepsilon$ small enough such that $W_{r}^{u}(k_{r}^{1})$ intersects $B,F_{r}$, and $S_{r}$ for all $r$ in the same way between $k_{r,+}^{1}$ and $c_{r}$. That means, while computing $\widehat{D}_{r}(k_{r,+}^{1})$ we get the same result up to the point $c_{r}$ for all $r\in[-\varepsilon,\varepsilon]$. So without loss of generality we can assume $\widehat{D}_{r}(k_{r,+}^{1})=\widehat{D}_{r}(c_{r})$. Therefore, we get
\begin{align*}
\widehat{D}_{-\varepsilon}(k_{-\varepsilon,+}^{1})&=\widehat{D}_{-\varepsilon}(k_{-\varepsilon,+}^{2})\\
\widehat{D}_{\varepsilon}(k_{\varepsilon,+}^{1})&=\widehat{D}_{\varepsilon}(k_{\varepsilon,-}^{2}).
\end{align*}
If $\varepsilon$ is small enough, the following holds:
\begin{align*}
\widehat{D}_{\varepsilon}(k_{\varepsilon,+}^{2})&=\Phi_{0,\varepsilon}\circ\widehat{D}_{-\varepsilon}(k_{-\varepsilon,+}^{2})\\
\widehat{D}_{\varepsilon}(k_{\varepsilon,-}^{2})&=\Phi_{0,\varepsilon}\circ\widehat{D}_{-\varepsilon}(k_{-\varepsilon,-}^{2})\\
\widehat{D}_{\varepsilon}(k_{\varepsilon,-}^{1})&=\Phi_{0,\varepsilon}\circ\widehat{D}_{-\varepsilon}(k_{-\varepsilon,-}^{1}).
\end{align*}
With this we can compute the ideal
\begingroup
\allowdisplaybreaks
\begin{align*}
\langle D_{\varepsilon}(C_{1}(K_{\varepsilon}))\rangle_{R}&=\langle D_{\varepsilon}(k_{\varepsilon}^{1}),D_{\varepsilon}(\lbrace k_{\varepsilon}^{2},\dots,k_{\varepsilon}^{n_{1}}\rbrace)\rangle_{R}\\
&=\langle D_{\varepsilon}(k_{\varepsilon}^{1})+D_{\varepsilon}(k_{\varepsilon}^{2}),D_{\varepsilon}(\lbrace k_{\varepsilon}^{2},\dots,k_{\varepsilon}^{n_{1}}\rbrace)\rangle_{R}\\
&=\langle\widehat{D}_{\varepsilon}(k_{\varepsilon,+}^{1})-\widehat{D}_{\varepsilon}(k_{\varepsilon,-}^{1})+\widehat{D}_{\varepsilon}(k_{\varepsilon,+}^{2})-\widehat{D}_{\varepsilon}(k_{\varepsilon,-}^{2}),D_{\varepsilon}(\lbrace k_{\varepsilon}^{2},\dots,k_{\varepsilon}^{n_{1}}\rbrace)\rangle_{R}\\
&=\langle\widehat{D}_{\varepsilon}(k_{\varepsilon,+}^{2})-\widehat{D}_{\varepsilon}(k_{\varepsilon,-}^{1}),D_{\varepsilon}(\lbrace k_{\varepsilon}^{2},\dots,k_{\varepsilon}^{n_{1}}\rbrace)\rangle_{R}\\
&=\langle\Phi_{0,\varepsilon}\circ(\widehat{D}_{-\varepsilon}(k_{-\varepsilon,+}^{2})-\widehat{D}_{-\varepsilon}(k_{-\varepsilon,-}^{1})),\Phi_{0,\varepsilon}\circ D_{-\varepsilon}(\lbrace k_{-\varepsilon}^{2},\dots,k_{-\varepsilon}^{n_{1}}\rbrace)\rangle_{R}\\
&=\langle\Phi_{0,\varepsilon}\circ(\widehat{D}_{-\varepsilon}(k_{-\varepsilon,+}^{1})-\widehat{D}_{-\varepsilon}(k_{-\varepsilon,-}^{1})),\Phi_{0,\varepsilon}\circ D_{-\varepsilon}(\lbrace k_{-\varepsilon}^{2},\dots,k_{-\varepsilon}^{n_{1}}\rbrace)\rangle_{R}\\
&=\langle\Phi_{0,\varepsilon}\circ D_{-\varepsilon}(k_{-\varepsilon}^{1}),\Phi_{0,\varepsilon}\circ D_{-\varepsilon}(\lbrace k_{-\varepsilon}^{2},\dots,k_{-\varepsilon}^{n_{1}}\rbrace)\rangle_{R}\\
&=\langle\Phi_{0,\varepsilon}\circ D_{-\varepsilon}(C_{1}(K_{-\varepsilon}))\rangle_{R}.
\end{align*}
\endgroup
It follows
\begin{align*}
\Cord(K_{\varepsilon})&=\Phi_{0,\varepsilon}(\Cord(K_{-\varepsilon}))\\
&\cong\Cord(K_{-\varepsilon}).
\end{align*}
The other possible situations are to be treated analogously.\\[.5em]
Case (ii,12): One of the cases (ii,1) to (ii,10) holds for a cord which is generated by the application of relation~(iv) (where in the cases (ii,1) to (ii,7) the unstable manifolds are to be replaced by trajectories along the vector field $-\nabla E_{0}$, starting at this cord): Analogous to the cases (ii,1) to (ii,10).\\[.5em]
Case (ii,13): A cord which is generated by the application of relation~(iv) runs along the vector field $-\nabla E_{0}$ to a critical point of index 1: Analogous to case (ii,11).\\[.5em]
Case (ii,14): Creation / cancellation of a pair of critical points of index 0 and 1 or of index 1 and 2. It suffices to consider the creation of a pair of critical points since the other case corresponds to the inverse isotopy.\\[.5em]
a) Creation of a pair of critical points of index 0 and 1. So let $K_{0}\times K_{0}$ contain a critical point $p$ of birth type of index 0 (see Definition \ref{emryonicbirthdeathdef}).\\[.5em]
Since $K_{0}$ is generic with one exception, the proofs of the lemmata \ref{cordlemmaadd1}, \ref{cordlemmaadd2} and \ref{cordlemmaadd3} also work for $p$ and $W_{0}^{u}(p)$. With a similar transversality argument as in these lemmata we can show that $p$ does not lie on the unstable manifold of a critical point of index 1.\\
According to Remark \ref{UcaphatWrem}, we can choose neighborhoods $U_{r}$ and coordinates such that we can represent the situation as in Figure \ref{Index01}. Here, $g_{\varepsilon}$ and $k_{\varepsilon}$ in the right figure are nondegenerate critical points of index 0 and 1, respectively.
\begin{figure}[ht]\centering 
\subfigure{\includegraphics[scale=0.3]{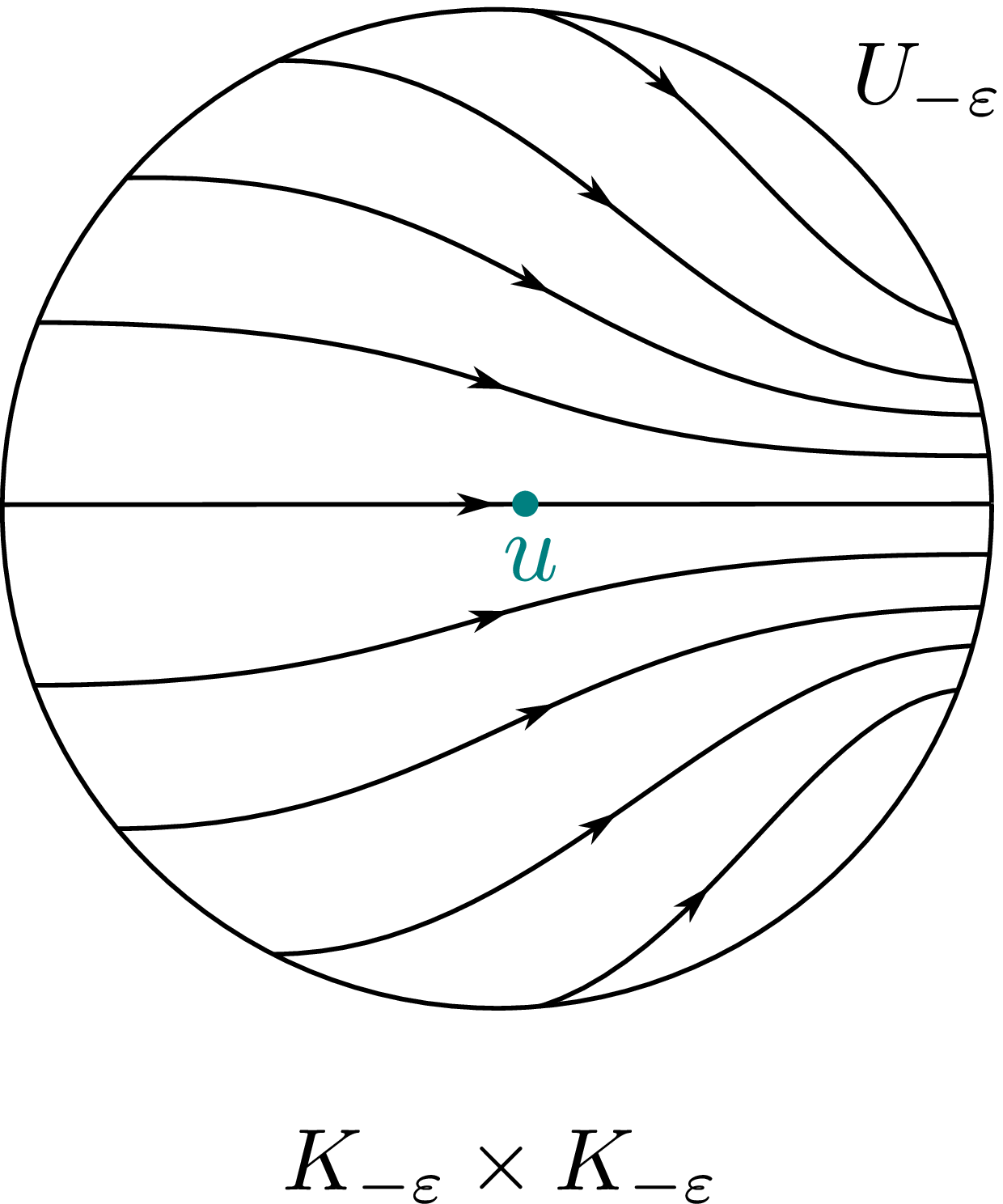}}
\subfigure{\hspace*{1cm}\includegraphics[scale=0.3]{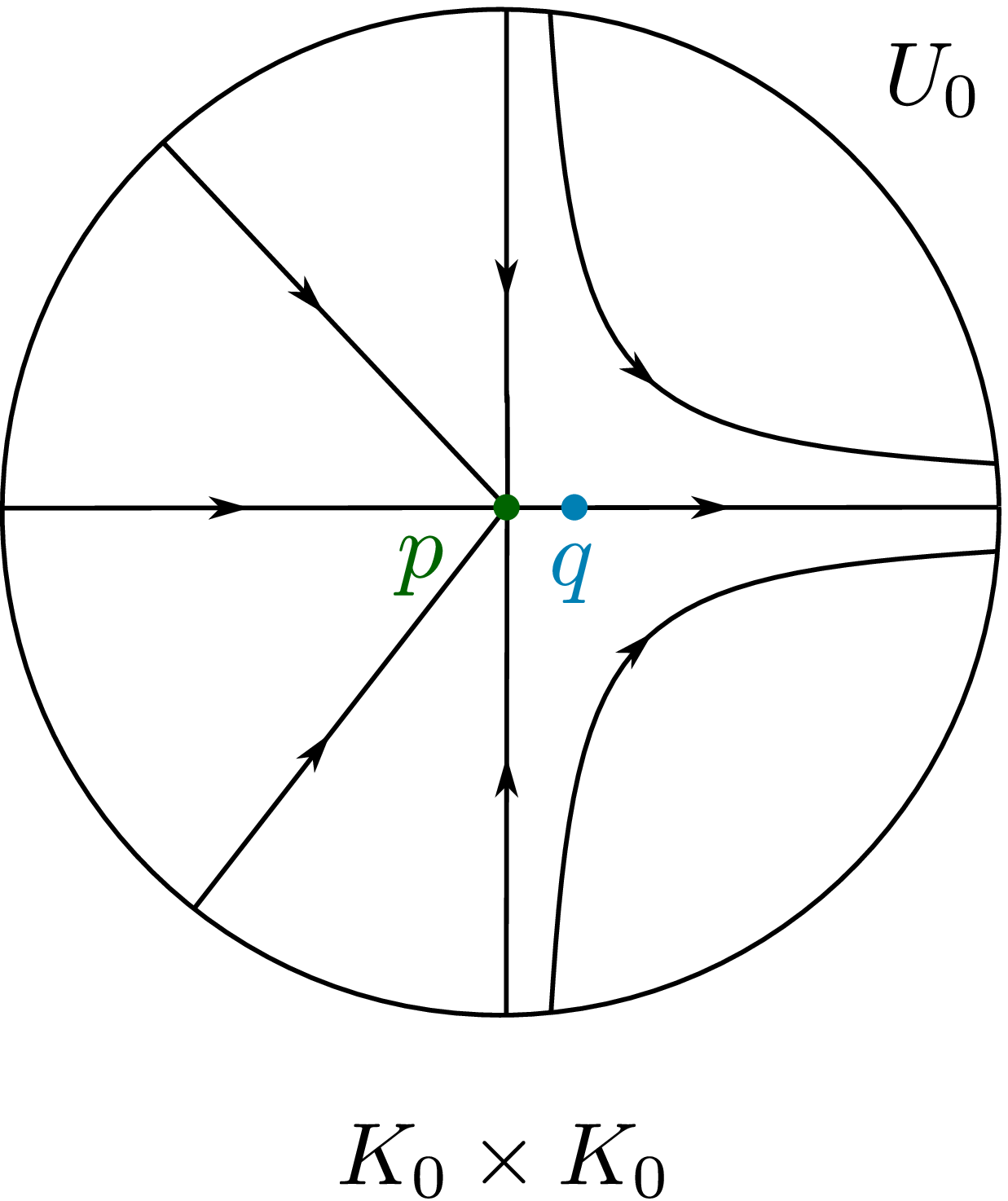}}
\subfigure{\hspace*{1.1cm}\includegraphics[scale=0.3]{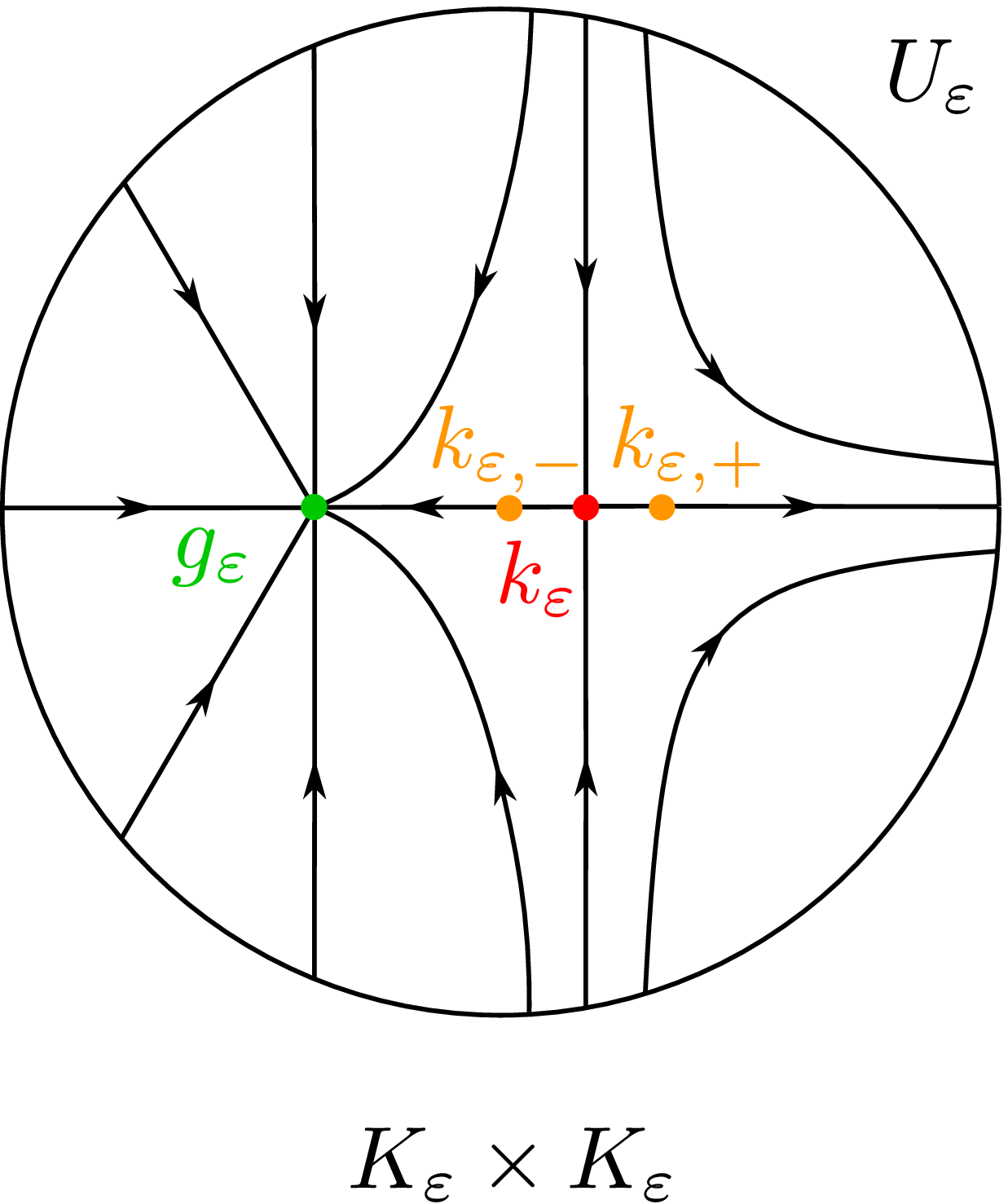}}
\caption{Creation of a pair of critical points of index 0 and 1\label{Index01}}
\end{figure}

Since we get two additional critical points during the isotopy, we have to modify the linear maps $\Phi_{0,r}$ and $\Phi_{1,r}$ a little bit:\\
Let
\[n_{0}:=\vert Crit_{0}(K_{-\varepsilon})\vert\text{ and }n_{1}:=\vert Crit_{1}(K_{-\varepsilon})\vert.\]
Then for all $r\in[-\varepsilon,0)$ we have
\[\vert Crit_{0}(K_{r})\vert=n_{0}\text{ and }\vert Crit_{1}(K_{r})\vert=n_{1}\]
and for all $r\in(0,\varepsilon]$ we have
\[\vert Crit_{0}(K_{r})\vert=n_{0}+1\text{ and }\vert Crit_{1}(K_{r})\vert=n_{1}+1.\]
So let
\begin{align*}
Crit_{0}(K_{r})&=
\begin{cases}
\lbrace g_{r}^{1},\dots,g_{r}^{n_{0}}\rbrace&r\in[-\varepsilon,0)\\
\lbrace g_{0}^{1},\dots,g_{0}^{n_{0}},p\rbrace&r=0\\
\lbrace g_{r}^{1},\dots,g_{r}^{n_{0}},g_{r}\rbrace&r\in(0,\varepsilon]
\end{cases}\\
Crit_{1}(K_{r})&=
\begin{cases}
\lbrace k_{r}^{1},\dots,k_{r}^{n_{1}}\rbrace&r\in[-\varepsilon,0]\\
\lbrace k_{r}^{1},\dots,k_{r}^{n_{1}},k_{r}\rbrace&r\in(0,\varepsilon].
\end{cases}
\end{align*}
Now we define for $i=0,1$
\[Crit_{i}^{[-\varepsilon,\varepsilon]}:=\bigcup_{r\in[-\varepsilon,\varepsilon]}\lbrace r\rbrace\times Crit_{i}(K_{r})\subset[-\varepsilon,\varepsilon]\times T^{2}.\]
Since the isotopy is smooth, we can number the critical points such that the maps

\begin{align*}
\Psi_{0}:[-\varepsilon,\varepsilon]\times Crit_{0}(K_{-\varepsilon})&\to Crit_{0}^{[-\varepsilon,\varepsilon]}\\
(r,g_{-\varepsilon}^{i})&\mapsto g_{r}^{i},\ i=1,\dots,n_{0}\\
\Psi_{1}:[-\varepsilon,\varepsilon]\times Crit_{1}(K_{-\varepsilon})&\to Crit_{1}^{[-\varepsilon,\varepsilon]}\\
(r,k_{-\varepsilon}^{i})&\mapsto k_{r}^{i},\ i=1,\dots,n_{1}
\end{align*}
are continuous with respect to the first component. For all $r\in[-\varepsilon,\varepsilon]$ we define the linear maps $\Phi_{0,r}$ and $\Phi_{1,r}$ on generators by
\begin{align*}
\Phi_{0,r}:C_{0}(K_{-\varepsilon})&\to\langle g_{r}^{1},\dots,g_{r}^{n_{0}}\rangle_{R}\\
g_{-\varepsilon}^{i}&\mapsto\Psi_{0}(r,g_{-\varepsilon}^{i})=g_{r}^{i},\ i=1,\dots,n_{0}\\
\lambda^{\pm1}&\mapsto\lambda^{\pm1}\\
\mu^{\pm1}&\mapsto\mu^{\pm1}\\
\Phi_{1,r}:C_{1}(K_{-\varepsilon})&\to\langle k_{r}^{1},\dots,k_{r}^{n_{1}}\rangle_{\mathbb{Z}}\\
k_{-\varepsilon}^{i}&\mapsto\Psi_{1}(r,k_{-\varepsilon}^{i})=k_{r}^{i},\ i=1,\dots,n_{1}.
\end{align*}
For all $r\in[-\varepsilon,\varepsilon]$ we extend $\Phi_{0,r}$ to an algebra homomorphism. Obviously, $\Phi_{0,r}$ and $\Phi_{1,r}$ are isomorphisms for all $r\in[-\varepsilon,\varepsilon]$.\\
By the same consideration as above, it follows that for $\varepsilon$ small enough the following holds for all $i=1,\dots,n_{1}$:
\[\widehat{D}_{\varepsilon}(k_{\varepsilon,\pm}^{i})=\Phi_{0,\varepsilon}\circ\widehat{D}_{-\varepsilon}(k_{-\varepsilon,\pm}^{i}).\]
Thus, we have for all $i=1,\dots,n_{1}$:
\begin{align*}
D_{\varepsilon}(k_{\varepsilon}^{i})&=\Phi_{0,\varepsilon}\circ D_{-\varepsilon}(k_{-\varepsilon}^{i})\\
&=\Phi_{0,\varepsilon}\circ D_{-\varepsilon}\circ\Phi_{1,\varepsilon}^{-1}(k_{\varepsilon}^{i}).
\end{align*}
Now we choose $q\in W^{u}_{0}(p)$ such that $\lbrace\varphi_{0}^{s}(q):s\leq0\rbrace\cap(S\cup F\cup B)=\emptyset$. Analogous to the above consideration it can be shown that there exists an open neighborhood $U$ of $(0,q)$ in $[-\varepsilon,\varepsilon]\times T^{2}$ such that for all $(r,x)\in U$ the following holds:
\[\Phi_{0,0}\circ\Phi_{0,r}^{-1}\circ\widehat{D}_{r}(x)=\widehat{D}_{0}(q).\]
For $\varepsilon$ small enough we can assume that we have $U\cap(\lbrace r\rbrace\times T^{2})\neq\emptyset$ for all $r\in[-\varepsilon,\varepsilon]$. Choose $u\in U\cap(\lbrace-\varepsilon\rbrace\times T^{2})$. Also choose $k_{\varepsilon,+}$ and $k_{\varepsilon,-}$ to determine $D_{\varepsilon}(k_{\varepsilon})$. Without loss of generality, $k_{\varepsilon,+}$ and $k_{\varepsilon,-}$ can be chosen as shown in Figure \ref{Index01}. If the orientation of the knot is reversed, $k_{\varepsilon,+}$ and $k_{\varepsilon,-}$ are swapped and the sign of $D_{\varepsilon}(k_{\varepsilon})$ changes. However, nothing changes in the quotient. Then we get:
\[\Phi_{0,0}(\widehat{D}_{-\varepsilon}(u))=\widehat{D}_{0}(q)=\Phi_{0,0}\circ\Phi_{0,\varepsilon}^{-1}(\widehat{D}_{\varepsilon}(k_{\varepsilon,+})).\]
In addition we get
\[\widehat{D}_{\varepsilon}(k_{\varepsilon,-})=g_{\varepsilon},\]
since $\dim W^{s}_{\varepsilon}(g_{\varepsilon})=2$ and $g_{\varepsilon}$ lies arbitrarily close to $k_{\varepsilon}$ for $\varepsilon$ sufficiently small. Therefore, it follows that
\begin{align*}
D_{\varepsilon}(k_{\varepsilon})&=\widehat{D}_{\varepsilon}(k_{\varepsilon,+})-\widehat{D}_{\varepsilon}(k_{\varepsilon,-})\\
&=\Phi_{0,\varepsilon}(\widehat{D}_{-\varepsilon}(u))-g_{\varepsilon}.
\end{align*}
Further we get
\begin{align*}
C_{0}(K_{\varepsilon})&=\langle\Phi_{0,\varepsilon}(C_{0}(K_{-\varepsilon})),g_{\varepsilon}\rangle_{R}\\
C_{1}(K_{\varepsilon})&=\langle\Phi_{1,\varepsilon}(C_{1}(K_{-\varepsilon})),k_{\varepsilon}\rangle_{\mathbb{Z}}.
\end{align*}
Now we can compute
\begingroup
\allowdisplaybreaks
\begin{align*}
\Cord(K_{\varepsilon})&=C_{0}(K_{\varepsilon})/I_\varepsilon\\
&=C_{0}(K_{\varepsilon})/\langle D_{\varepsilon}(C_{1}(K_{\varepsilon}))\rangle\\
&=C_{0}(K_{\varepsilon})/\langle D_{\varepsilon}(\lbrace k_{\varepsilon}^{1},\dots,k_{\varepsilon}^{n_{1}}\rbrace),D_{\varepsilon}(k_{\varepsilon})\rangle\\
&=\langle\Phi_{0,\varepsilon}(C_{0}(K_{-\varepsilon})),g_{\varepsilon}\rangle_{R}/\langle\Phi_{0,\varepsilon}\circ D_{-\varepsilon}\circ\Phi_{1,\varepsilon}^{-1}(\lbrace k_{\varepsilon}^{1},\dots,k_{\varepsilon}^{n_{1}}\rbrace),D_{\varepsilon}(k_{\varepsilon})\rangle\\
&=\langle\Phi_{0,\varepsilon}(C_{0}(K_{-\varepsilon})),g_{\varepsilon}\rangle_{R}/\langle\Phi_{0,\varepsilon}\circ D_{-\varepsilon}(\lbrace k_{-\varepsilon}^{1},\dots,k_{-\varepsilon}^{n_{1}}\rbrace),\Phi_{0,\varepsilon}(\widehat{D}_{-\varepsilon}(u))-g_{\varepsilon}\rangle\\
&=\langle\Phi_{0,\varepsilon}(C_{0}(K_{-\varepsilon})),g_{\varepsilon}\rangle_{R}/\langle\Phi_{0,\varepsilon}(I_{-\varepsilon}),\Phi_{0,\varepsilon}(\widehat{D}_{-\varepsilon}(u))-g_{\varepsilon}\rangle\\
&\hspace*{-.04cm}\overset{(1)}{=}\Phi_{0,\varepsilon}(C_{0}(K_{-\varepsilon}))/\Phi_{0,\varepsilon}(I_{-\varepsilon})\\
&\hspace*{-.04cm}\overset{(2)}{=}\Phi_{0,\varepsilon}\big(C_{0}(K_{-\varepsilon})/I_{-\varepsilon}\big)\\
&=\Phi_{0,\varepsilon}(\Cord(K_{-\varepsilon}))\\
&\cong\Cord(K_{-\varepsilon}).
\end{align*}
\endgroup
Equality (1) holds because on the one hand we have $\widehat{D}_{-\varepsilon}(u)\in C_{0}(K_{-\varepsilon})$ and on the other hand we have in the quotient: $g_{\varepsilon}=\Phi_{0,\varepsilon}(\widehat{D}_{-\varepsilon}(u))$. It follows that $\langle g_{\varepsilon}\rangle\subset\Phi_{0,\varepsilon}(C_{0}(K_{-\varepsilon}))$. Equality (2) holds because $\Phi_{0,\varepsilon}$ is an algebra isomorphism.\\[.5em]
b) Creation of a pair of critical points of index 1 and 2. So let $K_{0}\times K_{0}$ contain a critical point $p$ of birth type of index 1.\\
With a similar transversality argument as before we can show that $p$ does not lie on the stable or unstable manifold of a critical point of index 1.\\
According to Remark \ref{UcaphatWrem}, we can choose neighborhoods $U_{r}$ and coordinates such that we can represent the situation as in Figure \ref{Index12}. Here, $k_{\varepsilon}$ and $l_{\varepsilon}$ in the right figure are nondegenerate critical points of index 1 and 2, respectively.
\begin{figure}[ht]\centering 
\subfigure{\includegraphics[scale=0.3]{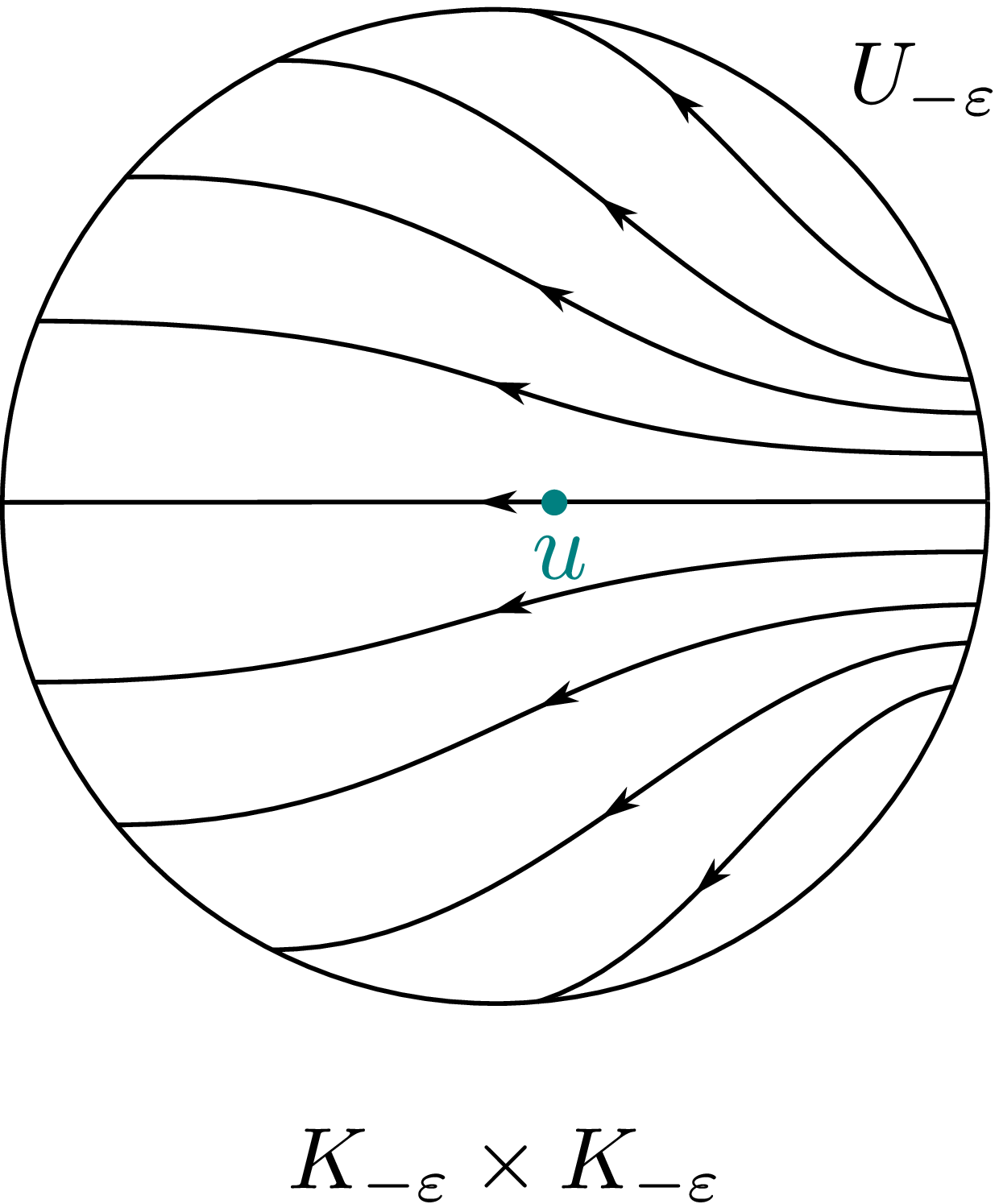}}
\subfigure{\hspace*{1cm}\includegraphics[scale=0.3]{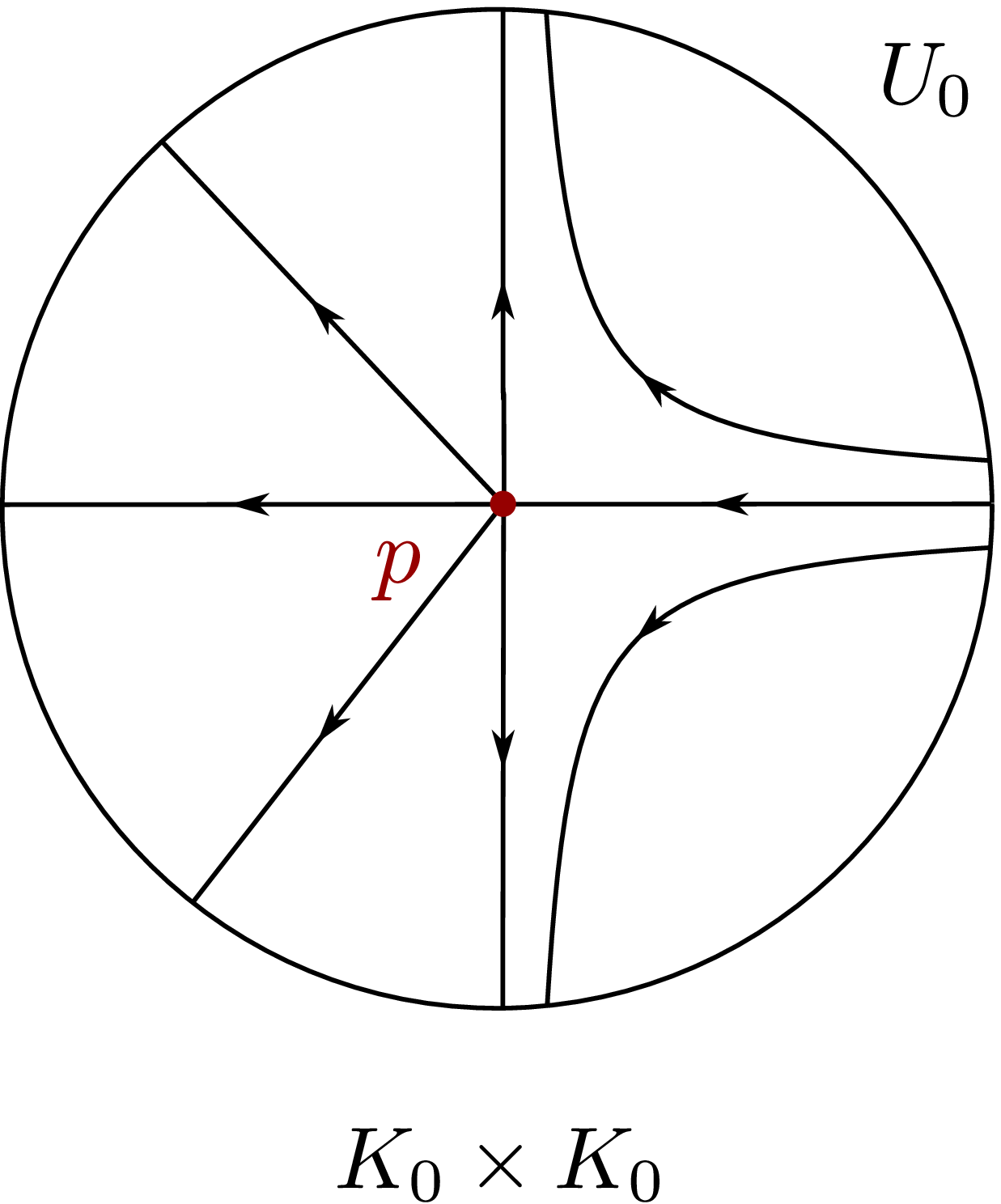}}
\subfigure{\hspace*{1.1cm}\includegraphics[scale=0.3]{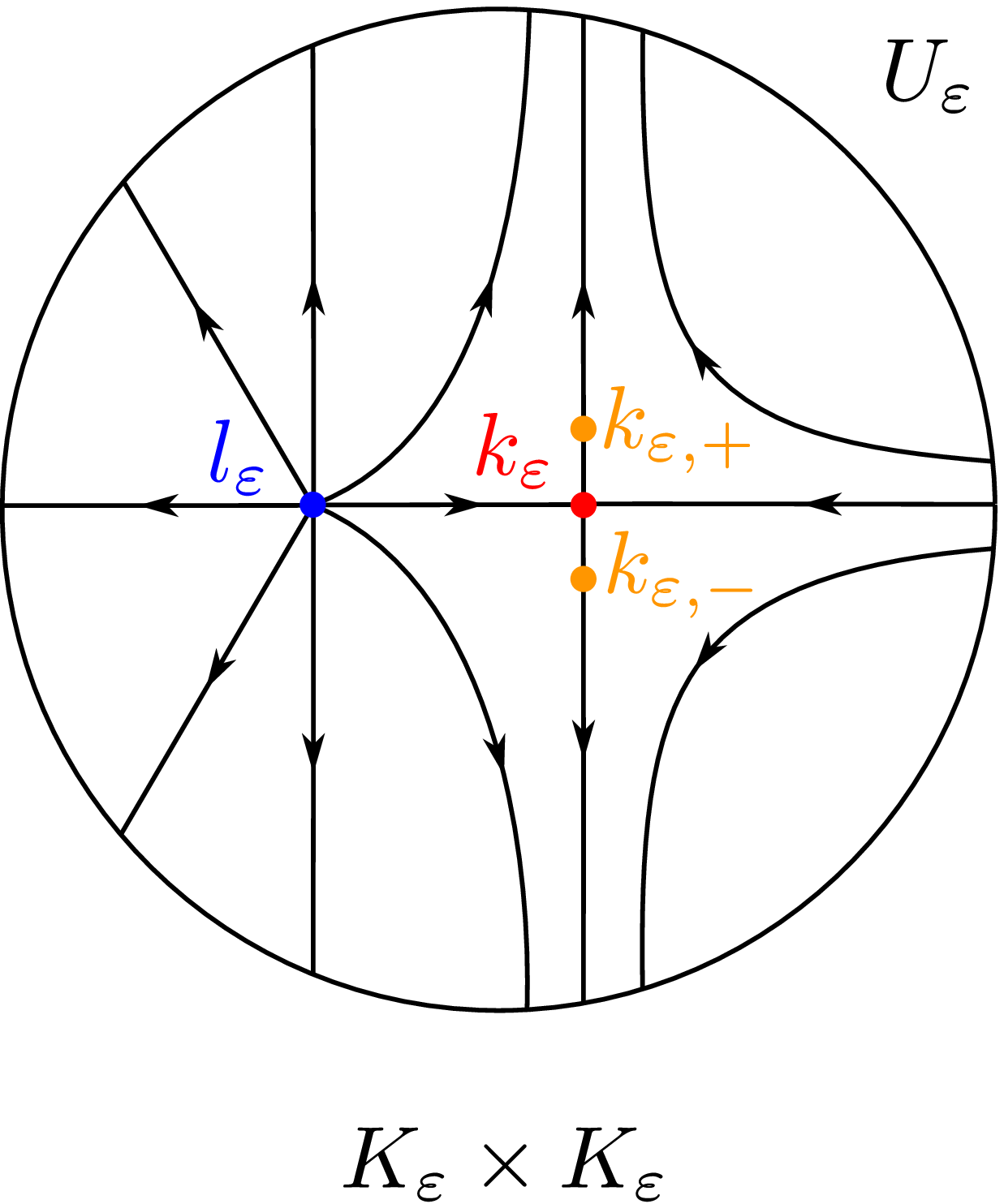}}
\caption{Creation of a pair of critical points of index 1 and 2\label{Index12}}
\end{figure}
We can divide $K_{r}\times K_{r}$ into open subsets bounded by ``broken'' trajectories \cite{Aud} such that $K_{r}\times K_{r}$ is the union of the closures of these open sets. Figure~\ref{brokentraj} shows this schematically on the left, where $l_{r},k_{r}^{1},k_{r}^{2}$ and $g_{r}$ are critical points with $\Ind(l_{r})=2,\Ind(k_{r}^{1})=\Ind(k_{r}^{2})=1$, and $\Ind(g_{r})=0$. These open subsets are of the form
\[\lbrace x\in K_{r}\times K_{r}:\lim_{s\to\infty}\varphi_{r}^{s}(x)=g_{r}\text{ and}\lim_{s\to-\infty}\varphi_{r}^{s}(x)=l_{r}\rbrace.\]
\begin{figure}[ht]\centering 
\subfigure{\includegraphics[scale=0.4]{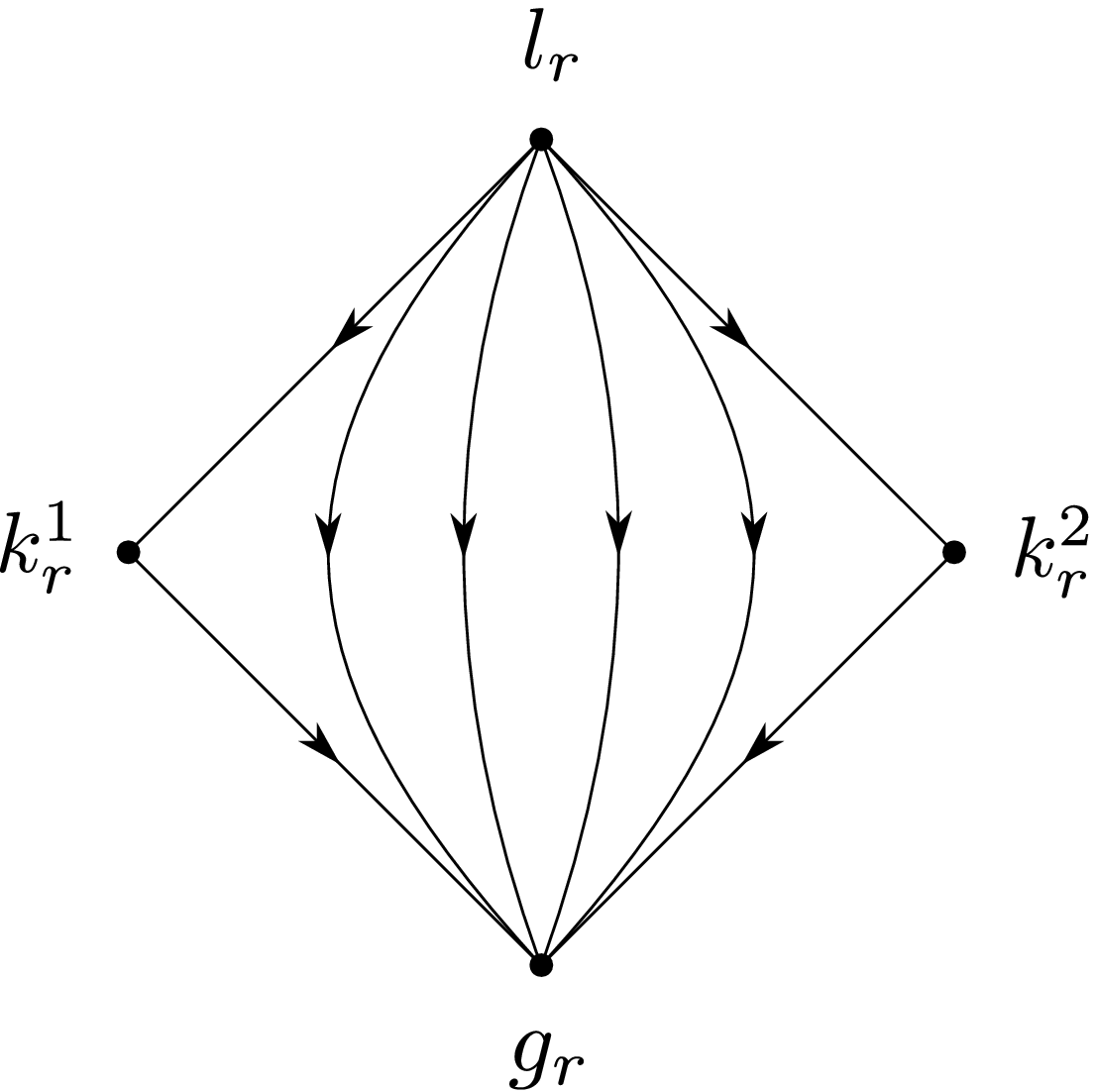}}
\subfigure{\hspace*{1cm}\includegraphics[scale=0.4]{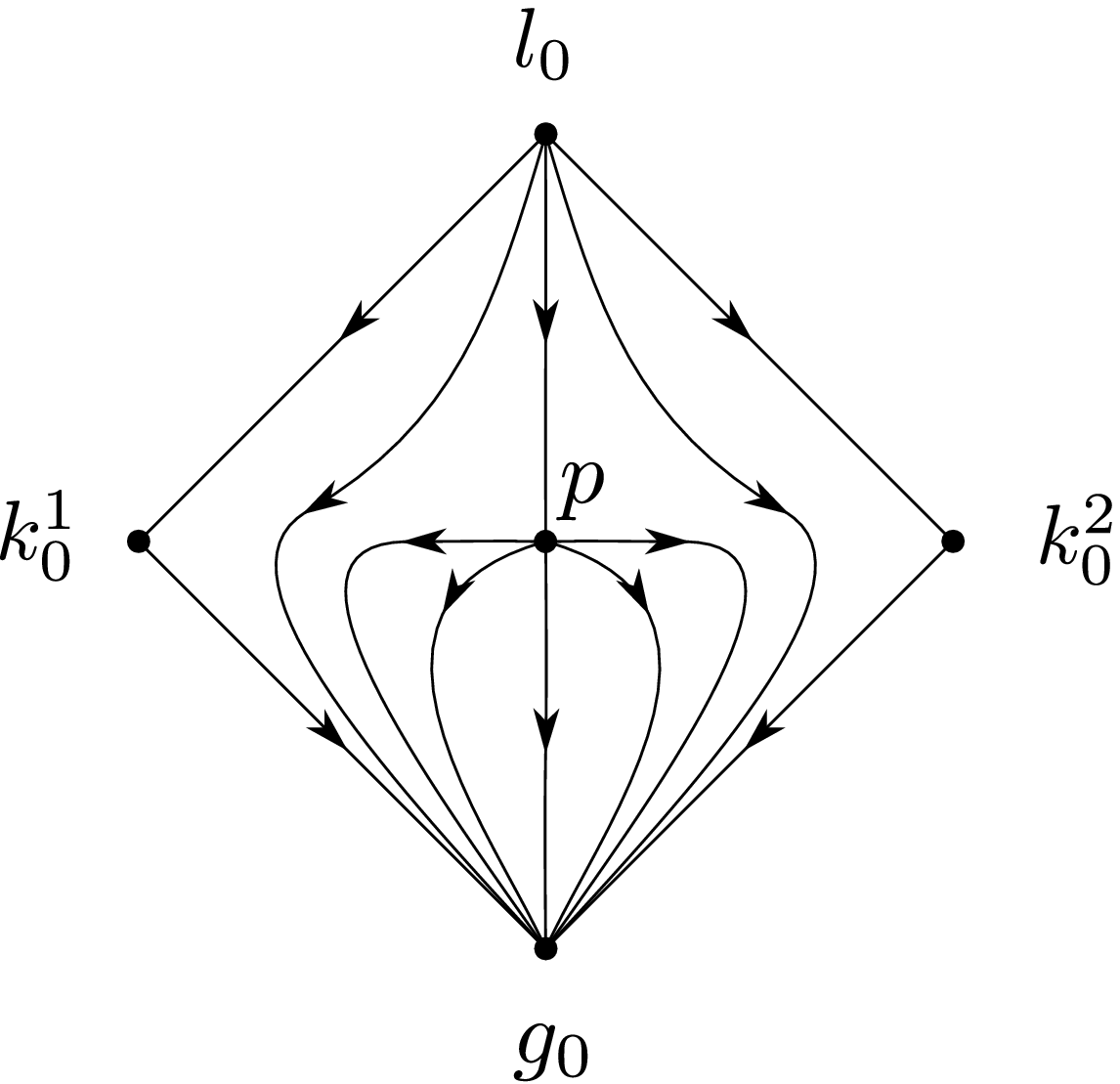}}
\caption{Broken trajectories and position of the critical point $p$ of birth type of index 1\label{brokentraj}}
\end{figure}%
A ``broken'' trajectory starts at a critical point of index 2, runs along a stable manifold of a critical point of index 1 to that point, then on the unstable manifold of that point to a critical point of index 0.\\
So $p$ lies in such an open subset of $K_{0}\times K_{0}$, as shown in Figure \ref{brokentraj} on the right. Thus, it is obvious that the trajectories starting from $k_{\varepsilon,+}$ and $k_{\varepsilon,-}$ end at the same critical point $g_{\varepsilon}$ of index 0. If these trajectories intersect the sets $S_{\varepsilon},F_{\varepsilon}$, and $B$ in the same way (as well as all flow lines resulting from the application of relation (iv)) such that $\widehat{D}_{\varepsilon}(k_{\varepsilon,+})=\widehat{D}_{\varepsilon}(k_{\varepsilon,-})$, we get $D_{\varepsilon}(k_{\varepsilon})=0$, and therefore $\Cord(K_{-\varepsilon})\cong\Cord(K_{\varepsilon})$.\\
If this is not the case, we perturb the 1-parameter family $(E_{r})_{r\in[-\varepsilon,\varepsilon]}$ of energy functions with the help of Lemma \ref{perturbationofWug} such that we get the 1-parameter family $(E^{\prime}_{r})_{r\in[-\varepsilon,\varepsilon]}$ with the following properties (see also Figure~\ref{Perturbenfct}):
\begin{itemize}\itemsep0pt
\item[(i)]$E_{-\varepsilon}^{\prime}=E_{-\varepsilon}$ and $E_{\varepsilon}^{\prime}=E_{\varepsilon}$.
\item[(ii)]The flow lines starting at $p$ are arbitrarily close to each other, so they run inside a $\delta$-tube for a $\delta>0$.
\item[(iii)]The flow lines starting at $k_{r,+}^{\prime}$ and $k_{r,-}^{\prime}$ run also within a $\delta$-tube for all $r\in(0,\hat{\varepsilon})$ where $0<\hat{\varepsilon}<\varepsilon$. $\hat{\varepsilon}$ is chosen such that the energy functions $E_{r}^{\prime}$ are generic for all $r\in[-\hat{\varepsilon},\hat{\varepsilon}]\setminus\lbrace0\rbrace$.
\end{itemize}
\begin{figure}[htbp]\centering 
\includegraphics[scale=0.5]{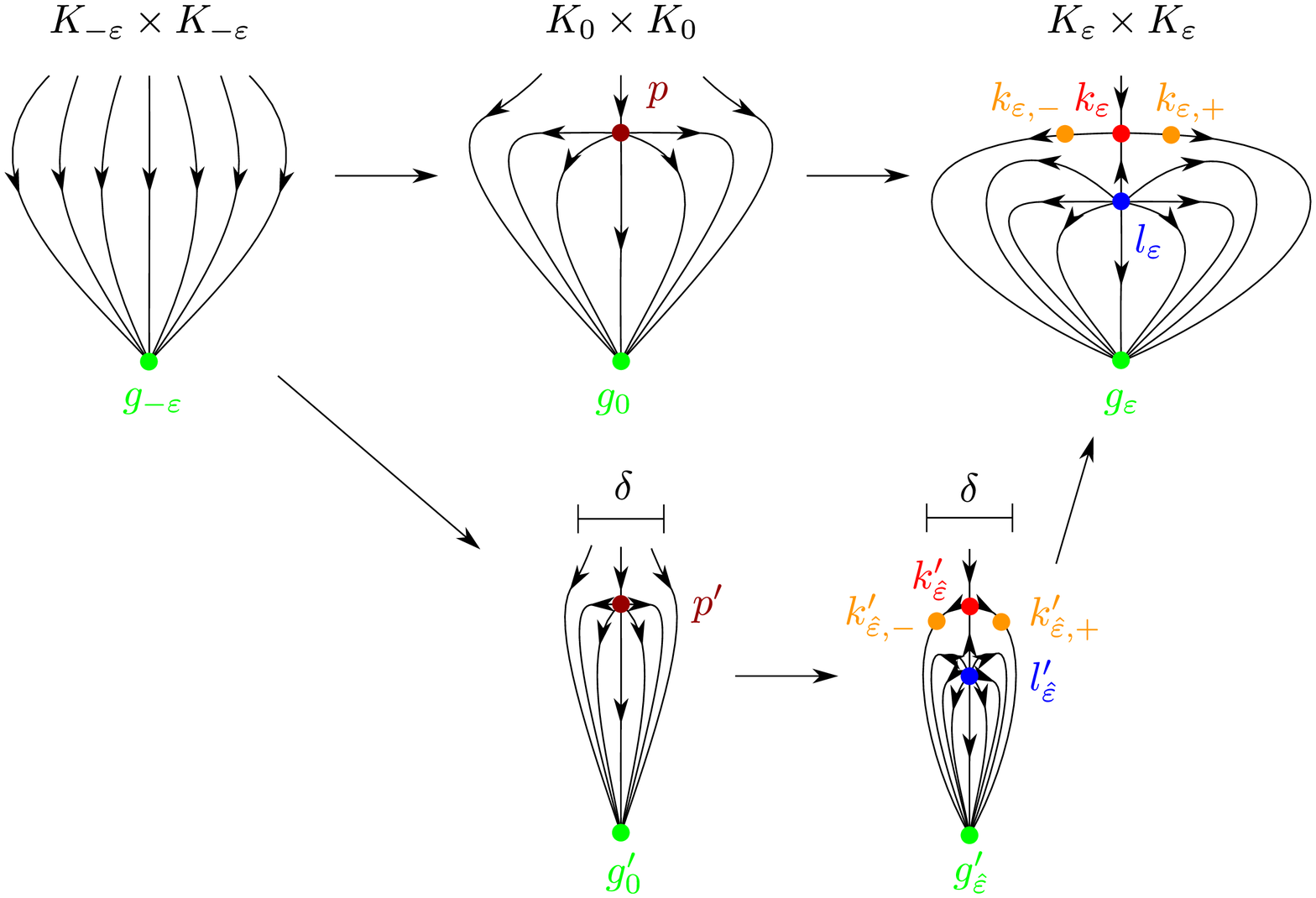}
\caption{Perturbation of the 1-parameter family of energy functions}\label{Perturbenfct}
\end{figure}%
Now we choose $\delta$ small enough such that the flow lines starting at $k_{\hat{\varepsilon},+}^{\prime}$ and $k_{\hat{\varepsilon},-}^{\prime}$ intersect the sets $S_{\hat{\varepsilon}},F_{\hat{\varepsilon}}$ and $B$ in the same way (and also all flow lines that result from the application of relation~(iv)). We now determine the cord algebras for the knots $K_{r}, r\in[-\varepsilon,\varepsilon]$ with respect to the energy functions $E_{r}^{\prime}$, except in the finitely many cases where $K_{r}$ is non-generic, and get:
\begin{itemize}\itemsep0pt
\item[(i)]For $r\in[-\varepsilon,0)$ and $r\in(\hat{\varepsilon},\varepsilon]$ the cases (ii, 1) to (ii, 13) from Lemma \ref{genericisotopy} may occur. We have shown yet that the cord algebra does not change in these cases.
\item[(ii)]For $r\in(0,\hat{\varepsilon})$ the cord algebra does not change since only generic energy functions occur. 
\item[(iii)]Since $D^{\prime}_{\hat{\varepsilon}}(k_{\hat{\varepsilon}}^{\prime})=\widehat{D}^{\prime}_{\hat{\varepsilon}}(k_{\hat{\varepsilon},+}^{\prime})-\widehat{D}^{\prime}_{\hat{\varepsilon}}(k_{\hat{\varepsilon},-}^{\prime})=0$, it follows that the cord algebra stays the same for all $r\in(0,\varepsilon]$.
\end{itemize}
It remains to be shown that the cord algebra does not change at the transition over $r=0$. For this we use the linear maps as described above:
\begingroup
\allowdisplaybreaks
\begin{align*}
\Phi_{0,r}:C_{0}(K_{-\hat{\varepsilon}})&\to C_{0}(K_{r})\\
g_{-\hat{\varepsilon}}^{i}&\mapsto\Psi_{0}(r,g_{-\hat{\varepsilon}}^{i})=g_{r}^{i},\ i=1,\dots,n_{0}\\
\lambda^{\pm1}&\mapsto\lambda^{\pm1}\\
\mu^{\pm1}&\mapsto\mu^{\pm1}\displaybreak\\
\Phi_{1,r}:C_{1}(K_{-\hat{\varepsilon}})&\to\langle k_{r}^{1},\dots,k_{r}^{n_{1}}\rangle_{\mathbb{Z}}\\
k_{-\hat{\varepsilon}}^{i}&\mapsto\Psi_{1}(r,k_{-\hat{\varepsilon}}^{i})=k_{r}^{i},\ i=1,\dots,n_{1},
\end{align*}
\endgroup
where $r\in[-\hat{\varepsilon},\hat{\varepsilon}]$. By the same consideration as above, it follows that for $\varepsilon$ small enough the following holds for all $i=1,\dots,n_{1}$:
\begin{align*}
D_{\hat{\varepsilon}}^{\prime}(k_{\hat{\varepsilon}}^{i})&=\Phi_{0,\hat{\varepsilon}}\circ D_{-\hat{\varepsilon}}^{\prime}(k_{-\hat{\varepsilon}}^{i})\\
&=\Phi_{0,\hat{\varepsilon}}\circ D_{-\hat{\varepsilon}}^{\prime}\circ\Phi_{1,\hat{\varepsilon}}^{-1}(k_{\hat{\varepsilon}}^{i}).
\end{align*}
Now we can compute
\begingroup
\allowdisplaybreaks
\begin{align*}
\Cord(K_{\hat{\varepsilon}})&=C_{0}(K_{\hat{\varepsilon}})/I_{\hat{\varepsilon}}\\
&=C_{0}(K_{\hat{\varepsilon}})/\langle D_{\hat{\varepsilon}}^{\prime}(C_{1}(K_{\hat{\varepsilon}}))\rangle\\
&=C_{0}(K_{\hat{\varepsilon}})/\langle D_{\hat{\varepsilon}}^{\prime}(\lbrace k_{\hat{\varepsilon}}^{1},\dots,k_{\hat{\varepsilon}}^{n_{1}}\rbrace),\underbrace{D_{\hat{\varepsilon}}^{\prime}(k_{\hat{\varepsilon}}^{\prime})}_{=0}\rangle\\
&=C_{0}(K_{\hat{\varepsilon}})/\langle D_{\hat{\varepsilon}}^{\prime}(\lbrace k_{\hat{\varepsilon}}^{1},\dots,k_{\hat{\varepsilon}}^{n_{1}}\rbrace)\rangle\\
&=\Phi_{0,\hat{\varepsilon}}(C_{0}(K_{-\hat{\varepsilon}}))/\langle\Phi_{0,\hat{\varepsilon}}\circ D_{-\hat{\varepsilon}}\circ\Phi_{1,\hat{\varepsilon}}^{-1}(\lbrace k_{\hat{\varepsilon}}^{1},\dots,k_{\hat{\varepsilon}}^{n_{1}}\rbrace)\rangle\\
&=\Phi_{0,\hat{\varepsilon}}(C_{0}(K_{-\hat{\varepsilon}}))/\langle\Phi_{0,\hat{\varepsilon}}\circ D_{-\hat{\varepsilon}}(C_{1}(K_{-\hat{\varepsilon}}))\rangle\\
&=\Phi_{0,\hat{\varepsilon}}(C_{0}(K_{-\hat{\varepsilon}}))/\Phi_{0,\hat{\varepsilon}}(I_{-\hat{\varepsilon}})\\
&=\Phi_{0,\hat{\varepsilon}}\big(C_{0}(K_{-\hat{\varepsilon}})/I_{-\hat{\varepsilon}}\big)\\
&=\Phi_{0,\hat{\varepsilon}}(\Cord(K_{-\hat{\varepsilon}}))\\
&\cong\Cord(K_{-\hat{\varepsilon}}).
\end{align*}
\endgroup
\end{proof}
\section{Final Remark}
Finally, we will briefly compare the topological definition of Lenhard Ng's cord algebra with our definition using Morse theory.\\
The original version of the cord algebra is easy to define. However, when calculating the cord algebra for a given knot the following must be considered: If one has found several generators and relations, it is still to be shown that no further generators or relations exist. This proof must be given for each knot individually.\\
The definition of the cord algebra with the help of Morse theory is very complex, since first some properties of generic knots and a generic framing are to be shown, and then the boundary map is to be defined. As can be seen in the examples, the determination of the individual relations is also laborious and must be carried out very carefully. However, since there are only finitely many critical points of index 1, one has surely found all relations and generators, as soon as one has determined the boundary map for each of these critical points.

\section*{Appendix}
\addcontentsline{toc}{section}{Appendix}
\appendix
\section{Background material}\label{Backgroundmaterial}
In this section we will recollect some statements from differential topology and Morse theory that we need in the Sections \ref{Corddef} and \ref{Knotinv}. All statements are presented without proofs. These can be found in the respective literature. 
\subsection{Differential topology}
This section is taken from \cite{Hir} (Chapters 2.4 and 3.2) except for the relative versions of the transversality theorem and the jet transversality theorem.\\[.5em]
We denote by $C(X,Y)$ the set of continuous maps from a space $X$ to a
space $Y$. The \textit{compact open topology} on $C(X,Y)$ is generated by the subbase
comprising all sets of the form
\[\lbrace f\in C(X,Y):f(K)\subset V\rbrace\]
where $K\subset X$ is compact and $V\subset Y$ is open. We also call this the \textit{weak topology} to contrast it with another topology defined below. The resulting
topological space is denoted by $C_{W}(X,Y)$.\\
The space $C_{S}(X,Y)$ is the set $C(X,Y)$ with the following \textit{strong topology}. Let $\Gamma_{f}\subset X\times Y$ denote the graph of the map $f\in C(X,Y)$. If $W\subset X\times Y$ is an open set containing $\Gamma_{f}$, let
\[\mathcal{N}(f,W):=\lbrace g\in C(X,Y):\Gamma_{g}\subset W\rbrace.\]
These sets, for all $f$ and $W$, form a base for the strong topology. The induced
topology on a subset of $C(X,Y)$ is also called \textit{strong}.\\
If $X$ is compact, the weak and strong topologies are the same.\\[.5em]
Let $M, N$ be $C^{r}$ manifolds, $0\leq r<\infty$. An \textit{$r$-jet from $M$ to $N$} is an equivalence class $[x,f,U]_{r}$ of triples $(x,f,U)$, where $U\subset M$ is an open set, $x\in U$, and $f:U\to N$ is a $C^{r}$ map. The equivalence relation is: $[x,f,U]_{r}=[x^{\prime},f^{\prime},U^{\prime}]_{r}$ if $x=x^{\prime}$, $f(x)=f^{\prime}(x)$ and in some (and hence any) pair of charts adapted to $f$ at $x$, $f$ and $f^{\prime}$ have the same derivatives up to order $r$. We use the notation
\[[x,f,U]_{r}=j^{r}_{x}=j^{r}f(x),\]
to denote the \textit{$r$-jet of $f$ at $x$}. The set of all $r$-jets from $M$ to $N$ is denoted by $J^{r}(M,N)$. In fact, if $M,N$ are $C^{r+s}$ manifolds, $J^{r}(M,N)$ has differentiability class $C^{s}$.\\
For each $C^{r}$ map $f:M\to N$ we define a map 
\begin{align*}
j^{r}f:M&\to J^{r}(M,N)\\
x&\mapsto j^{r}f(x).
\end{align*}
This \textit{$r$-prolongation} of $f$ is continuous and in fact $C^{s}$ if $M$
and $N$ are $C^{r+s}$. We consider $j^{r}f$ as a kind of intrinsic $r$'th derivative of $f$. It is clear that $j^{r}$ is injective.\\[.5em]
Let $f:M\to N$ be a $C^{1}$ map and $A\subset N$ a submanifold. If $K\subset M$, we write $f\pitchfork_{K}A$ to mean that $f$ is \textit{transverse to $A$ along $K$}, that is, whenever $x\in K$ and $f(x)=y\in A$, the tangent space $T_{y}N$ is spanned by $T_{y}A$ and the image of $D_{x}f$. When $K=M$ we simply write $f\pitchfork A$.\\
If $f\pitchfork A$, then $f^{-1}(A)$ is a submanifold (under certain restrictions on boundary behavior) and $\codim(f^{-1}(A)\subset M)=\codim(A\subset N)$. This is one of the main reasons for the importance of transversality.\\
Define
\[\pitchfork^{r}_{K}\mspace{-4mu}(M,N;A):=\lbrace f\in C^{r}(M,N):f\pitchfork_{K}A\rbrace\]
and
\[\pitchfork^{r}\mspace{-4mu}(M,N;A):=\ \pitchfork^{r}_{M}\mspace{-4mu}(M,N;A).\]
Recall that a \textit{residual} subset of a space $X$ is one which contains the intersection of countably many dense open sets. All manifolds and submanifolds are tacitly assumed to be $C^{\infty}$.
\begin{thm}[Transversality Theorem]\label{Transvthm}
Let $M,N$ be manifolds and $A\subset N$ a submanifold. Let $1\leq r<\infty$. Then:
\begin{itemize}\itemsep0pt
\item[(i)]$\pitchfork^{r}\mspace{-6mu}(M,N;A)$ is residual (and therefore dense) in $C^{r}(M,N)$ for both the strong and weak topologies.
\item[(ii)]Suppose $A$ is closed in $N$. If $L\subset M$ is closed (resp.~compact), then
$\pitchfork^{r}_{L}\mspace{-4mu}(M,N;A)$ is dense and open in $C^{r}_{S}(M,N)$ (resp.~in $C^{r}_{W}(M,N)$).
\end{itemize}
\end{thm}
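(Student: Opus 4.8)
The plan is to deduce everything from two classical ingredients: Sard's theorem and the \emph{parametric (Thom) transversality theorem}, which I would establish first as the technical core. Its statement: if $P$ is a manifold and $F\colon P\times M\to N$ is a $C^{r}$ map with $F\pitchfork A$ (on all of $P\times M$), then $\{p\in P: F_{p}\pitchfork A\}$ is residual, and in fact of full measure, in $P$, where $F_{p}=F(p,\cdot)$. To prove it, set $W:=F^{-1}(A)$; since $F\pitchfork A$, $W$ is a $C^{r}$ submanifold of $P\times M$ with $\codim(W\subset P\times M)=\codim(A\subset N)$. A short linear-algebra computation shows that for $(p,x)\in W$ one has $F_{p}\pitchfork A$ at $x$ if and only if $(p,x)$ is a regular point of the projection $\pi\colon W\to P$. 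Applying Sard's theorem to $\pi|_{W}$, its regular values form a residual set of full measure in $P$; for such $p$ every $x\in F_{p}^{-1}(A)$ is a regular point, so $F_{p}\pitchfork A$. (If $M$ has boundary, one runs the same argument on the interior and on $\partial M$ separately.)

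Next I would prove \emph{density} in a local model and then patch. Local model: let $U\subseteq\mathbb{R}^{m}$ be open, $A\subseteq\mathbb{R}^{n}$ a submanifold, $f\in C^{r}(U,\mathbb{R}^{n})$, $K\subset U$ compact, and $\rho$ a bump function with $\rho\equiv1$ near $K$ and $\supp\rho\subset U$. Consider $F(v,x):=f(x)+\rho(x)v$ for $v$ ranging over a small ball $B\subset\mathbb{R}^{n}$. On the open set $\{\rho>0\}\supseteq K$ the map $F$ is a submersion, hence transverse to $A$ there, so by the parametric theorem there are arbitrarily small $v$ with $F_{v}$ transverse to $A$ on a neighbourhood of $K$. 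The map $g:=F_{v}$ is $C^{r}$-close to $f$, equals $f$ off $\supp\rho$, and satisfies $g\pitchfork_{K}A$.

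To globalize, cover $M$ by a countable, \emph{locally finite} family of compact sets $K_{i}$, each contained in a chart domain $U_{i}$ with $f(U_{i})$ inside a chart domain of $N$ (in which $A$ appears as a submanifold of $\mathbb{R}^{n}$). Set $g_{0}=f$ and inductively apply the local lemma on $K_{i}$ to $g_{i-1}$ to obtain $g_{i}$, choosing the perturbation supported in $U_{i}$ and small enough that the transversality already obtained on $K_{1}\cup\dots\cup K_{i-1}$ (an open condition along a compact set) survives. Local finiteness makes $(g_{i})$ eventually locally constant, so it converges in the strong topology to a $C^{r}$ map $g$ with $g\pitchfork A$, and $g$ is as close to $f$ as desired; this gives density in both the weak and strong topologies. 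Finally I assemble (i) and (ii): for (ii), with $A$ closed and $L$ closed (resp.\ compact), $\pitchfork^{r}_{L}(M,N;A)$ is \emph{open} in $C^{r}_{S}$ (resp.\ $C^{r}_{W}$), since at $x\in L$ with $f(x)\notin A$ the condition persists because $N\setminus A$ is open, while at $x\in L$ with $f(x)\in A$ transversality is open; together with density this yields ``dense and open''. For (i), fix a countable compact cover $\{K_{i}\}$ of $M$ and write $\pitchfork^{r}(M,N;A)=\bigcap_{i}\pitchfork^{r}_{K_{i}}(M,N;A)$; after reducing to $A$ closed (cover $A$ by countably many relatively open pieces $A_{j}$, each of which, since $A$ is locally closed in $N$, may be taken closed in $N$ after shrinking, and note $f\pitchfork A\iff f\pitchfork A_{j}$ for all $j$), each factor is open and dense, so the intersection is residual, hence dense by Baire.

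The main obstacle is the parametric transversality theorem and its Sard-theorem bookkeeping: one must set up $W=F^{-1}(A)$ and the projection $\pi$ so that Sard's theorem legitimately applies, which is the step constraining how the regularity hypotheses are used. A secondary, purely organizational difficulty is running the infinite patching process in the globalization so that the sequence converges in the strong topology and the perturbation over each $K_{i}$ does not destroy the transversality previously arranged over the $K_{j}$ with $j<i$, together with the reduction handling non-closed $A$.
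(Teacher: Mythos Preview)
The paper does not prove this theorem. It appears in the appendix ``Background material'', which the author prefaces by saying ``All statements are presented without proofs. These can be found in the respective literature''; for this particular result the reference given is Hirsch, \textit{Differential Topology}, Chapters~2.4 and~3.2. So there is nothing to compare your argument against in the paper itself.

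That said, your sketch is the standard proof and is essentially the one found in Hirsch: reduce to the parametric transversality theorem via Sard, produce local perturbations by linear translations cut off by bump functions, and globalize by a locally finite cover with an inductive patching argument, then assemble residuality from countably many open-and-dense sets. One small point worth tightening: in the openness argument for (ii) you should say a bit more carefully why non-transversality along a closed (resp.\ compact) $L$ is a closed condition in $C^{r}_{S}$ (resp.\ $C^{r}_{W}$); the usual route is a compactness argument on the locus where $f$ meets $A$, using that $A$ is closed. Also, in your reduction for (i) when $A$ is not closed, the cleaner statement is that a submanifold $A\subset N$ is locally closed, hence can be covered by countably many pieces each closed in some open $V_{j}\subset N$, and one then works with $\pitchfork^{r}_{K_{i}}(f^{-1}(V_{j}),V_{j};A_{j})$ rather than asserting the $A_{j}$ are closed in $N$.
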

\begin{thm}[Relative Transversality Theorem, \cite{Gol}]\label{Transvthmrel}
Let $U_{1}$ and $U_{2}$ be open subsets of $M$ with $\overline{U}_{1}\subset U_{2}$. Let $f$ be in $C^{\infty}(M,N)$ and $V$ be an open neighborhood of $f$ in $C^{\infty}(M,N)$. Then there is a smooth mapping $g:M\to N$ in $V$ such that $g=f$ on $U_{1}$ and $g\pitchfork A$ off $U_{2}$.
\end{thm}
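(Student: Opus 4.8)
The plan is to deduce the relative version from the parametric (Thom) transversality machinery that underlies Theorem~\ref{Transvthm}, localizing every perturbation inside the open set $M\setminus\overline{U}_{1}$ so that $f$ is never touched on $U_{1}$, and handling the (possibly noncompact) locus $M\setminus U_{2}$ on which transversality is demanded by an inductive, chart-by-chart construction. Two facts make the localization work: (a) transversality to $A$ along a \emph{compact} set is an open condition, so it persists on an open neighborhood of that set and survives sufficiently small further perturbations (this is the local form of Theorem~\ref{Transvthm}(ii), and it does not require $A$ to be closed); and (b) a perturbation supported in an open set disjoint from $\overline{U}_{1}$ automatically preserves $g=f$ on $U_{1}$ and, if small, keeps $g$ inside the prescribed neighborhood $V$.

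Concretely, I would first fix a locally finite cover $\{V_{i}\}_{i\in\mathbb{N}}$ of $M\setminus U_{2}$ by chart domains with $\overline{V}_{i}$ compact and $\overline{V}_{i}\subset M\setminus\overline{U}_{1}$ (possible since $M\setminus U_{2}$ is closed and sits inside the open set $M\setminus\overline{U}_{1}$), together with a shrinking to compact sets $K_{i}\subset V_{i}$ with $M\setminus U_{2}\subset\bigcup_{i}\operatorname{int}K_{i}$. Then I would build a sequence $g_{0}=f,g_{1},g_{2},\dots$ where $g_{i}$ is obtained from $g_{i-1}$ by a perturbation supported in $V_{i}$, chosen (i) small enough that $g_{i}\in V$, (ii) small enough that $g_{i}$ is still transverse to $A$ on an open neighborhood of $K_{1}\cup\dots\cup K_{i-1}$ (using (a) and the inductive hypothesis that $g_{i-1}$ is transverse there), and (iii) generic enough that $g_{i}$ is transverse to $A$ on a neighborhood of $K_{i}$; so $g_{i}$ is transverse on a neighborhood of $K_{1}\cup\dots\cup K_{i}$. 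For the genericity step (iii) I would embed $N\hookrightarrow\mathbb{R}^{q}$ with a tubular neighborhood retraction $\pi\colon W\to N$, pick a bump $\beta\colon M\to[0,1]$ equal to $1$ near $K_{i}$ and supported in $V_{i}$, and consider the finite-dimensional family $x\mapsto\pi\big(g_{i-1}(x)+\beta(x)\,t\big)$ for small $t\in\mathbb{R}^{q}$; differentiating in $t$ shows the associated total map is a submersion, hence transverse to $A$, at every point where $\beta\equiv1$, so the standard parametric transversality / Sard argument that proves Theorem~\ref{Transvthm} yields a dense set of admissible $t$. Finally I would set $g:=\lim_{i}g_{i}$: by local finiteness of $\{V_{i}\}$ only finitely many of the perturbations are nonzero near any given point, so $g$ is a well-defined smooth map; it equals $f$ on $U_{1}$ because each $V_{i}$ misses $\overline{U}_{1}$; it is transverse to $A$ along every $K_{i}$ (near any $x\in K_{i}$ one has $g=g_{N}$ for some $N\ge i$, and $g_{N}$ is transverse on a neighborhood of $K_{1}\cup\dots\cup K_{N}\ni x$), hence $g\pitchfork A$ off $U_{2}$; and $g\in V$ provided the tolerances at stage (i) were chosen summably against a fixed basic strong-topology neighborhood of $f$ contained in $V$.

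The main obstacle I expect is the bookkeeping reconciling two competing demands: covering a possibly noncompact $M\setminus U_{2}$ forces infinitely many chart-perturbations, whereas membership in the strong ($C^{\infty}$) neighborhood $V$ pointwise bounds their cumulative effect. Local finiteness of $\{V_{i}\}$ is exactly what resolves this — near each point only finitely many corrections act, so convergence and smoothness of $g$ are automatic — but one must arrange the successive error tolerances (for staying in $V$, for not destroying earlier transversality, and for the Sard step) so that all three are simultaneously achievable at each stage; this is routine but is where the care lies. A secondary point is that $N$ is not a vector space, so the perturbation family must be constructed through the tubular neighborhood $\pi$ (equivalently, through charts of $N$ adapted to $A$, turning transversality into ``$0$ is a regular value of a composition''); with that device in place the submersivity computation and the Sard argument are precisely those of the absolute transversality theorem.
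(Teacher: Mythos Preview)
The paper does not actually prove this statement: Theorem~\ref{Transvthmrel} is simply quoted from \cite{Gol} with no argument given (the appendix explicitly says the differential-topology results are presented without proof, and only the relative \emph{jet} transversality theorem, Theorem~\ref{JetTransvthmrel}, receives a proof there). So there is nothing in the paper to compare your argument against.

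That said, your outline is a correct and standard proof of the relative transversality theorem. The three ingredients --- (a) openness of transversality along compacta, (b) the finite-parameter family $x\mapsto\pi(g_{i-1}(x)+\beta(x)t)$ built from a tubular neighborhood of $N$ in $\mathbb{R}^{q}$, and (c) a locally finite cover of $M\setminus U_{2}$ by charts with compact closure contained in $M\setminus\overline{U}_{1}$ --- combine exactly as you describe, and the local finiteness is what makes the limit $g=\lim g_{i}$ well defined, smooth, equal to $f$ on $U_{1}$, and inside the strong neighborhood $V$. Your identification of the only genuine bookkeeping point (summably choosing the stepwise tolerances against a fixed basic strong-topology neighborhood contained in $V$) is accurate; it is routine once stated. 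If anything, it is worth noting that the paper's proof of Theorem~\ref{JetTransvthmrel} uses the simpler cut-off interpolation $h=\varphi g+(1-\varphi)f$, which works there because the set $K$ on which one perturbs is assumed \emph{compact}; your inductive chart-by-chart construction is precisely what is needed in the present theorem, where $M\setminus U_{2}$ may be noncompact.
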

We define the \textit{jet map}
\[j^{r}:C^{s}(M,N)\to C^{s-r}(M,J^{r}(M,N)),\]
where $1\leq r<s\leq\infty$. Let $A\subset J^{r}(M,N)$ be a submanifold. Let $g:M\to N$ be a $C^{s}$ map. We try to approximate $g$ by another $C^{s}$ map $h$ whose prolongation $j^{r}h:M\to J^{r}(M,N)$ is transverse to $A$. Denote the set of such maps $h$ by $\pitchfork^{s}\mspace{-4mu}(M,N;j^{r},A)$.
\begin{thm}[Jet Transversality Theorem]\label{JetTransvthm}
Let $M,N$ be $C^{\infty}$ manifolds without boundary, and let $A\subset J^{r}(M,N)$ be a $C^{\infty}$ submanifold. Suppose $1\leq r<s\leq\infty$.\\
Then $\pitchfork^{s}\mspace{-4mu}(M,N;j^{r},A)$ is residual and thus dense in $C^{s}_{S}(M,N)$, and open if $A$ is closed.
\end{thm}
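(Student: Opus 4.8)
The plan is to follow the classical route (as in \cite{Hir}, Chapter~3.2): reduce the global statement to a local statement over Euclidean charts by a Baire category argument, and prove the local statement by perturbing $f$ with polynomial maps. The essential point — and the reason one cannot simply invoke Theorem~\ref{Transvthm} — is that $j^{r}f$ is not an arbitrary map $M\to J^{r}(M,N)$ but a holonomic section, so genericity among \emph{all} maps into $J^{r}(M,N)$ is irrelevant; instead one must show that genericity within the holonomic sections obtained by perturbing $f$ already suffices.

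\textbf{Local step.} First I would treat the case $M=U\subset\mathbb{R}^{m}$ open and $N=\mathbb{R}^{n}$, where $J^{r}(U,\mathbb{R}^{n})$ is diffeomorphic to a product of $U$ with a Euclidean space (recording the value and the partial derivatives up to order $r$). Fix $g\in C^{s}(U,\mathbb{R}^{n})$, let $\mathcal{P}$ be the finite–dimensional vector space of polynomial maps $U\to\mathbb{R}^{n}$ of degree $\le r$, and consider the family $F:\mathcal{P}\times U\to\mathbb{R}^{n}$, $F(q,x)=g(x)+q(x)$, with prolongation $\Phi:\mathcal{P}\times U\to J^{r}(U,\mathbb{R}^{n})$, $\Phi(q,x)=j^{r}(g+q)(x)$. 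The key computation is that $\Phi$ is a submersion: for fixed $x_{0}$ the coefficients of $q$ can be chosen so that $j^{r}(g+q)(x_{0})$ is any prescribed jet, hence already the partial differential of $\Phi$ in the $\mathcal{P}$–directions is onto. A submersion is transverse to every submanifold, so $\Phi\pitchfork A$. By the parametric transversality principle — Sard's theorem applied to the restriction to $\Phi^{-1}(A)$ of the projection $\mathcal{P}\times U\to\mathcal{P}$ — the set of $q\in\mathcal{P}$ with $j^{r}(g+q)\pitchfork A$ is residual, hence dense, in $\mathcal{P}$; since $g+q$ is $C^{s}$–close to $g$ for small $q$, this approximates $g$ by maps whose $r$–prolongation is transverse to $A$. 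Restricting to transversality along a fixed compact $K\subset U$ turns this into an open and dense condition in the relevant $C^{s}$ topology.

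\textbf{Globalization.} Since $M$ is second countable I would pick a countable, locally finite family of charts $\varphi_{i}:W_{i}\to\mathbb{R}^{m}$ together with compact sets $K_{i}\subset W_{i}$ with $M=\bigcup_{i}\operatorname{int}K_{i}$, and, using cutoff functions, realise the perturbations of the previous step as perturbations of the global map $f$ supported near the $K_{i}$. Writing the submanifold $A$ as an increasing union of compacta (it is $\sigma$–compact) lets one express $\pitchfork^{s}(M,N;j^{r},A)$ as a countable intersection of sets of the form ``$j^{r}f$ is transverse to $A$ at the points of $K_{i}$ whose image lies in a fixed compact part of $A$'', each of which is open and dense in $C^{s}_{S}(M,N)$ by the local step. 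Since the strong topology makes $C^{s}_{S}(M,N)$ a Baire space, the intersection is residual and therefore dense. For the last assertion, if $A$ is closed in $J^{r}(M,N)$ then a map $M\to J^{r}(M,N)$ transverse to $A$ stays transverse under every sufficiently $C^{1}$–close map, so $\{g:g\pitchfork A\}$ is open in the strong topology; as the jet map $j^{r}:C^{s}_{S}(M,N)\to C^{s-r}_{S}(M,J^{r}(M,N))$ is continuous, $\pitchfork^{s}(M,N;j^{r},A)=(j^{r})^{-1}\{g:g\pitchfork A\}$ is open.

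\textbf{Main obstacle.} The conceptual heart is the submersion property of $\Phi$ in the local step; once that is in place, the rest is a standard category argument. The genuinely fiddly part I expect is the bookkeeping in the globalization over a non-compact $M$: keeping the countably many successive perturbations compatible — so that transversality already achieved over $K_{1},\dots,K_{i-1}$ survives the perturbation near $K_{i}$ — and controlling everything in the strong topology rather than the weak one. This is precisely where one must verify that the relevant subsets are open, not merely dense, so that Baire's theorem can be applied.
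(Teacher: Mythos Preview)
The paper does not give its own proof of this theorem: it appears in Appendix~\ref{Backgroundmaterial}, where the author explicitly states that ``All statements are presented without proofs. These can be found in the respective literature'', and attributes this section to \cite{Hir}, Chapter~3.2. So there is nothing to compare your proposal against.

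That said, your outline is precisely the standard argument from \cite{Hir}: the polynomial-perturbation family $\Phi(q,x)=j^{r}(g+q)(x)$ is a submersion because polynomials of degree $\le r$ realise every $r$-jet, parametric transversality then gives density locally, and a countable chart cover plus Baire finishes the globalization. Your identification of the main obstacle --- the bookkeeping over noncompact $M$ in the strong topology --- is accurate, and your openness argument via continuity of $j^{r}$ is the right one. The sketch is correct and would compile into a complete proof with routine details filled in.
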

\begin{thm}[Relative Jet Transversality Theorem]\label{JetTransvthmrel}
Let $M,N$ be $C^{\infty}$ manifolds without boundary, and let $A\subset J^{r}(M,N)$ be a $C^{\infty}$ submanifold. Let $K\subset M$ be a compact subset. Let $f_{0}\in C^{s}(M,N)$ with $f_{0}\pitchfork A$ on $\overline{M\setminus K}$. Let $1\leq r<s\leq\infty$.\\
Then $\pitchfork_{K}^{s}\mspace{-5mu}(M,N;A,f_{0}):=\lbrace f\in C^{s}(M,N):f\pitchfork A,f=f_{0}\text{ on }M\setminus K\rbrace$ is open and dense in $\lbrace f\in C^{s}(M,N):f=f_{0}\text{ on }M\setminus K\rbrace$.
\end{thm}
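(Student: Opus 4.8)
The plan is to deduce the relative statement from the absolute jet transversality theorem (Theorem~\ref{JetTransvthm}) by rerunning Thom's construction with every perturbation kept supported in a compact subset of $K$, in the way the relative transversality theorem (Theorem~\ref{Transvthmrel}) refines Theorem~\ref{Transvthm}. Write $\mathcal{F}:=\lbrace f\in C^{s}(M,N):f=f_{0}\text{ on }M\setminus K\rbrace$ with the induced strong topology. \textbf{Step 1: shrink the region to be corrected.} Since $M$ is Hausdorff, $K$ is closed and $\partial K\subset\overline{M\setminus K}$; because $f_{0}\pitchfork A$ on $\overline{M\setminus K}$ and this transversality persists on a neighbourhood, there is an open $U\supset\overline{M\setminus K}$ with $j^{r}f_{0}\pitchfork A$ on $U$. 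Then $K_{0}:=K\setminus U$ is compact and lies in $\operatorname{int}(K)$. Fix a relatively compact open $\mathcal{O}$ with $K_{0}\subset\mathcal{O}\subset\overline{\mathcal{O}}\subset\operatorname{int}(K)$; since $K_{0}\subset\mathcal{O}$ one has $M\setminus\mathcal{O}\subset U$, so every map equal to $f_{0}$ off $\mathcal{O}$ lies in $\mathcal{F}$ and is already transverse on $M\setminus\mathcal{O}$. It therefore suffices to produce, arbitrarily $C^{s}$-close to $f_{0}$, a map $g$ with $g=f_{0}$ off $\mathcal{O}$ and $j^{r}g\pitchfork A$ on $\overline{\mathcal{O}}$, and to establish openness among such maps.

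\textbf{Step 2: density via Thom's local perturbations and patching.} This is the substantive point, and I would import it essentially verbatim from the proof of Theorem~\ref{JetTransvthm}. Cover the compact set $\overline{\mathcal{O}}$ by finitely many chart domains $\Omega_{1},\dots,\Omega_{\ell}$ with $\overline{\Omega_{i}}\subset\operatorname{int}(K)$, and pick charts on $N$ trivializing $J^{r}(M,N)$ over each $\Omega_{i}$. On each $\Omega_{i}$ form a finite-dimensional family $F_{i}:M\times P_{i}\to N$, $F_{i}(\cdot,0)=f_{0}$, by adding to $f_{0}$, in the local target coordinates, a polynomial of degree $\le r$ in the source coordinates multiplied by a fixed bump function supported in a compact subset of $\Omega_{i}$ and equal to $1$ on a smaller set $W_{i}$; then the evaluation $(x,p)\mapsto j^{r}F_{i}(x,p)$ is a submersion, hence transverse to $A$, for $x\in W_{i}$. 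The parametric transversality (Sard) argument then gives, for almost every $p$, a map agreeing with $f_{0}$ off $\Omega_{i}$, $C^{s}$-close to $f_{0}$, whose $r$-prolongation is transverse to $A$ on $W_{i}$. Iterating over $i=1,\dots,\ell$ --- arranging the $W_{i}$ so that $\bigcup_{i}W_{i}$ still covers the union of the compact supports of all the cutoffs, and choosing each successive perturbation small enough that the transversality already obtained on $\overline{W_{1}}\cup\dots\cup\overline{W_{i-1}}$ persists --- yields $g\in\mathcal{F}$, as $C^{s}$-close to $f_{0}$ as desired, with $j^{r}g\pitchfork A$ on $\overline{\mathcal{O}}$, hence by Step~1 on all of $M$.

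\textbf{Step 3: openness.} For $g\in\pitchfork_{K}^{s}(M,N;A,f_{0})$ and $f\in\mathcal{F}$ strongly close to $g$ one has $j^{r}f=j^{r}f_{0}\pitchfork A$ on $M\setminus K$, while over the compact set $K$ transversality of $j^{r}f$ to $A$ along $K$ is open in the strong topology; this is the jet form of Theorem~\ref{Transvthm}(ii) and is immediate when $A$ is closed. For genuinely non-closed $A$ the statement carries the same ``$A$ closed'' proviso already present in Theorem~\ref{JetTransvthm} (or one replaces $A$ by a closed submanifold over the relatively compact part of $J^{r}(M,N)$ relevant to $j^{r}f(K)$), and the same remark covers the neighbourhood claim used in Step~1.

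\textbf{Expected main obstacle.} The delicate step is the inductive patching in Step~2: one must run the (here finite) induction so that the accumulated perturbation stays $C^{s}$-small while the shrinkings $W_{i}$ keep covering the entire region on which some perturbation has been made, so that transversality is finally achieved everywhere it might have been spoiled. This is precisely the bookkeeping that forms the technical core of the absolute jet transversality theorem, now carried out with all supports confined to $\mathcal{O}\subset\operatorname{int}(K)$; once it is in place, the relative features of the statement add nothing essentially new, the only remaining point of care being the openness claim for non-closed $A$ discussed above.
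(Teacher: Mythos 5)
Your argument is correct in outline, but it takes a genuinely different route from the paper for the density part. The paper does not re-run Thom's construction with localized supports; instead it applies the absolute jet transversality theorem (Theorem~\ref{JetTransvthm}) as a black box to get some globally transverse $g$ with $g|_{K}$ $\delta$-close to $f|_{K}$, and then interpolates, $h:=\varphi g+(1-\varphi)f$ with a cutoff $\varphi$ equal to $1$ on a large compact $L\subset\mathring{K}$ and $0$ off $K$. Transversality of $h$ is then checked in three regions: on $L$ (where $h=g$), on $M\setminus K$ (where $h=f_{0}$), and on the collar $\overline{K\setminus L}$, where the key observation is that $f$ and $f_{0}$ agree to all orders on $\overline{M\setminus K}$, so for $L$ large enough $f$ (hence $h$) is $C^{s}$-close to $f_{0}$ there and transversality follows from openness. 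Your approach buys self-containedness of the perturbation mechanism and avoids the interpolation step --- which in the paper's form $\varphi g+(1-\varphi)f$ strictly needs $N$ to be a vector space or a chart/tubular-neighborhood interpretation --- at the cost of reproducing the full inductive bookkeeping of the Thom construction (correctly identified by you as the delicate point: the shrunken sets $W_{i}$ must cover both $\overline{\mathcal{O}}$ and all cutoff supports, and each successive perturbation must be small enough to preserve transversality already gained on the compact pieces). Both arguments share the same implicit proviso, also present in the paper, that openness (and the persistence of transversality on neighbourhoods used in your Step~1) requires $A$ closed or a localization to a closed piece of $A$.
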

\begin{proof}
Openness follows as in the case of the jet transversality theorem.\\
To proove density, let $L\subset\mathring{K}\subset M$ be a compact subset and $\delta>0$. $L$ and $\delta$ will be determined more precisely later. Let $f\in C^{s}(M,N)$ with $f=f_{0}$ on $M\setminus K$ be given. It follows from the jet transversality theorem that there exists a function $g\in C^{s}(M,N)$ with $g\pitchfork A$ and
\[d_{C^{s}(M,N)}(g\vert_{K},f\vert_{K})<\delta,\]
where $d_{C^{s}(M,N)}$ is the distance in the $C^{s}(M,N)$ norm. Let $\varphi\in C^{\infty}(M,[0,1])$ be a smooth cutter function with
\vspace*{-.4cm}
\begin{align*}
\varphi\vert_{L}&\equiv1\text{ and}\\
\varphi\vert_{M\setminus K}&\equiv0.\vspace*{-.2cm}
\end{align*}%
\vspace*{-.2cm}
With this function $\varphi$ we define the map
\[h:=\varphi g+(1-\varphi)f.\]
The map $h$ satisfies:
\begin{itemize}\itemsep0pt\vspace*{-.2cm}
\item[(i)]$h=f_{0}$ on $M\setminus K$.
\item[(ii)]Let $\varepsilon>0$ be given. Since $\delta$ can be chosen arbitrarily small, $d_{C^{s}(M,N)}(h,f)<\varepsilon$ can be achieved.
\item[(iii)]$h\pitchfork A$ since:\\
Obviously, $h\vert_{L}\pitchfork A$ and $h\vert_{M\setminus K}\pitchfork A$. So it still remains to show $h\vert_{K\setminus L}\pitchfork A$:\\
Since $f_{0}$ is transverse to $A$ on $\overline{M\setminus K}$, it follows: Because of the openness of transversality, $L$ can be chosen sufficiently large such that $f_{0}\pitchfork A$ on $\overline{M\setminus L}$. Furthermore, it holds for $\tilde{\varepsilon}>0$: If $L$ is chosen big enough and $\delta$ is chosen small enough, it can be guaranteed that 
\[d_{C^{s}\left(\overline{K\setminus L},N\right)}(f_{0},h)\leq d_{C^{s}\left(\overline{K\setminus L},N\right)}(f_{0},f)+d_{C^{s}\left(\overline{K\setminus L},N\right)}(f,h)<\tilde{\varepsilon}.\]
It follows: If $\tilde{\varepsilon}$ is chosen small enough, the following holds because of the openness of transversality:
\[h\vert_{\overline{K\setminus L}}\pitchfork A.\]
\end{itemize}
\vspace*{-.4cm}
\end{proof}

The jet transversality theorem can be extended to families of submanifolds as follows: 
\begin{thm}\label{JetTransvthmadd}
Let $A_{0},\dots,A_{q}$ be $C^{\infty}$ submanifolds of $J^{r}(M,N)$. If $1\leq r<s\leq\infty$, the set
\[\lbrace f\in C^{s}(M,N):j^{r}f\pitchfork A_{k},k=0,\dots q\rbrace\]
is residual in $C_{S}^{s}(M,N)$.
\end{thm}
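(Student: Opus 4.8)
The plan is to deduce this from the single-submanifold Jet Transversality Theorem (Theorem~\ref{JetTransvthm}) together with the purely set-theoretic fact that a finite (indeed countable) intersection of residual subsets of a topological space is again residual. The condition ``$j^r f\pitchfork A_k$'' for each individual $k$ is already handled by Theorem~\ref{JetTransvthm}, and the set in the statement is merely the simultaneous intersection of these $q+1$ conditions.

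First I would apply Theorem~\ref{JetTransvthm} to each submanifold $A_k\subset J^{r}(M,N)$, $k=0,\dots,q$, to conclude that
\[R_k:=\lbrace f\in C^{s}(M,N):j^{r}f\pitchfork A_k\rbrace\]
is residual in $C^{s}_{S}(M,N)$. By the definition of residual, for each $k$ there are dense open sets $U_{k,n}\subset C^{s}_{S}(M,N)$, $n\in\mathbb{N}$, with $\bigcap_{n\in\mathbb{N}}U_{k,n}\subset R_k$. Next I would observe that the set named in the theorem is exactly $\bigcap_{k=0}^{q}R_k$, and that
\[\bigcap_{k=0}^{q}R_k\ \supset\ \bigcap_{k=0}^{q}\ \bigcap_{n\in\mathbb{N}}U_{k,n}\ =\ \bigcap_{(k,n)\in\{0,\dots,q\}\times\mathbb{N}}U_{k,n}.\]
Since the index set $\{0,\dots,q\}\times\mathbb{N}$ is countable and each $U_{k,n}$ is dense and open, the right-hand side is a countable intersection of dense open sets; hence $\bigcap_{k=0}^{q}R_k$ is residual in $C^{s}_{S}(M,N)$, which is the assertion. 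The same argument with $\mathbb{N}$ in place of $\{0,\dots,q\}$ shows that the statement even holds for a countable family of submanifolds.

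There is no real obstacle here beyond this bookkeeping; the one point requiring care is that it is ``residual'', and not ``open and dense'', that is the notion stable under the intersection being formed. One cannot simply intersect finitely many open dense sets, because the sets $\lbrace f:j^{r}f\pitchfork A_k\rbrace$ are in general only residual, not open — they become open precisely when the $A_k$ are closed, which is the extra hypothesis appearing in the second clause of Theorem~\ref{JetTransvthm}. Since the present statement asks only for residuality, one works throughout with the countably many dense open sets furnished by the definition of residual, exactly as above.
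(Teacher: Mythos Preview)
Your argument is correct. Note, however, that the paper does not actually supply its own proof of this theorem: it appears in the background appendix, where the author explicitly states that results are quoted without proof from the literature (here, Hirsch's \textit{Differential Topology}). Your derivation---apply Theorem~\ref{JetTransvthm} to each $A_k$ separately and then use that a finite (or countable) intersection of residual sets is residual---is precisely the standard one, and your closing remark about why one must work with residuality rather than ``open and dense'' is apt.
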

\subsection{Morse Theory}\label{MorseTheory}
\subsubsection{Morse functions and gradient fields}\label{gradientfields}
In this section we recollect some definitions and statements of Morse theory as presented in \cite{Aud}, with one exception.\\[.5em]
If $f:\mathbb{R}^{n}\rightarrow\mathbb{R}$ is a differentiable function, we are familiar with its gradient, the vector field $\grad f$, whose coordinates in the canonical basis of $\mathbb{R}^{n}$ are
\[\grad_{p}f=\Big(\frac{\partial f}{\partial p_{1}},\dots,\frac{\partial f}{\partial p_{n}}\Big).\]
More succinctly, it is (also) the vector field defined by
\[\langle\grad_{p}f,x\rangle=D_{p}f\cdot x\]
for every vector $x\in\mathbb{R}^{n}$ (where, of course, the angle brackets $\langle\cdot,\cdot\rangle$ denote the usual Euclidean inner product in $\mathbb{R}^{n}$). The most important properties of this vector field are due to the fact that this inner product is a positive definite symmetric bilinear form:
\begin{itemize}\itemsep0pt
\item[(i)]It vanishes exactly at the critical points of the function $f$.
\item[(ii)]The function $f$ is decreasing along the flow lines of the field $-\grad f$.
\end{itemize}
The gradient is also denoted by $\nabla$.
\begin{defi}
We say that a function $f:M\to\mathbb{R}$ on a manifold $M$ is a \textit{Morse function} if all its critical points are nondegenerate. The \textit{(Morse) index} $\Ind(p)$ of a nondegenerate critical point $p$ of $f$ is the dimension of the largest subspace of the tangent space to $M$ at $p$ on which the Hessian of $f$ is negative definite.
\end{defi}
\begin{defi}
Let $f:M\rightarrow\mathbb{R}$ be a Morse function on a manifold $M$ and $p\in M$ be a critical point of $f$. Denote by $\varphi^{s}$ the flow of the gradient field $-\nabla f$, i.e., $\varphi^{s}$ is the solution of the differential equation $\frac{d}{ds}\varphi^{s}=-\nabla f(\varphi^{s})$.
\begin{itemize}\itemsep0pt
\item[(i)]The \textit{stable manifold of p} is
\[\vspace*{-.3cm} W^{s}(p):=\{x\in M: \lim_{s\rightarrow+\infty}\varphi^{s}(x)=p\}.\]
\item[(ii)]The \textit{unstable manifold of p} is
\[W^{u}(p):=\{x\in M: \lim_{s\rightarrow-\infty}\varphi^{s}(x)=p\}.\]
\end{itemize}
\end{defi}
\begin{defi}[\cite{Ste}]
Let $M$ be a smooth manifold, $K,N\subset M$ smooth submanifolds, $p\in K\cap N$, and $T_{p}M,T_{p}N,T_{p}K$ the respective tangent spaces.\\
We say, $K$ is \textit{transverse} to $N$, written as $K\pitchfork N$, if the inclusion map $i:K\hookrightarrow M$ is transverse to~$N$, i.e. if the following holds:
\[T_{p}M=T_{p}N+T_{p}K\ \forall p\in K\cap N.\]
\end{defi}
\begin{defi}\label{smaledef}
We say that a gradient field of a Morse function $f$ satisfies the \textit{Smale condition} if all stable and unstable manifolds of its critical points meet transversally, that is, if for all critical points $p,q$ of $f$,
\[W^{u}(p)\pitchfork W^{s}(q).\]
\end{defi}
The following conclusion results from the Smale condition:
\begin{cor}\label{smale}
If $p$ and $q$ are two distinct critical points of a Morse function $f:M\to\mathbb{R}$, where $M$ is a smooth manifold, the gradient of $f$ satisfies the Smale condition, and $W^{u}(p)\cap W^{s}(q)\neq\emptyset$, then
\[\Ind(p)>\Ind(q).\]
In other words, the index decreases along the gradient lines.
\end{cor}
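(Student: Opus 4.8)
The plan is to compute the dimension of $W^{u}(p)\cap W^{s}(q)$ in two ways --- from the Smale (transversality) condition on the one hand, and from flow-invariance on the other --- and then compare. Throughout, write $n=\dim M$, and recall the standard facts from Morse theory (see \cite{Aud}): for a critical point $c$ of $f$, the sets $W^{u}(c)$ and $W^{s}(c)$ are embedded submanifolds with $\dim W^{u}(c)=\Ind(c)$ and $\dim W^{s}(c)=n-\Ind(c)$, and both are invariant under the gradient flow $\varphi^{s}$ of $-\nabla f$. The first step I would carry out is the observation that no point of $W^{u}(p)\cap W^{s}(q)$ is a critical point of $f$: if $x$ were critical then $\varphi^{s}(x)=x$ for all $s$, so $p=\lim_{s\to-\infty}\varphi^{s}(x)=x=\lim_{s\to+\infty}\varphi^{s}(x)=q$, contradicting $p\neq q$.

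Next I would invoke the Smale condition. By Definition~\ref{smaledef} we have $W^{u}(p)\pitchfork W^{s}(q)$; applying the submanifold/codimension statement recalled in the differential-topology appendix to the inclusion $W^{u}(p)\hookrightarrow M$, which is transverse to the submanifold $W^{s}(q)$, we get that $W^{u}(p)\cap W^{s}(q)$ is a submanifold of $W^{u}(p)$ of codimension $\codim(W^{s}(q)\subset M)=\Ind(q)$, so that
\[\dim\bigl(W^{u}(p)\cap W^{s}(q)\bigr)=\dim W^{u}(p)-\Ind(q)=\Ind(p)-\Ind(q).\]

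On the other hand, $W^{u}(p)\cap W^{s}(q)$ is nonempty by hypothesis, invariant under $\varphi^{s}$ (being an intersection of flow-invariant sets), and --- by the first step --- contains no critical point of $f$. Hence $\varphi^{s}$ restricts to a flow without fixed points on this nonempty manifold, which forces $\dim\bigl(W^{u}(p)\cap W^{s}(q)\bigr)\geq 1$. Combining the two computations yields $\Ind(p)-\Ind(q)\geq 1$, i.e.\ $\Ind(p)>\Ind(q)$, as claimed.

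I do not expect a real obstacle here: the only things requiring care are that $W^{u}(p)$ and $W^{s}(q)$ really are embedded submanifolds of the indicated dimensions, and that a transverse intersection of submanifolds has the expected dimension --- both of which are quoted from the cited literature. The remaining content is the elementary flow-invariance observation above.
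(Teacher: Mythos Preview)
Your proof is correct and is precisely the standard argument. Note that the paper does not actually give a proof of this corollary: it appears in the background-material appendix, where all statements are explicitly presented without proofs and referred to \cite{Aud}; your argument is essentially the one found there.
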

\subsubsection{1-parameter families of functions and vector fields}\label{Oneparamfam}
In Section \ref{Knotinv} we will proove that the cord algebra defined using Morse theory is a knot invariant. For this we consider a smooth isotopy of knots. Therefore, we get 1-parameter families of functions and vector fields, which we will take a closer look at. In the following, some properties of such 1-parameter families are shown. This section is taken from \cite{Cie1}.\\[.5em]
Throughout this section, $V$ denotes a smooth manifold of dimension m. First, we describe the critical points that occur in a generic 1-parameter family of functions $\phi_{t}:V\to\mathbb{R},t\in\mathbb{R}$.
\begin{defi}\ \\\label{emryonicbirthdeathdef}
\vspace*{-.5cm}
\begin{itemize}\itemsep0pt
\item[(i)]A critical point $p$ of a function $\phi:V\to\mathbb{R}$ is
called \textit{embryonic} if $\ker\Hess_{p}\phi$ is one-dimensional and the third derivative of $f$ in the direction of $\ker\Hess_{p}\phi$ is nonzero.
\item[(ii)]We say that a 1-parameter family of functions $\phi_{t}:V\to\mathbb{R},t\in\mathbb{R}$, has a \textit{birth-death type} critical point $p\in V$ at $t=0$ if $p$ is an embryonic critical point of $\phi_{0}$ and $(0,p)$ is a nondegenerate critical point of the function $(t,x)\mapsto\phi_{t}(x)$.
\end{itemize}
\end{defi}
With a family of functions $\phi_{t}:V\to\mathbb{R},t\in\mathbb{R}$, one can associate its \textit{profile} (or Cerf diagram). This is the subset $C(\lbrace\phi_{t}\rbrace)\subset\mathbb{R}\times\mathbb{R}$ such that $C(\lbrace\phi_{t}\rbrace)\cap(t\times\mathbb{R})$ is the set of critical values of the function~$\phi_{t}$. If $\phi_{t}$ is a family of Morse functions, then $C(\lbrace\phi_{t}\rbrace)$ is a collection of graphs of smooth functions. Part (ii) of the following theorem shows that birth-death points correspond to cusps of the profile.
\begin{thm}
\begin{itemize}
\item[(i)]Near an embryonic critical point $p$ of $\phi$ of index $k-1$ there exist coordinates $(x,y,z)\in\mathbb{R}^{m-k}\oplus\mathbb{R}^{k-1}\oplus\mathbb{R}$ in which $\phi$ has the form
\[\phi(x,y,z)=\phi(p)+\vert x\vert^{2}-\vert y\vert^{2}+z^{3}.\]
\item[(ii)]Suppose that $p$ is a birth-death type critical point of index $k-1$ for the family of functions $\phi_{t}:V\to\mathbb{R},t\in\mathbb{R}$, at $t=0$. Then there exist families of local diffeomorphisms $f_{t}:\mathcal{O}p\ p\to\mathcal{O}p\ 0\subset\mathbb{R}^{m}$ and $g_{t}:\mathcal{O}p\ \phi_{0}(p)\to\mathcal{O}p\ 0\subset\mathbb{R},t\in\mathcal{O}p\ 0$ such that the family of functions $\psi_{t}=g_{t}\circ\phi_{t}\circ f_{t}^{-1}$ has the form
\begin{align}
\psi_{t}(x,y,z)=\vert x\vert^{2}-\vert y\vert^{2}+z^{3}\pm tz\label{birthdeathcoord}
\end{align}
for $(x,y,z)\in\mathbb{R}^{m-k}\oplus\mathbb{R}^{k-1}\oplus\mathbb{R}$.
\item[(iii)]Let $\phi_{t},\tilde{\phi}_{t}:V\to\mathbb{R}$ be two families of functions with birth-death type critical points $p,\tilde{p}$ at $t=0$ of the same index and with the same profile. Then there exist a family of local diffeomorphisms $h_{t}:\mathcal{O}p\ p\to\mathcal{O}p\ \tilde{p},t\in\mathcal{O}p\ 0$ such that $\tilde{\phi}_{t}\circ h_{t}=\phi_{t}$.
\item[(iv)]A generic 1-parameter family of functions $\phi_{t}:V\to\mathbb{R}$ has only nondegenerate and birth-death type critical points.
\end{itemize}
\end{thm}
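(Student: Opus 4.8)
The plan is to establish (i) and (ii) as normal-form results obtained from the splitting lemma, to read off (iii) from (ii), and to prove (iv) by a parametric jet-transversality count; the reduction is essentially the same in all parts. For (i), apply the splitting lemma at $p$: since $\ker\Hess_{p}\phi$ is one-dimensional there are local coordinates $(u,z)\in\mathbb{R}^{m-1}\oplus\mathbb{R}$ centered at $p$ with $\phi(u,z)=\phi(p)+Q(u)+h(z)$, where $Q$ is the nondegenerate quadratic form induced by $\Hess_{p}\phi$ on a complement of the kernel and $h$ is a smooth one-variable function with $h(0)=h'(0)=h''(0)=0$ and, by the hypothesis on the third derivative in the kernel direction, $h'''(0)\neq0$. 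Diagonalizing $Q$ splits $u=(x,y)$ with $Q=\vert x\vert^{2}-\vert y\vert^{2}$ and $\dim y$ equal to the index $k-1$; writing $h(z)=z^{3}v(z)$ with $v(0)\neq0$ and substituting $z\mapsto z\,v(z)^{1/3}$ (a local diffeomorphism of $\mathbb{R}$ fixing $0$) normalizes $h$ to $z^{3}$.

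For (ii), I would carry $t$ through the same steps. The parametrized splitting lemma --- solve $\partial_{u}\phi_{t}(u,z)=0$ for $u=u(t,z)$ by the implicit function theorem (the $u$-block of the Hessian of $\phi_{0}$ is invertible near $p$) and expand in $z$ --- yields a smooth family of source diffeomorphisms $f_{t}$ and, after absorbing the $t$-dependent minimal value by a family $g_{t}$ of reparametrizations of the target, reduces $\phi_{t}$ to $\vert x\vert^{2}-\vert y\vert^{2}+h_{t}(z)$ with $h_{0}(0)=h_{0}'(0)=h_{0}''(0)=0$ and $h_{0}'''(0)\neq0$. The hypothesis that $(0,p)$ is a nondegenerate critical point of $(t,x)\mapsto\phi_{t}(x)$ becomes, after this reduction, $\partial_{t}\partial_{z}h_{t}(z)\big|_{(0,0)}\neq0$, i.e.\ $\{h_{t}\}$ is a transverse one-parameter unfolding of the singularity $z^{3}$. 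From here one either invokes that $z^{3}\pm tz$ is the miniversal unfolding of this singularity and that a transverse unfolding is equivalent to it, or argues directly: a $t$-dependent shift $z\mapsto z+a(t)$ (chosen by the implicit function theorem to kill the quadratic coefficient of $h_{t}$), a rescaling of $z$, and a reparametrization of $t$ put $h_{t}$ into the form $z^{3}\pm tz$, which is \eqref{birthdeathcoord}. Part (iii) then follows at once: apply (ii) to both families; the assumption that the profiles coincide fixes the sign $\pm$ (the cusp of the profile of $z^{3}+tz$ opens toward $t<0$, that of $z^{3}-tz$ toward $t>0$) and forces the two parameter-reparametrizations to agree, so composing the two normalizations gives the required family with $\tilde\phi_{t}\circ h_{t}=\phi_{t}$.

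For (iv), view a family as a map $\Phi\colon\mathbb{R}\times V\to\mathbb{R}$, $(t,x)\mapsto\phi_{t}(x)$, and apply the $t$-parametrized jet transversality theorem (the parametric counterpart of Theorem~\ref{JetTransvthm}, in the spirit of Theorem~\ref{JetTransvthmadd}) to the fiberwise jet $(t,x)\mapsto j^{k}\phi_{t}(x)\in J^{k}(V,\mathbb{R})$, $k\geq3$, relative to the Thom--Boardman-type strata: the locus $\Sigma$ of critical $k$-jets (codimension $m=\dim V$); its open dense sublocus of corank-$1$ critical jets whose restricted cubic is nonzero, the $A_{2}$-stratum, of codimension $m+1$; and the bad loci, corank-$1$ with vanishing restricted cubic ($A_{\geq3}$, codimension $m+2$) and corank $\geq2$ (codimension $\geq m+3$). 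For a generic family the fiberwise jet is transverse to all of these; as $\dim(\mathbb{R}\times V)=m+1$, the $A_{\geq3}$- and corank-$\geq2$-loci have empty preimage, so every degenerate critical point is embryonic, and the preimage of the $A_{2}$-stratum is a discrete set of points $(t_{i},p_{i})$ at which the transversality is precisely the birth-death nondegeneracy.

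The main obstacle I anticipate is the bookkeeping in (iv) and the accompanying dictionary between conditions: describing the Thom--Boardman strata of $J^{k}(V,\mathbb{R})$ invariantly, getting their codimensions right, and checking that transversality of the fiberwise jet to the $A_{2}$-stratum at $(0,p)$ agrees --- after the harmless normalization $\partial_{t}\phi_{t}(p)\big|_{t=0}=0$ effected by the $g_{t}$ of part (ii) --- with the hypothesis ``$(0,p)$ is a nondegenerate critical point of $(t,x)\mapsto\phi_{t}(x)$'' used there, rather than with a weaker condition. Once that dictionary is in place, (iv) combines with (ii)--(iii) to give the normal forms along the generic family, which is the full statement.
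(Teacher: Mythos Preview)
The paper does not prove this theorem. It appears in Appendix~\ref{Backgroundmaterial} under the heading ``Background material,'' whose first paragraph says explicitly that ``All statements are presented without proofs. These can be found in the respective literature,'' and the subsection containing this theorem begins ``This section is taken from \cite{Cie1}.'' So there is no proof in the paper to compare against; the theorem is quoted from Cieliebak--Eliashberg.

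That said, your sketch is the standard route and is sound. The splitting lemma reduction in (i), its parametrized version in (ii) followed by the one-variable normalization of the $A_{2}$-unfolding, the deduction of (iii) from (ii), and the Thom--Boardman codimension count plus parametric jet transversality for (iv) are exactly how this is done in the singularity-theory literature (Arnold, Cerf, Wassermann). One small point in (iii): matching profiles gives more than just the sign; it pins down the target reparametrizations $g_t$ as well, since the profile of $z^3\pm tz$ is the semicubical cusp $27c^2=\mp4t^3$ and equality of profiles forces the induced $t$-reparametrizations and the $g_t$ to agree up to the coordinate changes already absorbed into $h_t$. Your remark that the main bookkeeping burden lies in (iv) is accurate; once the stratification of $J^k(V,\mathbb{R})$ is written down invariantly, the codimensions $m$, $m+1$, $m+2$, $\geq m+3$ you list are correct, and transversality to the $A_2$-stratum is indeed equivalent to the birth--death nondegeneracy condition.
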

In particular, part (i) shows that embryonic critical points are isolated. We say that a birth-death type critical point $p$ is of \textit{birth type} if the sign in front of $t$ in formula \eqref{birthdeathcoord} is minus, and of \textit{death type} otherwise. Note that near a birth type critical point a pair of nondegenerate critical points of indices $k$ and $k-1$ appears at $t=0$, and near a death type critical point such a pair disappears.\\[.5em]
Let $X$ be a smooth vector field on $V$ und $p\in V$ a zero of $X$. The differential $D_{p}X:T_{p}V\to T_{p}V$ induces a splitting into invariant subspaces
\[T_{p}V=E_{p}^{+}\oplus E_{p}^{-}\oplus E_{p}^{0},\]
where $E_{p}^{+}$ (resp.~$E_{p}^{-},E_{p}^{0}$) is spanned by the generalized eigenvectors corresponding to eigenvalues with positive (resp.~negative, vanishing) real part. The dimension of $E_{p}^{-}$ is called the \textit{(Morse) index} of $X$ at $p$. Denote by $\varphi^{s}:V\to V,s\in\mathbb{R}$, the flow of $X$.
\begin{thm}[Center manifold theorem]
Let $p\in V$ be a zero of a $C^{r+1}$ vector field $X,r\in\mathbb{N}$. Then there exist the following local $\varphi^{s}$-invariant manifolds through $p$:
\begin{itemize}
\item $W_{p}^{0\pm}$ tangent to $E_{p}^{0}\oplus E_{p}^{\pm}$ of class $C^{r+1}$,
\item $W_{p}^{\pm}\subset W_{p}^{0\pm}$ tangent to $E_{p}^{\pm}$ of class $C^{r}$,
\item $W_{p}^{0}=W_{p}^{0+}\cap W_{p}^{0-}$ tangent to $E_{p}^{0}$ of class $C^{r+1}$.
\end{itemize}
The $W_{p}^{\pm}$ are unique, and they are smooth (resp.~real analytic) if $X$ is.
\end{thm}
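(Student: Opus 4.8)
This is a classical result, and the plan is to run the Lyapunov--Perron variation-of-constants argument: localize the vector field by a cutoff, and then obtain each of the invariant manifolds as the fixed point of a contraction on a suitable space of exponentially weighted orbits, following the approach of \cite{Cie1}.

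First I would pass to local coordinates near $p$ in which $p=0$ and $X$ takes the form $\dot u = Au + f(u)$ with $A = D_pX$, $f(0)=0$, $D_0 f = 0$, and $f$ of class $C^{r+1}$. Decompose $A = A^+\oplus A^-\oplus A^0$ along the invariant splitting $T_pV = E_p^+\oplus E_p^-\oplus E_p^0$ and fix rates $0\le\beta'<\beta$ such that every eigenvalue of $A^+$ has real part $>\beta$, every eigenvalue of $A^-$ has real part $<-\beta$, and every eigenvalue of $A^0$ has real part of modulus $<\beta'$; record the corresponding exponential bounds for the semigroups $e^{tA^{\pm}}$ and $e^{tA^{0}}$. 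Since the statement is local, I would replace $f$ by $\chi(\,\cdot\,/\delta)f$ for a bump function $\chi\equiv 1$ near $0$: the modified field is globally defined of the same regularity, agrees with $X$ on $\B_{\delta}(0)$, and has arbitrarily small Lipschitz and $C^1$ norm as $\delta\to 0$ because $D_0 f = 0$; its global invariant manifolds will restrict to the desired local ones.

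Next I would build $W_p^{0-}$ as a graph over $E_p^0\oplus E_p^-$. A forward orbit $u$ with $\sup_{t\ge 0}e^{-\eta t}|u(t)|<\infty$, $\beta'<\eta<\beta$, must satisfy the Lyapunov--Perron integral equation obtained by integrating its $E^0\oplus E^-$ components forward from time $0$ and its $E^+$ component backward from $+\infty$; for each $\xi\in E_p^0\oplus E_p^-$ this equation has a unique solution $u(\,\cdot\,;\xi)$ in the weighted space of such orbits, by the contraction principle once the Lipschitz constant of the cutoff nonlinearity is small. Setting $\psi^-(\xi):=\pi^+u(0;\xi)$ yields $W_p^{0-}=\{\xi+\psi^-(\xi)\}$; invariance follows from the time-shift property together with uniqueness of the fixed point, $\psi^-(0)=0$ from $\xi=0$, and $D_0\psi^-=0$ from $D_0 f=0$. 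Running the same argument for $-X$ produces $W_p^{0+}$, a graph of some $\psi^+$ over $E_p^0\oplus E_p^+$ tangent to $E_p^0\oplus E_p^+$. Then I would set $W_p^0:=W_p^{0+}\cap W_p^{0-}$ and solve the two graph relations simultaneously for the $E_p^{\pm}$-components as functions of the $E_p^0$-component --- possible in a neighbourhood of $0$ since both derivatives vanish there --- which exhibits $W_p^0$ as an invariant submanifold tangent to $E_p^0$ of dimension $\dim E_p^0$. The strong stable manifold $W_p^-$ is constructed the same way, but now using the space of forward orbits with $\sup_{t\ge 0}e^{\gamma t}|u(t)|<\infty$, $\beta'<\gamma<\beta$, parametrized by $\pi^- u_0\in E_p^-$; this is an intrinsic description (``points whose forward orbit decays exponentially''), so $W_p^-$ is genuinely unique, in contrast to $W_p^0$ and $W_p^{0\pm}$, which depend on the cutoff. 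Time reversal gives $W_p^+$, and by construction $W_p^{\pm}\subset W_p^{0\pm}$.

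The real work is the regularity bookkeeping, and this is the step I expect to be the main obstacle. Having each manifold as a fixed point $\psi=\Gamma(\psi)$ of a uniform contraction, I would differentiate the fixed-point equation and apply the fiber-contraction principle (equivalently the Hirsch--Pugh--Shub $C^k$-section theorem), verifying at each order $k$ the spectral-gap inequality that keeps the differentiated operator contracting --- $k$ times the ``weak'' rate must stay below the ``strong'' rate --- which is exactly what produces the stated differentiability classes, $C^{r+1}$ for the centre-type manifolds and one derivative fewer for the strong (un)stable manifolds sitting inside them. The $C^{\infty}$ statement then follows by letting $k\to\infty$, and the real-analytic statement by repeating the fixed-point argument in a complex Banach space and invoking the analytic implicit function theorem (or a majorant estimate); I would treat the analytic case separately, since the weighted-space contraction does not by itself yield analyticity.
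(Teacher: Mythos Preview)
The paper does not prove this theorem. It appears in Appendix~\ref{Backgroundmaterial} (``Background material''), where the author explicitly states: ``All statements are presented without proofs. These can be found in the respective literature.'' The subsection containing the center manifold theorem is taken from \cite{Cie1}, and the theorem is simply quoted as a standard result that feeds into the discussion of birth--death type zeroes used later in Section~\ref{Knotinv}. So there is no paper proof to compare your proposal against.

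Your outline is the standard Lyapunov--Perron/variation-of-constants route and is a reasonable sketch of how one would actually prove the result. If you want to sharpen it for its own sake, the two places that deserve care are: (i) the claimed regularity split --- $C^{r+1}$ for $W_p^{0\pm}$ and $W_p^0$ but only $C^r$ for $W_p^{\pm}$ --- which in your sketch is asserted rather than derived; you should make explicit which spectral-gap inequality fails at order $r+1$ for the strong manifolds but survives for the center-type ones, since naively both come from the same cutoff nonlinearity of class $C^{r+1}$; and (ii) the inclusion $W_p^{\pm}\subset W_p^{0\pm}$, which you say holds ``by construction'' but in fact requires an argument, because $W_p^{\pm}$ and $W_p^{0\pm}$ are obtained from contractions on different weighted spaces; the usual justification is that the exponentially decaying orbits defining $W_p^-$ automatically satisfy the sub-exponential growth bound defining $W_p^{0-}$, hence lie in it by the uniqueness part of that fixed-point problem.
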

$W_{p}^{-}$ (resp.~$W_{p}^{+},\ W_{p}^{0},\ W_{p}^{0-},\ W_{p}^{0+}$)is called the \textit{local stable (resp.~unstable, center, center-stable, center-unstable) manifold} at $p$. The center, center-stable, and center-unstable manifolds are in general not unique, and they need not be smooth even if $X$ is. By the center manifold theorem we can choose $C^{r}$-coordinates $Z=(x,y,z)\in E_{p}^{+}\oplus E_{p}^{-}\oplus E_{p}^{0}$ in which $W_{p}^{\pm}$ and $W_{p}^{0\pm}$ correspond to $E_{p}^{\pm}$ (resp.~$E_{p}^{0}\oplus E_{p}^{0\pm}$). In these coordinates $X$ is of the form
\begin{align}
X(x,y,z)=(A^{+}x+O(\vert x\vert\vert Z\vert),A^{-}y+O(\vert y\vert\vert Z\vert),A^{0}z+O(\vert z\vert\vert Z\vert+\vert x\vert\vert y\vert))\label{vfcoord}
\end{align}
with linear maps $A^{+}$ (resp.~$A^{-},\ A^{0}$) all of whose eigenvalues have positive (resp.~negative, zero) real part. The specific form of the higher order terms follows from tangency of $X$ to $W_{p}^{\pm}$ and $W_{p}^{0\pm}$.\\
A zero $p$ of a vector field $X$ is called \textit{nondegenerate} if all its eigenvalues are nonzero. It is called \textit{hyperbolic} if $E_{p}^{0}=0$, i.e., all eigenvalues of $D_{p}X$ have nonzero real part. In this case we have \textit{global} stable and unstable manifolds characterized by
\[W_{p}^{\pm}=\lbrace x\in V:\lim_{s\to\mp\infty}\varphi^{s}(x)=p\rbrace.\]
\begin{rem}
In Section \ref{gradientfields} we denote the stable (resp.~unstable) manifold by $W_{p}^{s}$ (resp.~$W_{p}^{u}$). In Section \ref{Corddef} we use this notation. However, in this section we have retained the $W_{p}^{\pm}$ notation used in \cite{Cie1} to illustrate the relationship with $E_{p}^{\pm}$. 
\end{rem}
These manifolds are injectively immersed (but not necessarily embedded) in $V$. For a hyperbolic zero the local representation \eqref{vfcoord} simplifies to
\[X(x,y)=(A^{+}x+O(\vert x\vert\vert Z\vert),A^{-}y+O(\vert y\vert\vert Z\vert)).\]
Let us call a zero $p$ \textit{embryonic} if $E_{p}^{0}$ is one-dimensional and the restriction of $X$ to a center manifold $W_{p}^{0}$ has nonvanishing second derivative at $p$ (for some local coordinate on $W_{p}^{0}\cong\mathbb{R}$; the definition depends neither on this local coordinate nor on the choice of $W_{p}^{0}$). It follows that in suitable coordinates $Z=(x,y,z)\in\mathbb{R}^{m-k}\oplus\mathbb{R}^{k-1}\oplus\mathbb{R}$ near $p$ the vector field is of the form
\begin{align}
X(x,y,z)=\left( A^{+}x+O(\vert x\vert\vert Z\vert),A^{-}y+O(\vert y\vert\vert Z\vert),z^{2}+O(\vert z\vert(\vert x\vert+\vert y\vert+\vert z\vert^{2})+\vert x\vert\vert y\vert)\right)\label{vfcoordembr}
\end{align}
with linear maps $A^{+},\ A^{-}$ all of whose eigenvalues have positive (resp.~negative) real part.
\begin{lem}
Let $p$ be an embryonic zero of a smooth vector field $X$. Then
\[\widehat{W}_{p}^{\pm}:=\lbrace x\in V:\lim_{s\to\mp\infty}\varphi^{s}(x)=p\rbrace\]
is an injectively immersed smooth manifold with boundary $W_{p}^{\pm}$.
\end{lem}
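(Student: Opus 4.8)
The plan is to prove a local version near $p$ and then globalize by pushing forward with the flow. By the center manifold theorem fix coordinates $Z=(x,y,z)\in E_p^+\oplus E_p^-\oplus E_p^0$ on a neighborhood $U$ of $p$ in which $X$ has the normal form \eqref{vfcoordembr}, so that locally $W_p^{0+}=\{y=0\}$ and $W_p^+=\{y=z=0\}$; here the $x$-direction is backward-contracting, the $y$-direction backward-expanding, and the center equation reads $\dot z=z^2+O(|z|(|x|+|z|^2)+|x||y|)$. Throughout I treat $\widehat W_p^+$, the set of points whose negative semiorbit converges to $p$; the case of $\widehat W_p^-$ is obtained at the end by replacing $X$ with $-X$.

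First I would show that, after shrinking $U$, any point of $U$ whose negative semiorbit stays in $U$ already lies on the center-unstable manifold $W_p^{0+}$: this is the standard fact that backward-bounded orbits near $p$ are trapped on $W_p^{0+}$, since a nonzero $y$-component is driven out of $U$ in backward time by the hyperbolic estimate for $A^-$. In particular $\widehat W_p^+\cap U\subseteq W_p^{0+}$, so it suffices to analyze the vector field restricted to $W_p^{0+}$, where $y=0$ and the system becomes $\dot x=A^+x+O(|x|(|x|+|z|))$, $\dot z=z^2+O(|z|(|x|+|z|^2))$.

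The heart of the argument is this restricted analysis. The slice $\{z=0\}$ is invariant and equals $W_p^+$, all of whose points converge to $p$ in backward time by the classical unstable manifold theorem. For $z_0>0$ small one constructs a backward-invariant wedge in $(x,z)$ on which $z$ stays positive and $x$ decays exponentially backward; the substitution $w=1/z$ turns the $z$-equation into $\dot w=-1+O(1/w)$, from which $z(s)\to 0^+$ and hence $\varphi^s\to p$ as $s\to-\infty$. For $z_0<0$ the same wedge estimate forces the negative semiorbit to leave $U$. Hence, after shrinking $U$, one gets $\widehat W_p^+\cap U=W_p^{0+}\cap\{z\ge 0\}$, a smooth manifold with boundary $W_p^{0+}\cap\{z=0\}=W_p^+$.

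Finally one globalizes: $\widehat W_p^+=\bigcup_{s\ge 0}\varphi^s(\widehat W_p^+\cap U)$ is an increasing union of diffeomorphic images of a manifold-with-boundary, hence an injectively immersed smooth manifold (not embedded in general, exactly as for unstable manifolds of hyperbolic zeros) whose boundary is $\bigcup_{s\ge 0}\varphi^s(W_p^+\cap U)=W_p^+$. The statement for $\widehat W_p^-$ follows by applying the above to $-X$, for which $W_p^{0-}$ and the half $\{z\le 0\}$ take over the respective roles. I expect the main obstacle to be the restricted-system analysis of the third paragraph: the center direction is only polynomially (not exponentially) attracting, so the usual contraction/graph-transform proof of the (un)stable manifold theorem does not apply directly, and one needs the explicit trapping wedge together with the exponential backward decay of $x$ both to isolate precisely the half $\{z\ge 0\}$ and to obtain smoothness of $\widehat W_p^+$ up to its boundary $W_p^+$, using that $W_p^+$ is a genuine smooth invariant manifold rather than a merely finitely differentiable center object.
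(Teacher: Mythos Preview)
The paper does not give a proof of this lemma: it appears in Appendix~\ref{Backgroundmaterial} as background material explicitly taken from \cite{Cie1}, and the appendix opens with the disclaimer that all statements there are presented without proofs. So there is no in-paper argument to compare against. What the paper does provide is Remark~\ref{UcaphatWrem}, which records the local description $U\cap\widehat W_p^-=\{(x,y,z)\in U:x=0,\,z\le 0\}$ (and by symmetry $U\cap\widehat W_p^+=\{y=0,\,z\ge 0\}$). Your proposal is precisely a proof of this remark followed by globalization via the flow, so your strategy is entirely in line with what the paper asserts.

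The outline is sound: trapping backward-bounded orbits on $W_p^{0+}$, the sign analysis of $\dot z=z^2+\ldots$ (forward-increasing, hence backward-converging to $0$ exactly when $z\ge 0$), and the union $\bigcup_{s\ge 0}\varphi^s(\widehat W_p^+\cap U)$ are the right ingredients. One point deserves more care than your final sentence gives it. The center-unstable manifold $W_p^{0+}$ furnished by the center manifold theorem is only $C^{r+1}$ for a $C^{r+1}$ vector field and is not unique; identifying $\widehat W_p^+\cap U$ as a half of $W_p^{0+}$ therefore yields a priori only $C^r$ regularity, and knowing that the boundary $W_p^+$ is smooth does not by itself upgrade the interior. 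The fix is that $\widehat W_p^+$ is intrinsically defined by the dynamical condition and sits inside \emph{every} choice of center-unstable manifold; running your argument with a $C^r$ choice for each $r$ shows that $\widehat W_p^+$ is a $C^r$ manifold-with-boundary for all $r$, hence $C^\infty$. With that addition your argument is complete.
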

\begin{rem}\label{UcaphatWrem}
If we choose coordinates $Z=(x,y,z)$ on a neighborhood $U$ of $p$ in which $X$ is of the form \eqref{vfcoordembr}, then
\[U\cap\widehat{W}_{p}^{-}=\lbrace(x,y,z)\in U:x=0,z\leq0\rbrace,\]
see Figure \ref{UcaphatW}. An analogous statement holds for $\widehat{W}^{+}_{p}$.
\begin{figure}[ht]\centering
\includegraphics[scale=0.35]{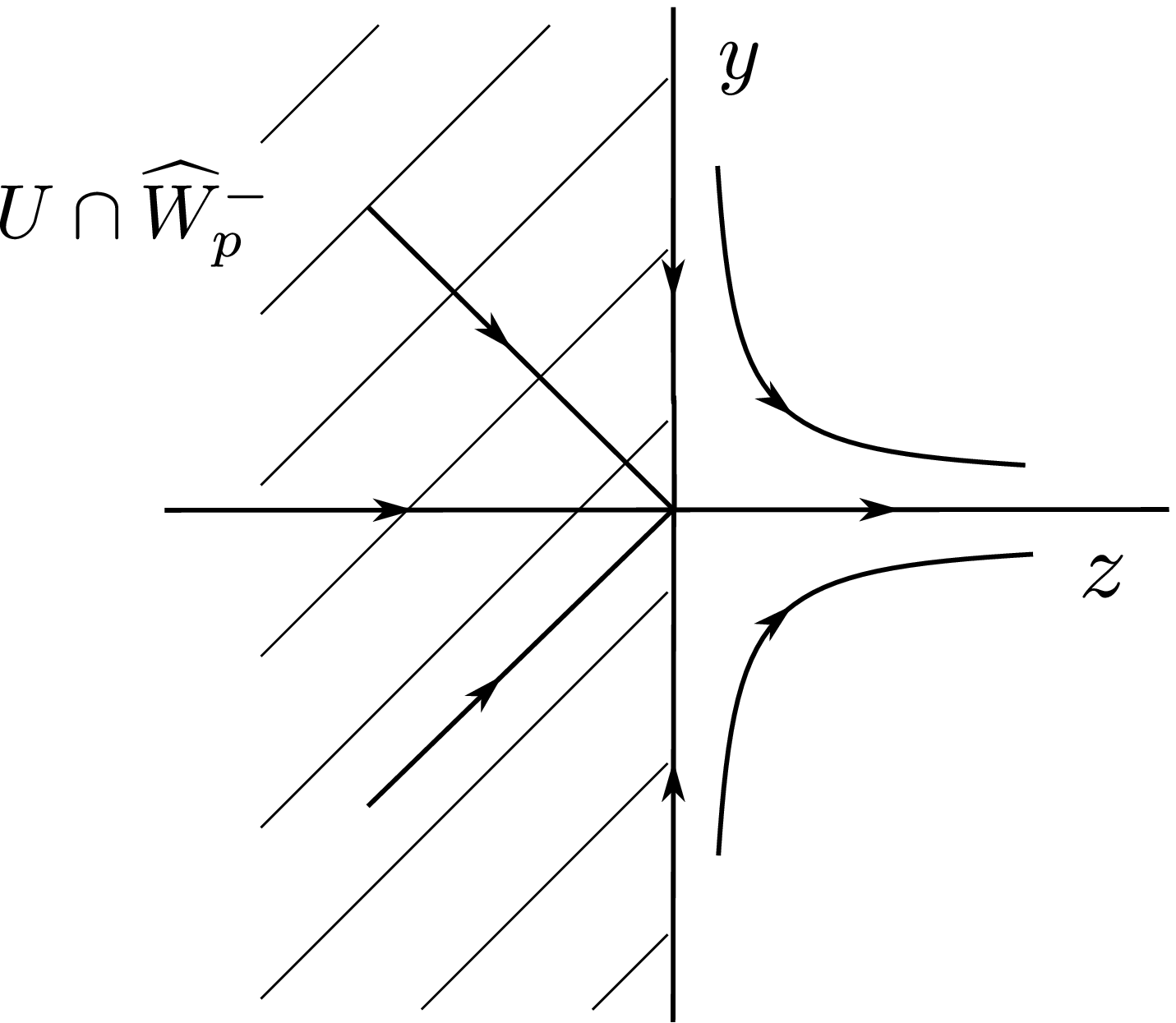}
\caption{The flow near an embryonic zero}\label{UcaphatW}
\end{figure}%
\end{rem}
We say that a 1-parameter family $X_{t},\ t\in(-\varepsilon,\varepsilon)$, of vector fields near $p\in V$ is of \textit{birth-death type} if $p$ is an embryonic zero of $X_{0}$ and the section $(t,Z)\mapsto X_{t}(Z)$ is transverse to the zero section of the bundle $TV\to\mathbb{R}\times V$ at $(0,p)$. It follows that in suitable coordinates $Z=(x,y,z)\in\mathbb{R}^{m-k}\oplus\mathbb{R}^{k-1}\oplus\mathbb{R}$ near $p$ the family is of the form
\begin{align}
X_{t}(x,y,z)=\left( A_{t}^{+}x+O(\vert x\vert\vert Z\vert),A_{t}^{-}y+O(\vert y\vert\vert Z\vert),z^{2}\pm t+O(\vert z\vert(\vert x\vert+\vert y\vert+\vert z\vert^{2})+\vert x\vert\vert y\vert)\right)\label{vffamcoord}
\end{align}
with smooth families of linear maps $A_{t}^{\pm}$ all of whose eigenvalues have positive (resp.~negative) real part. (The specific form of the higher order terms follows from tangency of the vector field $\widehat{X}(t,Z)=(0,X_{t}(Z))$ on $\mathbb{R}\times V$ to $\lbrace0\rbrace\times W_{p}^{\pm}$ and $\mathbb{R}\times W_{p}^{0}$, plus the fact that in suitable coordinates the zero set of $\widehat{X}$ is the curve $\lbrace x=y=z^{2}\pm t=0\rbrace$.)\\
We say that the family is of \textit{birth type} if the sign $z^{2}\pm t$ in \eqref{vffamcoord} is minus, and of \textit{death type} otherwise. Note that in a birth type family a pair of hyperbolic zeroes of indices $k$ and $k-1$ appears at $t=0$ and in a death type family such a pair disappears.
\begin{lem}\label{OneparamfamVF}
\begin{itemize}
\item[(i)]A generic vector field has only hyperbolic zeroes.
\item[(ii)]In a generic 1-parameter family of vector fields without nonconstant periodic
orbits, only birth-death type degeneracies appear.
\end{itemize}
\end{lem}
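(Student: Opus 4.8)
The plan is to derive both statements from the jet transversality theorem (Theorem~\ref{JetTransvthm}, or Theorem~\ref{JetTransvthmadd} for a finite list of target submanifolds) by locating the relevant ``degeneracy loci'' inside suitable jet bundles. For (i), regard a vector field as a $C^{s}$ section of $TV\to V$; its space is a Baire space (with the strong topology, which agrees with the weak one since $V$ is compact), and the first derivative of a section at one of its zeroes is the intrinsic linearization $A=D_{p}X\in\mathrm{End}(T_{p}V)$. Inside the $1$-jet bundle $J^{1}(TV\to V)$ consider the set $\Sigma$ of $1$-jets whose value vanishes and whose linearization has an eigenvalue on $i\mathbb{R}$, and stratify it by the spectral configuration of $A$. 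Its top strata --- $A$ has a simple zero eigenvalue, resp.\ a simple nonzero purely imaginary pair, with all remaining eigenvalues off $i\mathbb{R}$ --- are submanifolds of codimension $\dim V+1$: $\dim V$ from the vanishing of the value, and $1$ from the equation $\det A=0$, resp.\ from the resultant condition that the characteristic polynomial of $A$ have a nonzero purely imaginary root. All further strata (repeated zero eigenvalue, zero eigenvalue together with an imaginary pair, two imaginary pairs, \dots) have codimension $\geq\dim V+2$. Since $\dim V$ is strictly below the codimension of every stratum, an $X$ with $j^{1}X$ transverse to all of them --- a residual condition --- satisfies $j^{1}X(V)\cap\Sigma=\varnothing$ by the usual dimension argument (compare $\codim(\partial S\subset T^{2})=2$ in the proof of Lemma~\ref{cordlemmaadd2}), i.e.\ all its zeroes are hyperbolic. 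This proves (i).

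For (ii) we run the same argument one parameter up, using $2$-jets because the embryonic condition records a second derivative. View a $1$-parameter family as a section $\widehat{X}$ of $\mathrm{pr}_{V}^{*}TV\to\mathbb{R}\times V$ and, inside $J^{2}(\mathrm{pr}_{V}^{*}TV)$, single out: the zero section $Z_{0}$ (codimension $\dim V$); the locus $\Sigma_{\mathrm{bd}}$ where the value vanishes, $A$ has a simple zero eigenvalue with all others off $i\mathbb{R}$, and the intrinsic quadratic coefficient of the restriction of the field to a center manifold along the kernel direction is nonzero (codimension $\dim V+1$); the locus $\Sigma_{\mathrm{Hopf}}$ where the value vanishes and $A$ has a simple nonzero purely imaginary pair with all others off $i\mathbb{R}$ (codimension $\dim V+1$); and the ``higher'' loci --- multiple degeneracies, or a zero eigenvalue together with vanishing of that quadratic coefficient --- all of codimension $\geq\dim V+2$. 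A residual set of families has $\widehat{X}$, resp.\ $j^{2}\widehat{X}$, transverse to $Z_{0},\Sigma_{\mathrm{bd}},\Sigma_{\mathrm{Hopf}}$ and disjoint from the higher loci; for such a family the zero set in $\mathbb{R}\times V$ is a $1$-manifold, hyperbolic at all but finitely many parameter values, and at each exceptional value $A$ has exactly one eigenvalue on $i\mathbb{R}$. If it is $0$, then membership in $\Sigma_{\mathrm{bd}}$ gives the embryonic condition and transversality to $Z_{0}$ is precisely the remaining clause of the definition, so the degeneracy is of birth-death type (formula \eqref{vffamcoord}). If it is a purely imaginary pair, the family crosses $\Sigma_{\mathrm{Hopf}}$ transversally --- a nondegenerate Hopf point; after one more residual refinement (nonvanishing first Lyapunov coefficient) the Hopf bifurcation theorem produces a branch of nonconstant periodic orbits near it, contradicting the standing hypothesis. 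Hence a family in the resulting residual set that in addition has no nonconstant periodic orbits has no Hopf points, and its only degeneracies are of birth-death type; intersecting the residual set with the (Baire) class of such families gives (ii). In the application $X_{t}=-\nabla E_{t}$ the linearizations $-\Hess E_{t}$ are self-adjoint, hence have real spectrum, so $\Sigma_{\mathrm{Hopf}}$ is never met and this last step is vacuous.

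The transversality bookkeeping is routine once the spectral strata are recognized as submanifolds of the jet bundle and their codimensions are computed: Theorems~\ref{JetTransvthm}--\ref{JetTransvthmadd} produce the residual sets and the dimension count turns transversality into emptiness (for (i) and the higher strata) or isolatedness (for the top strata). I expect the main obstacle to be the Hopf step: one must verify that a transverse crossing of $\Sigma_{\mathrm{Hopf}}$ is, after a generic perturbation, a genuine (nondegenerate) Hopf bifurcation and therefore forces nonconstant periodic orbits, and one must be careful about the meaning of ``generic'' relative to the non-closed class of families without periodic orbits --- the cleanest route being to establish all transversality and Hopf-nondegeneracy conditions as a residual subset of \emph{all} families and only then intersect with that class. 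A secondary technical point is that the center-manifold quadratic coefficient entering the embryonic condition is well defined: a center manifold is not unique, but its jet at the equilibrium is, so this coefficient is a polynomial function of the $2$-jet of the field at the degenerate zero.
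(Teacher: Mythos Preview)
The paper does not prove this lemma. It appears in Appendix~\ref{Backgroundmaterial}, which opens with ``All statements are presented without proofs. These can be found in the respective literature,'' and the subsection containing Lemma~\ref{OneparamfamVF} is explicitly attributed to \cite{Cie1}. So there is no proof in the paper to compare your proposal against; the lemma is quoted as background.

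That said, your outline is the standard route and is essentially correct. A few remarks. For (i), the stratification of $\Sigma$ by spectral type is the right picture, though the strata with nonzero purely imaginary pairs are not literally cut out by a single polynomial equation in the entries of $A$ (the condition ``$\chi_{A}$ has a root on $i\mathbb{R}\setminus\{0\}$'' is semialgebraic rather than algebraic); you should phrase this as a finite union of submanifolds of the indicated codimensions, which is enough for Theorem~\ref{JetTransvthmadd}. For (ii), your handling of the Hopf stratum is the delicate point, as you note: the cleanest statement is that a residual set of \emph{all} families is transverse to $\Sigma_{\mathrm{Hopf}}$ and has nonzero first Lyapunov coefficient there, and then the hypothesis ``no nonconstant periodic orbits'' rules these crossings out a posteriori. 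Your observation that the $2$-jet of a center manifold at the equilibrium is uniquely determined (so the embryonic coefficient is a well-defined function on $J^{2}$) is correct and is exactly what makes $\Sigma_{\mathrm{bd}}$ a bona fide submanifold of the jet space. Finally, your closing remark that for gradient fields the linearization is symmetric and $\Sigma_{\mathrm{Hopf}}$ is vacuous is precisely why the paper can invoke part (ii) without worrying about Hopf bifurcations.
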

\begin{rem}
In Section \ref{Corddef} we will look at a gradient vector field of a Morse function. In this case, the zeros of the vector field correspond to the critical points of the function. In Section \ref{Knotinv} we will deal with an isotopy of knots. This results in a 1-parameter family of Morse functions and an associated 1-parameter family of gradient fields. In this context we will need the second statement of the previous lemma.
\end{rem}
\section{Proof of Lemma \ref{framinglemma}}\label{framinglemmaproof}
\begin{rem}\label{framingassumption}
Let $K$ be a knot of length $L$ and $\gamma:S^{1}\cong\mathbb{R}/L\mathbb{Z}\to\mathbb{R}^{3}$ be an arclength parametrization of $K$. Assume $\ddot{\gamma}(s)\neq0$ for all $s\in S^{1}$. In the proof of the lemma it is used that the framing~$\nu$ satisfies the following conditions: 
\begin{itemize}\itemsep0pt
\item[(i)]$\dot{\nu}(s)\neq0$ at all points $\gamma(s)$ for which a cord exists that is tangent to $K$ at $\gamma(s)$. According to Lemma \ref{cordlemma}, these are only finitely many points.
\item[(ii)]Consider the map $\bar{n}:S^{1}\to S^{2},s\mapsto\frac{\ddot{\gamma}(s)}{\vert\ddot{\gamma}(s)\vert}$. For all points $\gamma(s)$ with $\nu(s)=\bar{n}(s)$ the following holds: $\dot{\nu}(s)\neq\dot{\bar{n}}(s)$.
\item[(iii)]For two particular finite sets $R_{1},R_{2}\subset K\times K$ which are described in more detail in the proof the following holds: $F^{s}\cap(R_{1}\cup R_{2})=\emptyset$. 
\end{itemize}
\end{rem}
\begin{proof}[Proof of Lemma \ref{framinglemma}]
Let $K$ be a knot and $\gamma:S^{1}\to\mathbb{R}^{3}$ be an arclength parametrization of $K$. Assume $\ddot{\gamma}(s)\neq0$ for all $s\in S^{1}$. Let $\nu$ be a framing of $K$.\\[.5em]
1) Let $(p,q)\in\partial S$ be a cord that is tangent to $K$ at the point $p=\gamma(s)$. We can assume that $\dot{\gamma}(s)$ points in the direction of the cord $(p,q)$, otherwise we can reparametrize. We choose local coordinates $(x,y,z)$ in $\mathbb{R}^{3}$, where the $x$-axis points in the direction of the cord $(p,q)$, therefore in the direction of $\dot{\gamma}(s)$, the $y$-axis points in the direction of $\ddot{\gamma}(s)$, and the $z$-axis points in the direction of $\dot{\gamma}(s)\times\ddot{\gamma}(s)$ such that we have $p=(0,0,0)$ and $q=(1,0,0)$. Then $K$ near $p$ can be written as a graph over the $x$-axis (with $y=\kappa x^{2}+O(x^{3})$ and $z=O(x^{3})$) and near $q$ as a graph over the $z$-axis (with $x=1+O(z^{2})$ and $y=O(z^{2})$). There $2\kappa\neq0$ is the curvature of $K$ at $p$ (cf. \cite{Cie3}, in the proof of Lemma~7.10). So we can assume a local model of $K$ near $p$ and $q$ (see Figure~\ref{boundaryofF}):\\
$K$ can be written near $p$ as a graph over the $x$-axis:
\[y=\kappa x^{2},\ z=0\]
and near $q$ as a graph over the $z$-axis:
\[x=1,\ y=0.\]
\begin{figure}[ht]\centering
\includegraphics[scale=0.4]{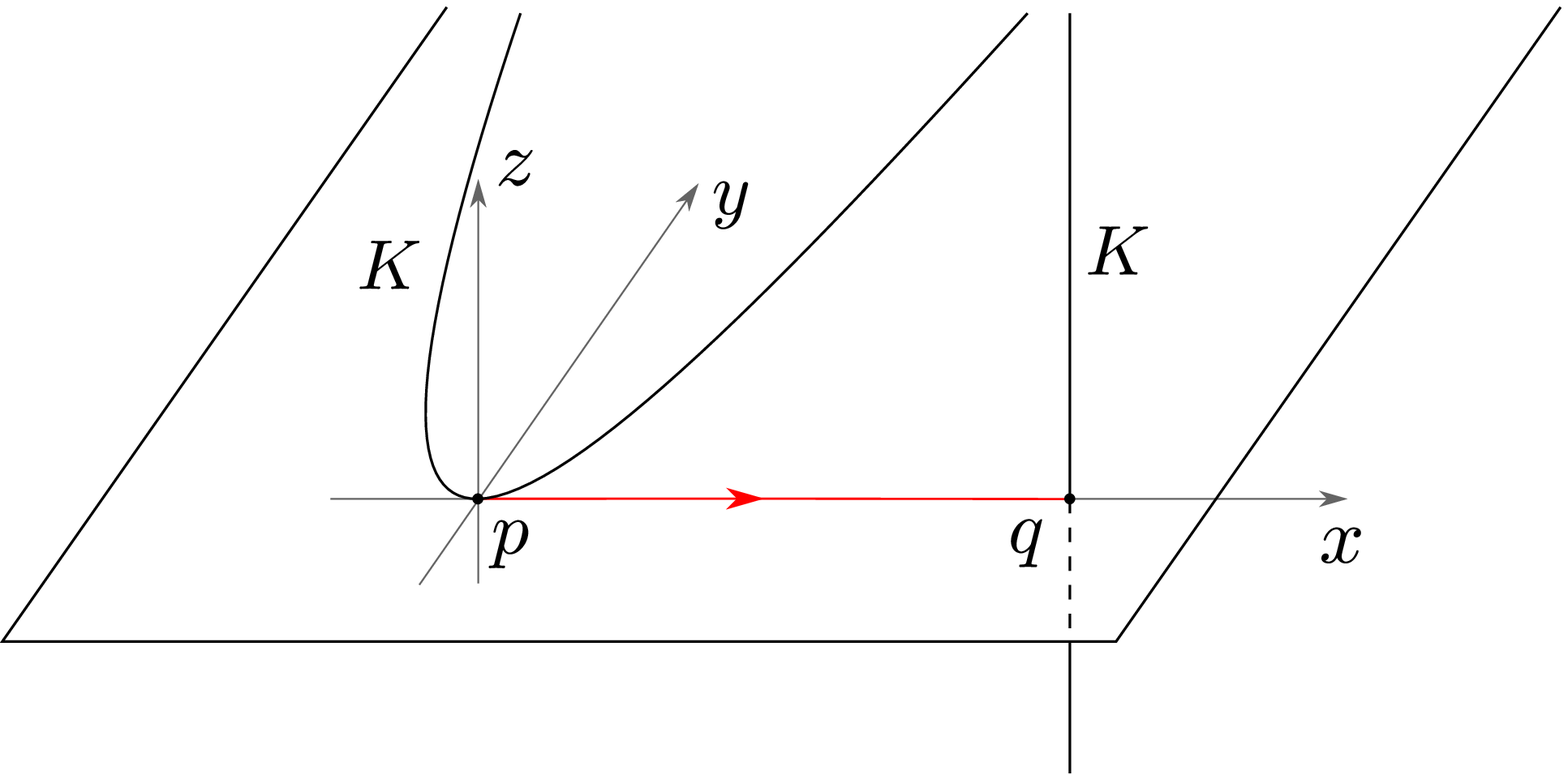}
\caption{Local model of $K$ in a neighborhood of a boundary point of $S$}\label{boundaryofF}
\end{figure}%

Thus, cords in a neighborhood of $(p,q)$ can be written as $(s(x),t(z))\in K\times K$ such that
\[\gamma(s(x))=
\begin{pmatrix}
x\\\kappa x^{2}\\0
\end{pmatrix}
\text{ and }\gamma(t(z))=
\begin{pmatrix}
1\\0\\z
\end{pmatrix}.\]
With this we have
\[p=\gamma(s(0))\text{ and }q=\gamma(t(0)).\]
The normal plane at the point $\gamma(s(x))$ near $p$ is
\begin{align*}
N(x)&=\lbrace v\in\mathbb{R}^{3}:\langle v,\frac{d}{dx}\gamma(s(x))\rangle=0\rbrace\\
&=\spann\left(
\begin{pmatrix}
-2\kappa x\\1\\0
\end{pmatrix},
\begin{pmatrix}
0\\0\\1
\end{pmatrix}\right).
\end{align*}
With $\alpha(x):=\begin{pmatrix}
-2\kappa x\\1\\0
\end{pmatrix}$ and $\beta:=\begin{pmatrix}
0\\0\\1
\end{pmatrix}$ we get
\[N(x)=\spann(\alpha(x),\beta).\]
Thus, the following holds:
\begin{align*}
\langle\gamma(t(z))-\gamma(s(x)),\alpha(x)\rangle&=\langle
\begin{pmatrix}
1-x\\-\kappa x^{2}\\z
\end{pmatrix},
\begin{pmatrix}
-2\kappa x\\1\\0
\end{pmatrix}\rangle\\
&=-2\kappa x+2\kappa x^{2}-\kappa x^{2}\\
&=\kappa x^{2}-2\kappa\\
\langle\gamma(t(z))-\gamma(s(x)),\beta\rangle&=\langle
\begin{pmatrix}
1-x\\-\kappa x^{2}\\z
\end{pmatrix},
\begin{pmatrix}
0\\0\\1
\end{pmatrix}\rangle\\
&=z.
\end{align*}
We define the map
\begin{align*}
n:D^{2}((p,q))\setminus\lbrace(p,q)\rbrace&\to S^{1}\\
(s(x),t(z))&\mapsto\frac{\left(\langle\gamma(t(z))-\gamma(s(x)),\alpha(x)\rangle,\langle\gamma(t(z))-\gamma(s(x)),\beta\rangle\right)}{\vert\left(\langle\gamma(t(z))-\gamma(s(x)),\alpha(x)\rangle,\langle\gamma(t(z))-\gamma(s(x)),\beta\rangle\right)\vert}\\
&\ \ \ \ =\frac{(\kappa x^{2}-2\kappa x,z)}{\vert(\kappa x^{2}-2\kappa x,z)\vert}.
\end{align*}
Now we set $x(u)=\cos u$ and $z(u)=\sin u$ for $u\in[0,2\pi]$ and consider the map
\begin{align*}
\hat{n}:S^{1}&\to S^{1}\\
u&\mapsto n(s(x(u)),t(z(u)))\\
&\ \ \ \ =\frac{(\kappa x^{2}(u)-2\kappa x(u),z(u))}{\vert(\kappa x^{2}(u)-2\kappa x(u),z(u))\vert}\\
&\ \ \ \ =\frac{(\kappa\cos^{2}u-2\kappa\cos u,\sin u)}{\vert(\kappa\cos^{2}u-2\kappa\cos u,\sin u)\vert}\\
&\ \ \ \ =\frac{((\cos u-2)\kappa\cos u,\sin u)}{\vert((\cos u-2)\kappa\cos u,\sin u)\vert}.
\end{align*}
The mapping degree of $\hat{n}$ can be determined immediately:
\[\deg(\hat{n})=1,\]
i.e. $(p,q)$ is a boundary point of $F^{s}$ and hence we have shown $\partial^{s}S\subset\partial F^{s}$.\\[.5em]
Let us now consider a framing $\nu(x)=a(x)\frac{\alpha(x)}{\vert\alpha(x)\vert}+b(x)\beta$ with $a^{2}(x)+b^{2}(x)=1$. The following holds:
\[(s(x),t(z))\in F^{s}\Leftrightarrow
\begin{pmatrix}
a(x)\\b(x)
\end{pmatrix}
=n(s(x),t(z))=\frac{1}{((\kappa x^{2}-2\kappa x)^{2}+z^{2})^{\frac{1}{2}}}
\begin{pmatrix}
\kappa x^{2}-2\kappa x\\z
\end{pmatrix}.\]
If the two equations are squared, both yield the same equation
\begin{align}
b^{2}(x)(\kappa x^{2}-2\kappa x)^{2}=a^{2}(x)z^{2}.\label{zofx}
\end{align}
\textbf{Case 1}: $a(0),b(0)\neq0\ \Leftrightarrow\ a(0),b(0)\neq\pm1$.\\
Then there exists a neighborhood of 0 with $a(x),b(x)\neq0$ for all $x$ in that neighborhood. In this neighborhood, equation \eqref{zofx} can be solved for $z^{2}$ and $z$ can be written as a function of $x$:
\[z^{2}(x)=\frac{b^{2}(x)(\kappa x^{2}-2\kappa x)^{2}}{a^{2}(x)}.\]
In addition, the following has to hold:
\begin{align*}
\sign(z(x))&\overset{!}{=}\sign(b(x))=const\text{ in a neighborhood of }x=0\\
\sign(\kappa x^{2}-2\kappa x)&\overset{!}{=}\sign(a(x))=const\text{ in a neighborhood of }x=0.
\end{align*}
It follows
\[x
\begin{cases}
>0&a(0)<0\\
<0&a(0)>0.
\end{cases}\]
Thus, in a neighborhood of $(p,q)$, $F^{s}$ is a smooth curve with boundary point $(p,q)$.\\[.5em]
\textbf{Case 2}: $b(0)=0\ \Leftrightarrow\ a(0)=\pm1$.\\
From equation \eqref{zofx} it follows immediately $z=0$ and the following must hold:
\begin{align*}
\sign(\kappa x^{2}-2\kappa x)&\overset{!}{=}\sign(a(x))=const\text{ in a neighborhood of }x=0.
\end{align*}
It follows
\[x
\begin{cases}
>0&a(0)=-1\\
<0&a(0)=1.
\end{cases}\]
Thus, in a neighborhood of $(p,q)$, $F^{s}$ is also a smooth curve with boundary point $(p,q)$.\\[.5em]
\textbf{Case 3}: $a(0)=0\ \Leftrightarrow\ b(0)=\pm1$.\\
From equation \eqref{zofx} it follows immediately $x=0$ (the second solution $x=2$ is not relevant since $x$ is only considered in a small neighborhood of 0) and the following has to hold:
\begin{align*}
\sign(z)&\overset{!}{=}\sign(b(x))=const\text{ in a neighborhood of }x=0.
\end{align*}
It follows
\[z
\begin{cases}
>0&b(0)=1\\
<0&b(0)=-1.
\end{cases}.\]
Thus, in a neighborhood of $(p,q)$, $F^{s}$ is also a smooth curve with boundary point $(p,q)$.\\[.5em]
We look again at equation \eqref{zofx} and calculate 
\begingroup
\allowdisplaybreaks
\begin{align*}
\lim_{x\to0}z^{2}(x)&=\lim_{x\to0}\frac{b^{2}(x)(\kappa x^{2}-2\kappa x)^{2}}{a^{2}(x)}\\
&=\lim_{x\to0}\frac{2b(x)b^{\prime}(x)(\kappa x^{2}-2\kappa x)^{2}+2b^{2}(x)(\kappa x^{2}-2\kappa x)(2\kappa x-2\kappa)}{2a(x)a^{\prime}(x)}\\
&=\lim_{x\to0}\Big(\frac{((b^{\prime}(x))^{2}+b(x)b^{\prime\prime}(x))(\kappa x^{2}-2\kappa x)^{2}+2b(x)b^{\prime}(x)(\kappa x^{2}-2\kappa x)(2\kappa x-2\kappa)}{(a^{\prime}(x))^{2}+a(x)a^{\prime\prime}(x)}\\
&\hspace*{1.3cm}+\frac{2b(x)b^{\prime}(x)(\kappa x^{2}-2\kappa x)(2\kappa x-2\kappa)+b^{2}(x)(2\kappa x-2\kappa)^{2}+2b^{2}(x)(\kappa x^{2}-2\kappa x)\kappa}{(a^{\prime}(x))^{2}+a(x)a^{\prime\prime}(x)}\Big)\\
&=\frac{4\kappa^{2}}{(a^{\prime}(0))^{2}}.
\end{align*}
\endgroup
The second and third equality result from the rule of l'Hospital. The last equality holds for $a^{\prime}(0)\neq0$. According to condition (i) from Remark \ref{framingassumption}, we have $\dot{\nu}(s(0))\neq0$ because the cord $(p,q)$ is tangent to $K$ at $p=\gamma(s(0))$. Since $b(0)=\pm1$, and thus $b$ reaches a maximum or minimum, we get $b^{\prime}(0)=0$. So $a^{\prime}(0)\neq0$ is satisfied. It follows 
\[\lim_{x\to0}z(x)=\sign(b(0))\frac{2\kappa}{\vert a^{\prime}(0)\vert}.\]
This means that $F^{s}$ can approach the cord $(p,q)$ in $K\times K$ very close, but in this case does not pass through this cord. Together with the above, this results in a picture like the one in Figure \ref{Fatpq}.\\[.5em]
\begin{figure}[ht]\centering
\includegraphics[scale=0.4]{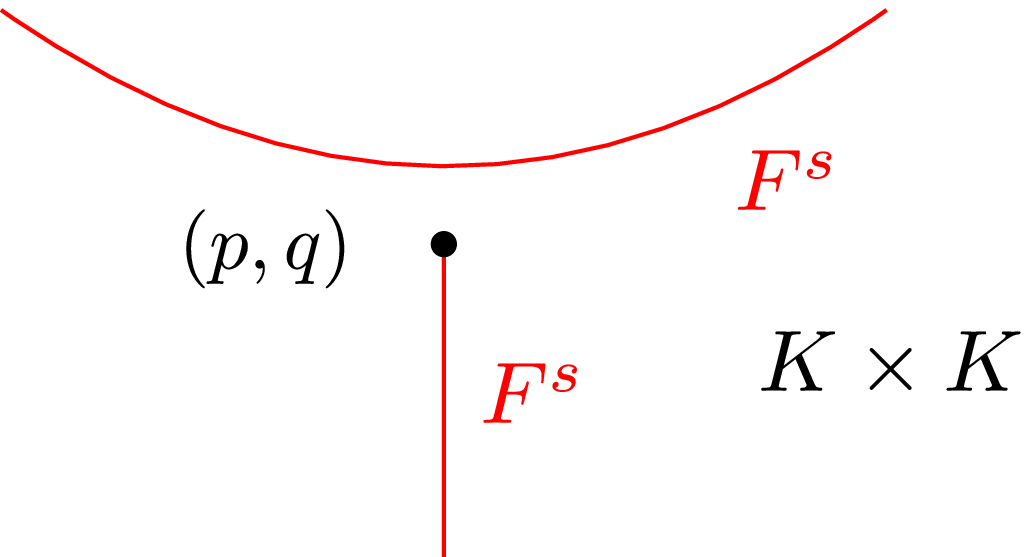}
\caption{$F^{s}\subset K\times K$ in a neighborhood of the cord $(p,q)$}\label{Fatpq}
\end{figure}%

So far we have only considered a local model of $K$ and neglected the terms of higher order. These must now be taken into account in a further step. However, we will not carry out this consideration here in detail, but only note the following:\\
Let $p=\gamma(s_{0})$ and $q=\gamma(t_{0})$. Now we only look at cords with startpoint $\gamma(s)$ for $s\in(s_{0}-\varepsilon,s_{0}+\varepsilon)$ and endpoint $\gamma(t)$ for $t\in(t_{0}-\delta,t_{0}+\delta)$, where $\varepsilon$ and $\delta$ have to be chosen sufficiently small. The above calculations will then be carried out again, taking into account the higher-order terms. In the course of the calculations, $\varepsilon$ and $\delta$ may have to be reduced several times in order to guarantee that the terms of higher order are small enough. Compare also \cite{Cie3} in the proof of Lemma 7.10.\\[.5em]
2) Now we consider the behavior of $F$ near the diagonal $\Delta\subset K\times K$. Let $p=\gamma(s)\in K$. We choose coordinates $(x,y,z)$ as in 1) such that $(\dot{\gamma}(s),\ddot{\gamma}(s),\dot{\gamma}(s)\times\ddot{\gamma}(s))$ is a basis of this coordinate system. Thus, $K$ can be considered as a graph over the $x$-axis with $y=\kappa x^{2}$ and $z=0$. So, the normal plane at the point $\gamma(s(x))$ is as in 1)
\[N(x)=\spann(\alpha(x),\beta)\text{ where }\alpha(x)=\begin{pmatrix}
-2\kappa x\\1\\0
\end{pmatrix},\beta=\begin{pmatrix}
0\\0\\1
\end{pmatrix}.\]
We define the following map, which is the projection onto the normal plane $N(x)$ and a normalization: 
\begin{align*}
n_{x}:\mathbb{R}^{3}&\to S^{1}\subset N(x)\\
v&\mapsto\frac{\left(\langle v,\frac{\alpha(x)}{\vert\alpha(x)\vert}\rangle,\langle v,\beta\rangle\right)}{\Bigl|\left(\langle v,\frac{\alpha(x)}{\vert\alpha(x)\vert}\rangle,\langle v,\beta\rangle\right)\Bigl|}
\end{align*}
Let $(x_{1},x_{2})$, where $x_{1}\neq x_{2}$, be a cord in a neighborhood $U$ of the cord $(p,p)\in\Delta$, see Figure~\ref{Fatpp}. Then
\begin{align*}
n_{x_{1}}(\gamma(s(x_{2}))-\gamma(s(x_{1})))&=n_{x_{1}}\left(\begin{pmatrix}
x_{2}-x_{1}\\
\kappa x_{2}^{2}-\kappa x_{1}^{2}\\
0-0
\end{pmatrix}\right)\\
&=\frac{(-2\kappa x_{2}x_{1}+2\kappa x_{1}^{2}+\kappa x_{2}^{2}-\kappa x_{1}^{2},0)}{\vert(-2\kappa x_{2}x_{1}+2\kappa x_{1}^{2}+\kappa x_{2}^{2}-\kappa x_{1}^{2},0)\vert}\\
&=\frac{(\kappa(x_{2}-x_{1})^{2},0)}{\vert(\kappa(x_{2}-x_{1})^{2},0)\vert}\\
&=(1,0).
\end{align*}
In the local coordinates of $N(0)$ the following holds: $\frac{\ddot{\gamma}(s(0))}{\vert\ddot{\gamma}(s(0))\vert}=(1,0)$. So if $\nu(0)=(1,0)\in N(0)$, we have $\nu(x)\neq(1,0)$ in a neighborhood of $x=0$  according to condition (ii) from Remark \ref{framingassumption}, and so we get
\[F^{s}\cap U=\lbrace(x_{1},x_{2})\in U:x_{1}=0, x_{2}\neq x_{1}\rbrace.\]
The set $F^{s}$ is not defined on the diagonal because there the projection of a cord onto the normal plane is the zero vector. However, we can extend $F^{s}$ in a  canonical way on the diagonal (see Figure~\ref{Fatpp})
\[F^{s}\cap U=\lbrace(x_{1},x_{2})\in U:x_{1}=0\rbrace.\]
\begin{figure}[ht]\centering
\includegraphics[scale=0.45]{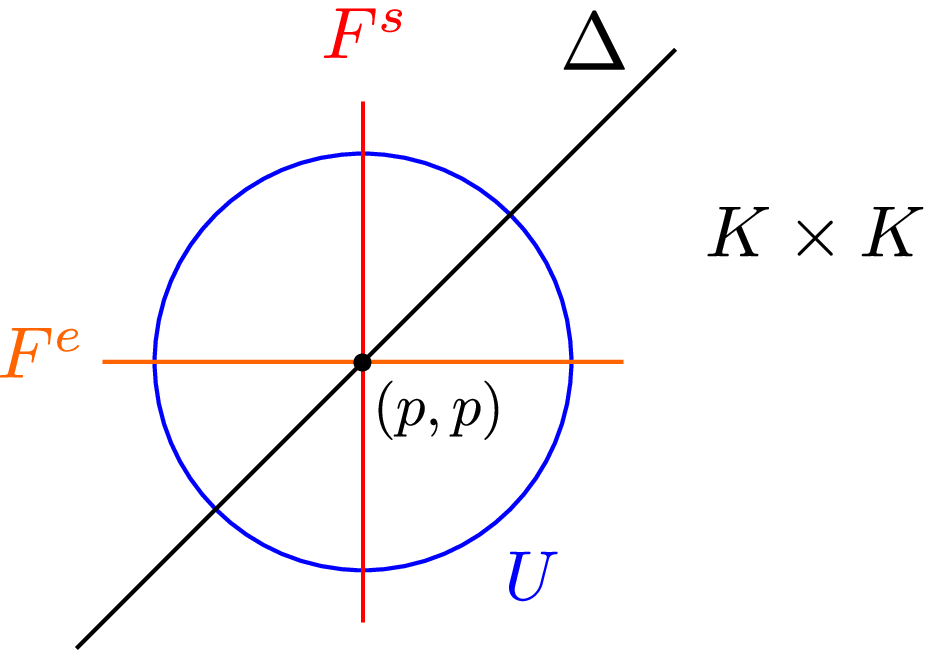}
\caption{$F\subset K\times K$ in a neighborhood of the cord $(p,p)$}\label{Fatpp}
\end{figure}%

Assumption (ii) from Remark \ref{framingassumption} is necessary for the following reason: If $\nu(x)\equiv(1,0)$ in a neighborhood of $x=0$, $\dim F^{s}=2$ would follow because every cord in that neighborhood would intersect the framing.\\[.5em]
Also here we will not carry out the consideration of the terms of higher order. However, this is to be realized analogous to the above consideration.\\[.5em]
3) It remains to show that $F^{s}\setminus(\partial S\cup\Delta)$ is a one-dimensional submanifold.\\
We define the maps $\nu_{1},\nu_{2}\in C^{\infty}(S^{1},S^{2})$ by $\nu_{1}(s)=\frac{\ddot{\gamma}(s)}{\vert\ddot{\gamma}(s)\vert}$ and $\nu_{2}(s)=\frac{\dot{\gamma}(s)\times\ddot{\gamma}(s)}{\vert\dot{\gamma}(s)\times\ddot{\gamma}(s)\vert}$. Thus, we have $(\dot{\gamma}(s),\nu_{1}(s),\nu_{2}(s))$ as an orthonormal basis of $\mathbb{R}^{3}$ for all $s\in S^{1}$. So the framing can be written as
\begin{align*}
\nu:S^{1}&\to S^{2}\\
s&\mapsto a_{1}(s)\nu_{1}(s)+a_{2}(s)\nu_{2}(s)
\end{align*}
where $(a_{1},a_{2})=:\hat{a}\in C^{\infty}(S^{1},S^{1})$.
We define the map
\begin{align*}
a:S^{1}&\to S^{1}\times S^{1}\\
s&\mapsto(s,\hat{a}(s)).
\end{align*}
We also define the following map, which is the orthogonal projection of a cord onto the normal plane at the startpoint of the cord and a normalization: 
\begin{align*}
\hat{n}:T^{2}\setminus(\partial S\cup\Delta)&\to S^{1}\\
(t,u)&\mapsto
\frac{
\begin{pmatrix}
\langle\gamma(u)-\gamma(t),\nu_{1}(t)\rangle\\
\langle\gamma(u)-\gamma(t),\nu_{2}(t)\rangle
\end{pmatrix}
}
{\biggl|
\begin{pmatrix}
\langle\gamma(u)-\gamma(t),\nu_{1}(t)\rangle\\
\langle\gamma(u)-\gamma(t),\nu_{2}(t)\rangle
\end{pmatrix}
\biggl|}.
\end{align*}
This map is well defined since the boundary of $S$ and the diagonal are excluded. According to Definition \ref{framingintersectiondef}, a cord $(t,u)$ intersects the framing at its startpoint if and only if $\hat{n}(t,u)=\hat{a}(t)$. We consider the map 
\begin{align*}
n:T^{2}\setminus(\partial S\cup\Delta)&\to S^{1}\times S^{1}\\
(t,u)&\mapsto(t,\hat{n}(t,u)).
\end{align*}
We want to show that $a$ is transverse to $n$. This holds if and only if the map 
\begin{align*}
a\times n:S^{1}\times(T^{2}\setminus(\partial S\cup\Delta))&\to(S^{1}\times S^{1})\times(S^{1}\times S^{1})\\
(s,t,u)&\mapsto(a(s),n(t,u))=(s,\hat{a}(s),t,\hat{n}(t,u))
\end{align*}
is transverse to $\Delta_{(S^{1}\times S^{1})\times(S^{1}\times S^{1})}$ where
\[\Delta_{(S^{1}\times S^{1})\times(S^{1}\times S^{1})}=\lbrace(s,t,u,v)\in (S^{1}\times S^{1})\times(S^{1}\times S^{1}):(s,t)=(u,v)\rbrace\]
is the diagonal in $(S^{1}\times S^{1})\times(S^{1}\times S^{1})$. The tangent space at points of this diagonal is
\[T_{(s,t,s,t)}\Delta_{(S^{1}\times S^{1})\times(S^{1}\times S^{1})}=\spann\left(
\begin{pmatrix}
1\\0\\1\\0
\end{pmatrix},
\begin{pmatrix}
0\\1\\0\\1
\end{pmatrix}
\right).\]
In addition we have
\[D_{(s,t,u)}(a\times n)=\left(\begin{array}{ccc}
1&0&0\\
\dot{\hat{a}}(s)&0&0\\
0&1&0\\
0&\frac{\partial\hat{n}}{\partial t}(t,u)&\frac{\partial\hat{n}}{\partial u}(t,u)
\end{array}\right).\]
The derivatives of $\hat{a}$ and $\hat{n}$ each have two components, but can be regarded as real-valued by the identification $S^{1}\cong\mathbb{R}/\mathbb{Z}$. The matrix is to be understood in this sense. Furthermore, we have
\[(a\times n)(s,t,u)\in\Delta_{(S^{1}\times S^{1})\times(S^{1}\times S^{1})}\Leftrightarrow s=t,\hat{a}(s)=\hat{n}(t,u).\]
Thus, $a\times n$ is transverse to the diagonal $\Delta_{(S^{1}\times S^{1})\times(S^{1}\times S^{1})}$ if and only if for all points $(s,t,s,t)\in(S^{1}\times S^{1})\times(S^{1}\times S^{1})$ with $(s,t,s,t)=(a\times n)(s,s,u)$ the following holds:
\[\spann\left(
\begin{pmatrix}
1\\0\\1\\0
\end{pmatrix},
\begin{pmatrix}
0\\1\\0\\1
\end{pmatrix},
\begin{pmatrix}
1\\\dot{\hat{a}}(s)\\0\\0
\end{pmatrix},
\begin{pmatrix}
0\\0\\1\\\frac{\partial\hat{n}}{\partial s}(s,u)
\end{pmatrix},
\begin{pmatrix}
0\\0\\0\\\frac{\partial\hat{n}}{\partial u}(s,u)
\end{pmatrix}
\right)=\mathbb{R}^{4}.\]
This is satisfied if
\begin{align}
\dot{\hat{a}}(s)\neq\frac{\partial\hat{n}}{\partial s}(s,u)\hspace*{1cm}\text{ or }\hspace*{1.2cm}\dot{\hat{a}}(s)=\frac{\partial\hat{n}}{\partial s}(s,u)\text{ and }\frac{\partial\hat{n}}{\partial u}(s,u)\neq0.\label{transvcond}
\end{align}
Since $\codim(\Delta_{(S^{1}\times S^{1})\times(S^{1}\times S^{1})}\subset(S^{1}\times S^{1})\times(S^{1}\times S^{1}))=2$, $(a\times n)^{-1}(\Delta_{(S^{1}\times S^{1})\times(S^{1}\times S^{1})})$ is a one-dimensional submanifold of $S^{1}\times(T^{2}\setminus(\partial S\cup\Delta))$. The following holds:
\begin{align*}
(a\times n)^{-1}(\Delta_{(S^{1}\times S^{1})\times(S^{1}\times S^{1})})&=\lbrace(s,t,u):a(s)=n(t,u)\rbrace\\
&=\lbrace(s,t,u):(s,\hat{a}(s))=(t,\hat{n}(t,u))\rbrace\\
&=\lbrace(s,s,u):(s,\hat{a}(s))=(s,\hat{n}(s,u))\rbrace.
\end{align*}
The latter set can be considered as
\[\lbrace(s,u)\in T^{2}\setminus(\partial S\cup\Delta):\hat{a}(s)=\hat{n}(s,u)\rbrace,\]
i.e. the set of all cords that intersect the framing at its startpoint. Hence $F^{s}\setminus(\partial S\cup\Delta)\subset K\times K$ is a one-dimensional submanifold.\\[.5em]
Together with the results from 1) and 2) the following holds: $F^{s}\subset K\times K$ is a one-dimensional submanifold with boundary and $\partial F^{s}=\partial^{s}S$.\\[.5em]
So it remains to show that condition \eqref{transvcond} holds generically. For this we want to show the following: \\
(i) $M_{1}:=\lbrace(s,u):\frac{\partial\hat{n}}{\partial u}(s,u)=0\rbrace\subset T^{2}\setminus(\partial S\cup\Delta)$ is a one-dimensional submanifold.\\
(ii) Let $M_{2}:=\lbrace(s,u):\hat{a}(s)=\hat{n}(s,u),\dot{\hat{a}}(s)=\frac{\partial\hat{n}}{\partial s}(s,u)\rbrace\subset T^{2}\setminus(\partial S\cup\Delta)$. Then $M_{1}\cap M_{2}=\emptyset$.\\[.5em]
Proof of these two statements:\\
(i) In Figure \ref{nhatsu} we can see the following:
\begin{align*}
\frac{\partial\hat{n}}{\partial u}(s,u)=0&\Leftrightarrow\frac{\partial(\gamma(u)-\gamma(s))}{\partial u}\in\spann(\dot{\gamma}(s),\hat{n}(s,u))\\
&\Leftrightarrow\dot{\gamma}(u)\in\spann(\dot{\gamma}(s),\hat{n}(s,u))\\
&\Leftrightarrow\langle\dot{\gamma}(u),\hat{n}(s,u)\times\dot{\gamma}(s)\rangle=0\\
&\Leftrightarrow\langle\dot{\gamma}(u),(\gamma(u)-\gamma(s))\times\dot{\gamma}(s)\rangle=0\\
&\Leftrightarrow\langle\dot{\gamma}(s)\times\dot{\gamma}(u),\gamma(u)-\gamma(s)\rangle=0.
\end{align*}
\begin{figure}[ht]\centering
\includegraphics[scale=0.5]{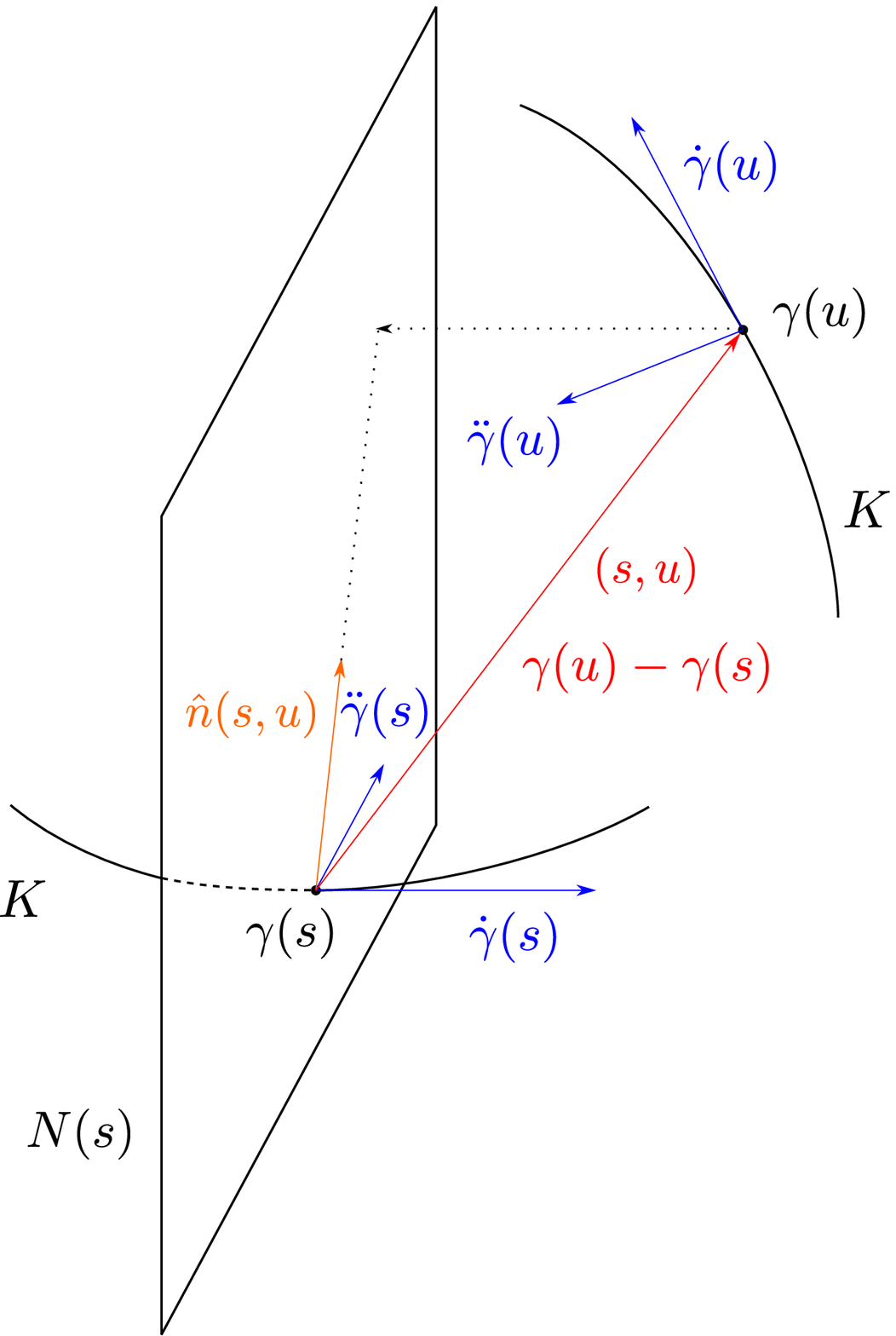}
\caption{The cord $(s,u)$ with the first and second derivatives of the parametrization $\gamma$ at start and end point of the cord and $\hat{n}(s,u)$}\label{nhatsu}
\end{figure}%

We define the maps (for $k$ big enough)
\begin{align*}
f:C^{k}(S^{1},\mathbb{R}^{3})\times(T^{2}\setminus(\partial S\cup\Delta))&\to\mathbb{R}\\
(\gamma,s,u)&\mapsto\langle\dot{\gamma}(s)\times\dot{\gamma}(u),\gamma(u)-\gamma(s)\rangle
\end{align*}
and
\begin{align*}
f_{\gamma}:T^{2}\setminus(\partial S\cup\Delta)&\to\mathbb{R}\\
(s,u)&\mapsto f(\gamma,s,u).
\end{align*}
We want to show that 0 is a regular value for $f_{\gamma}$. First, we show that the map
\[D_{(\gamma,s,u)}f:C^{k}(S^{1},\mathbb{R}^{3})\times\mathbb{R}^{2}\to\mathbb{R}\]
is surjective for all $(\gamma,s,u)$ with $f(\gamma,s,u)=0$:\\[.5em]
Let $(\gamma,s,u)$ be such that $f(\gamma,s,u)=\langle\dot{\gamma}(s)\times\dot{\gamma}(u),\gamma(u)-\gamma(s)\rangle=0$.\\
We assume that $D_{(\gamma,s,u)}f$ is the zero map and determine $D_{(\gamma,s,u)}f\cdot(\hat{\gamma},\hat{s},\hat{u})$:
\begin{align*}
\frac{\partial f}{\partial s}(\gamma,s,u)\cdot\hat{s}&=\langle\ddot{\gamma}(s)\times\dot{\gamma}(u),\gamma(u)-\gamma(s)\rangle\hat{s}+\underbrace{\langle\dot{\gamma}(s)\times\dot{\gamma}(u),-\dot{\gamma}(s)\rangle}_{=0}\hat{s}\\[-1em]
&=\langle\ddot{\gamma}(s)\times\dot{\gamma}(u),\gamma(u)-\gamma(s)\rangle\hat{s}\\
\frac{\partial f}{\partial u}(\gamma,s,u)\cdot\hat{u}&=\langle\dot{\gamma}(s)\times\ddot{\gamma}(u),\gamma(u)-\gamma(s)\rangle\hat{u}+\underbrace{\langle\dot{\gamma}(s)\times\dot{\gamma}(u),\dot{\gamma}(u)\rangle}_{=0}\hat{u}\\[-1em]
&=\langle\dot{\gamma}(s)\times\ddot{\gamma}(u),\gamma(u)-\gamma(s)\rangle\hat{u}\\
\frac{\partial f}{\partial\gamma}(\gamma,s,u)\cdot\hat{\gamma}&=\langle\dot{\hat{\gamma}}(s)\times\dot{\gamma}(u),\gamma(u)-\gamma(s)\rangle\\[-.7em]
&\hspace*{.4cm}+\langle\dot{\gamma}(s)\times\dot{\hat{\gamma}}(u),\gamma(u)-\gamma(s)\rangle\\
&\hspace*{.4cm}+\langle\dot{\gamma}(s)\times\dot{\gamma}(u),\hat{\gamma}(u)-\hat{\gamma}(s)\rangle.
\end{align*}
Since $\langle\dot{\gamma}(s)\times\dot{\gamma}(u),\gamma(u)-\gamma(s)\rangle=0$, we can also write:
\begin{align*}
\frac{\partial f}{\partial s}(\gamma,s,u)&=\langle(\ddot{\gamma}(s)+\lambda\dot{\gamma}(s))\times\dot{\gamma}(u),\gamma(u)-\gamma(s)\rangle\\
\frac{\partial f}{\partial u}(\gamma,s,u)&=\langle\dot{\gamma}(s)\times(\ddot{\gamma}(u)+\mu\dot{\gamma}(u)),\gamma(u)-\gamma(s)\rangle
\end{align*}
for any $\lambda,\mu\in\mathbb{R}$.\\[.5em]
Assuming $\gamma(u)-\gamma(s)=0$, it follows that $s=u$ and hence $(s,u)\in\Delta$. However, this is excluded by the definition of $f$.\\[.5em]
Assuming $(\gamma(u)-\gamma(s))\parallel\dot{\gamma}(u)$, it follows that the cord $(s,u)$ is tangent to $K$. Thus, we have $(s,u)\in\partial S$. However, this is also excluded by the definition of $f$. Analog for $(\gamma(u)-\gamma(s))\parallel\dot{\gamma}(s)$.\\[.5em]
Assume that $\ddot{\gamma}(s),\dot{\gamma}(s)$ and $\dot{\gamma}(u)$ are linearly independent. The set
\[A:=\lbrace(\ddot{\gamma}(s)+\lambda\dot{\gamma}(s))\times\dot{\gamma}(u):\lambda\in\mathbb{R}\rbrace\]
describes a straight line with $0\notin A$. So $\spann(A)$ is a plane through the origin with $\spann(A)\perp\dot{\gamma}(u)$. Thus, the following holds:
\[\langle(\ddot{\gamma}(s)+\lambda\dot{\gamma}(s))\times\dot{\gamma}(u),\gamma(u)-\gamma(s)\rangle=0\ \forall\ \lambda\in\mathbb{R}\ \ \Leftrightarrow\ \ (\gamma(u)-\gamma(s))\parallel\dot{\gamma}(u).\]
It follows that $(s,u)\in\partial S$. This is a contradiction to the definition of $f$. So $\ddot{\gamma}(s),\dot{\gamma}(s)$ and $\dot{\gamma}(u)$ are linearly dependent and we get
\[\dot{\gamma}(u)=\lambda_{1}\ddot{\gamma}(s)+\lambda_{2}\dot{\gamma}(s)\text{ with }\lambda_{1},\lambda_{2}\in\mathbb{R}\]
since $\ddot{\gamma}(s)$ and $\dot{\gamma}(s)$ are linearly independent. It follows that the set
\begin{align*}
\lbrace(\ddot{\gamma}(s)+\lambda\dot{\gamma}(s))\times\dot{\gamma}(u):\lambda\in\mathbb{R}\rbrace&=\lbrace(\ddot{\gamma}(s)+\lambda\dot{\gamma}(s))\times(\lambda_{1}\ddot{\gamma}(s)+\lambda_{2}\dot{\gamma}(s)):\lambda\in\mathbb{R}\rbrace\\
&=\lbrace\lambda\lambda_{1}\dot{\gamma}(s)\times\ddot{\gamma}(s)+\lambda_{2}\ddot{\gamma}(s)\times\dot{\gamma}(s):\lambda\in\mathbb{R}\rbrace\\
&=\lbrace(\lambda\lambda_{1}-\lambda_{2})\dot{\gamma}(s)\times\ddot{\gamma}(s):\lambda\in\mathbb{R}\rbrace
\end{align*}
is a straight line through the origin. Since $\frac{\partial f}{\partial s}(\gamma,s,u)=\langle(\ddot{\gamma}(s)+\lambda\dot{\gamma}(s))\times\dot{\gamma}(u),\gamma(u)-\gamma(s)\rangle=0$ for all $\lambda\in\mathbb{R}$ according to the asumption, it follows that
\[\gamma(u)-\gamma(s)\in\spann(\dot{\gamma}(s),\ddot{\gamma}(s)),\]
hence
\[\gamma(u)-\gamma(s)=\lambda_{3}\ddot{\gamma}(s)+\lambda_{4}\dot{\gamma}(s)\text{ with }\lambda_{3},\lambda_{4}\in\mathbb{R}.\]
Now we can rewrite the three summands of $\frac{\partial f}{\partial\gamma}(\gamma,s,u)\cdot\hat{\gamma}$:
\begin{align*}
\langle\dot{\hat{\gamma}}(s)\times\dot{\gamma}(u),\gamma(u)-\gamma(s)\rangle&=\langle\dot{\hat{\gamma}}(s)\times(\lambda_{1}\ddot{\gamma}(s)+\lambda_{2}\dot{\gamma}(s)),\lambda_{3}\ddot{\gamma}(s)+\lambda_{4}\dot{\gamma}(s)\rangle\\
&=\lambda_{1}\lambda_{4}\langle\dot{\hat{\gamma}}(s)\times\ddot{\gamma}(s),\dot{\gamma}(s)\rangle+\lambda_{2}\lambda_{3}\langle\dot{\hat{\gamma}}(s)\times\dot{\gamma}(s),\ddot{\gamma}(s)\rangle\\
&=(\lambda_{2}\lambda_{3}-\lambda_{1}\lambda_{4})\langle\dot{\gamma}(s)\times\ddot{\gamma}(s),\dot{\hat{\gamma}}(s)\rangle\\
\langle\dot{\gamma}(s)\times\dot{\hat{\gamma}}(u),\gamma(u)-\gamma(s)\rangle&=\langle\dot{\gamma}(s)\times\dot{\hat{\gamma}}(u),\lambda_{3}\ddot{\gamma}(s)+\lambda_{4}\dot{\gamma}(s)\rangle\\
&=-\lambda_{3}\langle\dot{\gamma}(s)\times\ddot{\gamma}(s),\dot{\hat{\gamma}}(u)\rangle\\
\langle\dot{\gamma}(s)\times\dot{\gamma}(u),\hat{\gamma}(u)-\hat{\gamma}(s)\rangle&=\langle\dot{\gamma}(s)\times(\lambda_{1}\ddot{\gamma}(s)+\lambda_{2}\dot{\gamma}(s)),\hat{\gamma}(u)-\hat{\gamma}(s)\rangle\\
&=\lambda_{1}\langle\dot{\gamma}(s)\times\ddot{\gamma}(s),\hat{\gamma}(u)-\hat{\gamma}(s)\rangle.
\end{align*}
All in all, with $\bar{\lambda}:=\lambda_{2}\lambda_{3}-\lambda_{1}\lambda_{4}$ we get
\[\frac{\partial f}{\partial\gamma}(\gamma,s,u)\cdot\hat{\gamma}=\langle\dot{\gamma}(s)\times\ddot{\gamma}(s),\bar{\lambda}\dot{\hat{\gamma}}(s)-\lambda_{3}\dot{\hat{\gamma}}(u)+\lambda_{1}(\hat{\gamma}(u)-\hat{\gamma}(s))\rangle.\]
We have $\lambda_{3}\neq0$, otherwise $\gamma(u)-\gamma(s)=\lambda_{4}\dot{\gamma}(s)$, i.e. $(s,u)\in\partial S$. So we can choose $\hat{\gamma}$ such that $\dot{\hat{\gamma}}(u)=\dot{\gamma}(s)\times\ddot{\gamma}(s)$ and $\dot{\hat{\gamma}}(s)=\hat{\gamma}(u)-\hat{\gamma}(s)=0$. With this choice we have $\frac{\partial f}{\partial\gamma}(\gamma,s,u)\cdot\hat{\gamma}\neq0$ and therefore the map $D_{(\gamma,s,u)}f$ is not the zero map. Hence $D_{(\gamma,s,u)}f$ is surjective.\\[.5em]
Thus, 0 is a regular value for $f$ and $f^{-1}(0)$ is a Banach manifold (see \cite{Cie4}, Proposition 3.19(a) (Parametric transversality)). The projection map $pr_{1}:f^{-1}(0)\to C^{k}(S^{1},\mathbb{R}^{3})$, for $k$ big enough, is smooth. According to the Sard-Smale theorem, almost all points in $C^{k}(S^{1},\mathbb{R}^{3})$ are regular values for $pr_{1}$, so the parametrization $\gamma$ of $K$ is generically a regular value. Thus, $pr_{1}^{-1}(\gamma)\subset f^{-1}(0)$ is a Banach submanifold and 0 is a regular value for $f_{\gamma}$ (see \cite{Cie4}, Proposition 3.19(c) (Parametric transversality)). So $M_{1}=f_{\gamma}^{-1}(0)$ is a one-dimensional submanifold and the assertion (i) is shown.\\[.5em]
(ii) Let $(s,u)\in M_{1}$.\\[.5em]
(a) First, we consider the case $\frac{\partial f_{\gamma}}{\partial u}(s,u)=0$, i.e. in a neighborhood of $s$ $M_{1}$ cannot be written as a graph over the $s$-axis, see Figure \ref{dfgammaduzero}.
\begin{figure}[ht]\centering
\includegraphics[scale=0.6]{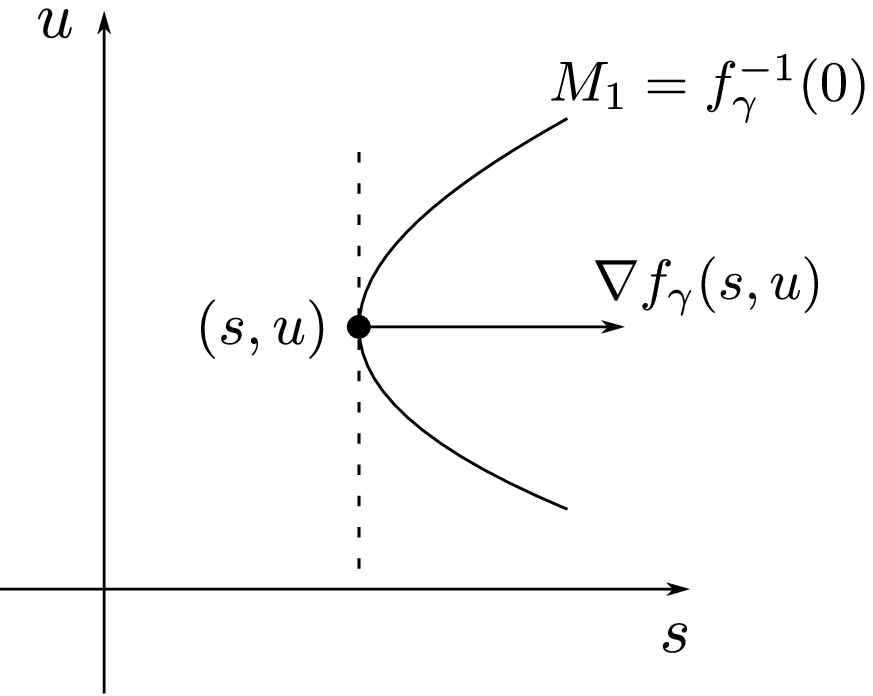}
\caption{A point $(s,u)\in M_{1}$ with $\frac{\partial f_{\gamma}}{\partial u}(s,u)=0$}\label{dfgammaduzero}
\end{figure}%

To simplify the notation we define the function $g_{\gamma}:=\frac{\partial f_{\gamma}}{\partial u}$. So the following holds:
\begingroup
\allowdisplaybreaks
\begin{align*}
&\begin{cases}
f_{\gamma}(s,u)=\langle\dot{\gamma}(s)\times\dot{\gamma}(u),\gamma(u)-\gamma(s)\rangle=0\text{ and}\\
g_{\gamma}(s,u)=\langle\dot{\gamma}(s)\times\ddot{\gamma}(u),\gamma(u)-\gamma(s)\rangle=0
\end{cases}\\
\Leftrightarrow&
\begin{cases}
\dot{\gamma}(s)\in\spann(\dot{\gamma}(u),\gamma(u)-\gamma(s))\text{ and}\\
\ddot{\gamma}(u)\in\spann(\dot{\gamma}(s),\gamma(u)-\gamma(s))\overset{(s,u)\notin\partial S}{=}\spann(\dot{\gamma}(u),\gamma(u)-\gamma(s))
\end{cases}\\
\Leftrightarrow&
\begin{cases}
\dot{\gamma}(s)=\mu_{1}\dot{\gamma}(u)+\mu_{2}(\gamma(u)-\gamma(s)),\ \mu_{1},\mu_{2}\in\mathbb{R},\ \mu_{1}\neq0\text{ and}\\
\ddot{\gamma}(u)=\mu_{3}\dot{\gamma}(u)+\mu_{4}(\gamma(u)-\gamma(s)),\ \mu_{3},\mu_{4}\in\mathbb{R},\ \mu_{4}\neq0.
\end{cases}
\end{align*}
\endgroup
$\mu_{1}$ must not vanish, otherwise we have $(s,u)\in\partial S$, and $\mu_{4}$ must not vanish, since $\dot{\gamma}(u)$ and $\ddot{\gamma}(u)$ are linearly independent.\\
We define the function
\begin{align*}
h_{\gamma}:T^{2}\setminus(\partial S\cup\Delta)&\to\mathbb{R}^{2}\\
(s,u)&\mapsto(f_{\gamma}(s,u),g_{\gamma}(s,u))
\end{align*}
and want to show that $h^{-1}_{\gamma}((0,0))$ is a finite set for generic knots. This holds if and only if $D_{(s,u)}h_{\gamma}$ is surjective for all $(s,u)$ with $h_{\gamma}(s,u)=(0,0)$. Let $(s,u)\in T^{2}\setminus(\partial S\cup\Delta)$ with $h_{\gamma}(s,u)=(0,0)$. Then the following holds:
\begin{align*}
D_{(s,u)}h_{\gamma}&=
\begin{pmatrix}
\langle\ddot{\gamma}(s)\times\dot{\gamma}(u),\gamma(u)-\gamma(s)\rangle&\langle\ddot{\gamma}(s)\times\ddot{\gamma}(u),\gamma(u)-\gamma(s)\rangle\\
\langle\dot{\gamma}(s)\times\ddot{\gamma}(u),\gamma(u)-\gamma(s)\rangle&\langle\dot{\gamma}(s)\times\dddot{\gamma}(u),\gamma(u)-\gamma(s)\rangle+\langle\dot{\gamma}(s)\times\ddot{\gamma}(u),\dot{\gamma}(u)\rangle
\end{pmatrix}\\
&=\begin{pmatrix}
\langle\ddot{\gamma}(s)\times\dot{\gamma}(u),\gamma(u)-\gamma(s)\rangle&\mu_{3}\langle\ddot{\gamma}(s)\times\dot{\gamma}(u),\gamma(u)-\gamma(s)\rangle\\
0&\mu_{1}\langle\dot{\gamma}(u)\times\dddot{\gamma}(u),\gamma(u)-\gamma(s)\rangle
\end{pmatrix}.
\end{align*}
So $D_{(s,u)}h_{\gamma}$ is surjective, i.e. $\det(D_{(s,u)}h_{\gamma})\neq0$, if and only if
\begin{align*}
&\langle\ddot{\gamma}(s)\times\dot{\gamma}(u),\gamma(u)-\gamma(s)\rangle\neq0\text{ and}\\
&\langle\dot{\gamma}(u)\times\dddot{\gamma}(u),\gamma(u)-\gamma(s)\rangle\neq0,
\end{align*}
since $\mu_{1}\neq0$.\\[.5em]
First, we look at $\langle\dot{\gamma}(u)\times\dddot{\gamma}(u),\gamma(u)-\gamma(s)\rangle$. We will need the torsion of the knot at the point $\gamma(u)$. Recall that for all $u\in S^{1}$ the torsion $\tau(u)\in\mathbb{R}$ is the unique real number for which $\dot{b}(u)=\tau(u)\bar{n}(u)$ holds, where $\bar{n}(u):=\frac{\ddot{\gamma}(u)}{\vert\ddot{\gamma}(u)\vert}$ and $b(u):= \dot{\gamma}(u)\times\bar{n}(u)$. We calculate 
\begingroup
\allowdisplaybreaks
\begin{align*}
\dot{b}(u)&=\frac{d}{du}\left(\dot{\gamma}(u)\times\frac{\ddot{\gamma}(u)}{\vert\ddot{\gamma}(u)\vert}\right)\\
&=\underbrace{\ddot{\gamma}(u)\times\frac{\ddot{\gamma}(u)}{\vert\ddot{\gamma}(u)\vert}}_{=0}+\dot{\gamma}(u)\times\frac{\vert\ddot{\gamma}(u)\vert\dddot{\gamma}(u)-\frac{1}{2}\cdot2(\langle\ddot{\gamma}(u),\ddot{\gamma}(u)\rangle)^{-\frac{1}{2}}\langle\ddot{\gamma}(u),\dddot{\gamma}(u)\rangle\ddot{\gamma}(u)}{\vert\ddot{\gamma}(u)\vert^{2}}\\
&=\dot{\gamma}(u)\times\frac{\vert\ddot{\gamma}(u)\vert^{2}\dddot{\gamma}(u)-\langle\ddot{\gamma}(u),\dddot{\gamma}(u)\rangle\ddot{\gamma}(u)}{\vert\ddot{\gamma}(u)\vert^{3}},
\end{align*}
\endgroup
and then we can write
\[\tau(u)\ddot{\gamma}(u)=\dot{\gamma}(u)\times\dddot{\gamma}(u)-\frac{\langle\ddot{\gamma}(u),\dddot{\gamma}(u)\rangle}{\vert\ddot{\gamma}(u)\vert^{2}}\dot{\gamma}(u)\times\ddot{\gamma}(u).\]
Therefore, the following holds:
\begin{align*}
\langle\dot{\gamma}(u)\times\dddot{\gamma}(u),\gamma(u)-\gamma(s)\rangle&=\tau(u)\langle\ddot{\gamma}(u),\gamma(u)-\gamma(s)\rangle+\frac{\langle\ddot{\gamma}(u),\dddot{\gamma}(u)\rangle}{\vert\ddot{\gamma}(u)\vert^{2}}\hspace*{-4pt}\underbrace{\langle\dot{\gamma}(u)\times\ddot{\gamma}(u),\gamma(u)-\gamma(s)\rangle}_{=0\text{ since }\ddot{\gamma}(u)\in\spann(\dot{\gamma}(u),\gamma(u)-\gamma(s))}\\
&=\tau(u)\langle\ddot{\gamma}(u),\gamma(u)-\gamma(s)\rangle.
\end{align*}
According to the above consideration, we have $\ddot{\gamma}(u)\in\spann(\dot{\gamma}(u),\gamma(u)-\gamma(s))$. Besides, $\ddot{\gamma}(u)$ and $\dot{\gamma}(u)$ are linearly independent. Therefore,
\[\gamma(u)-\gamma(s)\in\spann(\dot{\gamma}(u),\ddot{\gamma}(u)),\]
and thus
\[\gamma(u)-\gamma(s)=\mu_{5}\ddot{\gamma}(u)+\mu_{6}\dot{\gamma}(u),\ \mu_{5},\mu_{6}\in\mathbb{R},\mu_{5}\neq0.\]
$\mu_{5}$ must not vanish, otherwise the cord $(s,u)$ would be tangent to $K$. It follows
\begin{align*}
\tau(u)\langle\ddot{\gamma}(u),\gamma(u)-\gamma(s)\rangle&=\tau(u)\langle\ddot{\gamma}(u),\mu_{5}\ddot{\gamma}(u)+\mu_{6}\dot{\gamma}(u)\rangle\\
&=\tau(u)\mu_{5}\vert\ddot{\gamma}(u)\vert^{2}.
\end{align*}
This expression does not vanish if $\tau(u)\neq0$. For a generic knot $\tau(u)=0$ holds for only finitely many $u\in S^{1}$. So let $\lbrace\bar{u}_{i}:i=1,\dots,N_{u}\rbrace\subset S^{1}$, for an $N_{u}\in\mathbb{N}$, be the set of points for which $\tau(\bar{u}_{i})=0$ holds. Consider the set
\begin{align*}
\widetilde{R}_{1}&:=\lbrace(s,u)\in M_{1}:\frac{\partial f_{\gamma}}{\partial u}(s,u)=0,\tau(u)=0\rbrace\\
&=\lbrace(s,u)\in T^{2}\setminus(\partial^{s}S\cup\Delta):h_{\gamma}(s,u)=0,\tau(u)=0\rbrace\\
&=\lbrace(s,\bar{u}_{i})\in T^{2}\setminus(\partial^{s}S\cup\Delta):h_{\gamma}(s,\bar{u}_{i})=0,i\in\lbrace1,\dots,N_{u}\rbrace\rbrace.
\end{align*}
We can choose open neighborhoods of $\partial^{s}S$ and the diagonal of $K\times K$ of which we already know that $F^{s}$ is a submanifold within these neighborhoods. Let $U$ be the union of these neighborhoods. Thus, it suffices to consider the set
\[R_{1}:=\widetilde{R}_{1}\cap(T^{2}\setminus U).\]
$R_{1}$ is finite since: We have $M_{1}=f_{\gamma}^{-1}(0)$. Therefore, the following holds for all $(s,u)\in f_{\gamma}^{-1}(0)$: $\nabla f_{\gamma}(s,u)\perp M_{1}$. If $(s,\bar{u}_{i})\in M_{1}$, we have $\frac{\partial f_{\gamma}}{\partial u}(s,\bar{u}_{i})=0$ according to the assumption. So $M_{1}$ is tangent to the straight line $\lbrace(s,u):s=const\rbrace$ at the point $(s,\bar{u}_{i})\in R_{1}$, see Figure \ref{Ronefinite}.
\begin{figure}[ht]\centering
\includegraphics[scale=0.5]{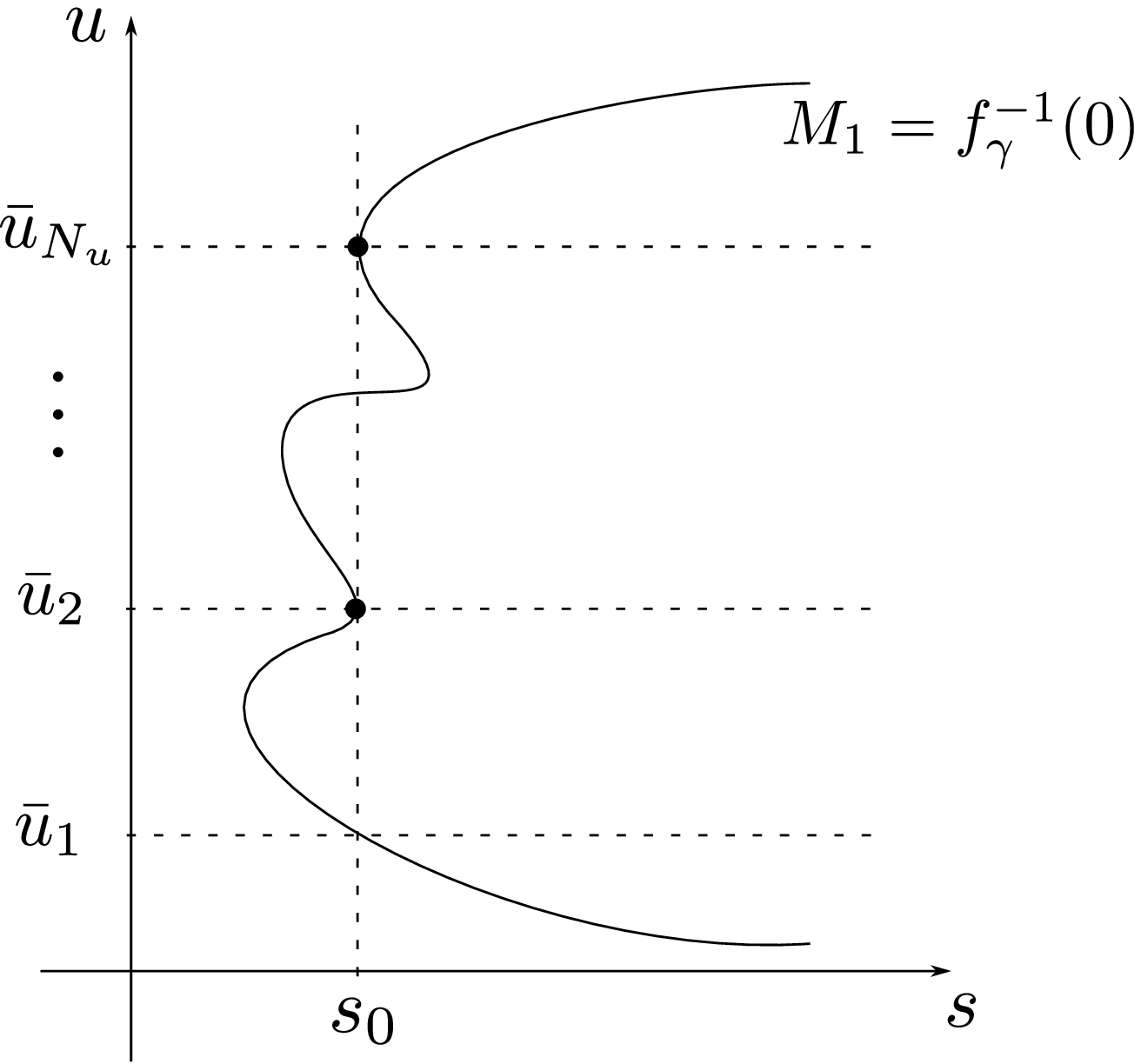}
\caption{The set $R_{1}$ is finite}\label{Ronefinite}
\end{figure}%
Since the set $\lbrace\bar{u}_{i}:i=1,\dots,N_{u}\rbrace\subset S^{1}$ is finite, no further point in $R_{1}$ can be contained in a sufficiently small neighborhood of $(s,\bar{u}_{i})$. Therefore, $R_{1}$ is a discrete set. Since $M_{1}\cap(T^{2}\setminus U)$ is compact, $R_{1}$ is finite. In the example of the Figure \ref{Ronefinite} we have $(s_{0},\bar{u}_{2}),(s_{0},\bar{u}_{N_{u}})\in R_{1}$.\\[.5em]
Now let's look at the entry $\frac{\partial f_{\gamma}}{\partial s}(s,u)=\langle\ddot{\gamma}(s)\times\dot{\gamma}(u),\gamma(u)-\gamma(s)\rangle$ in $D_{(s,u)}h_{\gamma}$. According to the assumption, we have $f_{\gamma}(s,u)=\langle\dot{\gamma}(s)\times\dot{\gamma}(u),\gamma(u)-\gamma(s)\rangle=0$. It follows
\[\frac{\partial f_{\gamma}}{\partial s}(s,u)=\langle(\ddot{\gamma}(s)+\lambda\dot{\gamma}(s))\times\dot{\gamma}(u),\gamma(u)-\gamma(s)\rangle\]
for all $\lambda\in\mathbb{R}$. Assume $\frac{\partial f_{\gamma}}{\partial s}(s,u)=0$. Then we get as in (i):
\[\gamma(u)-\gamma(s)=\lambda_{3}\ddot{\gamma}(s)+\lambda_{4}\dot{\gamma}(s),\ \lambda_{3},\lambda_{4}\in\mathbb{R},\lambda_{3}\neq0.\]
Thus, we can write:
\begin{align*}
0&=f_{\gamma}(s,u)=\lambda_{3}\langle\dot{\gamma}(s)\times\dot{\gamma}(u),\ddot{\gamma}(s)\rangle\text{ and}\\
0&=g_{\gamma}(s,u)=\lambda_{3}\langle\dot{\gamma}(s)\times\ddot{\gamma}(u),\ddot{\gamma}(s)\rangle,
\end{align*}
or, since $\lambda_{3}\neq0$:
\begin{align*}
&\langle\dot{\gamma}(s)\times\dot{\gamma}(u),\ddot{\gamma}(s)\rangle=0\text{ and}\\
&\langle\dot{\gamma}(s)\times\ddot{\gamma}(u),\ddot{\gamma}(s)\rangle=0.
\end{align*}
Therefore,
\[\dot{\gamma}(u),\ddot{\gamma}(u),\gamma(u)-\gamma(s)\in\spann(\dot{\gamma}(s),\ddot{\gamma}(s)).\]
We define the maps (for $k$ big enough)
\begin{align*}
\tilde{f}:C^{k}(S^{1},\mathbb{R}^{3})\times(T^{2}\setminus(\partial S\cup\Delta))&\to\mathbb{R}^{3}\\
(\gamma,s,u)&\mapsto
\begin{pmatrix}
\langle\dot{\gamma}(s)\times\ddot{\gamma}(s),\dot{\gamma}(u)\rangle\\
\langle\dot{\gamma}(s)\times\ddot{\gamma}(s),\ddot{\gamma}(u)\rangle\\
\langle\dot{\gamma}(s)\times\ddot{\gamma}(s),\gamma(u)-\gamma(s)\rangle
\end{pmatrix}
\end{align*}
and
\begin{align*}
\tilde{f}_{\gamma}:T^{2}\setminus(\partial S\cup\Delta)&\to\mathbb{R}^{3}\\
(s,u)&\mapsto f(\gamma,s,u).
\end{align*}
We want to show that 0 is a regular value for $\tilde{f}_{\gamma}$. First, we show that the map
\[D_{(\gamma,s,u)}\tilde{f}:C^{k}(S^{1},\mathbb{R}^{3})\times\mathbb{R}^{2}\to\mathbb{R}^{3}\]
is surjective for all $(\gamma,s,u)$ with $\tilde{f}(\gamma,s,u)=0$:\\[.5em]
Let $(\gamma,s,u)$ be such that $\tilde{f}(\gamma,s,u)=0$. Let $(\hat{\gamma},0,0)\in C^{k}(S^{1},\mathbb{R}^{3})\times\mathbb{R}^{2}$. We determine
\begin{align*}
D_{(\gamma,s,u)}\tilde{f}\cdot(\hat{\gamma},0,0)&=\frac{\partial\tilde{f}}{\partial\gamma}(\gamma,s,u)\cdot\hat{\gamma}\\
&=\begin{pmatrix}
\langle\dot{\hat{\gamma}}(s)\times\ddot{\gamma}(s)+\dot{\gamma}(s)\times\ddot{\hat{\gamma}}(s),\dot{\gamma}(u)\rangle\hspace{1.1cm}+\langle\dot{\gamma}(s)\times\ddot{\gamma}(s),\dot{\hat{\gamma}}(u)\rangle\hspace{1.1cm}\\
\langle\dot{\hat{\gamma}}(s)\times\ddot{\gamma}(s)+\dot{\gamma}(s)\times\ddot{\hat{\gamma}}(s),\ddot{\gamma}(u)\rangle\hspace{1.1cm}+\langle\dot{\gamma}(s)\times\ddot{\gamma}(s),\ddot{\hat{\gamma}}(u)\rangle\hspace{1.1cm}\\
\langle\dot{\hat{\gamma}}(s)\times\ddot{\gamma}(s)+\dot{\gamma}(s)\times\ddot{\hat{\gamma}}(s),\gamma(u)-\gamma(s)\rangle+\langle\dot{\gamma}(s)\times\ddot{\gamma}(s),\hat{\gamma}(u)-\hat{\gamma}(s)\rangle
\end{pmatrix}.
\end{align*}
If we choose $\hat{\gamma}_{1},\hat{\gamma}_{2},\hat{\gamma}_{3}$ such that $\dot{\hat{\gamma}}_{i}(s)=\ddot{\hat{\gamma}}_{i}(s)=0$, for $i=1,2,3$, and
\begin{itemize}\itemsep0pt
\item $\dot{\hat{\gamma}}_{1}(u)=\dot{\gamma}(s)\times\ddot{\gamma}(s),\ddot{\hat{\gamma}}_{1}(u)=\hat{\gamma}_{1}(u)-\hat{\gamma}_{1}(s)=0$, we get
\[D_{(\gamma,s,u)}\tilde{f}\cdot(\hat{\gamma}_{1},0,0)=\begin{pmatrix}\vert\dot{\gamma}(s)\times\ddot{\gamma}(s)\vert^{2}\\0\\0\end{pmatrix}.\]
\item $\ddot{\hat{\gamma}}_{2}(u)=\dot{\gamma}(s)\times\ddot{\gamma}(s),\dot{\hat{\gamma}}_{2}(u)=\hat{\gamma}_{2}(u)-\hat{\gamma}_{2}(s)=0$, we get
\[D_{(\gamma,s,u)}\tilde{f}\cdot(\hat{\gamma}_{2},0,0)=\begin{pmatrix}0\\\vert\dot{\gamma}(s)\times\ddot{\gamma}(s)\vert^{2}\\0\end{pmatrix}.\]
\item $\hat{\gamma}_{3}(u)-\hat{\gamma}_{3}(s)=\dot{\gamma}(s)\times\ddot{\gamma}(s),\dot{\hat{\gamma}}_{3}(u)=\ddot{\hat{\gamma}}_{3}(u)=0$, we get
\[D_{(\gamma,s,u)}\tilde{f}\cdot(\hat{\gamma}_{3},0,0)=\begin{pmatrix}0\\0\\\vert\dot{\gamma}(s)\times\ddot{\gamma}(s)\vert^{2}\end{pmatrix}.\]
\end{itemize}
Since $\vert\dot{\gamma}(s)\times\ddot{\gamma}(s)\vert^{2}\neq0$, the surjectivity results from any linear combination of these three choices. Thus, 0 is a regular value for $\tilde{f}$ and $\tilde{f}^{-1}(0)$ is a Banach manifold (see \cite{Cie4}, Proposition 3.19(a) (Parametric transversality)). The projection map $pr_{1}:\tilde{f}^{-1}(0)\to C^{k}(S^{1},\mathbb{R}^{3})$, for $k$ big enough, is smooth. According to the Sard-Smale theorem, almost all points in $C^{k}(S^{1},\mathbb{R}^{3})$ are regular values for $pr_{1}$, so the parametrization $\gamma$ of $K$ is generically a regular value. Thus, $pr_{1}^{-1}(\gamma)\subset\tilde{f}^{-1}(0)$ is a Banach submanifold and 0 is a regular value for $\tilde{f}_{\gamma}$ (see \cite{Cie4}, Proposition 3.19(c)). It follows $\tilde{f}_{\gamma}^{-1}(0)=\emptyset$, i.e. for a generic knot we have $\langle\ddot{\gamma}(s)\times\dot{\gamma}(u),\gamma(u)-\gamma(s)\rangle\neq0$ in case (a).\\[.5em]
Now we choose some open neighborhoods as follows:
\begin{itemize}\itemsep0pt
\item According to 1), the following holds: Around the finitely many points $\partial^{s}S$, $F^{s}$ is a smooth curve with endpoint in $\partial^{s}S$. So we can choose open neighborhoods of these finitely many points of which we already know that within these neighborhoods $F^{s}$ is a one-dimensional submanifold with boundary. Let $U_{\partial^{s}S}$ be the union of these neighborhoods.
\item According to 2), there exists an open neighborhood $U_{\Delta}$ of the diagonal in $K\times K$ such that $F^{s}\cap U_{\Delta}$ is a one-dimensional submanifold.
\item We can choose the framing such that $F^{s}\cap R_{1}=\emptyset$ since $R_{1}$ depends only on the knot and is finite. So there exist open neighborhoods $U_{(s,u)_{i}}$ of all points $(s,u)_{i}\in R_{1}$ such that for all $i$ the following holds: $F^{s}\cap U_{(s,u)_{i}}=\emptyset$. Let $U_{R_{1}}:=\bigcup\limits_{i}U_{(s,u)_{i}}$.
\end{itemize}
Let $U:=U_{R_{1}}\cup U_{\partial^{s}S}\cup U_{\Delta}$. We define the map
\[\tilde{h}_{\gamma}:=h_{\gamma}\vert_{T^{2}\setminus U}.\]
According to the above, $D_{(s,u)}\tilde{h}_{\gamma}$ is surjective for all $(s,u)$ with $\tilde{h}_{\gamma}(s,u)=(0,0)$. So the set
\[R_{2}:=\tilde{h}_{\gamma}^{-1}((0,0))\]
is finite since $T^{2}\setminus U$ is compact.\\
We choose the framing so that $F^{s}\cap R_{2}=\emptyset$.\\[.5em]
(b) Since $R_{2}$ is finite, i.e. $\frac{\partial f_{\gamma}}{\partial u}(s,u)=0$ for only finitely many $(s,u)\in M_{1}$, $M_{1}\setminus R_{2}$ can be split into finitely many disjoint subsets $M_{1,i},i=1,\dots,N_{M_{1}}$, with $N_{M_{1}}\in\mathbb{N}$, each of which is connected, so that the following holds for all $i=1,\dots,N_{M_{1}}$: $M_{1,i}$ can be written as a graph over the $s$-axis, i.e. the following holds: 
\[M_{1,i}=\lbrace(s,u_{i}(s)):s\in U_{i}\rbrace,\]
where $U_{i}\subset S^{1}$ is open, $u_{i}:U_{i}\to\mathbb{R}$ is smooth, and $\lim\limits_{s\to s_{0}}h_{\gamma}(s_{0},u_{i}(s))=(0,0)$ for $s_{0}\in\partial U_{i}$. We define the set
\[M_{U_{i}}:=\lbrace s\in U_{i}:\hat{a}(s)=g_{1}(s),\dot{\hat{a}}(s)=g_{2}(s)\rbrace\subset S^{1}\cong\mathbb{R}/\mathbb{Z}\]
with
\begin{align*}
g_{1}:S^{1}&\to S^{1}&g_{2}:S^{1}&\to\mathbb{R}\\
s&\mapsto\hat{n}(s,u(s))&s&\mapsto\frac{\partial\hat{n}}{\partial s}(s,u(s)).
\end{align*}
The set of all 1-jets from $S^{1}$ to $S^{1}$ is $J^{1}(S^{1},S^{1})=S^{1}\times S^{1}\times\mathbb{R}$. Furthermore,
\[L_{i}:=\lbrace(s,g_{1}(s),g_{2}(s)):s\in U_{i}\rbrace\subset J^{1}(S^{1},S^{1})\]
is a submanifold, since $L_{i}$ is a graph over the $s$-axis, with $\codim(L_{i}\subset J^{1}(S^{1},S^{1}))=2$. According to Theorem \ref{JetTransvthmadd}, we can perturb $\hat{a}$ in the space $C^{k}(S^{1},S^{1})$, for $k$ big enough, such that the map
\begin{align*}
h:S^{1}&\to J^{1}(S^{1},S^{1})\\
s&\mapsto(s,\hat{a}(s),\dot{\hat{a}}(s))
\end{align*}
is transverse to $\bigcup\limits_{i=1}^{N_{M_{1}}}\hspace*{-.1cm}L_{i}$. So we get $M_{U}:=\bigcup\limits_{i=1}^{N_{M_{1}}}\hspace*{-.1cm}M_{U_{i}}=h^{-1}\left(\bigcup\limits_{i=1}^{N_{M_{1}}}\hspace*{-.1cm}L_{i}\right)=\emptyset$ since $\codim(M_{U}\subset S^{1})=2$.\\[.5em]
Thus, the condition \eqref{transvcond} is satisfied and this proves the lemma.
\end{proof}

\end{document}